\renewcommand*{\eqref}[1]{%
  \hyperref[{#1}]{\textup{\tagform@{\ref*{#1}}}}%
}
\numberwithin{equation}{section}
\newtheorem{theorem}{Theorem}[section]
\newtheorem{proposition}[theorem]{Proposition}
\newtheorem{corollary}[theorem]{Corollary}
\newtheorem{lemma}[theorem]{Lemma}
\theoremstyle{definition}
\newtheorem{definition}[theorem]{Definition}
\newtheorem{remark}[theorem]{Remark}
\newtheorem*{claim*}{Claim}
\newcommand{\R}{\mathbb R}
\newcommand{\D}{\mathcal D}
\newcommand{\PP}{\mathbb P}
\newcommand{\EE}{\mathbb E}
\newcommand{\h}{\mathsf{H}}
\newcommand{\supp}{\mathop{\mathrm{supp}}}
\newcommand{\inter}{\mathop{\mathrm{int}}}
\newcommand{\diam}{\mathop{\mathrm{diam}}}
\newcommand{\Ent}{\operatorname{Ent}}
\newcommand{\Var}{\operatorname{Var}}
\newcommand{\Id}{\operatorname{Id}}
\newcommand{\Tr}{\operatorname{Tr}}
\newcommand{\Cov}{\operatorname{Cov}}
\newcommand{\HH}{H^1}
\newcommand{\cc}{\kappa}
\title{The Brownian transport map}
\author{Dan Mikulincer}
\address{Department of Mathematics, Massachusetts Institute of Technology, 
Cambridge, MA, USA}
\email{danmiku@mit.edu}
\author{Yair Shenfeld}
\address{Department of Mathematics, Massachusetts Institute of Technology, 
Cambridge, MA, USA}
\email{shenfeld@mit.edu}
\begin{document}
\maketitle
\begin{abstract}
Contraction properties of transport maps between probability measures play an important role in the theory of functional inequalities. The actual construction of such maps, however, is a non-trivial task and, so far, relies mostly on the theory of optimal transport. In this work, we take advantage of the infinite-dimensional nature of the Gaussian measure and  construct a new transport map, based on the F\"ollmer process, which pushes forward the Wiener measure onto probability measures on Euclidean spaces. Utilizing the tools of the Malliavin and stochastic calculus in Wiener space, we show that this Brownian transport map is a contraction in various settings where the analogous questions for optimal transport maps are open.  

The contraction properties of the Brownian transport map enable us to prove functional inequalities in Euclidean spaces, which are either completely new or improve on current results. Further and related applications of our contraction results are the existence of Stein kernels with desirable properties (which lead to new central limit theorems), as well as new insights into the Kannan--Lov\'asz--Simonovits conjecture. 

We go beyond the Euclidean setting and address the problem of contractions on the Wiener space itself. We show that optimal transport maps and causal optimal transport maps (which are related to Brownian transport maps)  between the Wiener measure and other target measures on Wiener space exhibit very different behaviors.
\end{abstract}
\section{Introduction}
One of the basic tools in the study of functional inequalities in Euclidean spaces is the use of Lipschitz maps $T:\R^d\to \R^d$ \cite{cordero2002some, kim2012generalization}. 
A good starting point for this discussion is Caffarelli's contraction theorem \cite{caffarelli2000monotonicity} (see also \cite{chewi2023entropic,fathi2020proof,kolesnikov2011mass,valdimarsson2007hessian} for other proofs): If $\gamma_d$ is the standard Gaussian measure on $\R^d$ and $p$ is a probability measure on $\R^d$ which is more log-concave than $\gamma_d$, then the optimal transport map of Brenier $T:\R^d\to \R^d$, which pushes forward $\gamma_d$ to $p$, is 1-Lipschitz. In other words, that $p$ is more log-concave than $\gamma_d$ is manifested by the contractive properties of the transport map $T$. With the existence of $T$ in hand, we can easily transfer to $p$ functional inequalities which are known to be true for $\gamma_d$. For example, the Poincar\'e inequality states that for $\eta:\R^d\to \R$ we have
\[
\Var_{\gamma_d}[\eta]\le \int |\nabla\eta|^2d\gamma_d.
\]
As $T$ is 1-Lipschitz, its derivative is bounded, $|DT|_{\text{op}}\le 1$, so
\begin{equation} \label{eq:transportproof}
	\Var_{p}[\eta]=\Var_{\gamma_d}[\eta\circ T]\le \int |\nabla(\eta\circ T)|^2d\gamma_d\le  \int |DT|_{\text{op}}^2\,(|\nabla\eta|\circ T)^2d\gamma_d\le \int |\nabla\eta|^2dp
\end{equation}
where we used that $p$ is the pushforward of $\gamma_d$ under $T$. We see that $p$ satisfies the Poincar\'e inequality  with the same constant as $\gamma_d$. 

This paper starts with the observation that since the Gaussian measure is infinite-dimensional in nature\footnote{The infinite-dimensional nature of the Gaussian is manifested by the fact that it enjoys several quantitative analytic properties, like the Poincar\'e inequality, which do not depend on the ambient dimension. In particular, the Wiener measure, which is the infinite-dimensional analogue of the Gaussian measure, satisfies the same analytic properties.}, the search for contractive transport maps from the Gaussian measure to some target measure should not be confined to Euclidean spaces, even if the target measure is a measure on $\R^d$. Specifically, we will take our source measure to be the Wiener measure (an infinite-dimensional Gaussian measure) which will allow us to take advantage of the Malliavin and stochastic calculus of the Wiener space. Given a target measure $p$ on $\R^d$, our construction relies on the F\"ollmer process: The solution $X=(X_t)$ to the stochastic differential equation
\begin{align}
\label{eq:Follmer}
dX_t=\nabla\log P_{1-t}\left(\frac{dp}{d\gamma_d}\right)(X_t)dt+dB_t, \quad t\in [0,1], \quad X_0=0
\end{align}
where $(B_t)$ is the standard Brownian motion in $\R^d$ and $(P_t)$ is the heat semigroup. This process can be seen as Brownian motion conditioned on being distributed like $p$ at time 1; i.e., $X_1\sim p$. Hence, we view the solution to \eqref{eq:Follmer} at time 1 as a transport map, which we call the \emph{Brownian transport map}, $X_1:\Omega\to \R^d$ which pushes forward the Wiener measure $\gamma$ on the Wiener space\footnote{The Wiener measure $\gamma$ is such that $\Omega\ni\omega\sim \gamma$ is a standard Brownian motion in $\R^d$ where $\Omega$ is the classical Wiener space of continuous paths in $\R^d$ parameterized by time $t\in [0,1]$.} $\Omega$  to the target measure $p$ on $\R^d$. 

In the remainder of the introduction we present our results on the contractive properties of the Brownian transport map, as well as applications to functional inequalities and to central limit theorems. We also study the behavior of the Brownian transport map when considered as a map from the Wiener space to itself. This point of view further elucidates the connection between our results and optimal transport theory.

\subsection{Almost-sure contraction}
\label{subsec:uniIntro}

Before presenting our first result we discuss the types of measures for which it is reasonable to expect that the Brownian transport map will be an almost-sure contraction (the reader is referred to section \ref{sec:prem} for the exact definition). The rough intuition is that if the measure $\gamma$ is squeezed into a more concentrated measure then the transport map should be a contraction. We focus on several mechanisms which in principle could facilitate such contractions. The first mechanism works by requiring that $S:=\text{diam}(\supp (p))$ is finite so that the entire mass of $p$ is confined into a region with finite volume. The second mechanism, inspired by Caffarelli's result, works by imposing convexity assumptions on $p$: We say that $p$ is $\cc$-log-concave for some $\cc\in \R$ if it has smooth density and
\[
-\nabla^2\log\left(\frac{dp}{dx}\right)(x)	\succeq\cc \Id_d \quad\forall\,x\in \inter(\supp (p)).
\]
Note that we allow $\cc$ to take negative values and that the case $\cc=0$ corresponds to $p$ being log-concave.  When $\cc\ge 1$ we see that $p$ is more log-concave than $\gamma_d$ ($\cc=1$ when $p=\gamma_d$ as $-\nabla^2\log\left(\frac{d\gamma_d}{dx}\right)=\Id_d$) so in that sense $p$ is more concentrated than $\gamma_d$ and we expect some type of contraction.

The following result shows that the Brownian transport map is an almost-sure contraction when the target measure satisfies either a convexity assumption or a finite-volume of support assumption.  For example, as will be clear from the subsequent discussion, Theorem \ref{thm:uniinformal} always improves on the analogous result of Caffarelli which states that when $p$ is $\cc$-log-concave, for $\cc>0$, the optimal transport map is $\frac{1}{\sqrt{\cc}}$-Lipschitz. In the remainder of the paper we refer to $\ell$-Lipschitz maps as \emph{contractions with constant $\ell$}.
\begin{theorem}
\label{thm:uniinformal}{\normalfont{(Almost-sure contraction)}}
Let $p$ be a $\cc$-log-concave measure for some $\cc\in \R$ and let $S:=\textnormal{diam}(\supp (p))$. \\

\begin{enumerate}[(i)]
\item If $\cc S^2\ge 1$ then the Brownian transport map between $\gamma$ and $p$ is an almost-sure contraction with constant $\frac{1}{\sqrt{\cc}}$. \\

\item If $\cc S^2<1$ then the Brownian transport map between $\gamma$ and $p$ is an almost-sure contraction with constant $\left(\frac{e^{1-\cc S^2}+1}{2}\right)^{1/2}S$.
\end{enumerate}
\end{theorem}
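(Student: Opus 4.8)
The plan is to differentiate the F\"ollmer equation in the Malliavin sense, reduce the almost-sure contraction to a pathwise estimate on the resulting Jacobian flow, and control that flow through a uniform one-sided bound on the Hessian of $\log P_{1-t}(dp/d\gamma_d)$. Write $f:=dp/d\gamma_d$. Since $X=(X_t)$ is adapted, $D_sX_t=0$ for $s>t$; and for $s\le t$, differentiating $X_t=o+\int_0^t\nabla\log P_{1-r}f(X_r)\,dr+B_t$ and using $D_sB_t=\Id_d$ ($s\le t$) gives the linear matrix ODE
\[
\partial_t(D_sX_t)=\nabla^2\log P_{1-t}f(X_t)\,(D_sX_t),\qquad D_sX_s=\Id_d .
\]
Because $X_1\sim p$ and $D_s(\eta\circ X_1)=(D_sX_1)^{\!\top}\nabla\eta(X_1)$ by the chain rule, the matrix governing the transfer of functional inequalities is the symmetric positive $M:=\int_0^1(D_sX_1)(D_sX_1)^{\!\top}\,ds$ (one has $|\mathcal D X_1|^2=\|M\|_{\mathrm{op}}$ in the sense of the introduction, since $\Var_p(\eta)\le\EE_\gamma[\|M\|_{\mathrm{op}}\,|\nabla\eta|^2(X_1)]$), and ``almost-sure contraction with constant $C$'' is precisely $\|M\|_{\mathrm{op}}\le C^2$ for $\gamma$-a.e.\ path. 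So it suffices to bound $\|M\|_{\mathrm{op}}$ pathwise.

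The key input is a one-sided bound on $\nabla^2\log P_{1-t}f$. A direct Gaussian computation (differentiate $\log P_{1-t}f$ twice and complete the square in the heat kernel) gives
\[
\nabla^2\log P_{1-t}f(x)=\frac{1}{(1-t)^2}\,\Cov(\mu_{t,x})-\frac{1}{1-t}\,\Id_d ,
\]
where $\mu_{t,x}$ is the probability measure with density proportional to $\tfrac{dp}{dz}(z)\exp\!\big(-\tfrac{t}{2(1-t)}|z-x/t|^2\big)$. Since $p$ is $\cc$-log-concave, $\mu_{t,x}$ is $\lambda_t$-log-concave with $\lambda_t:=\cc+\tfrac{t}{1-t}$, and it is supported in $\supp(p)$, a set of diameter $\le S$. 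Hence $\Cov(\mu_{t,x})\preceq q_t\,\Id_d$, uniformly in $x$, where $q_t:=\min\{1/\lambda_t,\,S^2/4\}$ if $\lambda_t>0$ (Brascamp--Lieb together with the diameter bound) and $q_t:=S^2/4$ if $\lambda_t\le 0$ (the diameter bound alone). Therefore $\nabla^2\log P_{1-t}f(x)\preceq\alpha_t\,\Id_d$ for every $x$, with $\alpha_t:=q_t/(1-t)^2-1/(1-t)$.

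Now I would close the argument. As $\nabla^2\log P_{1-t}f$ is symmetric and $\preceq\alpha_t\Id_d$, the matrix inequality $\tfrac{d}{dt}\big[(D_sX_t)^{\!\top}(D_sX_t)\big]\preceq 2\alpha_t\,(D_sX_t)^{\!\top}(D_sX_t)$ and Gr\"onwall give $\|D_sX_1\|_{\mathrm{op}}\le\exp\!\big(\int_s^1\alpha_r\,dr\big)$ along every path, whence the deterministic bound
\[
\|M\|_{\mathrm{op}}\le\int_0^1\exp\!\Big(2\!\int_s^1\alpha_r\,dr\Big)\,ds ,
\]
an elementary integral once one tracks the crossover time $t^\ast$ at which the two entries of $q_t$ coincide. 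When $\cc S^2\ge 1$ one has $\lambda_t\ge\cc>0$ for all $t$; using the Brascamp--Lieb bound throughout, the exponent telescopes to $\|D_sX_1\|_{\mathrm{op}}\le(\cc+s(1-\cc))^{-1}$ and the integral evaluates to exactly $1/\cc$, giving constant $1/\sqrt{\cc}$. When $\cc S^2<1$ the diameter bound is the relevant (or only available) one on $[0,t^\ast]$ and Brascamp--Lieb on $[t^\ast,1]$; splitting the integral accordingly and evaluating each piece yields the asserted constant $\big(\tfrac{e^{1-\cc S^2}+1}{2}\big)^{1/2}S$, which matches $1/\sqrt{\cc}$ at $\cc S^2=1$ and improves upon it below.

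The conceptual heart is the covariance identity together with the fact that, even when $\cc\le 0$ so that $P_{1-t}f$ is log-concave in no sense, the two hypotheses ``$\cc$-log-concave'' and ``support of diameter $S$'' still pin down $\Cov(\mu_{t,x})$ uniformly --- this is exactly why a finite-volume assumption becomes indispensable once convexity is lost, and why $S$ enters through an exponential. The principal obstacle is the Malliavin-calculus justification: one must verify that $X_t$ is Malliavin differentiable and that the drift $\nabla\log P_{1-t}f$, whose spatial derivative blows up like $(1-t)^{-1}$, is regular enough as $t\uparrow 1$ for the variational equation to be valid; the clean route is to prove the estimate first for smooth, compactly supported approximations of $p$, for which the F\"ollmer process is manifestly well-behaved, and then pass to the limit, the bounds being uniform in the approximation. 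I expect the evaluation of the Gr\"onwall integral in case (ii), with the careful accounting of which covariance bound is active on which time interval, to be the most computation-heavy step, while making the Malliavin regularity fully rigorous is the most delicate point.
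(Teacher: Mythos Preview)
Your proposal is correct and follows essentially the same route as the paper: the covariance identity for $\nabla^2\log P_{1-t}f$, the combination of Brascamp--Lieb and diameter bounds, and the Gr\"onwall estimate leading to $\|M\|_{\mathrm{op}}\le\int_0^1\exp\bigl(2\int_s^1\alpha_r\,dr\bigr)\,ds$ are exactly the paper's ingredients (your direct Gr\"onwall on $(D_sX_t)^{\!\top}(D_sX_t)$ is a slightly cleaner variant of the paper's Bernoulli-ODE argument for $|\alpha_{\dot h}(t)|^2$, but yields the identical bound). Two small discrepancies: the paper uses the cruder diameter bound $\Cov\preceq S^2\Id_d$ rather than your $S^2/4$, so your computation in case~(ii), if carried out as written, would produce a \emph{better} constant than the stated one---use $S^2$ to recover the theorem verbatim; and the paper handles the $t\uparrow 1$ regularity not by smooth approximation of $p$ but by establishing the estimate on $[0,1)$ and then extending $X_t$ and $D_sX_t$ to $t=1$ via an $L^2$-Cauchy and weak-convergence argument.
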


To unpack Theorem \ref{thm:uniinformal} let us consider some of its important special cases.

\begin{itemize}

\item $S<\infty$ and $\cc=0$. This setting corresponds to the case where $p$ is log-concave with bounded convex support. It is an open question  \cite[Problem 4.3]{kolesnikov2011mass} whether the optimal transport map of Brenier between $\gamma_d$ and $p$ is a contraction with a dimension-free constant. On the other hand, Theorem \ref{thm:uniinformal} shows that the Brownian transport map between $\gamma$ and $p$ is in fact an almost-sure contraction with a dimension-free constant of the optimal dependence $O(S)$. \\

\item $\cc>0$. If $\cc S^2\ge 1$ then we obtain the exact  analogue of Caffarelli's result for the optimal transport map \cite[Theorem 2.2]{kolesnikov2011mass}. If $\cc S^2<1$ then part (ii) shows that the Brownian transport map is an almost-sure contraction with constant $\left(\frac{e^{1-\cc S^2}+1}{2}\right)^{1/2}S\le \frac{1}{\sqrt{\cc}}$; the last inequality holds as $\cc S^2<1$ and by the estimate $1-\frac{1}{x}\le \log x$. Thus, we get an improvement on the analogue of Caffarelli's result. \\

\item $S<\infty$ and $\cc<0$. In this setting, only part (ii) applies and we see that the Brownian transport map is an almost-sure contraction with constant $\left(\frac{e^{1+|\cc| S^2}+1}{2}\right)^{1/2}S$. There are no analogous results for other transport maps. \\

%

\item $S=\infty$ and $\cc\le 0$. The bounds provided by Theorem \ref{thm:uniinformal} are trivial in this case. This is unavoidable, as we explain in section \ref{subsec:kls}.
\end{itemize}
\vspace{0.1in}

\begin{remark}
\label{rem:rhoS}
The distinction between $\cc S^2\ge 1$ and $\cc S^2<1$ is not necessary and one could get more refined results; see Remark \ref{rem:rhoSBL}. The formulation of Theorem \ref{thm:uniinformal}, however, is the cleanest which is why we chose it. 
\end{remark}

Theorem \ref{thm:uniinformal} goes beyond the above  examples by capturing the effect of the interplay of convexity (including $\cc<0$) and support size on the contraction properties of the Brownian transport map.\footnote{Note that the quantity $\cc S^2$ is scale-invariant since  the effect of rescaling $\R^d$ by $r$ turns a $\cc$-log-concave measure into a $\frac{\cc}{r^2}$-log-concave measure while the size of the support of the measure rescales from $S$ to $rS$. Because the source measure is $\gamma$, which corresponds to a Gaussian with unit variance, we expect to have a threshold at $\cc S^2=1$ which is indeed the case in Theorem \ref{thm:uniinformal}; but see Remark \ref{rem:rhoS} and Remark \ref{rem:rhoSBL}.}

The reason why we can prove the results in Theorem \ref{thm:uniinformal}, which are unknown for the Brenier map, is because the Malliavin calculus available in the Wiener space allows us to write a differential equation for the derivative of the Brownian transport map, which in turn shows that it is a contraction. This feature does not have an analogue in optimal transport maps (but see \cite[equation (1.8)]{kim2012generalization} for a different transport map). Moreover, as will be shown in section \ref{sec:opt}, trying to replace the Brownian transport map by the optimal transport map on the Wiener space (see section \ref{subsec:Wiener} for more details) is not possible since the optimal transport map on Wiener space will essentially reduce to the optimal transport map between $\gamma_d$ and $p$ for which the desired contraction properties are not known. 

In our second result we identify a third mechanism to promote the existence of contractive transport maps. In essence, the idea is that taking well-behaved mixtures of Gaussians will also have tame concentration profiles. Indeed, in the case where the mixing measure has bounded support we establish the the Brownian transport map is a contraction.
\begin{theorem}{\normalfont{(Gaussian mixtures)}}
	\label{thm:Gaussmix}
	Let $p:=\gamma_d\star \nu$ be the convolution of the standard Gaussian measure $\gamma_d$ with a probability measure $\nu$ on $\R^d$ supported on a ball of radius $R$. Then, the Brownian transport map between $\gamma$ and $p$ is an almost-sure contraction with constant $\left(\frac{e^{2R^2}-1}{2}\right)^{1/2}\frac{1}{R}$.
\end{theorem}
In the one-dimensional case, an analogue of Theorem \ref{thm:Gaussmix} was established in \cite{zimmermann2016elementary}, for the Brenier map. The proof relied on the explicit expression for transport maps between measures on the real line. While it is unknown whether the Brenier map enjoys similar properties in higher dimensions, our analysis of the Brownian transport map affords Theorem \ref{thm:Gaussmix} as an extensive generalization to arbitrary dimensions. As a corollary we are able to deduce several new functional inequalities, as well as improve upon existing ones, for Gaussian mixtures (see section \ref{sec:funcineq} below).
\subsection{Functional inequalities} \label{sec:funcineq}
Once Theorems \ref{thm:uniinformal} and \ref{thm:Gaussmix} are established, the generality of the transport method towards functional inequalities opens the door  to the improvement of numerous functional inequalities. Section \ref{sec:funct} is dedicated to proving such results which include the isoperimetric inequality, $\Psi$-log-Sobolev inequalities; a generalization of the log-Sobolev inequality, and $q$-Poincar\'e inequalities, a generalization of the Poincar\'e inequality.
In Table 1 we summarize our results and note the ones which seem to be new. The definitions and exact statements are deferred to Section \ref{sec:funct}.
{\small
\begin{table}[h!]\label{tbl:results} 
	
	\centering

	\begin{threeparttable}
	\begin{tabular}{|l|c|c|c|c|c|c|} 
		\toprule
		\textbf{Measures}   &  \textbf{Poincar\'e} & \textbf{Log-Sobolev} & \textbf{Isoperimetric} & \textbf{q-Poincar\'e} & \textbf{$\Psi$-log-Sobolev} \\
		\midrule
		\midrule
		\makecell{$p$ is $\cc$-log-concave \\ $\cc > 0$} & $\frac{1}{\kappa}$ \cite{brascamp1976extensions}  & $\frac{1}{\kappa}$ \cite{bakry1985diffusions} &  $\frac{1}{\sqrt{\kappa}}$\cite{bakry1996levy} & $\frac{1}{\cc^\frac{q}{2}}$\cite{calderon2019functional} & $\frac{1}{\kappa}$ \cite{bakry1985diffusions} \\
		\midrule
		\makecell{$p$ is $\cc$-log-concave \\ $S:=\diam(\supp(p))$\\
			$\cc S^2 \leq 1$} & \makecell{$\frac{(e^{1-\cc S^2}+1)S^2}{2}$ \\ \cite{PW60,bakry2000some}}  & \makecell{$\frac{(e^{1-\cc S^2}+1)S^2}{2}$ \\ \cite{frieze1999log,calderon2019functional}}  & \makecell{$\sqrt{\frac{(e^{1-\cc S^2}+1)S^2}{2}}$\\ \cite{milman2015sharp}} & \makecell{$\sqrt{\frac{(e^{1-\cc S^2}+1)S^2}{2}}^q$\\ Theorem \ref{thm:qpoinc}\tnote{a}}  & \makecell{$\frac{(e^{1-\cc S^2}+1)S^2}{2}$\\ Theorem \ref{thm:PHIsob}} \\ 
		\midrule
		\makecell{$p = \gamma_d\star \nu$\\
		$R:=\diam(\supp(\nu))$} & \makecell{$\frac{e^{2 R^2}-1}{2R^2}$\\Theorem \ref{thm:qpoinc}\tnote{b}
	} & \makecell{$\frac{e^{2 R^2}-1}{2R^2}$\\Theorem \ref{thm:PHIsob}\tnote{c}} &  \makecell{$\sqrt{\frac{e^{2 R^2}-1}{2R^2}}$ \\Theorem \ref{thm:isoper}} & \makecell{$\sqrt{\frac{e^{2 R^2}-1}{2R^2}}^q$ \\Theorem \ref{thm:qpoinc}} & \makecell{$\frac{e^{2 R^2}-1}{2R^2}$\\Theorem \ref{thm:PHIsob}} \\ 
		\bottomrule
	\end{tabular}
	\begin{tablenotes}
		\item[a] Comparable results with more restrictive assumptions were proven in \cite{calderon2019functional}.
		\item[b] Comparable results with worse exponent were proven in \cite{wang2016functional,bardet2018functional}.
		\item[c] Comparable results with worse exponent were proven in \cite{chen2021dimension}.
	\end{tablenotes}	
	\caption{Summary of functional inequalities obtained from Theorem \ref{thm:uniinformal} and Theorem \ref{thm:Gaussmix}. For results which were previously known we supply references, otherwise the relevant theorem is noted.}
\end{threeparttable}
\end{table}
}
As can be seen from the table, for log-concave measures some of the results are not new. However, let us note that the proofs of the mentioned results, obtained gradually over the last several decades, utilized a myriad of different techniques. These techniques include, among others, localization methods, Bakry-\'Emery calculus, and Brunn-Minkowski theory, and often require ad-hoc arguments for the specific functional inequality in question. In contrast, our transportation approach provides a unifying framework to study such functional inequalities. As a result we are also able to obtain new, previously unknown, results such that as $\Psi$-log-Sobolev and $q$-Poincar\'e inequalities for log-concave measures with bounded support. While it is likely that one could use other techniques to prove comparable results, the benefit of our approach is that no further arguments are needed, other than Theorem \ref{thm:uniinformal} and arguments similar to the one outlined in \eqref{eq:transportproof}.

The bottom row of Table 1 deals with Gaussian mixtures. The question of existence of functional inequalities for a mixture of distributions, given the existence of the corresponding inequalities for the individual components, has been investigated for some time. Only recently has it been settled for the Poincar\'e and log-Sobolev inequalities, \cite[Theorem 1]{chen2021dimension}. The result of \cite{chen2021dimension} is very general and applies to many families of mixture distributions but, on the other hand, the method of proof seems to be specialized to the Poincar\'e and log-Sobolev inequalities. In this case, the generality of the transport method allows to tackle inequalities which seem to lie outside of the scope of previous methods. In addition, the generality of the method of \cite{chen2021dimension} misses the special nature of mixtures of Gaussians. Indeed, our results improve on \cite[Corollaries 1,2]{chen2021dimension}.

\subsection{Log-concave measures}  
\label{subsec:kls}
As we saw in 
section \ref{sec:funcineq}, measures which are $\cc$-log-concave (with $\cc>0$) satisfy a Poincar\'e inequality with constant $\cc^{-1}$ (which in particular does not depend on the dimension $d$). When $\cc=0$, this constant blows up which leaves open the question of the existence of a Poincar\'e inequality for log-concave measures. The Kannan--Lov\'asz--Simonovits conjecture \cite{kannan1995isoperimetric}, in one of its formulations, states that there exists a constant $C_{\text{kls}}$, which does not depend on the dimension $d$, such that for any isotropic (i.e.,  centered with covariance equal to the identity matrix) log-concave measure $p$ on $\R^d$ we have
\[
\Var_{p}[\eta]\le C_{\text{kls}}\int |\nabla\eta|^2dp\quad\text{for }\eta:\R^d\to \R.
\]
In words, any isotropic log-concave measure on $\R^d$ satisfies a Poincar\'e inequality with a constant $C_{\text{kls}}$ which is dimension-free. In light of the above discussion, transport maps offer a natural route to proving the conjecture: Given an isotropic log-concave measure $p$ on $\R^d$ we would like to construct a transport map, from $\gamma_d$ or $\gamma$, to $p$ which is an almost-sure contraction with constant $C_{\text{kls}}$. Unfortunately, in general, such a map cannot exist:  Indeed, as seen in Table 1
, such a map will imply that $p$ satisfies a log-Sobolev inequality with a dimension-free constant.  But this is known to be false because the existence of a  log-Sobolev inequality is equivalent to sub-Gaussian concentration \cite[Theorem 5.3]{ledoux2001concentration}, which does not hold for the two-sided exponential measure even though it is isotropic and log-concave. Nonetheless, the transport approach towards the conjecture can still be made to work by using a weaker notion of contraction and using an important result of E. Milman. Indeed, consider the Brownian transport map and suppose that instead of having an almost-sure bound $|\mathcal DX_1|\le C_{\text{kls}}$ we only have a bound in expectation,
\[
\EE_{\gamma}[|\mathcal DX_1|^2]\le C,
\]
for some dimension-free constant $C$. Repeating the argument above, and using H\"older's inequality, we find that 
\[
\Var_{p}[\eta]\le C~ \text{Lip}^2(\eta)
\]
where $\text{Lip}(\eta):=\sup_{x\in\R^d}|\nabla\eta(x)|$. In principle, this bound is weaker than a Poincar\'e inequality because of the use of the $L^{\infty}$ norm on the gradient rather than the $L^2$ norm. However, as shown by E. Milman  \cite{milman2009role}, a Poincar\'e inequality is equivalent, up to a dimension-free constant, to first moment concentration which follows from the above $L^{\infty}$ bound. In conclusion, the Kannan--Lov\'asz--Simonovits conjecture is proven as soon as we can show that $\EE_{\gamma}[|\mathcal DX_1|^2]\le C$. 

Significant progress towards the resolution of the Kannan--Lov\'asz--Simonovits conjecture was made in a series of works \cite{eldan2013thin, leeVempala, chen2021almost,KlartagLehec,Klartag}. Building on these results and techniques,  we are able to make the, so far missing, connection between measure transportation and the Kannan--Lov\'asz--Simonovits conjecture. 

\begin{theorem}
\label{thm:klsintro}{\normalfont{(Contraction in expectation for log-concave measures)}}
Let $p$ be an isotropic log-concave measure on $\R^d$ with compact support\footnote{The assumption of compact support does not effect the connection to the Kannan--Lov\'asz--Simonovits conjecture \cite[section 2.6]{chen2021almost}. In particular, the bounds in the theorem are independent of the size of the support of $p$.}, and let $X_1:\Omega\to \R^d$ be the Brownian transport map from the Wiener measure $\gamma$ to $p$. There exists a universal constant $\zeta$ such that, for any positive integer $m$,
\[
\EE_{\gamma}\left[|\D X_1|_{\mathcal L(H,\R^d)}^{2m}\right]\le \zeta^m (2m+1)! (\log d)^{12m},
\]
where $|\cdot|_{\mathcal L(H,\R^d)}$ is the operator norm of operators from the Cameron-Martin space $H$ to $\R^d$. 
\end{theorem}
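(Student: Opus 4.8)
The plan is to derive and analyze a stochastic differential equation for the Malliavin derivative $\mathcal{D}X_t$ of the Föllmer process, and then estimate the relevant moments via Grönwall-type arguments combined with Malliavin integration by parts. Write $A_t := \nabla^2 \log P_{1-t}(dp/d\gamma_d)(X_t)$ for the Hessian of the drift evaluated along the process. Differentiating \eqref{eq:Follmer} in the Malliavin sense, one gets that $J_{s,t} := \mathcal{D}_s X_t$ solves a linear matrix ODE in $t$ (for fixed $s \le t$), namely $d J_{s,t} = A_t J_{s,t}\, dt$ with initial condition $J_{s,s} = \Id_d$ (the Brownian increment $dB_t$ contributes the identity at $t=s$ and nothing thereafter). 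Hence $|\mathcal{D}X_1|_{\mathcal{L}(H,\R^d)}^2 = \int_0^1 |J_{s,1}|_{\mathrm{op}}^2\, ds$ up to the correct Hilbert--Schmidt/operator bookkeeping, and the entire problem reduces to controlling the growth of the propagator generated by $A_t$.

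The first key step is a pointwise upper bound on $A_t$ coming from log-concavity: since $p$ is log-concave, a semigroup computation (essentially the one behind the Cramér--Rao / Brascamp--Lieb bound, as in the analysis underlying Theorem \ref{thm:uniinformal}) gives that $P_{1-t}(dp/d\gamma_d)$ is log-concave as a function, so $A_t \preceq \frac{1}{1-t}\Id_d$; this is what produces an almost-sure bound that blows up like $\int_0^1 (1-t)^{-1}dt$, i.e. is useless on its own, which is precisely why an $L^{2m}$-in-expectation estimate is needed instead. The second, and main, step is therefore to gain back the missing integrability by using the \emph{isotropy} of $p$: the covariance of the Föllmer process at time $t$ equals $\int_0^t \EE[J_{s,t}J_{s,t}^\top]\,ds$-type quantities, and isotropy pins the total ``budget'' $\EE\,\Tr(\cdot) = d$, so the Hessian $A_t$ cannot be close to its extreme value $\frac{1}{1-t}\Id_d$ on a large portion of the trajectory. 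Quantifying this trade-off — converting the global second-moment constraint from isotropy into a bound on high moments of $\int_0^1 |J_{s,1}|_{\mathrm{op}}^2 ds$ — is where the factor $d^{m\cdot o_d(1)}$ comes from, and I expect this to require the best known quantitative bounds towards the thin-shell/KLS estimates (e.g. an input of the form $\sigma_d^2 = d^{o_d(1)}$) fed through a moment-generating-function or iteration scheme.

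Concretely, the order of operations I would follow is: (1) justify that $X_t$ is Malliavin differentiable with $\mathcal{D}_s X_t = J_{s,t}$ solving the linear ODE above (standard, using smoothness of $\log P_{1-t}$ on $(0,1)$ and the compact-support assumption to control the drift near $t=1$); (2) express $|\mathcal{D}X_1|_{\mathcal{L}(H,\R^d)}^{2m}$ as a multiple time integral of products of propagators and reduce to bounding $\EE\big[\big(\int_0^1 \|J_{s,1}\|_{\mathrm{op}}^2 ds\big)^m\big]$; (3) split $[0,1] = [0,1-\delta]\cup[1-\delta,1]$: on $[0,1-\delta]$ use the crude bound $\|A_t\|\le (1-t)^{-1}$ to get a deterministic factor like $\delta^{-C m}$, and on $[1-\delta,1]$ use isotropy together with the second-moment/variance control to show the contribution is small with overwhelming probability; (4) optimize $\delta = \delta(d)$ and collect the combinatorial factor $(2m+1)!$ from the multiple-integral expansion and the Gaussian hypercontractivity estimates. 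The main obstacle is step (3): turning the single scalar constraint ``$\EE\int_0^1 \Tr(J_{s,1}J_{s,1}^\top)ds$ is of order $d^{1+o_d(1)}$'' into uniform-in-$m$ control of the $m$-th moment of the \emph{operator norm} of the propagator requires a genuine concentration input, and it is here that the dependence on progress towards KLS (hence the unavoidable $d^{o_d(1)}$ and the impossibility of a clean dimension-free bound, as noted in the text) enters.
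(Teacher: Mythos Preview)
Your high-level skeleton (differentiate the SDE, reduce to controlling the propagator $J_{s,t}$, split the time interval, feed in a KLS-type input) is broadly right, but two concrete errors would make the argument fail as written, and the key mechanism in step (3) is not identified.

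First, the pointwise bound is wrong in sign and in location of the singularity. For log-concave $p$ (i.e.\ $\kappa=0$) the Brascamp--Lieb estimate on the tilted measure $p^{x,1-t}$ gives $\Cov(p^{x,1-t})\preceq \frac{1-t}{t}\Id_d$, and since $A_t=\nabla v(t,x)=\frac{1}{(1-t)^2}\Cov(p^{x,1-t})-\frac{1}{1-t}\Id_d$ this yields $A_t\preceq \frac{1}{t}\Id_d$, not $\frac{1}{1-t}\Id_d$. So the divergence of the crude bound is at $t=0$, not at $t=1$; your splitting $[0,1-\delta]\cup[1-\delta,1]$ and the role you assign to each piece are inverted. (The bound $-\frac{1}{1-t}\Id_d\preceq A_t$ is the \emph{lower} bound and plays no role in the upper estimate.)

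Second, and more importantly, the ``isotropy pins the total budget'' idea is not how the argument works and would not close. A scalar trace constraint of the type $\EE\int_0^1\Tr(J_{s,1}J_{s,1}^\top)\,ds=O(d)$ gives no control on the $m$-th moment of the operator norm by itself. The paper's mechanism is different: one introduces the random \emph{stopping time} $\tau=r_0\wedge\inf\{r:\lambda_{\max}(\nabla v(r,X_r))\ge\alpha\}$, uses $\lambda_{\max}\le\alpha$ on $[0,\tau]$ and $\lambda_{\max}\le\frac{1}{r}$ on $[\tau,1]$ to get $|\mathcal DX_1|^{2m}\le e^{2m\alpha r_0}\tau^{-2m}$, and then proves that $\tau^{-1}$ has sub-exponential tails. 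That tail bound is the heart of the proof and requires (i) identifying the conditional law $p_t:=p^{X_t,1-t}$ with the stochastic localization process (so that $K_t:=\Cov(p_t)$ satisfies a known SDE), and (ii) running Chen's moment method on $\Gamma_r:=\Tr[K_r^q]$ with $q\sim 1/k$ tied to the current KLS exponent, which yields martingale/diffusion bounds strong enough to show $\PP[\tau\le s]\le c_\alpha e^{-b_\alpha/s}$. The $d^{m\cdot o_d(1)}$ and $(2m+1)!$ factors fall out of optimizing $\alpha$, $r_0$, and integrating the sub-exponential tail. None of this stochastic-localization/Chen machinery is visible in your proposal, and without it step (3) has no content.
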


Taking $m=1$ in Theorem \ref{thm:klsintro} we see that $C_{\text{kls}}$ is almost dimension-free, as would be expected from the Kannan--Lov\'asz--Simonovits conjecture. In fact, our transport perspective allows us to go beyond $m=1$, which is needed for the applications outlined below.

\begin{remark}
As explained in this section, an expectation bound of the form $\EE_{\gamma}[|DT|^2]<C$, where  $T$ is a transport map from either $\gamma$ or $\gamma_d$ to $p$, with a dimension-free universal constant $C$, would lead to a proof of the  Kannan--Lov\'asz--Simonovits conjecture. In fact, one of the novelties of the proof of Theorem \ref{thm:klsintro} is that it reveals that the reverse is also true, up to $\log d$ factors, when $T$ is the Brownian transport map. That is, assuming that the Kannan--Lov\'asz--Simonovits conjecture is true we would get, up to $\log d$ factors, an expectation bound $
\EE_{\gamma}\left[|\D X_1|_{\mathcal L(H,\R^d)}^{2m}\right]\le C_m$ for a dimension-free universal constant $C_m$ which depends only on $m$.
\end{remark}

\subsection{Stein kernels and central limit theorems}
Our proof of Theorem \ref{thm:klsintro} is based on known results concerning $C_{\text{kls}}$. Thus, Theorem \ref{thm:klsintro} does not supply any new information regarding the Kannan--Lov\'asz--Simonovits conjecture itself.
However, for isotropic log-concave measures, the transport approach is useful not only in the study of the conjecture but also in the theory of Stein kernels. As will become evident soon, these results go beyond the Poincar\'e inequality, and hence do not follow from the current results on the Kannan--Lov\'asz--Simonovits conjecture.

 Given a centered measure $p$ on $\R^d$, a matrix-valued map $\mathfrak{s}_p$ is called a \emph{Stein kernel} for $p$ if
\[
\EE_p[\eta(x)x]=\EE_p[\nabla \eta(x)\mathfrak{s}_p(x)]
\]
for a big-enough family of functions $\eta:\R^d\to \R$. Gaussian integration by parts shows that $p=\gamma_d$ if and only if the constant matrix $\Id_d$ is a Stein kernel for $p$. Hence, the distance of $\mathfrak{s}_p$ from $\Id_d$ controls the extent to which $p$ can be approximated by $\gamma_d$. Specifically, the \emph{Stein discrepancy} of $\mathfrak{s}_{p}$ is defined as
\[
S^2(\mathfrak{s}_p):=\EE_p[|\mathfrak{s}_p-\Id_d|_{\text{HS}}^2].
\]
The quantity $S(\mathfrak{s}_p)$ plays an important role in functional inequalities \cite{ledoux2017deficit, saumard2019weighted, nourdin2009second} and normal approximations \cite{nourdin2012normal, ledoux2015stein, fathi2019stein, courtade2019existence}. For applications, often it is enough to bound $\EE_p[|\mathfrak{s}_p|_{\text{HS}}^2]$. While in one dimension the Stein kernel of a given measure is unique and given by an explicit formula, in high-dimensions these kernels are non-unique and their construction is often non-trivial \cite{courtade2019existence}. It was observed in \cite{chatterjee2009fluctuations} that transport maps with certain properties are good candidates for constructing Stein kernels. We will follow this strategy and construct Stein kernels with small Hilbert-Schmidt norm, based on the Brownian transport map, for a very large class of measures.
\begin{restatable}{theorem}{stein}\label{thm:SteinIntro}
	{\normalfont{(Stein kernels)}} Let $p$ be an isotropic log-concave measure on $\R^d$ with compact support. Let $\chi:\R^d\to \R^k$ be a continuously differentiable function with bounded partial derivatives such that $\EE_p[\chi]=0$ and $\EE_p[|\nabla\chi|_{\textnormal{op}}^8]<\infty$. Then, the pushforward measure $q:=\chi_*p$ on $\R^k$ admits a Stein kernel $\tau_q$ satisfying 
	\[
	\EE_q[|\tau_q|_{\textnormal{HS}}^2]\le a d (\log d)^{24}\sqrt{\EE_p[|\nabla\chi|_{\textnormal{op}}^8]},
	\]
 for some universal constant $a>0$.
\end{restatable}

For example, taking $k=d$ and $\chi(x) := x$ shows that for $p$ isotropic and log-concave
\begin{align}
	\label{eq:steinbd}
	\EE_p[|\tau_p|_{\text{HS}}^2]\le a d (\log d)^{24}.
\end{align}
The analogous bound of \eqref{eq:steinbd}, with a better polylog and a different Stein kernel $\mathfrak{s}_p$, follows from \cite{courtade2019existence} that showed 
$\EE_p[|\mathfrak{s}_p|_{\text{HS}}^2]\le dC_p$, where $C_p$ is the Poincar\'e constant of $p$. Indeed, since $C_p\le c\log d$ by the result of \cite{Klartag}, the bound \eqref{eq:steinbd} holds for $\mathfrak{s}_p$.  However, unlike previous constructions, our construction is well-behaved with respect to compositions. It allows to consider general $\chi$, which leads to the existence of Stein kernels $\tau_q$ with bounded Hilbert-Schmidt norm, where now $q$ does not necessarily satisfy a Poincar\'e inequality.

As a concrete application we will use Theorem \ref{thm:SteinIntro} to deduce new central limit theorems with nearly optimal convergence rates. The best dimensional dependence in the convergence rate one could expect is of order $\sqrt{\frac{d}{n}}$, as can be seen by considering product measures. Most known results establish general rates of convergence, in various distances, which are not better than $\frac{d}{\sqrt{n}}$, and typically require a super-linear dependence in the dimension (see \cite{bentkus2004lypanunv,chaterjee2008multivariate} for some notable examples). To improve on such bounds, several recent works have shown that, by imposing strong structural assumptions on the common law of the summands, one can reduce the rate of convergence to $\sqrt{\frac{d}{n}}$. However, these works dealt with highly regular measures, such as log-concave measures \cite{eldan2020clt}, measures with small support \cite{zhai2018high}, or measures satisfying a Poincar\'e inequality \cite{fathi2019stein,courtade2019existence}. These assumptions can be restrictive for certain applications involving heavy-tailed measures, whose moment generating function may not be well-defined and hence do not have sub-exponential tails.

We will bypass the above restrictive assumptions by utilizing the Stein kernel approach to normal approximations combined with Theorem \ref{thm:SteinIntro}. 
The starting point is the inequality,
\begin{align}
\label{eq:WS}
W_2^2(p,\gamma_d)\le S^2(\mathfrak{s}_p),
\end{align}
valid for any Stein kernel $\mathfrak{s}_p$, where $W_2$ is the Wasserstein 2-distance \cite[Proposition 3.1]{ledoux2015stein}. 
 In particular, inequality \eqref{eq:WS} can be used  to prove central limit theorems: Suppose that $p$ is isotropic and let $\{Y_i\}$ be an i.i.d. sequence sampled from $p$. Then, as shown in \cite[section 2.5]{ledoux2015stein}, for every given Stein kernel $\mathfrak{s}_p$, there exists a Stein kernel $\mathfrak{s}_{p_n}$, where $\frac{1}{\sqrt{n}}\sum_{i=1}^nY_i\sim p_n$, such that  
\[
S^2(\mathfrak{s}_{p_n})\le \frac{S^2(\mathfrak{s}_p)}{n}.
\]
Combining \eqref{eq:WS} with the triangle inequality thus yields
\[
W_2^2\left(p_n,\gamma_d\right)\le \frac{2}{n}\left\{\EE_p[|\mathfrak{s}_p|_{\text{HS}}^2]+d\right\}.
\]
The upshot of this discussion is that if we can construct a Stein kernel $\mathfrak{s}_p$ with a small Hilbert-Schmidt norm we then obtain a central limit theorem with a good rate. In particular, whenever $\EE_p[|\mathfrak{s}_p|_{\text{HS}}^2]=O(d)$, we get an $\sqrt{\frac{d}{n}}$ rate of convergence. Using our Stein kernel $\tau_p$ and the bound in Theorem \ref{thm:SteinIntro} we obtain:

\begin{corollary}
\label{cor:CLTIntro}{\normalfont{(Central limit theorem)}}
Let $p, q$, and $\chi$ be as in Theorem \ref{thm:SteinIntro}, and further suppose that $q$ is isotropic. Then, if $\{Y_i\}$ are i.i.d. sampled from $q$ we have, with $\frac{1}{\sqrt{n}}\sum_{i=1}^nY_i\sim q_n$, 
\[
W_2^2(q_n,\gamma_k)\le2 \frac{\sqrt{\EE_p[|\nabla\chi|_{\textnormal{op}}^8]}\,d(\log d)^{24}+k}{n},
\]
for some universal constant $a>0$.
\end{corollary}
Corollary \ref{cor:CLTIntro} allows to significantly relax the regularity assumptions of the above mentioned works, while still maintaining a nearly optimal rate of convergence. For example, if $\chi$ has a quadratic growth then $q$ can have super-exponential tails, so it cannot satisfy a Poincar\'e inequality. In contrast, in such a setting, Corollary \ref{cor:CLTIntro} provides results which are comparable, up to the Sobolev norm of $\chi$ and $\log d$ terms, to the ones obtained for log-concave measures \cite{courtade2019existence, fathi2019stein}. Another appealing feature is the ability to treat singular measures when $k> d$. A particular case of interest is  $\chi(x) := x^{\otimes m}$ for some positive integer $m$. Even though $q$ may be heavy-tailed in this case, since $\EE_p[|\nabla\chi|_{\text{op}}^8]$ is finite when $p$ is log-concave we get a central limit theorem for sums of i.i.d. tensor powers. Proving such a result was a question posed in \cite[section 3]{Mikulincer} where it was also suggested that finding an appropriate transport map could prove useful. Using our construction, Corollary \ref{cor:CLTIntro} resolves this question.
\subsection{Contractions on Wiener space}
\label{subsec:Wiener}
In the previous sections we viewed the solution $X$ to \eqref{eq:Follmer} as a map $X_1:\Omega\to \R^d$. We can go however beyond the Euclidean setting by not restricting ourselves to the value of $X$ at time $t=1$. We then get a map $X:\Omega\to \Omega$ which transports the Wiener measure $\gamma$ to the measure $\mu$ on $\Omega$ given by $d\mu(\omega)= \frac{dp}{d\gamma_d}(\omega_1)d\gamma (\omega)$ for $\omega\in \Omega$ where $\omega_t$ is the value of $\omega$ at time $t\in [0,1]$. In words, $\mu$ is obtained from $\gamma$ by reweighting the probability of $\omega$ according to $\gamma$ by the value of $\frac{dp}{d\gamma_d}:\R^d\to \R^d$ at $\omega_1\in\R^d$. This leads to the following question: Is $X$ a contraction, in a suitable sense, from the Wiener measure $\gamma$ to $\mu$ under appropriate conditions on $\mu$? In fact, this question can be placed in the general context of transport maps on Wiener space as we now explain.

We start by recalling that the Wiener space $\Omega$ contains the important Cameron-Martin space $\HH$ whose significance lies in the fact that the law of $\omega+h$ is absolutely continuous with respect to the law of $\gamma$ whenever $h\in \HH$. The Cameron-Martin space is the continuous injection of the space $H:=L^2(\Omega,\R)$ under the anti-derivative map $\dot{h}\in H\mapsto h:=\int_0^{\cdot}\dot{h}\in \HH$ and it induces a cost on $\Omega$ by setting, for $x,y\in \Omega$, $|x-y|_{\HH}$ with the convention that $|x-y|_{H^1}=+\infty $ if $x-y\notin \HH$. Based on this cost two notions of optimal transport maps on $\Omega$ can be defined:

Given probability measures measure $\nu,\mu$ on $\Omega$ let $\Pi(\nu,\mu)$ be the set of probability measures  on $\Omega\times \Omega$ such that their two marginals are equal to $\nu$ and $\mu$, respectively. The (squared) Wasserstein 2-distance between $\nu$ and $\mu$ is defined as 
\[
W_2^2(\nu,\mu)=\inf_{\pi\in\Pi(\nu,\mu)}\int_{\Omega\times \Omega}|x-y|_{\HH}^2d\pi(x,y). 
\]
Assuming that $W_2^2(\nu,\mu)<\infty$, the \emph{optimal transport map} $O:\Omega\to \Omega$, when its exists, is a map which transports $\nu$ into $\mu$ satisfying
\[
\int_{\Omega}|\omega-O(\omega)|_{\HH}^2d\nu(\omega)=W_2^2(\nu,\mu).
\]
This definition is the generalization of the definition appearing in the classical optimal transport theory on Euclidean spaces \cite{villani2008optimal}. In Euclidean spaces, the existence of optimal transport maps and their regularity was proven by Brenier \cite[Theorem 2.12]{villani2008optimal} while the analogous result in Wiener space is due to Feyel and {\"U}st{\"u}nel \cite{feyel2004monge}. Since $W_2^2(\nu,\mu)<\infty$, we may write $O(\omega)=\omega+\xi(\omega)$ where $\xi:\Omega\to \HH$ so that 
\[
W_2^2(\nu,\mu):=\inf_{\xi} \EE_{\gamma}\left[|\xi(\omega)|_{\HH}^2\right]
\]
where the infimum is taken over all maps $\xi:\Omega\to \HH$ such that $\text{Law}(\omega+\xi(\omega))=\mu$ with $\omega\sim \nu$. Importantly, we do not make the requirement that $\xi(\omega)$ is an adapted\footnote{Colloquially, a process is adapted if it cannot anticipate the future; see section \ref{sec:prem} for the precise definition.} process. We now turn to the second notion of optimal transport: Given probability measures $\nu,\mu$ on $\Omega$, we define\footnote{This definition is the analogue of the Monge problem rather than the Wasserstein distance. There is a more general definition \cite{lassalle2013causal} of causal optimal transport corresponding to adapted  Wasserstein distance but this is not important for our work.} the \emph{causal optimal transport map} $A:\Omega\to \Omega$ to be the map which transports $\nu$ to $\mu$ satisfying 
\[
\int_{\Omega}|\omega-A(\omega)|_{\HH}^2d\nu(\omega)=\inf_{\xi \text{ adapted}} \EE_{\gamma}\left[|\xi(\omega)|_{\HH}^2\right]
\]
where the infimum is taken over all maps $\xi:\Omega\to \HH$ such that $\text{Law}(\omega+\xi(\omega))=\mu$ with $\omega\sim \nu$, and with the additional requirement that $\xi$ is an adapted process. This notion of optimality, sometimes referred to as \emph{adapted optimal transport}, has recently gained a lot of traction (e.g, \cite{bartl2021wasserstein} and references therein). 

The connection between these transport maps and the Brownian transport map follows from the work of Lassalle \cite{lassalle2013causal}. It turns out that when $d\mu(\omega)= \frac{dp}{d\gamma_d}(\omega_1)d\gamma (\omega)$ for $\omega\in\Omega$, the causal optimal transport map $A:\Omega\to\Omega$, which transports $\gamma$ to $\mu$, is precisely the F\"ollmer process $X:\Omega\to \Omega$. This is essentially a consequence of Girsanov's theorem as well as the entropy-minimization property of the F\"ollmer process \cite{follmer1988random}.

Once a notion of optimal (causal or non-causal) transport map in Wiener space is established, the question of contraction arises: Given a measure $\mu$ on $\Omega$ which is more log-concave than the Wiener measure $\gamma$, is either $O$ or $A$ a contraction? We are not aware of any such results in the current literature. To make this question precise we need a notion of convexity on $\Omega$ as well as a notion of contraction. We postpone the precise definitions to section \ref{sec:opt} and for now denote such a notion of contraction as \emph{Cameron-Martin contraction}. Let us state some of our results in this direction.

\begin{theorem}{\normalfont{(Cameron-Martin contraction)}}
\label{thm:strongInformal} 
\vspace{0.1in}

\begin{itemize}
\item Let $p$ be any 1-log-concave measure on $\R^d$ and let $\mu$ be a measure on the Wiener space given by $d\mu(\omega)=\frac{dp}{d\gamma_d}(\omega_1)d\gamma(\omega)$ for $\omega\in \Omega$. Then, the optimal transport map $O$ from the Wiener measure $\gamma$ to $\mu$ is a Cameron-Martin contraction with constant 1.
\\

\item There exists a 1-log-concave measure $p$ on $\R^d$ such that the causal transport map $A$ between $\gamma$ to $\mu$, where $d\mu(\omega)=\frac{dp}{d\gamma_d}(\omega_1)d\gamma(\omega)$ for $\omega\in \Omega$, is not a Cameron-Martin contraction with any constant. 
\end{itemize}
\end{theorem}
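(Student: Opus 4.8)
The two bullets require quite different arguments, so I treat them in turn; throughout write $f:=\tfrac{dp}{d\gamma_d}$, and recall that $p$ being $1$-log-concave is equivalent to $f$ being log-concave, a property preserved by the heat semigroup $(P_t)$.

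\emph{The optimal map (first bullet).} The plan is to exhibit $O$ explicitly through the finite-dimensional Brenier map $T:\R^d\to\R^d$ transporting $\gamma_d$ to $p$, and then reduce the contraction statement to Caffarelli's theorem. Let $e\in\HH$ be the path $e(t)=t$ and set $O(\omega):=\omega+(T(\omega_1)-\omega_1)\,e$, i.e.\ $O(\omega)_t=\omega_t+t\bigl(T(\omega_1)-\omega_1\bigr)$. First I would check $O_{*}\gamma=\mu$: disintegrating $\gamma$ over the endpoint $\omega_1\sim\gamma_d$ expresses it as a mixture of Brownian bridges from $o$ to $\omega_1$; adding the linear drift $t\mapsto t(T(\omega_1)-\omega_1)$ converts each such bridge into a Brownian bridge from $o$ to $T(\omega_1)$, and since $T_{*}\gamma_d=p$ the resulting mixture is exactly the disintegration of $\mu$. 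Next, optimality: for $\omega-\omega'\in\HH$ one has $|\omega-\omega'|_{\HH}^2=\int_0^1|\partial_t(\omega-\omega')|^2\,dt\ge|\omega_1-\omega_1'|^2$ by Cauchy--Schwarz, so projecting an arbitrary coupling of $\gamma$ and $\mu$ to the endpoints gives $W_2^2(\gamma,\mu)\ge W_2^2(\gamma_d,p)$; since $|(T(\omega_1)-\omega_1)e|_{\HH}=|T(\omega_1)-\omega_1|$, the map $O$ meets this lower bound, so it realizes $W_2^2(\gamma,\mu)$ and is the optimal transport map (it is moreover $\nabla^{\HH}\Psi$ for the $\HH$-convex potential $\Psi(\omega)=\tfrac12|\omega|_{\HH}^2-\tfrac12|\omega_1|^2+\varphi(\omega_1)$ with $T=\nabla\varphi$, matching the Feyel--Üstünel characterization). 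The contraction is then immediate from
\[
|O(\omega+k)-O(\omega)|_{\HH}^2-|k|_{\HH}^2=\bigl|T(\omega_1+k_1)-T(\omega_1)\bigr|^2-|k_1|^2\le 0\qquad(k\in\HH),
\]
the inequality being precisely that $T$ is $1$-Lipschitz, i.e.\ Caffarelli's contraction theorem for the $1$-log-concave $p$; infinitesimally the same algebra gives $\|(\nabla^{\HH})^2\Psi(\omega)\|\le1\Leftrightarrow DT(\omega_1)\preceq\Id_d$. Hence $O$ is a Cameron--Martin contraction with constant $1$.

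\emph{The causal map (second bullet).} By Lassalle's identification $A$ is the F\"ollmer process $X$ of \eqref{eq:Follmer}, so I would first write down the Malliavin derivative of $X$ and an exact formula for its Cameron--Martin norm. For $h\in\HH$ the process $Y:=\mc D_hX(\omega)$ solves $\dot Y_t=\dot h_t-\beta_t Y_t$, $Y_0=o$, where $\beta_t:=-\nabla^2\log P_{1-t}f(X_t)\succeq0$ (log-concavity of $P_{1-t}f$). Itô's formula along $X$, combined with the Riccati-type evolution of $\nabla^2\log P_{1-t}f$ coming from the heat equation, yields $d\beta_t=\beta_t^2\,dt+dM_t$ with $M$ a martingale driven by $\nabla^3\log P_{1-t}f(X_t)$; plugging this into the integration by parts of $\int_0^1\langle\beta_tY_t,\dot Y_t\rangle\,dt$ (and using $\dot h=\dot Y+\beta Y$) makes the $\langle\beta_t^2Y_t,Y_t\rangle$ terms cancel and leaves
\[
\bigl\|\mc D_hX(\omega)\bigr\|_{\HH}^2=\|h\|_{\HH}^2-\langle\beta_1(\omega)Y_1,Y_1\rangle+N_\omega,\qquad N_\omega:=\int_0^1\langle (dM_t)Y_t,Y_t\rangle,
\]
with $\EE_\gamma[N]=0$. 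Since $\beta_1\succeq0$, taking expectations gives $\EE_\gamma[\|\mc D_hX\|_{\HH}^2]\le\|h\|_{\HH}^2$: the causal map \emph{does} contract in expectation, with constant $1$.

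\emph{Failure of the almost-sure bound.} To finish I must choose a $1$-log-concave $p$ for which $\|\mc D_hX(\cdot)\|_{\HH}$ is not in $L^\infty(\gamma)$ for some fixed $h$ (say $\dot h\equiv1$); by the identity above this means showing $N_\omega$ is not essentially bounded from above while $\langle\beta_1Y_1,Y_1\rangle$ stays controlled on the relevant event. When $p$ is Gaussian, $\log P_sf$ is quadratic, $M\equiv0$, $N\equiv0$, and $X$ is an almost-sure contraction, so a genuinely non-Gaussian $1$-log-concave $p$ is needed; natural candidates are a truncated Gaussian $dp\propto e^{-|x|^2/2}\mathbf 1_{\{|x|\le a\}}\,dx$ or $dp\propto e^{-|x|^2/2-c|x|^4}\,dx$, for which $\nabla^3\log P_sf\not\equiv0$. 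The remaining — and, I expect, hardest — step is quantitative: control $\beta_t$, $\nabla^3\log P_{1-t}f(X_t)$ and $Y_t$ along the positive-probability family of paths that approach $\partial\,\supp(p)$ (resp.\ drift into the region where $\nabla^3\log f$ is large) as $t\uparrow1$, and show the conditional law of $N_\omega$ has unbounded support, so that the essential supremum over $\omega$ of the operator norm of $\mc DX(\omega)$ on $\HH$ is infinite. The singular asymptotics here are governed by the heat-flow identity $\nabla^2\log P_sf(x)=-\tfrac1s\Id+\tfrac1{s^2}\Cov_{\rho_{x,s}}$, with $\rho_{x,s}(dy)\propto f(y)\,e^{-|x-y|^2/(2s)}\,dy$, and its $x$-derivative; the delicate point is the competition, as $s\downarrow0$, between the growth of these derivatives and the damping factor $e^{-\int\beta}$ present inside $Y$. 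Morally, the adaptedness constraint forces the causal map to build its correction gradually, concentrating the derivative near time $1$, whereas $O$ spreads the same correction linearly in time and thus stays a contraction.
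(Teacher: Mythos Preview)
Your argument is correct and is essentially the paper's (Theorem~\ref{thm:OT}): build $O$ from the finite-dimensional Brenier map via the linear drift $t\mapsto t(T(\omega_1)-\omega_1)$, check the pushforward by bridge disintegration, check optimality via $W_2^2(\gamma,\mu)\ge W_2^2(\gamma_d,p)$, and reduce the Cameron--Martin contraction to Caffarelli. Your final identity $|O(\omega+k)-O(\omega)|_{\HH}^2-|k|_{\HH}^2=|T(\omega_1+k_1)-T(\omega_1)|^2-|k_1|^2$ is a slightly cleaner packaging than the paper's (which writes $M:=\int_0^1\nabla^2\phi_d(\omega_1+rh_1)\,dr-\Id_d$ and then bounds $|Mh_1|^2+2\langle Mh_1,h_1\rangle$), and has the mild advantage of not needing $\phi_d\in C^2$.

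\textbf{Second bullet.} Here there is a genuine gap: you derive the identity $\|\mc D_hX\|_{\HH}^2=\|h\|_{\HH}^2-\langle\beta_1Y_1,Y_1\rangle+N_\omega$ (which is correct, modulo integrability issues at $t=1$) and extract from it the \emph{expectation} contraction, but you explicitly leave undone the step that matters, namely exhibiting a $1$-log-concave $p$ and showing $N_\omega$ has unbounded essential range. That is the entire content of the bullet. Your candidate examples are plausible but you carry out no analysis on them, and the paper's Remark~\ref{rem:nontrivcexam} warns that the choice is delicate: the most natural $1$-log-concave candidates (Gaussians with shifted mean or different variance) turn out to be Cameron--Martin contractions, so something genuinely non-trivial is needed.

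The paper's route is quite different and entirely concrete. It fixes $d=1$ and takes $p$ to be a Gaussian conditioned on being positive, for which $P_{1-t}f$ and hence $v'(t,x)=\partial_{xx}^2\log P_{1-t}f(x)$ are computed explicitly in terms of the inverse Mills ratio. The key construction (Lemma~\ref{lem:eta}) is a deterministic path $\eta_\epsilon$ on $[0,1-\epsilon]$ along which $v'(t,\eta_\epsilon(t))\asymp -\tfrac{1}{1-t}$; plugging this into the explicit formula from Lemma~\ref{lem:derrep},
\[
|DA[\dot\ell]|_H^2\ge\int_0^{1-\epsilon}\Bigl(1+v'(t,X_t)\int_0^te^{\int_s^t v'(r,X_r)\,dr}\,ds\Bigr)^2dt,
\]
with $\dot\ell\equiv1$, a direct computation (Lemma~\ref{lem:DTh_large}) shows the right side is $\gtrsim\log(1/\epsilon)$ whenever $X$ stays $\delta(\epsilon)$-close to $\eta_\epsilon$ on $[\epsilon,1-\epsilon]$. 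Finally (Lemma~\ref{lem:bridgesup}) the F\"ollmer process is a mixture of Brownian bridges, and a standard support argument gives this event positive probability. No martingale identity is used; the blow-up is obtained by a hands-on estimate of the integral along a carefully engineered tube of paths. Your martingale approach is an interesting alternative framework, but as it stands you have not supplied the analogous quantitative input, and without it the second bullet is not proved.
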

The first part of the theorem is a straightforward consequence of Caffareli's contraction theorem. The second part requires some work by constructing a non-trivial example (see Remark \ref{rem:nontrivcexam}) where the causal optimal transport map fails to be a Cameron-Martin contraction.

\subsection*{Organization of the paper} 
Section \ref{sec:prem} contains the preliminaries necessary for this work including the definition of the Brownian transport map based on the F\"ollmer process. Section \ref{sec:causal} contains the construction of the F\"ollmer process and the analysis of its properties which then leads to the almost-sure contraction properties of the Brownian transport map. The main results in this section are contained in Theorem \ref{thm:causalmain}. Section \ref{sec:contrctlc} focuses on the setting where the target measure is log-concave with compact convex support and shows that, in this setting, we can bound the moments of the derivative of the Brownian transport map; the main result is Theorem \ref{thm:KLS}. In addition, section \ref{sec:contrctlc} contains a short explanation of the connection between stochastic localization and the F\"ollmer process.  In section \ref{sec:funct} we use the almost-sure contraction established in Theorem \ref{thm:causalmain} to prove new functional inequalities. In addition, section \ref{sec:funct} contains our results on Stein kernels and their applications to central limit theorems. In section \ref{sec:CMcontract} we set up the preliminaries necessary for the study of contraction properties of transport maps on the Wiener space itself. In section \ref{sec:causal_strong} we show that causal optimal transport maps are not Cameron-Martin contractions even when the target measure is $\cc$-log-concave, for any $\cc$.  Finally, section \ref{sec:opt} is devoted to optimal transport on the Wiener space.

\subsection*{Acknowledgments} We gratefully acknowledge the contributions of Ramon van Handel to this work: The idea of viewing the F\"ollmer process as a transport map from the Wiener space to Euclidean spaces  is due to him and, in addition, he was involved in the initial stage of this work and pointed out Remark \ref{rem:thetaToTheta}. Further, the connection to the Kannan--Lov\'asz--Simonovits conjecture is due to him. We thank him for many helpful comments which improved the quality of this manuscript. We are grateful to Sasha Kolesnikov for illuminating for us the subtleties of the Wiener space and for other discussions about this work. We also thank Ronen Eldan, Alexandros Eskenazis, Bo'az Klartag, and Emanuel Milman, and Yunan Yang
 for useful conversations about the topics of this paper. Finally, we thank the anonymous referees for their valuable suggestions and corrections. 

The authors gratefully acknowledge the program Geometric Functional Analysis and Applications that took place in 2017 at the Mathematical Sciences Research Institute (MSRI) where their collaboration on this project was initiated. Dan Mikulincer is partially supported by a European Research Council grant no. 803084. Yair Shenfeld was partially funded by NSF grant DMS-1811735 and the Simons Collaboration on Algorithms \& Geometry; this material is based upon work supported by the National Science Foundation under Award Number 2002022.

\section{Preliminaries} 
\label{sec:prem}
For the rest of the paper we fix a dimension $d$ and let $f:\R^d\to \R_{\ge 0}$ be a function such that $\int_{\R^d}fd\gamma_d=1$ where $\gamma_d$ is the standard Gaussian measure on $\R^d$. We denote the probability measure $p(x)dx:=f(x)d\gamma_d(x)$ and further assume that the relative entropy $\h(p|\gamma_d):=\int_{\R^d} \log\left(\frac{dp}{d\gamma_d}\right)dp<+\infty$. We set $S:=\diam (\supp(p))$. We write $\langle \cdot,\cdot\rangle$ for the Euclidean inner product and $|\cdot|$ for the corresponding norm. Our notion of convexity is the following:
\begin{definition}
 A probability measure $p$ is \emph{$\cc$-log-concave} for some $\cc\in \R$ if it has smooth density, the support of $p$ is convex, and $-\nabla^2\log \left(\frac{dp}{dx}\right)(x)	\succeq \cc \Id_d$ for all $x\in \inter(\supp(p))$. 
\end{definition}
%
Next we recall some basics on the classical Wiener space and the Malliavin calculus \cite{nualart2006malliavin}.
\subsection*{Wiener space}
Let $(\Omega,\mathcal F,\gamma)$ be the classical Wiener space: $\Omega=C_0([0,1];\R^d)$ is the set of continuous functions from $[0,1]$ to $\R^d$,  $\gamma$ is the Wiener measure, and $\mathcal F$ is the completion (with respect to $\gamma$) of the Borel sigma-algebra generated by the uniform norm $|\omega|_{\infty}:=\sup_{t\in [0,1]}|\omega_t|$ for $\omega\in \Omega$. In words, a path $\omega\in \Omega$ sampled according to $\gamma$ has the law of a Brownian motion in $\R^d$ running from time 0 to time 1. We set $W_t:=W(\omega)_t:=\omega_t$ for $t\in [0,1]$ and let $(\mathcal F_t)_{t\in [0,1]}$ be the sigma-algebra on $\Omega$ generated by $(W_t)_{t\in [0,1]}$ together with the null sets of $\mathcal F$. We say that a process $(u_t)_{t\in [0,1]}$ is adapted if $u_t:\Omega\to \R^d$ is $\mathcal F_t$-measurable for all $t\in [0,1]$. 
For the rest of the paper we define a probability measure $\mu$ on $\Omega$ by
\[
d\mu(\omega)=f(\omega_1)d\gamma(\omega). 
\]
An important Hilbert subspace in $\Omega$ is the Cameron-Martin space $\HH$ which is defined as follows: Let $H=L^2([0,1],\R^d)$ and given $g\in H$ set $i(g)\in \Omega$ by $i(g)_t:=\int_0^tg_sds$. Then $\HH:=\{i(g):g\in H\}$ and we often write $h_t=\int_0^t \dot{h}_sds$ for $\dot{h}\in H$ and $h\in \HH$.  The space $\HH$ has an inner product induced from the inner product of the Hilbert space $H$, namely, $\langle h,g\rangle_{\HH}:=\int_0^1\langle \dot{h}_s,\dot{g}_s\rangle ds$.  The significance of the Cameron-Martin space is that the measure of the process $W+h=(\omega_t+h_t(\omega))_{t\in [0,1]}$ is absolutely continuous with respect to $\gamma$ whenever $h(\omega)\in \HH$ $\gamma$-a.e. and $(h_t(\omega))_{t\in [0,1]}$ is adapted and regular-enough; this is a consequence of Girsanov's theorem. Given $\dot{h}\in H$ we set $W(\dot{h}):=\int _0^1\dot{h}_tdW_t$ where the integral is the stochastic It\^o integral; in this notation, $W_t=W(1_{[0,t]})$.



Next we define the notion of contraction which is compatible with the Cameron-Martin space. 
\begin{definition}
\label{def:contract}
A measurable map $T:\Omega\to \R^d$ is an \emph{almost-sure contraction} with constant $C$ if
\[
|T(\omega+h)-T(\omega)|\le C|h|_{\HH} \quad \forall \,h\in \HH  \quad\gamma\text{-a.e.}
\]
\end{definition}

%

In Euclidean space, a function is Lipschitz if and only if its derivative (which exists almost-everywhere) is bounded. In order to find the analogue of this result for our notion of contraction we need an appropriate definition of derivatives on the Wiener space. 
\subsection*{Malliavin calculus} 
The calculus on the Wiener space was developed by P. Malliavin in the 70's and it will play an important role in our proof techniques. The basic objects of analysis in this theory is the variation of a function $F:\Omega\to \R$ as the input $\omega\in \Omega$ is perturbed. In order for the calculus to be compatible with the Wiener measure only perturbations in the direction of the Cameron-Martin space $\HH$ are considered. We now sketch the construction of the Malliavin derivative and refer to \cite{nualart2006malliavin} for a complete treatment. The construction of derivatives of $F$ starts with the definition of the class $\mathcal S$ of smooth random variables: $F\in\mathcal S$ if there exists $m\in \mathbb Z_+$ and a smooth function $\eta:\R^m\to \R$ whose partial derivatives have polynomial growth such that 
\[
F=\eta(W(\dot{h}_1),\ldots, W(\dot{h}_m))
\]
for some $\dot{h}_1,\ldots,\dot{h}_m\in  H$. The Malliavin derivative of a smooth random variable $F$ is a map $DF:\Omega\to H$ defined by
\[
DF=\sum_{i=1}^m\partial_i\eta(W(\dot{h}_1),\ldots, W(\dot{h}_m))\dot{h}_i.
\]
To get some intuition for this definition observe that
\[
\langle DF(\omega),\dot{h}\rangle_H=\frac{d}{d\epsilon}F(\omega+\epsilon h)\big|_{\epsilon=0}
\]
for $\gamma$-\text{a.e.} $\omega\in\Omega$  and every $\dot{h}\in H$ with $\HH\ni h=\int_0\dot{h}$, is the G\^ateaux derivative of $F$ in the direction $\dot{h}$. The Malliavin derivative is then extended to a larger class of functions on the Wiener space: Given $p\ge 1$ we let $\mathbb D^{1,p}$ be the closure of the class $\mathcal S$ with respect to the norm
\[
\|F\|_{1,p}:=\left(\EE_{\gamma}\left[|F|^p\right]+\EE_{\gamma}\left[|DF|_{H}^p\right]\right)^{\frac{1}{p}}.
\]
In other words, $\mathbb D^{1,p}$ is the domain in $L^p(\Omega,\gamma)$ of the Malliavin derivative operator $D$. The value of $DF\in H$ at time $t\in [0,1]$ is denoted as $D_tF$.  

The notion of the Malliavin derivative allows us to define the appropriate notion of derivatives of transport maps $F:\Omega\to \R^k$. Let $F=(F^1,\ldots,F^k):\Omega\to \R^k$ with $F^i:\Omega\to\R$ a Malliavin differentiable random variable for $i\in [k]$, and let $D_tF$ be the $k\times d$ matrix given by $[D_tF]_{ij}=D_t^jF^i$ where we use the notation $D_tF^i=(D_t^1F^i,\ldots, D_t^dF^i)\in \R^d$ for $i\in [k]$; in other words, $D_t^jF^i$ is the $j$th coordinate of $D_tF^i$. For $\gamma$-a.e. $\omega\in \Omega$ we define the linear Malliavin derivative operator $\D_{\omega} F: H\to \R^d$ by
\[
\D_{\omega} F[\dot{h}]:=\int_0^1D_tF(\omega)\dot{h}_tdt = \langle DF(\omega),\dot{h}\rangle_H,\quad \dot{h}\in H.
\]
When no confusion arises we omit the subscript dependence on $\omega$ and write $\D F$.
The next result shows that almost-sure contraction is equivalent to the boundedness of the corresponding Malliavin derivative operator. In the following we denote by $\mathcal L(H,\R^d)$ the space of linear operators from $H$ to $\R^d$ equipped with the operator norm  $|\cdot|_{\mathcal L(H,\R^d)}$. For example, note that $\D F\in\mathcal L(H,\R^d)$ for $\gamma$-a.e. $\omega\in \Omega$.
\begin{lemma}
\label{lem:rad}
$~$
\\

\begin{itemize}
\item Suppose $T:\Omega\to \R^d$ is an almost-sure contraction with constant $C$. Then $\D T$ exists $\gamma$-a.e. and $|\D T|_{\mathcal L(H,\R^d)}\le C$ $\gamma$-a.e.
\\

\item Let $T:\Omega\to \R^d$ be such that there exists $q>1$ so that $\EE_{\gamma}[|T|^q]<\infty$ and $\D T$ exists $\gamma$-a.e. If $|\D T|_{\mathcal L(H,\R^d)}\le C$ $\gamma$-a.e. then there exists an almost-sure contraction $\tilde T:\Omega\to \R^d$ with constant $C$ such that $\gamma$-a.e. $\tilde T=T$.
\end{itemize}

\end{lemma}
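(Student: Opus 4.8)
The plan is to prove the two directions separately, since they are essentially a Wiener-space analogue of the classical equivalence ``Lipschitz $\iff$ bounded derivative'' (Rademacher's theorem in one direction, the mean value / fundamental theorem of calculus in the other), but carried out along Cameron--Martin directions only.

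\emph{First direction (contraction $\Rightarrow$ bounded derivative).} Suppose $T$ is an almost-sure contraction with constant $C$. For each fixed $h \in \HH$, the bound $|T(\omega+h)-T(\omega)| \le C|h|_{\HH}$ holds $\gamma$-a.e., so in particular $T$ has, $\gamma$-a.e., finite ``directional increments'' in every Cameron--Martin direction. I would first show that along any fixed direction $h \in \HH$ the map $t \mapsto T(\omega + t h)$ is $C|h|_{\HH}$-Lipschitz on $\R$ for $\gamma$-a.e.\ $\omega$ (apply the contraction hypothesis to the increments $t_2 h - t_1 h = (t_2 - t_1)h$ and use quasi-invariance of $\gamma$ under $\HH$-shifts to ensure the exceptional set can be taken uniform over rational $t$, then extend by continuity). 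Hence the directional derivative $\frac{d}{dt}T(\omega+th)\big|_{t=0}$ exists for a.e.\ $\omega$ with norm $\le C|h|_{\HH}$; by the intuition formula $\langle DF(\omega),\dot h\rangle_H = \frac{d}{d\epsilon}F(\omega+\epsilon h)\big|_{\epsilon=0}$ recalled in the preliminaries, this directional derivative coincides (componentwise) with $\D_\omega T[\dot h] = \int_0^1 D_t T(\omega)\dot h_t\,dt$. To upgrade ``directional derivative exists in every direction'' to ``$T \in \mathbb D^{1,q}$ and $\D T$ is a genuine bounded linear operator'', I would invoke the closability of the Malliavin derivative together with a standard criterion (e.g.\ \cite{nualart2006malliavin}): a function whose difference quotients along $\HH$ are uniformly bounded in $L^q$, with $q$ coming from the assumed integrability, lies in $\mathbb D^{1,q}$ and its derivative is the weak (or a.e.)\ limit of those quotients; the uniform bound $C$ then passes to $|\D T|_{\mathcal L(H,\R^d)}$ by duality, since $|\D_\omega T|_{\mathcal L(H,\R^d)} = \sup_{|\dot h|_H \le 1} |\D_\omega T[\dot h]| \le \sup_{|h|_{\HH}\le 1} C|h|_{\HH} = C$. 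Some care is needed because $T$ is a priori only measurable, not in any $\mathbb D^{1,q}$, so the integrability hypothesis $\EE_\gamma[|T|^q]<\infty$ (implicit here, or derivable from the contraction bound plus $T(\cdot)$ being finite somewhere) is what makes the closability argument applicable.

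\emph{Second direction (bounded derivative $\Rightarrow$ contraction).} Now assume $\EE_\gamma[|T|^q]<\infty$ for some $q>1$, that $\D T$ exists $\gamma$-a.e., and $|\D T|_{\mathcal L(H,\R^d)} \le C$ $\gamma$-a.e. Fix $h \in \HH$ with $\dot h \in H$. The idea is to integrate the derivative along the segment $\omega + t h$, $t\in[0,1]$: formally, $T(\omega+h)-T(\omega) = \int_0^1 \frac{d}{dt}T(\omega+th)\,dt = \int_0^1 \D_{\omega+th}T[\dot h]\,dt$, whence $|T(\omega+h)-T(\omega)| \le \int_0^1 |\D_{\omega+th}T|_{\mathcal L(H,\R^d)}\,|h|_{\HH}\,dt \le C|h|_{\HH}$. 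To make this rigorous one must control the exceptional null sets: the hypothesis gives $|\D T| \le C$ outside a $\gamma$-null set $N$, and by quasi-invariance of $\gamma$ under translation by $th$ (Cameron--Martin), $\gamma(N - th) = 0$ for every $t$, so by Fubini the path $t \mapsto \omega+th$ avoids $N$ for a.e.\ $t$, for $\gamma$-a.e.\ $\omega$; combined with absolute continuity of $t \mapsto T(\omega+th)$ (which follows once we know $T$ has an $\HH$-continuous version, itself extracted from the $\mathbb D^{1,q}$ regularity via the Sobolev-type embedding / Fukushima--type argument on Wiener space) the fundamental theorem of calculus applies. This produces a version $\tilde T$, agreeing with $T$ $\gamma$-a.e., that is genuinely an almost-sure contraction with constant $C$: one defines $\tilde T$ on a full-measure set of ``good'' $\omega$ and checks the inequality on differences $\tilde T(\omega+h) - \tilde T(\omega)$ directly.

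\emph{Main obstacle.} The routine parts are the duality identity $|\D_\omega T|_{\mathcal L(H,\R^d)} = \sup_{|h|_{\HH}\le1}|\D_\omega T[\dot h]|$ and the Cameron--Martin quasi-invariance bookkeeping of null sets. The genuinely delicate point is handling the \emph{exceptional sets uniformly in the direction $h$}: the contraction inequality must hold for \emph{all} $h \in \HH$ simultaneously, $\gamma$-a.e., whereas the derivative bound and the quasi-invariance arguments are naturally performed one direction at a time. Passing from ``for each $h$, a.e.\ $\omega$'' to ``a.e.\ $\omega$, for all $h$'' requires either a separability/density argument (reduce to a countable dense family of directions in $\HH$ and use continuity of $h \mapsto T(\omega+h)$ in the Cameron--Martin topology) or a clean appeal to the structure of $\mathbb D^{1,q}$ and the precise (quasi-continuous) version of $T$; getting the quasi-continuous representative and its $\HH$-absolute-continuity along lines is where the real work lies, and this is precisely where one cites the standard machinery of \cite{nualart2006malliavin}.
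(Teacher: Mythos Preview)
Your proposal is sound and correctly identifies both the structure of the argument and its principal difficulty (uniformity of the null set over all directions $h \in \HH$). However, the paper takes a much shorter route: rather than carrying out the Rademacher/Sobolev-embedding program from scratch, it simply invokes the relevant results from Bogachev's \emph{Gaussian Measures}. Specifically, the first direction is \cite[Theorem~5.11.2(ii)]{bogachev1998gaussian} (Lipschitz along $\HH$ implies G\^ateaux differentiable along $\HH$ with the expected bound), and the second is \cite[Theorem~5.11.7]{bogachev1998gaussian} (a map in a suitable Sobolev class with bounded derivative admits a Lipschitz version). The only thing the paper verifies is that the G\^ateaux derivative along $\HH$ in Bogachev's sense coincides with the operator $\mathcal D T$ built from the Malliavin derivative; this is clear for smooth cylindrical maps and extends to the closure by \cite[Theorem~5.7.2]{bogachev1998gaussian}.

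Your approach is essentially a sketch of how those theorems in Bogachev are proved, so it is correct in outline but longer. Two remarks: (i) in the first direction no integrability assumption on $T$ is needed --- the Lipschitz bound alone yields differentiability --- so your parenthetical about $\EE_\gamma[|T|^q]<\infty$ being ``implicit here, or derivable'' is unnecessary and slightly muddled; (ii) the quasi-continuous-representative machinery you allude to is exactly what Bogachev packages, and citing it directly disposes of the ``for each $h$ a.e.\ $\omega$'' versus ``a.e.\ $\omega$ for all $h$'' issue you correctly flag as the main obstacle. If you insist on avoiding the citation, your outline would work, but note that Nualart's book is not the ideal source for these Lipschitz/Sobolev results on abstract Wiener space --- Bogachev is the right reference here.
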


\begin{proof}
The first part will follow from  \cite[Theorem 5.11.2(ii)]{bogachev1998gaussian} while the second part will follow from  \cite[Theorem 5.11.7]{bogachev1998gaussian} once we check that these results can be applied. We take the domain to be $\Omega$ ( a locally convex space) with the measure $\gamma$ (a centered Radon Gaussian measure). The space $\HH$ is the Cameron-Martin space while the image of $T$ is a subset of $\R^d$ (a separable Banach space  with the Radon-Nikodym property). It remains to check that the G\^ateaux derivative of $T$ along $\HH$ is equal to $\mathcal DT$. For smooth cylindrical maps $T$ \cite[p. 207]{bogachev1998gaussian} this is clear and the general result follows from \cite[Theorem 5.7.2]{bogachev1998gaussian}.
\end{proof}

\subsection*{The F\"ollmer process and the Brownian transport map} The history of the \emph{F\"ollmer process} goes back to the work of E. Schr\"odinger in 1932 \cite{leonard2013survey}, but it was H. F\"ollmer who formulated the problem in the language of stochastic differential equations \cite{follmer1988random}; see also the work of Dai Pra from the stochastic control approach \cite{dai1991stochastic}. Let $p=f d\gamma_d$ be our probability measure on $\R^d$ and let $(B_t)_{t\in [0,1]}$ be the standard Brownian motion in $\R^d$. The \emph{F\"ollmer drift} $v(t,x)$ is such that the solution $(X_t)_{t\in [0,1]}$ of the stochastic differential equation
 \begin{align}
 \label{eq:sde}
dX_t=v(t,X_t)dt+dB_t,\quad X_0=0,
\end{align}
satisfies $X_1\sim p$ and, in addition, $\int_0^1\EE_{\gamma}\left[|v(t,X_t)|^2\right]dt$ is minimal among all such drifts. It turns out that the F\"ollmer drift $v$ has an explicit solution: Let $(P_t)_{t\ge 0}$ be the heat semigroup on $\R^d$ acting on functions $\eta:\R^d\to \R$ by
\[
P_t\eta(x)=\int \eta(x+\sqrt{t}z)d\gamma_d(z),
\]
then, the  F\"ollmer  drift $v:[0,1]\times \R^d\to\R^d$ is given by
\[
v(t,x):=\nabla\log P_{1-t}f(x).
\]
That $X_1\sim p$ with the above $v$ can be seen, for example, from the Fokker-Planck equation of \eqref{eq:sde}. Further, as a consequence of Girsanov's theorem, the optimal drift satisfies
\begin{align}
\label{eq:entprop}
\h(p|\gamma_d)=\frac{1}{2}\int_0^1\EE_{\gamma}\left[|v(t,X_t)|^2\right]dt.
\end{align} 
We refer to \cite{follmer1988random, lehec2013representation} for more details. Specifically, the validity of \eqref{eq:entprop} is guaranteed in our setting by \cite[Theorem 3.1]{dai1991stochastic} (using the uniqueness of the solution to \eqref{eq:sde}).

The \emph{Brownian transport map} is defined as the map $X_1:\Omega\to\R^d$. This definition makes sense only if \eqref{eq:sde} has a strong solution which in particular is defined at $t=1$; we will address this issue in the next section. 

\section{Almost-sure contraction properties of Brownian transport maps}
\label{sec:causal}
In this section we show that the Brownian transport map is an almost-sure contraction in various settings. The following is the main result of this section and it covers the almost-sure contraction statements of Theorem \ref{thm:uniinformal} and Theorem \ref{thm:Gaussmix}. 
\begin{theorem}
\label{thm:causalmain}
$~$
\begin{enumerate}
\item
Suppose that either $p$ is $\cc$-log-concave for some $\cc> 0$, or that $p$ is $\cc$-log-concave for some $\cc\in \R$ and that $S<+\infty$. Then \eqref{eq:sde} has a unique strong solution for all $t\in [0,1]$. Furthermore,\\
%

\begin{enumerate}[(a)]
\item If $\cc S^2\ge 1$ then $X_1$ is an almost-sure contraction with constant $\frac{1}{\sqrt{\cc}}$; equivalently,
\[
|\D X_1|_{\mathcal L(H,\R^d)}^2\le \frac{1}{\cc} \quad\gamma\textnormal{-a.e.}
\]
\item If $\cc S^2<1$ then $X_1$ is an almost-sure contraction with constant $\left(\frac{e^{1-\cc S^2}+1}{2}\right)^{1/2}S$; equivalently,
\[
|\D X_1|_{\mathcal L(H,\R^d)}^2\le \left(\frac{e^{1-\cc S^2}+1}{2}\right)S^2  \quad\gamma\textnormal{-a.e.}
\]
\end{enumerate}
\item Fix a probability measure $\nu$ on $\R^d$ supported on a ball of radius $R$ and let $p:=\gamma_d\star \nu$. Then \eqref{eq:sde} has a unique strong solution for all $t\in [0,1]$. Furthermore, $X_1$ is an almost-sure contraction with constant $\left(\frac{e^{2R^2}-1}{2}\right)^{1/2}\frac{1}{R}$; equivalently,
\[
|\D X_1|_{\mathcal L(H,\R^d)}^2\le \frac{e^{2R^2}-1}{2R^2}  \quad\gamma\textnormal{-a.e.}
\]
\end{enumerate}
\end{theorem}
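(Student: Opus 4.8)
The plan is to differentiate the F\"ollmer equation \eqref{eq:sde} in the Malliavin sense, reduce the Cameron--Martin derivative of $X_1$ to the solution of a pathwise linear ODE, and control that ODE by a scalar Gr\"onwall estimate. Write $v(t,x)=\nabla\log P_{1-t}f(x)$ and $H_t:=\nabla^2\log P_{1-t}f(X_t)=\nabla_xv(t,X_t)$. The standard Malliavin calculus for stochastic differential equations gives (first on an interval $[0,1-\varepsilon]$, where $v$ is smooth in $x$ with spatial derivative controlled by the bounds below) that $X_t\in\mathbb D^{1,q}$ with $D_sX_t=\Id_d+\int_s^tH_rD_sX_r\,dr$ for $s\le t$ and $D_sX_t=0$ for $s>t$. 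Consequently, for $\dot h\in H$ the curve $\phi_t:=\D X_t[\dot h]=\int_0^tD_sX_t\dot h_s\,ds$ solves $\dot\phi_t=H_t\phi_t+\dot h_t$, $\phi_0=0$, and $\D X_1[\dot h]=\phi_1$; hence $|\D X_1|_{\mathcal L(H,\R^d)}=\sup_{|\dot h|_H\le1}|\phi_1|$. The next ingredient is the identity $H_t=\frac{\Sigma_t}{(1-t)^2}-\frac{\Id_d}{1-t}$, where $\Sigma_t:=\Cov(X_1\mid\mathcal F_t)$; this follows either by differentiating $\log P_{1-t}f$ twice or from the description of the F\"ollmer process as the Doob $h$-transform of Brownian motion by the space-time harmonic function $(t,x)\mapsto P_{1-t}f(x)$, which also shows that, conditionally on $X_t=x$, the law of $X_1$ is $p_{t,x}(dy)\propto f(y)e^{-|y-x|^2/(2(1-t))}\,dy$, i.e.\ the Gaussian $N(x,(1-t)\Id_d)$ reweighted by $f$.

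I would then bound $\Sigma_t$ in two ways. First, $p_{t,x}$ is supported in $\supp(p)$, so $\Sigma_t\preceq\frac{S^2}{4}\Id_d$. Second, since $-\nabla^2\log(dp/dx)\succeq\cc\Id_d$ while the Gaussian factor adds $\frac{1}{1-t}-1$ to the Hessian of the negative log-density of $p_{t,x}$, this measure is $\bigl(\cc+\frac{t}{1-t}\bigr)$-log-concave, and the Brascamp--Lieb inequality gives $\Sigma_t\preceq\frac{1-t}{\cc(1-t)+t}\Id_d$ whenever $\cc(1-t)+t>0$. Combining with the identity for $H_t$ yields $H_t\preceq\lambda(t)\Id_d$ for an explicit scalar $\lambda$, which near $t=1$ is governed by the log-concavity bound (and stays bounded above, even for $\cc<0$) and, when $\cc S^2<1$, near $t=0$ by the diameter bound. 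For the Gaussian-mixture target $p=\gamma_d\star\nu$ the computation is cleaner: $\log P_{1-t}f(x)=\log\int e^{\langle x,a\rangle-t|a|^2/2}\,d\nu(a)$ is a cumulant-generating function of $\nu$, so $H_t$ is the covariance of a probability measure supported in the ball of radius $R$, whence $0\preceq H_t\preceq R^2\Id_d$ and one takes $\lambda\equiv R^2$.

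Given $H_t\preceq\lambda(t)\Id_d$, the ODE $\dot\phi_t=H_t\phi_t+\dot h_t$ gives $\frac{d}{dt}|\phi_t|\le\lambda(t)|\phi_t|+|\dot h_t|$, so $|\phi_1|\le\int_0^1e^{\int_s^1\lambda}|\dot h_s|\,ds\le\bigl(\int_0^1e^{2\int_s^1\lambda(u)\,du}\,ds\bigr)^{1/2}|\dot h|_H$ by Cauchy--Schwarz, and therefore $|\D X_1|_{\mathcal L(H,\R^d)}^2\le\int_0^1e^{2\int_s^1\lambda(u)\,du}\,ds$. It remains to evaluate this integral. Using the log-concavity bound throughout (legitimate when $\cc>0$) one has $\lambda(t)=\frac{1-\cc}{\cc+(1-\cc)t}$ and $\int_s^1\lambda=-\log(\cc+(1-\cc)s)$, and the integral equals $\frac1\cc$, which is part~(a); in part~(b) one splits $[0,1]$ at the crossover time $t^\star$ of the two bounds for $\Sigma_t$, uses the diameter bound on $[0,t^\star]$ (the substitution $w=1/(1-s)$ turns the integral into an elementary one) and the log-concavity bound on $[t^\star,1]$, and adds the pieces to obtain the stated constant; in the mixture case $\int_0^1e^{2R^2(1-s)}\,ds=\frac{e^{2R^2}-1}{2R^2}$. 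Finally, combining the almost-sure bound on $|\D X_1|_{\mathcal L(H,\R^d)}$ with $X_1\in L^q(\gamma)$ (as $p$ has Gaussian tails or bounded support) and Lemma~\ref{lem:rad} produces an almost-sure contraction with the asserted constant.

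The main obstacle is making the first step rigorous: the drift $\nabla\log P_{1-t}f$ may be singular as $t\uparrow1$, so the Malliavin differentiability of $X_1$ and the variational equation for $D_sX_1$ cannot be invoked directly at the terminal time. I would prove everything on $[0,1-\varepsilon]$, establish the uniform-in-$\varepsilon$ bound on $|\D X_{1-\varepsilon}|_{\mathcal L(H,\R^d)}$ from the Gr\"onwall step, and pass to the limit $\varepsilon\downarrow0$ using closability of $D$ together with a priori $\mathbb D^{1,q}$-bounds to identify the limit with $\D X_1$; the existence and uniqueness of the strong solution up to time $1$ itself follows from $\h(p|\gamma_d)<\infty$ together with the drift-regularity consequences of the Hessian bound $H_t\preceq\lambda(t)\Id_d$ near $t=1$. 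A secondary, purely arithmetic nuisance is pinning down the precise constant in part~(b).
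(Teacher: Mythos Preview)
Your proposal is correct and follows essentially the same architecture as the paper's proof: the covariance representation of $H_t$, the diameter and Brascamp--Lieb bounds on $\Sigma_t$, the Gr\"onwall/Cauchy--Schwarz estimate $|\D X_1|^2\le\int_0^1e^{2\int_s^1\lambda}\,ds$, and the extension to $t=1$ via closability of $D$ are exactly Lemma~\ref{lem:nablav}, Lemma~\ref{lem:existsde}, and Proposition~\ref{prop:existsde}. Your cumulant-generating-function observation for the mixture case is a cleaner route to the bound $H_t\preceq R^2\Id_d$ than the paper's explicit computation in Lemma~\ref{lem:nablav}(3), and your diameter bound $\Sigma_t\preceq\frac{S^2}{4}\Id_d$ is sharper than the paper's $S^2\Id_d$, so carrying it through would in fact yield a slightly better constant in part~(b) than the one stated.
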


\begin{remark}
The dichotomy  of $\cc S^2\ge 1$ versus $\cc S^2<1$ is just a convenient way of organizing the various cases we consider, i.e., $\cc$ nonpositive or nonnegative and $S$ finite or infinite. This dichotomy is ambiguous when $\cc=0$ and $S=\infty$ since we need to make a convention regarding $0\cdot \infty$. Either way, the bound provided by Theorem \ref{thm:causalmain}(1) is trivial, since it is equal to $\infty$, so when proving Theorem \ref{thm:causalmain}(1) we will ignore issues arising from this case. Will come back to the case $\cc=0$ when proving  Theorem \ref{thm:KLS}. 
\end{remark}

The proof of Theorem \ref{thm:causalmain}, ignoring for now the issue of existence of solutions to  \eqref{eq:sde}, relies on the fact that the Malliavin derivative of the F\"ollmer process satisfies the following linear equation:
\[
D_rX_t=\Id_d+\int_r^t\nabla v(s,X_s)D_rX_sds\quad \forall \,r\le t\quad\text{and}\quad D_rX_t=0\quad\forall\, r> t.
\] 
Using this equation we show that
\[
|\D X_t|_{\mathcal L(H,\R^d)}^2\le \int_0^t e^{2\int_s^t\lambda_{\max}(\nabla v(r,X_r))dr} ds\quad\forall\, t\in[0,1]\quad\gamma\text{-a.e.}
\]
Hence, the proof of Theorem \ref{thm:causalmain} now boils down to estimating $\lambda_{\max}(\nabla v(r,X_r))$. In section \ref{subsec:cov} we express $\nabla v(r,X_r)$ as a covariance matrix which allows us to bound $\lambda_{\max}(\nabla v(r,X_r))$. In section \ref{subsec:derv_Follmer} we use those estimates to establish the existence and uniqueness of a strong solution to \eqref{eq:sde}. Consequently, we derive a differential equation for $\D X_t$, which together with the estimates on $\lambda_{\max}(\nabla v(r,X_r))$, allow us to bound $|\D X_t|_{\mathcal L(H,\R^d)}^2$. We complete the proof of Theorem \ref{thm:causalmain} in section \ref{subsec:proof_causalmain}.

\begin{remark}
As explained above, the key point behind the proof of Theorem \ref{thm:causalmain} is to upper bound $
\lambda_{\max}(\nabla v(r,X_r))=\lambda_{\max}(\nabla^2\log P_{1-r}(X_r))$. However, once a Hessian estimate on $\nabla^2\log P_{1-r}(X_r)$ is obtained, it can be used to prove functional inequalities without the usage of the Brownian transport map:

(a) The first way to do so is to work with the semigroup of $(X_t)$, and mimic the classical Bakry-\'Emery calculation (see \cite{bauerschmidt2023stochastic}). The downside of this approach is that it is well suited to functional inequalities such as the log-Sobolev inequality, but not to isoperimetric-type inequalities. In contrast, transport approaches, such as the Brownian transport map, can provde all of these functional inequalities in one streamlined framework. 

(b) The second way to apply the Hessian estimate  is to use it within the context of the heat flow transport map of Kim and Milman \cite{kim2012generalization}. This approach avoids the issues mentioned in part (a). On the other hand,  the usage of this transport map is only suitable if we want to prove \emph{pointwise} estimates on the Lipschitz constant of the transport map. In contrast, the Brownian transport map allows us to prove estimates on the Lipschitz constant of the transport map in \emph{expectation}, which is what is needed to make the connection with the  Kannan--Lov\'asz--Simonovits conjecture; cf. Theorem \ref{thm:klsintro}. (We remark however that the heat flow map has its own advantages, as explained in \cite[p.3]{MR4651223}.)
\end{remark}

\subsection{Covariance estimates}
\label{subsec:cov}

We begin by representing $\nabla v$ as a covariance matrix. Define the measure $p^{x,t}$ on $\R^d$, for fixed $t\in [0,1]$ and $x\in \R^d$, by
\begin{align}
\label{eq:pxt}
dp^{x,t}(y):= \frac{f(y)\varphi^{x,t}(y)}{P_tf(x)}dy
\end{align}
where $\varphi^{x,t}$ is the density of the $d$-dimensional Gaussian distribution with mean $x$ and covariance $t\Id_d$. 
\begin{claim*}
\begin{align}
\label{eq:vcov}
\nabla v(t,x)=\frac{1}{(1-t)^2}\Cov (p^{x,1-t})-\frac{1}{1-t}\Id_d\quad  \forall\, t\in [0,1]
\end{align}
and
\begin{align}
\label{eq:nablavlower}
-\frac{1}{1-t}\Id_d	\preceq \nabla v(t,x)\quad \forall\, t\in [0,1].
\end{align}
\end{claim*}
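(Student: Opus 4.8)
The identity \eqref{eq:vcov} should follow by a direct computation from the explicit formula $v(t,x)=\nabla\log P_{1-t}f(x)$ together with the heat-semigroup representation $P_sf(x)=\int f(x+\sqrt{s}z)\,d\gamma_d(z)=\int f(y)\varphi^{x,s}(y)\,dy$. First I would write $P_{1-t}f(x)=\int_{\R^d}f(y)\varphi^{x,1-t}(y)\,dy$, where $\varphi^{x,s}(y)=(2\pi s)^{-d/2}\exp(-|y-x|^2/(2s))$, and record the elementary Gaussian differentiation identities $\nabla_x\varphi^{x,s}(y)=\frac{y-x}{s}\varphi^{x,s}(y)$ and $\nabla_x^2\varphi^{x,s}(y)=\big(\frac{(y-x)(y-x)^\top}{s^2}-\frac{1}{s}\Id_d\big)\varphi^{x,s}(y)$. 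Then $\nabla\log P_{1-t}f(x)=\frac{\nabla_x P_{1-t}f(x)}{P_{1-t}f(x)}$ and $\nabla^2\log P_{1-t}f(x)=\frac{\nabla_x^2 P_{1-t}f(x)}{P_{1-t}f(x)}-\Big(\frac{\nabla_x P_{1-t}f(x)}{P_{1-t}f(x)}\Big)^{\otimes 2}$. Substituting the differentiation identities under the integral sign (justified since $f\in L^1(\gamma_d)$ and the Gaussian kernel is smooth with all derivatives integrable against $f$ on the region where things are finite — on $\supp p$ bounded this is immediate, and in the log-concave case one argues locally), and recognizing that integration against $\frac{f(y)\varphi^{x,1-t}(y)}{P_{1-t}f(x)}\,dy = dp^{x,1-t}(y)$ is exactly expectation under $p^{x,1-t}$, the first term becomes $\frac{1}{(1-t)^2}\EE_{p^{x,1-t}}[(y-x)(y-x)^\top]-\frac{1}{1-t}\Id_d$ and the subtracted square becomes $\frac{1}{(1-t)^2}\EE_{p^{x,1-t}}[y-x]^{\otimes 2}$. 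The difference of these two is precisely $\frac{1}{(1-t)^2}\Cov(p^{x,1-t})$, which gives \eqref{eq:vcov}.

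Once \eqref{eq:vcov} is in hand, the lower bound \eqref{eq:nablavlower} is immediate: a covariance matrix is positive semidefinite, so $\nabla v(t,x)=\frac{1}{(1-t)^2}\Cov(p^{x,1-t})-\frac{1}{1-t}\Id_d\succeq -\frac{1}{1-t}\Id_d$. This also covers the endpoint behavior formally; for $t=1$ one reads the statement in the limiting/degenerate sense, or simply restricts to $t\in[0,1)$ where $v$ is genuinely defined (the case $t=1$ being handled separately in the existence discussion). I would state the computation for $t\in[0,1)$ and remark that the bound at $t=1$ is vacuous or interpreted as a limit.

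**Main obstacle.** The only non-formal point is justifying differentiation under the integral sign in $\nabla_x$ and $\nabla_x^2$ of $P_{1-t}f(x)=\int f(y)\varphi^{x,1-t}(y)\,dy$, and ensuring the resulting integrals (the moments of $p^{x,1-t}$ up to order two) are finite so that $\Cov(p^{x,1-t})$ makes sense. When $S=\diam(\supp p)<\infty$ this is trivial since everything is supported on a bounded set and the integrand and its $x$-derivatives are bounded by integrable functions uniformly in $x$ on compacts. In the $\cc$-log-concave case with $\cc>0$ (and $S$ possibly infinite), $p^{x,1-t}$ has density proportional to $f(y)\varphi^{x,1-t}(y)=e^{-|y|^2/2}\tfrac{dp}{dx}(y)\cdot\varphi^{x,1-t}(y)\cdot(\text{const})$, and $\cc$-log-concavity of $p$ together with the Gaussian factor forces a $(\min(\cc,0)+\tfrac{1}{1-t})$-type log-concavity bound on $p^{x,1-t}$ — in particular Gaussian tails once $t<1$ — so all moments are finite and dominated-convergence-type arguments apply locally uniformly in $x$. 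I would organize this as a short lemma: for each $t\in[0,1)$ the measure $p^{x,1-t}$ is a well-defined probability measure with finite second moment, depending smoothly on $x$, which legitimizes the interchange; then the identity \eqref{eq:vcov} and the bound \eqref{eq:nablavlower} drop out of the computation above.
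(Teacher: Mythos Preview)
Your proposal is correct and follows essentially the same route as the paper: both compute $\nabla P_{1-t}f$ and $\nabla^2 P_{1-t}f$ by differentiating the Gaussian kernel $\varphi^{x,1-t}$ under the integral, plug into $\nabla^2\log P_{1-t}f=\frac{\nabla^2 P_{1-t}f}{P_{1-t}f}-\big(\frac{\nabla P_{1-t}f}{P_{1-t}f}\big)^{\otimes 2}$, and recognize the result as $\frac{1}{(1-t)^2}\Cov(p^{x,1-t})-\frac{1}{1-t}\Id_d$; the lower bound then follows from $\Cov\succeq 0$. The only difference is that you spell out the dominated-convergence justification for interchanging $\nabla_x$ with the integral, which the paper leaves implicit.
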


\begin{proof}
The estimate \eqref{eq:nablavlower} follows immediately from \eqref{eq:vcov}. To prove \eqref{eq:vcov} note that since
\[
P_{1-t}f(x)=\int f(y)\varphi^{x,1-t}(y)dy
\]
we have
\begin{align*}
&\nabla P_{1-t}f(x)=\frac{1}{1-t}\int (y-x)f(y)\varphi^{x,1-t}(y)dy,\\
&\nabla^2P_{1-t}f(x)=\frac{1}{(1-t)^2}\int (y-x)^{\otimes 2}f(y)\varphi^{x,1-t}(y)dy-\frac{1}{1-t}\left(\int f(y)\varphi^{x,1-t}(y)dy\right) \Id_d,
\end{align*}
and hence,
\begin{align*}
\nabla v(t,x)&=\nabla^2\log P_{1-t}f(x)=\frac{\nabla^2P_{1-t}f(x)}{P_{1-t}f(x)}-\left(\frac{\nabla P_{1-t}f(x)}{P_{1-t}f(x)}\right)^{\otimes 2}\\
&=\frac{1}{(1-t)^2}\int (y-x)^{\otimes 2}dp^{x,1-t}(y)-\frac{1}{(1-t)^2}\left(\int (y-x)dp^{x,1-t}(y)\right)^{\otimes 2}-\frac{1}{1-t} \Id_d\\
&=\frac{1}{(1-t)^2}\int y^{\otimes 2}dp^{x,1-t}(y)-\frac{1}{(1-t)^2}\left(\int ydp^{x,1-t}(y)\right)^{\otimes 2}-\frac{1}{1-t} \Id_d\\
&=\frac{1}{(1-t)^2}\Cov (p^{x,1-t})-\frac{1}{1-t}\Id_d. 
\end{align*}
\end{proof}

We start by using the representation \eqref{eq:vcov} to upper bound $\nabla v$. 
\begin{lemma} 
\label{lem:nablav}
Define the measure $dp=fd\gamma_d$ and let $S:=\diam (\supp(p))$. Then,
\begin{enumerate}
\item For every $t\in [0,1]$,
\[
\nabla v(t,x) 	\preceq \left(\frac{S^2}{(1-t)^2}-\frac{1}{1-t}\right)\Id_d.
\]

\item Let $\cc\in \R$ and suppose that $p$ is $\cc$-log-concave. Then, for any $t\in \left[\frac{\cc}{\cc-1}1_{\cc<0},1\right]$,
\[
\nabla v(t,x)\preceq  \frac{1-\cc}{\cc(1-t)+t}.
\]
\\
%

\item Fix a probability measure $\nu$ on $\R^d$ supported on a ball of radius $R$ and let $p:=\gamma_d\star \nu$. Then,
\[
\nabla v(t,x)\preceq R^2\Id_d.
\]
\end{enumerate}
\end{lemma}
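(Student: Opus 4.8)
The plan is to run everything through the covariance representation \eqref{eq:vcov} from the Claim,
\[
\nabla v(t,x)=\frac{1}{(1-t)^2}\Cov(p^{x,1-t})-\frac{1}{1-t}\Id_d ,
\]
and, in each of the three cases, to bound $\Cov(p^{x,1-t})$ from above by exploiting the structure that the reweighted measure $p^{x,1-t}$ — whose density, by \eqref{eq:pxt}, is proportional to $f(y)\varphi^{x,1-t}(y)$ — inherits from $p$. Thus all three parts reduce to an estimate on a single covariance matrix.

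For part (1), since $f=\frac{dp}{d\gamma_d}$ vanishes off $\supp(p)$, the measure $p^{x,1-t}$ is supported in $\supp(p)$, hence in a set of diameter at most $S$; for any probability measure $q$ supported in such a set and any $y_0\in\supp(q)$ one has $\Cov(q)\preceq\EE_q[(Y-y_0)^{\otimes2}]\preceq S^2\Id_d$ (the first inequality because $\EE_q[(Y-y_0)^{\otimes2}]=\Cov(q)+(\EE_q Y-y_0)^{\otimes2}\succeq\Cov(q)$, the second because $|Y-y_0|\le S$), and substituting $\Cov(p^{x,1-t})\preceq S^2\Id_d$ into the displayed identity gives the claim (when $S=\infty$ the bound is vacuous). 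For part (2), writing $-\nabla^2\log\bigl(f(y)\varphi^{x,1-t}(y)\bigr)=-\nabla^2\log f(y)+\tfrac{1}{1-t}\Id_d$ and using that $p(y)dy=f(y)d\gamma_d(y)$ is $\cc$-log-concave — so $-\nabla^2\log f\succeq(\cc-1)\Id_d$ — shows that $p^{x,1-t}$ is $\tfrac{\cc(1-t)+t}{1-t}$-log-concave; for $t$ in the stated range $[\tfrac{\cc}{\cc-1}1_{\cc<0},1]$ this constant is positive, so the Brascamp--Lieb inequality (equivalently, the fact that a $\kappa$-log-concave measure with $\kappa>0$ has covariance at most $\kappa^{-1}\Id_d$) yields $\Cov(p^{x,1-t})\preceq\tfrac{1-t}{\cc(1-t)+t}\Id_d$, and substituting into the displayed identity and simplifying the scalar $\tfrac{1}{(1-t)(\cc(1-t)+t)}-\tfrac{1}{1-t}$ produces exactly $\tfrac{1-\cc}{\cc(1-t)+t}$.

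For part (3) I would first record the explicit formula, valid for $p=\gamma_d\star\nu$,
\[
P_{1-t}f(x)=\int_{\R^d}e^{\langle x,w\rangle-\frac{t}{2}|w|^2}\,d\nu(w),
\]
obtained by writing $f(y)=\int e^{\langle y,w\rangle-|w|^2/2}\,d\nu(w)$ and carrying out the Gaussian integration defining $P_{1-t}$. Then $\nabla v(t,x)=\nabla^2\log P_{1-t}f(x)=\Cov(\nu_x)$, where $\nu_x$ is the tilted probability measure with $d\nu_x(w)\propto e^{\langle x,w\rangle-\frac{t}{2}|w|^2}\,d\nu(w)$; since $\nu_x$ is supported on $\supp(\nu)$, which lies in a ball of radius $R$, the covariance-versus-second-moment argument of part (1) gives $\Cov(\nu_x)\preceq R^2\Id_d$. (Alternatively one checks via the Gaussian product formula that $p^{x,1-t}$ is a reweighting of the mixture $\int\varphi^{x+(1-t)w,\,1-t}\,d\nu(w)$ and applies the law of total variance — $\Cov(p^{x,1-t})=(1-t)\Id_d+(1-t)^2\Cov(\text{mixing measure})\preceq\bigl((1-t)+(1-t)^2R^2\bigr)\Id_d$ — and concludes via \eqref{eq:vcov}; the cumulant-generating-function route is cleaner.)

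All steps are short. The only genuine computation is the Gaussian integral identity in part (3); the points requiring care are the bookkeeping of the substitution $t\mapsto1-t$ between \eqref{eq:vcov} and the statement, the verification that the log-concavity constant $\tfrac{\cc(1-t)+t}{1-t}$ in part (2) is positive precisely on the stated $t$-interval, and the (routine) check that the integrals defining $p^{x,1-t}$ and $P_{1-t}f$ converge. I do not anticipate a serious obstacle.
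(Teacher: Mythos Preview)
Your proposal is correct and, for parts (1) and (2), follows the paper's argument essentially verbatim (the paper writes the log-concavity constant of $p^{x,1-t}$ as $\cc+\tfrac{t}{1-t}$, which is your $\tfrac{\cc(1-t)+t}{1-t}$).

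For part (3) you take a genuinely different---and cleaner---route. The paper stays within the framework of \eqref{eq:vcov}: it identifies $p^{x,1-t}$ explicitly as a Gaussian mixture $\int\varphi^{(1-t)z+x,\,1-t}(y)\,d\tilde\nu(z)$ with $\tilde\nu$ a reweighting of $\nu$, then applies the law of total variance to get $\Cov(p^{x,1-t})=(1-t)\Id_d+(1-t)^2\Cov(\tilde\nu)\preceq(1-t)\bigl(1+(1-t)R^2\bigr)\Id_d$, and finally substitutes into \eqref{eq:vcov}. Your primary approach bypasses $p^{x,1-t}$ and \eqref{eq:vcov} entirely: recognizing $P_{1-t}f(x)=\int e^{\langle x,w\rangle-\frac{t}{2}|w|^2}\,d\nu(w)$ as a cumulant-generating-function, you read off $\nabla v(t,x)=\nabla^2\log P_{1-t}f(x)=\Cov(\nu_x)$ directly. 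This is shorter and conceptually tidier; the paper's route, on the other hand, makes the structure of $p^{x,1-t}$ explicit, which is occasionally useful elsewhere. Your parenthetical alternative is exactly the paper's argument. One small point: in invoking the ``covariance-versus-second-moment argument of part (1)'' for $\nu_x$, take the reference point to be the \emph{center} of the ball (the inequality $\Cov(q)\preceq\EE_q[(Y-a)^{\otimes2}]$ holds for any $a$, not just $a\in\supp(q)$), so that $|Y-a|\le R$ rather than the diameter $2R$.
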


\begin{proof} $~$

\begin{enumerate}
\item 
By  \eqref {eq:vcov}, it suffices to show that $\Cov (p^{x,1-t})\preceq S^2\Id_d$ which is clear from the definition of $p^{x,1-t}$.\\

\item  If $p$ is $\cc$-log-concave then, for any $t\in [0,1)$, $p^{x,1-t}$ is $\left(\cc+\frac{t}{1-t}\right)$-log-concave because
\[
-\nabla^2\log \left(\frac{dp^{x,1-t}}{dy}\right)(y)=-\nabla^2\log \left(f(y)\varphi^{0,1}(y)\right)-\nabla^2\log\left(\frac{\varphi^{x,1-t}(y)}{\varphi^{0,1}(y)}\right)\succeq \cc \Id_d+\frac{t}{1-t}\Id_d
\]
where we used that $dp(y)=f(y)\varphi^{0,1}(y)dy$. 
If $t\in \left[\frac{\cc}{\cc-1}1_{\cc<0},1\right]$, then $\left(\cc+\frac{t}{1-t}\right)\ge 0$ so by the Brascamp-Lieb inequality \cite[Theorem 4.9.1]{bakry2013analysis}, applied to functions of the form $\R^d\ni x\mapsto \langle x,v\rangle$ for $v\in S^{d-1}$, we get
\[
\Cov(p^{x,1-t})\preceq \left(\cc+\frac{t}{1-t}\right)^{-1}\Id_d
\]
and the result follows by \eqref{eq:vcov}. \\

\item We have
\begin{align*}
&\frac{dp^{x,1-t}(y)}{dy}=\frac{(\gamma_d\star \nu)(y)}{\varphi^{0,1}(y)}\frac{\varphi^{x,1-t}(y)}{P_{1-t}\left(\frac{\gamma_d\star \nu}{\varphi^{0,1}}\right)(x)}=A_{x,t}\int\varphi^{z,1}(y) \varphi^{\frac{x}{t},\frac{1-t}{t}}(y)d \nu(z)
\end{align*}
for some constant $A_{x,t}$ depending only on $x$ and $t$.
Hence, 
\[
\frac{dp^{x,1-t}(y)}{dy}=\int \varphi^{(1-t)z+x,1-t}(y)\, d\tilde{\nu}(z)
\]
where $\tilde{\nu}$ is a probability measure which is a multiple of $\nu$ by a positive function. In particular, $\tilde{\nu}$ is supported on the same ball as $\nu$. Let $G$ be a standard Gaussian vector in $\R^d$ and $Z\sim \tilde{\nu}$ be independent. Then
\[
\sqrt{1-t}G+x+(1-t)Z\sim p^{x,1-t}
\]
so 
\[
\Cov(p^{x,1-t})=(1-t)\Id_d+(1-t)^2\Cov(Z)\preceq (1-t)[1+(1-t)R^2]\Id_d. 
\]
By \eqref{eq:vcov},
\[
\nabla v(t,x)\preceq \frac{1+(1-t)R^2}{1-t}\Id_d-\frac{1}{1-t}\Id_d=R^2\Id_d.
\]
%
\end{enumerate}
\end{proof}

\begin{remark}
\label{rem:rhoSBL}
In principle, we could use more refined Brascamp-Lieb inequalities  \cite[Theorem 3.3]{kolesnikov2016riemannian}, or use the results of \cite{courtade2020stability} (which imply a stronger Poincar\'e inequality; we omit the details of this implication), to improve Lemma \ref{lem:nablav}(2) and the subsequent results. However, the improvement will end up being not too significant at a cost of much more tedious computations so we omit the details. 
\end{remark}

The majority of this section focuses on part (1) of Theorem \ref{thm:causalmain} since once that part is settled, part (2) will follow easily. The next two corollaries combine the bounds of Lemma \ref{lem:nablav}(1,2) to obtain a bound on $\lambda_{\max}(\nabla v(t,x))$, as well as its exponential, which is needed to bound $\mathcal DX_t$. The first corollary handles the case $\cc \ge 0$ with no assumptions on $S$ while the second corollary handles the case $\cc<0$ under the assumption $S<\infty$. 

\begin{corollary}
\label{cor:betapos}
Define the measure $dp=fd\gamma_d$ with $S:=\diam (\supp(p))$ and suppose that $p$ is $\cc$-log-concave with $\cc\in [0,+\infty)$. 
\begin{itemize}
\item If $\cc S^2\ge 1$ then
\[
\lambda_{\max}(\nabla v(t,x))\le \theta_t:=\frac{1-\cc}{(1-\cc)t+\cc},\quad t\in [0,1]
\]
and
\begin{align*}
\int_0^te^{2\int_s^t\theta_rdr}ds=\frac{t((1-\cc)t+\cc)}{\cc},\quad t\in [0,1].
\end{align*}
\item If $\cc S^2<1$ then
\[
\lambda_{\max}(\nabla v(t,x))\le\theta_t:=
\begin{cases}
 \frac{t+S^2-1}{(1-t)^2}\quad \text{for } t\in \left[0, \frac{1-\cc S^2}{(1-\cc)S^2+1}\right],\\
\frac{1-\cc}{(1-\cc)t+\cc} \quad \text{for } t\in \left[\frac{1-\cc S^2}{(1-\cc)S^2+1},1\right],
\end{cases}
\]
and, for $t\in\left[\frac{1-\cc S^2}{(1-\cc)S^2+1},1\right]$,\footnote{In order to simplify the computations this bound is proven only for $t$ large enough, which is all we will end up needing, rather than for all $t\in[0,1)$.}
\begin{align*}
\int_0^te^{2\int_s^t\theta_rdr}ds=\frac{1}{2S^2}((1-\cc)S^2t+\cc S^2)^2\left\{e^{2(1-\cc S^2)}-1\right\}+((1-\cc)t+\cc)(\cc S^2+t+(1-\cc)tS^2-1).
\end{align*}
\end{itemize}
\end{corollary}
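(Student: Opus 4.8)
The plan is to combine the two pointwise upper bounds on $\nabla v(t,x)$ already established in Lemma \ref{lem:nablav}(1) and Lemma \ref{lem:nablav}(2), and then perform the elementary integrations indicated in the statement. First, recall that Lemma \ref{lem:nablav}(1) gives $\lambda_{\max}(\nabla v(t,x)) \le \frac{S^2}{(1-t)^2} - \frac{1}{1-t} = \frac{t+S^2-1}{(1-t)^2}$ for all $t$, while Lemma \ref{lem:nablav}(2), applied with $\cc \ge 0$ (so the condition $t \in [\frac{\cc}{\cc-1}\mathbf 1_{\cc<0},1]$ becomes simply $t \in [0,1]$), gives $\lambda_{\max}(\nabla v(t,x)) \le \frac{1-\cc}{\cc(1-t)+t} = \frac{1-\cc}{(1-\cc)t+\cc}$. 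So at every time $t$ we may take $\theta_t$ to be the minimum of these two expressions.

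In the case $\cc S^2 \ge 1$: I would check that the bound $\frac{1-\cc}{(1-\cc)t+\cc}$ is the smaller (or at least a valid) choice uniformly in $t\in[0,1]$, which reduces to verifying the inequality $\frac{1-\cc}{(1-\cc)t+\cc} \le \frac{t+S^2-1}{(1-t)^2}$; cross-multiplying (the denominators $(1-\cc)t+\cc$ and $(1-t)^2$ are nonnegative on $[0,1]$ when $\cc\in[0,1]$, and when $\cc>1$ the first bound is negative hence automatically dominated) this becomes a polynomial inequality in $t$ that holds precisely because $\cc S^2 \ge 1$. Then I compute $\int_s^t \theta_r\,dr = \int_s^t \frac{1-\cc}{(1-\cc)r+\cc}\,dr = \log\frac{(1-\cc)t+\cc}{(1-\cc)s+\cc}$, so $e^{2\int_s^t\theta_r\,dr} = \left(\frac{(1-\cc)t+\cc}{(1-\cc)s+\cc}\right)^2$, and integrating over $s\in[0,t]$ gives $\int_0^t \left(\frac{(1-\cc)t+\cc}{(1-\cc)s+\cc}\right)^2 ds = ((1-\cc)t+\cc)^2 \cdot \frac{1}{1-\cc}\left[\frac{1}{\cc} - \frac{1}{(1-\cc)t+\cc}\right] = \frac{t((1-\cc)t+\cc)}{\cc}$, which is the claimed identity. (The degenerate case $\cc=1$ is handled by taking a limit, where $\theta_t\equiv 0$ and the integral is $t = t\cdot((1-\cc)t+\cc)/\cc$.)

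In the case $\cc S^2 < 1$: the crossover point between the two bounds is found by solving $\frac{t+S^2-1}{(1-t)^2} = \frac{1-\cc}{(1-\cc)t+\cc}$ for $t$, which yields $t^* = \frac{1-\cc S^2}{(1-\cc)S^2+1}$; one checks the first bound is smaller for $t \le t^*$ and the second for $t \ge t^*$, giving the piecewise $\theta_t$ in the statement. For the integral formula I restrict to $t \in [t^*,1)$ (which, as the footnote notes, is all that is needed downstream) and split $\int_0^t e^{2\int_s^t \theta_r\,dr}\,ds = \int_0^{t^*}(\cdots)\,ds + \int_{t^*}^t(\cdots)\,ds$. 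On the outer integral the exponent is $2\int_s^t \theta_r\,dr = 2\int_s^{t^*}\frac{r+S^2-1}{(1-r)^2}\,dr + 2\int_{t^*}^t \frac{1-\cc}{(1-\cc)r+\cc}\,dr$; the first piece integrates (via $\frac{r+S^2-1}{(1-r)^2} = \frac{S^2}{(1-r)^2} - \frac{1}{1-r}$) to an explicit expression and the second to a logarithm as before, and then one integrates the resulting product over $s$. Assembling all pieces and simplifying — using $t^* = \frac{1-\cc S^2}{(1-\cc)S^2+1}$ and the relation $1-t^* = \frac{(1-\cc)S^2 + \cc S^2}{(1-\cc)S^2+1} = \frac{S^2}{(1-\cc)S^2+1}$ so that $(1-t^*)^{-1} = \frac{(1-\cc)S^2+1}{S^2}$ — collapses everything to the stated closed form $\frac{1}{2S^2}((1-\cc)S^2 t + \cc S^2)^2\{e^{2(1-\cc S^2)}-1\} + ((1-\cc)t+\cc)(\cc S^2 + t + (1-\cc)tS^2 - 1)$.

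The only real obstacle here is bookkeeping: all steps are elementary calculus (antiderivatives of rational functions plus one exponential integral), so the main risk is an algebraic slip in the two-piece integration and in verifying that $t^*$ is indeed the crossover and that the relevant denominators keep a constant sign on $[0,1]$. I would organize the computation by first recording the two antiderivatives $\int \frac{1-\cc}{(1-\cc)r+\cc}\,dr = \log((1-\cc)r+\cc)$ and $\int \left(\frac{S^2}{(1-r)^2} - \frac{1}{1-r}\right)dr = \frac{S^2}{1-r} + \log(1-r)$, and then substituting limits carefully; the exponential of the sum of logarithms telescopes cleanly, which is what produces the compact final answer.
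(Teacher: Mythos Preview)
Your proposal is correct and follows essentially the same approach as the paper: take the minimum of the two bounds from Lemma~\ref{lem:nablav}(1),(2), determine the crossover point $t^*=\frac{1-\cc S^2}{(1-\cc)S^2+1}$ by comparing them, and then carry out the elementary antiderivative computations (the paper uses the equivalent primitive $\frac{r}{(1-r)^2}=\frac{d}{dr}\bigl[\frac{1}{1-r}+\log(1-r)\bigr]$ in place of your decomposition $\frac{r+S^2-1}{(1-r)^2}=\frac{S^2}{(1-r)^2}-\frac{1}{1-r}$, but this is the same calculation). The paper also pauses to check that the denominator $(1-\cc)t+\cc$ does not vanish on the relevant intervals, which is worth making explicit in your write-up.
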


\begin{proof}
By Lemma \ref{lem:nablav}, the two upper bounds we can get on $\lambda_{\max}(\nabla v(t,x))$ are $\frac{t+S^2-1}{(1-t)^2}$ and $\frac{1-\cc}{\cc(1-t)+t}$. Simple algebra shows that 
\[
\frac{t+S^2-1}{(1-t)^2}\le \frac{1-\cc}{\cc(1-t)+t} \quad\text{if and only if}\quad  (S^2-\cc S^2+1) t\le 1-\cc S^2.
\]
We consider two cases. 
%
%

\begin{itemize}
\item $\cc S^2\ge 1$: By considering $\cc S^2=1$ we see that the bound $(S^2-\cc S^2+1) t\le 1-\cc S^2$ cannot hold so it is always advantageous to use the bound 
\[
\lambda_{\max}(\nabla v(t,x))\le \theta_t:=\frac{1-\cc }{\cc(1-t)+t}=\frac{1-\cc}{(1-\cc)t+\cc}.
\]
Next we will compute $\int_0^te^{2\int_s^t\theta_rdr}ds$ and we first check that the integral $\int_s^t\theta_rdr$ is well-defined. The only issue is if $(1-\cc)t+\cc=0$ which happens when $t_0:=\frac{\cc}{\cc-1}$. If $\cc\in (0,1)$ then $t_0<0$ so $\theta_t$ is integrable on $[0,1]$, and if $\cc \ge 1$, then $t_0> 1$ so again $\theta_t$ is integrable on $[0,1]$. The only issue is when $\cc=0$ in which case $t_0=0$. However, in that case we cannot have $\cc S^2\ge 1$ as $\cc=0$. Compute,
\begin{align*}
\int_s^t\theta_rdr=(1-\cc)\left\{\frac{1}{1-\cc}\log((1-\cc)r+\cc)\right\}\bigg|_s^t=\log\left(\frac{(1-\cc)t+\cc}{(1-\cc)s+\cc}\right)
\end{align*}
so
\begin{align*}
\int_0^te^{2\int_s^t\theta_rdr}ds&=((1-\cc)t+\cc)^2\int_0^t\frac{1}{((1-\cc)s+\cc)^2}ds\\
&=-\frac{((1-\cc)t+\cc)^2}{1-\cc}\left\{\frac{1}{(1-\cc)t+\cc}-\frac{1}{\cc}\right\}=\frac{t((1-\cc)t+\cc)}{\cc}.
\end{align*}

\item $\cc S^2<1$: The condition $(S^2-\cc S^2+1) t\le 1-\cc S^2$ is equivalent to 
\[
t\le \frac{1-\cc S^2}{(1-\cc) S^2+1}
\]
since the denominator is nonnegative as $\cc S^2<1$. Hence we define
\[
\lambda_{\max}(\nabla v(t,x))\le\theta_t:=
\begin{cases}
\frac{t+S^2-1}{(1-t)^2}\quad \text{for } t\in \left[0, \frac{1-\cc S^2}{(1-\cc)S^2+1}\right],\\
\frac{1-\cc}{(1-\cc)t+\cc} \quad \text{for } t\in \left[\frac{1-\cc S^2}{(1-\cc)S^2+1},1\right].
\end{cases}
\]
From now until the end of the proof we assume that $t\ge \frac{1-\cc S^2}{(1-\cc)S^2+1}$. In order to compute $\int_s^t\theta_rdr$ we start by noting that  $\frac{r}{(1-r)^2}=\frac{d}{dr}\left[\frac{1}{1-r}+\log(1-r)\right]$. We also note that, following the discussion in the $\cc S^2\ge 1$ case, the denominator $(1-\cc)t+\cc$ does not vanish in the range where it is integrated.  For $s\in\left[0, \frac{1-\cc S^2}{(1-\cc)S^2+1}\right]$ we have
\begin{align*}
&\int_s^t\theta_rdr=\int_s^{\frac{1-\cc S^2}{(1-\cc)S^2+1}}\left(\frac{r+S^2-1}{(1-r)^2}\right)dr+\int_{\frac{1-\cc S^2}{(1-\cc)S^2+1}}^t\frac{1-\cc}{(1-\cc)r+\cc} dr\\
&=\left\{\frac{1}{1-r}+\log(1-r)\right\}\bigg|_s^{\frac{1-\cc S^2}{(1-\cc)S^2+1}}+(S^2-1)\left\{\frac{1}{1-r}\right\}\bigg|_s^{\frac{1-\cc S^2}{(1-\cc)S^2+1}}+(1-\cc)\left\{\frac{1}{1-\cc}\log((1-\cc)r+\cc)\right\}\bigg |_{\frac{1-\cc S^2}{(1-\cc)S^2+1}}^t\\
&=\left\{\log(1-r)\right\}\bigg|_s^{\frac{1-\cc S^2}{(1-\cc)S^2+1}}+S^2\left\{\frac{1}{1-r}\right\}\bigg|_s^{\frac{1-\cc S^2}{(1-\cc)S^2+1}}+\left\{\log((1-\cc)r+\cc)\right\}\bigg |_{\frac{1-\cc S^2}{(1-\cc)S^2+1}}^t\\
&=\log\left(\frac{S^2}{(1-\cc)S^2+1}\right)-\log(1-s)+(1-\cc)S^2+1-\frac{S^2}{1-s}+\log((1-\cc)t+\cc)-\log\left(\frac{1}{(1-\cc)S^2+1}\right)\\
&=\{(1-\cc)S^2+1+\log((1-\cc)S^2t+\cc S^2)\}-\left\{\log(1-s)+\frac{S^2}{1-s}\right\}.
\end{align*}
Hence,
\[
e^{2\int_s^t\theta_rdr}ds=e^{2(1-\cc)S^2+2}((1-\cc)S^2t+\cc S^2)^2\frac{e^{-\frac{2S^2}{1-s}}}{(1-s)^2}.
\]
For $s\in\left[\frac{1-\cc S^2}{(1-\cc)S^2+1},1\right]$ we have,
\begin{align*}
\int_s^t\theta_rdr=\int_s^t\frac{1-\cc}{(1-\cc)r+\cc} dr=\log\left(\frac{(1-\cc)t+\cc}{(1-\cc)s+\cc}\right)
\end{align*}
and so
\[
e^{2\int_s^t\theta_rdr}ds=\left(\frac{(1-\cc)t+\cc}{(1-\cc)s+\cc}\right)^2. 
\]
It follows that
\begin{align*}
&\int_0^te^{2\int_s^t\theta_rdr}ds\\
&= e^{2(1-\cc)S^2+2}((1-\cc)S^2t+\cc S^2)^2\int_0^{\frac{1-\cc S^2}{(1-\cc)S^2+1}} \frac{e^{-\frac{2S^2}{1-s}}}{(1-s)^2}  ds+((1-\cc)t+\cc)^2\int_{\frac{1-\cc S^2}{(1-\cc)S^2+1}}^t \frac{1}{((1-\cc)s+\cc)^2}ds,
\end{align*}
and we note that both integrals are finite because $(1-\cc)s+\cc$ does not vanish in the range of integration. The first integral reads
\begin{align*}
\int_0^{\frac{1-\cc S^2}{(1-\cc)S^2+1}} \frac{e^{-\frac{2S^2}{1-s}}}{(1-s)^2}  ds=\left\{-\frac{1}{2S^2}e^{-\frac{2S^2}{1-s}}\right\}\bigg|_0^{\frac{1-\cc S^2}{(1-\cc)S^2+1}}=-\frac{1}{2S^2}\left\{e^{-2(1-\cc)S^2-2}-e^{-2S^2}\right\}
\end{align*}
so 
\[
e^{2(1-\cc)S^2+2}((1-\cc)S^2t+\cc S^2)^2\int_0^{\frac{1-\cc S^2}{(1-\cc)S^2+1}} \frac{e^{-\frac{2S^2}{1-s}}}{(1-s)^2}  ds=\frac{1}{2S^2}((1-\cc)S^2t+\cc S^2)^2\left\{e^{2(1-\cc S^2)}-1\right\}.
\]
The second integral reads
\begin{align*}
\int_{\frac{1-\cc S^2}{(1-\cc)S^2+1}}^t \frac{1}{((1-\cc)s+\cc)^2}ds=-\frac{1}{1-\cc}\left\{\frac{1}{(1-\cc)s+\cc}\right\}\bigg|_{\frac{1-\cc S^2}{(1-\cc)S^2+1}}^t=-\frac{1}{1-\cc}\left\{\frac{1}{(1-\cc)t+\cc}-((1-\cc)S^2+1)\right\}
\end{align*}
so
\begin{align*}
((1-\cc)t+\cc)^2\int_{\frac{1-\cc S^2}{(1-\cc)S^2+1}}^t \frac{1}{((1-\cc)s+\cc)^2}ds&=-\frac{(1-\cc)t+\cc}{1-\cc}\left\{1-((1-\cc)t+\cc)((1-\cc)S^2+1)\right\}\\
&=((1-\cc)t+\cc)(\cc S^2+t+(1-\cc)tS^2-1).
\end{align*}
%
%
%
\end{itemize}
Adding everything up gives the result. 
\end{proof}

\begin{corollary}
\label{cor:betaneg}
Define the measure $dp=fd\gamma_d$ and suppose that $S:=\diam (\supp(p))<\infty$ and that $p$ is $\cc$-log-concave with $\cc\in(-\infty, 0)$. We have
\[
\lambda_{\max}(\nabla v(t,x))\le\theta_t:=
\begin{cases}
 \frac{t+S^2-1}{(1-t)^2}\quad \text{for } t\in \left[0, \frac{1-\cc S^2}{(1-\cc)S^2+1}\right],\\
\frac{1-\cc}{(1-\cc)t+\cc} \quad \text{for } t\in \left[\frac{1-\cc S^2}{(1-\cc)S^2+1},1\right),
\end{cases}
\]
and, for $t\in\left[\frac{1-\cc S^2}{(1-\cc)S^2+1},1\right]$,
\begin{align*}
\int_0^te^{2\int_s^t\theta_rdr}ds=\frac{1}{2S^2}((1-\cc)S^2t+\cc S^2)^2\left\{e^{2(1-\cc S^2)}-1\right\}+((1-\cc)t+\cc)(\cc S^2+t+(1-\cc)tS^2-1).
\end{align*}
\end{corollary}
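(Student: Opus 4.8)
The plan is to follow the proof of Corollary \ref{cor:betapos} in its $\cc S^2<1$ branch, observing that when $\cc<0$ one automatically has $\cc S^2<0<1$. In fact both assertions to be proven --- the piecewise formula for $\theta_t$ and the closed form for $\int_0^t e^{2\int_s^t\theta_r\,dr}$ --- are verbatim the same as there, so the real content is only to check that the ingredients feeding that computation survive into the regime $\cc<0$. First I would record, from Lemma \ref{lem:nablav}(1), the universal bound $\lambda_{\max}(\nabla v(t,x))\le \frac{t+S^2-1}{(1-t)^2}$, and, from Lemma \ref{lem:nablav}(2) (which for $\cc<0$ is available on $t\in\left[\frac{\cc}{\cc-1},1\right]$), the bound $\lambda_{\max}(\nabla v(t,x))\le \frac{1-\cc}{\cc(1-t)+t}$ on that interval. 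Exactly as in Corollary \ref{cor:betapos}, elementary algebra gives $\frac{t+S^2-1}{(1-t)^2}\le \frac{1-\cc}{\cc(1-t)+t}$ if and only if $t\le \frac{1-\cc S^2}{(1-\cc)S^2+1}$ (the denominator being positive since $\cc<0$), which dictates the stated piecewise choice of $\theta_t$.

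The one point not already handled by Corollary \ref{cor:betapos} is that, in order to use the second bound on $\left[\frac{1-\cc S^2}{(1-\cc)S^2+1},1\right)$, one must verify $\frac{1-\cc S^2}{(1-\cc)S^2+1}\ge \frac{\cc}{\cc-1}$, so that this interval lies inside the admissible range of Lemma \ref{lem:nablav}(2). Clearing the (positive) denominators, this reduces to $(1-\cc S^2)(1-\cc)+\cc\big((1-\cc)S^2+1\big)\ge 0$, and a one-line expansion shows the left-hand side equals exactly $1$; thus the inequality holds strictly. The same computation shows $(1-\cc)s+\cc$ --- which is increasing in $s$ and positive at the left endpoint --- stays positive throughout the relevant range of integration, which is precisely what legitimizes the logarithmic antiderivatives used next.

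With $\theta_t$ in hand, I would evaluate $\int_0^t e^{2\int_s^t\theta_r\,dr}$ for $t\ge \frac{1-\cc S^2}{(1-\cc)S^2+1}$ exactly as in Corollary \ref{cor:betapos}: compute $\int_s^t\theta_r\,dr$ separately for $s$ in each of the two branches, using $\frac{r}{(1-r)^2}=\frac{d}{dr}\left[\frac{1}{1-r}+\log(1-r)\right]$ on the first and $\frac{1-\cc}{(1-\cc)r+\cc}=\frac{d}{dr}\log\big((1-\cc)r+\cc\big)$ on the second, then exponentiate and integrate the two resulting pieces in $s$; the telescoping is identical to the $\cc S^2<1$ case, and adding the two contributions yields the claimed formula. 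I do not expect any real difficulty in this last step --- all the genuine work is the elementary verification in the previous paragraph that the Brascamp-Lieb-based bound of Lemma \ref{lem:nablav}(2) carries over to negative $\cc$.
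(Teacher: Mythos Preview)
Your proposal is correct and follows essentially the same route as the paper: both arguments identify that for $\cc<0$ the first bound from Lemma~\ref{lem:nablav} must be used on $[0,\tfrac{\cc}{\cc-1})$, verify the key inequality $\tfrac{\cc}{\cc-1}<\tfrac{1-\cc S^2}{(1-\cc)S^2+1}$ so that the piecewise definition of $\theta_t$ is consistent, and then defer the integral computation to the $\cc S^2<1$ branch of Corollary~\ref{cor:betapos}. Your explicit verification that the cross-multiplied expression equals exactly $1$ is in fact more detailed than what the paper provides.
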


\begin{proof}
By Lemma \ref{lem:nablav}, the two upper bounds we can get on $\lambda_{\max}(\nabla v(t,x))$ are $\frac{t+S^2-1}{(1-t)^2}$  for any $t\in [0,1]$ and $\frac{1-\cc}{\cc(1-t)+t}$ for $t\in \left[\frac{\cc}{\cc-1},1\right]$. Hence, for $t\in [0,\frac{\cc}{\cc-1})$, we must use the bound $\frac{t+S^2-1}{(1-t)^2}$. Next we note that $0< \frac{\cc}{\cc-1}<\frac{1-\cc S^2}{(1-\cc)S^2+1}\le 1$ and that (using $\cc(1-t)+t\ge 0$ for $t\ge \frac{\cc}{\cc-1}$),
\[
\frac{t+S^2-1}{(1-t)^2}\le \frac{1-\cc}{\cc(1-t)+t} \quad\text{for }  t\in\left[\frac{\cc}{\cc-1},\frac{1-\cc S^2}{(1-\cc)S^2+1}\right]. 
\]
We define
\[
\lambda_{\max}(\nabla v(t,x))\le \theta_t:=
\begin{cases}
\frac{t+S^2-1}{(1-t)^2}\quad \text{for } t\in \left[0, \frac{1-\cc S^2}{(1-\cc)S^2+1}\right],\\
\frac{1-\cc}{(1-\cc)t+\cc} \quad \text{for } t\in \left[\frac{1-\cc S^2}{(1-\cc)S^2+1},1\right].
\end{cases}
\]
As in the proof of Corollary \ref{cor:betapos}, we have
\[
e^{2\int_s^t\theta_rdr}=
\begin{cases}
e^{2(1-\cc)S^2+2}((1-\cc)S^2t+\cc S^2)^2\frac{e^{-\frac{2S^2}{1-s}}}{(1-s)^2}, & s\in\left[0, \frac{1-\cc S^2}{(1-\cc)S^2+1}\right]\\
\left(\frac{(1-\cc)t+\cc}{(1-\cc)s+\cc}\right)^2,& s\in\left[\frac{1-\cc S^2}{(1-\cc)S^2+1},1\right].
\end{cases}
\]
Since $\frac{\cc}{\cc-1}<\frac{1-\cc S^2}{(1-\cc)S^2+1}$, the above term can be integrated as in the proof of Corollary \ref{cor:betapos}. 
\end{proof}

\subsection{The Malliavin derivative of the F\"ollmer process}
\label{subsec:derv_Follmer}

The bounds provided by \eqref{eq:nablavlower} and Lemma \ref{lem:nablav} are only strong enough to establish the existence of a unique strong solution to \eqref{eq:sde} only until $t<1$ because at $t=1$ these bounds can blow up. For our purposes, however, it is crucial to have the solution well-defined at $t=1$ since we need $X_1\sim p$. We will proceed by first analyzing the behavior of the solution before time 1, which will then allow us to extend the solution and its Malliavin derivative to $t=1$; see Proposition \ref{prop:existsde}.

\begin{lemma}
\label{lem:existsde}
Let $dp=fd\gamma_d$ with $S:=\diam (\supp(p))$ and suppose that either $S<\infty$ or $p$ is $\cc$-log-concave with $\cc\ge 0$. The equation \eqref{eq:sde} has a unique strong solution $(X_t)$ for $t\in [0,1)$ satisfying
\[
D_rX_t=\Id_d+\int_r^t\nabla v(s,X_s)D_rX_sds\quad \forall \,r\le t\quad\text{and}\quad D_rX_t=0\quad\forall\, r> t, \quad \gamma\textnormal{-a.e.}
\] 
In addition,
\[
|\D X_t|_{\mathcal L(H,\R^d)}^2\le \int_0^t e^{2\int_s^t\lambda_{\max}(\nabla v(r,X_r))dr} ds,\quad\forall\, t\in[0,1)\quad\gamma\textnormal{-a.e.}
\]
%
%
%
\end{lemma}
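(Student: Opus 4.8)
The plan has three parts: construct the strong solution on $[0,1)$; obtain Malliavin differentiability together with the linear equation for $D_rX_t$; and turn that equation into the stated operator-norm bound by a Gr\"onwall estimate followed by Cauchy--Schwarz.

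\emph{Step 1 (strong solution on $[0,1)$).} Fix $\epsilon\in(0,1)$ and work on $[0,1-\epsilon]$. For every $t<1$ the function $P_{1-t}f$ is strictly positive and $C^\infty$ (it is the convolution of $f$ with a nondegenerate Gaussian density), so $v(t,\cdot)=\nabla\log P_{1-t}f$ is $C^\infty$ in the space variable. By \eqref{eq:nablavlower} we have $\nabla v(t,\cdot)\succeq-\tfrac1\epsilon\Id_d$, and by Lemma~\ref{lem:nablav}(1) when $S<\infty$, or Lemma~\ref{lem:nablav}(2) when $p$ is $\cc$-log-concave with $\cc>0$, we get a matching upper bound; hence $\sup_{t\le 1-\epsilon}\|\nabla v(t,\cdot)\|_{\mathrm{op}}<\infty$, so $v$ is Lipschitz in space uniformly on $[0,1-\epsilon]$ with at most linear growth, and the classical existence--uniqueness theorem for SDEs gives a unique strong solution on $[0,1-\epsilon]$. (When $p$ is merely log-concave with $S=\infty$ the upper bound degenerates as $t\downarrow 0$; there one localizes near the origin, where $v(t,\cdot)$ is still locally Lipschitz, and uses a non-explosion estimate, or appeals to the classical construction of the F\"ollmer process under $\h(p|\gamma_d)<\infty$.) Letting $\epsilon\downarrow 0$ and patching by uniqueness yields a unique strong solution on $[0,1)$.

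\emph{Step 2 (Malliavin equation).} On each $[0,1-\epsilon]$ the drift is $C^1$ with bounded first derivative, so the standard Malliavin calculus for SDEs \cite{nualart2006malliavin} applies: $X_t\in\mathbb D^{1,p}$ for all $p$, and, since the diffusion coefficient is the identity matrix, the stochastic-integral term in the equation for $DX$ vanishes, leaving
\[
D_rX_t=\Id_d+\int_r^t\nabla v(s,X_s)\,D_rX_s\,ds\quad(r\le t),\qquad D_rX_t=0\quad(r>t),
\]
$\gamma$-a.e.; taking the union over $\epsilon$ gives this for all $t\in[0,1)$.

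\emph{Step 3 (the bound).} Fix $r\le t$ and read the integral equation as the linear ODE $\frac{d}{dt}D_rX_t=\nabla v(t,X_t)\,D_rX_t$ with $D_rX_r=\Id_d$. Since $\nabla v(t,X_t)=\nabla^2\log P_{1-t}f(X_t)$ is symmetric, for a unit vector $u$ the scalar $\psi(t):=|D_rX_t\,u|^2$ satisfies $\psi'(t)=2\langle D_rX_tu,\nabla v(t,X_t)D_rX_tu\rangle\le 2\lambda_{\max}(\nabla v(t,X_t))\,\psi(t)$, so Gr\"onwall gives $\|D_rX_t\|_{\mathrm{op}}^2\le e^{2\int_r^t\lambda_{\max}(\nabla v(s,X_s))\,ds}$. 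Finally $\D X_t[\dot h]=\int_0^1 D_sX_t\dot h_s\,ds=\int_0^t D_sX_t\dot h_s\,ds$, and the triangle inequality together with Cauchy--Schwarz in $L^2([0,t])$ gives $|\D X_t[\dot h]|\le\bigl(\int_0^t\|D_sX_t\|_{\mathrm{op}}^2\,ds\bigr)^{1/2}|\dot h|_H$, whence
\[
|\D X_t|_{\mathcal L(H,\R^d)}^2\le\int_0^t\|D_sX_t\|_{\mathrm{op}}^2\,ds\le\int_0^t e^{2\int_s^t\lambda_{\max}(\nabla v(r,X_r))\,dr}\,ds.
\]
The main obstacle is Step~1 — getting the solution all the way up to time $1$ while the drift's spatial Lipschitz constant blows up like $(1-t)^{-1}$, and, in the unbounded log-concave case, also handling the degeneracy of the drift bound as $t\downarrow 0$; Steps~2 and~3 are a routine application of Malliavin calculus followed by a one-line Gr\"onwall argument, the only point to watch being that it is the \emph{operator} norm of $D_sX_t$ (not its Hilbert--Schmidt norm) that propagates through Cauchy--Schwarz, which is what makes the resulting bounds of Theorem~\ref{thm:causalmain} dimension-free.
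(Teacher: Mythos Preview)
Your argument is correct. Steps~1 and~2 match the paper's proof essentially verbatim (including the slight looseness at $\cc=0$, $S=\infty$, which the paper also brushes aside via a remark). The genuine difference is in Step~3. The paper works directly with the image $\alpha_{\dot h}(t):=\D X_t[\dot h]$, derives the ODE $\partial_t\alpha_{\dot h}(t)=\dot h_t+\nabla v(t,X_t)\alpha_{\dot h}(t)$, and then bounds $y(t)=|\alpha_{\dot h}(t)|^2$ by comparison with the Bernoulli equation $z'=2|\dot h_t|\sqrt z+2\lambda_t z$, whose explicit solution is squared and Cauchy--Schwarzed to yield the stated inequality. You instead first control the kernel $D_rX_t$ pointwise in $r$ via Gr\"onwall, obtaining $\|D_rX_t\|_{\mathrm{op}}\le e^{\int_r^t\lambda_{\max}(\nabla v)\,ds}$, and only then integrate against $\dot h$ with Cauchy--Schwarz. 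Your route is shorter and avoids the Bernoulli detour; it also delivers the pointwise estimate on $\|D_rX_t\|_{\mathrm{op}}$ as a byproduct, which the paper later needs (in the Stein-kernel section) and extracts separately via an approximate-identity argument. The paper's route, on the other hand, packages everything in terms of the single function $\alpha_{\dot h}$, which is the object it tracks again when extending the derivative to $t=1$ in the subsequent proposition.
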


\begin{proof}
Fix $T\in (0,1)$ and note that by \eqref{eq:nablavlower} and Lemma \ref{lem:nablav}, $v:[0,T]\times\R^d\to \R^d$ is uniformly Lipschitz in $x$ (with the Lipschitz constant depending on $T$). Writing $v(t,x)=v(t,0)+\int_0^1\nabla v(t,rx)x dr$ we see, again by \eqref{eq:nablavlower} and Lemma \ref{lem:nablav}, that $v$ is of linear growth. It is standard that under these conditions the equation \eqref{eq:sde} has a unique strong solution \cite[Lemma 2.2.1]{nualart2006malliavin}. The Malliavin differentiability of $(X_t)_{t\in [0,1)}$ and the formula for its derivative  follow from \cite[Theorem 2.2.1 and p. 121]{nualart2006malliavin}, as we now elaborate. According to \cite[Theorem 2.2.1]{nualart2006malliavin}, if $(X_t)$ is a solution to a stochastic differential equation
\[
dX_t=dB_t+\int_0^tb(s,X_s)ds,
\]
with $b$ globally Lipschitz and of linear growth, then 
\[
D_rX_t=\int_r^t\bar{b}(s,X_s)D_rX_sds\quad \forall \,r\le t\quad\text{and}\quad D_rX_t=0\quad\forall\, r> t, \quad \gamma\textnormal{-a.e.},
\]
where $\bar b=\nabla b$ (see \cite[p. 121]{nualart2006malliavin}). As mentioned above, $b:=v$ is indeed globally Lipschitz and of linear growth, which implies the result.

Turning to the bound on $\mathcal DX_t$, fix $\dot{h}\in H$ and define $\alpha_{\dot{h}}:[0,1)\to \R^d$ by 
\[
\alpha_{\dot{h}}(t):=\D X_t[\dot{h}]=\int_0^tD_rX_t[\dot{h}_r]dr. 
\]
The equation for $DX_t$ and Fubini's theorem (which can be applied since $\nabla v$ is bounded and by using Gr\"onwall's inequality on any norm of $D_rX_t$) imply that
\begin{align*}
\alpha_{\dot{h}}(t)&= \int_0^t\Id_d[\dot{h}_r]dr+\int_0^t\int_r^t\nabla v(s,X_s)D_rX_s[\dot{h}_r]dsdr\\
&=h_t+\int_0^t\int_0^t\nabla v(s,X_s)D_rX_s[\dot{h}_r]dsdr\quad\text{(because $D_rX_s=0$ for $s<r$)}\\
&=h_t+\int_0^t\nabla v(s,X_s)\left(\int_0^tD_rX_s[\dot{h}_r]dr\right)ds\\
&=h_t+\int_0^t\nabla v(s,X_s)\left(\int_0^sD_rX_s[\dot{h}_r]dr\right)ds\quad\text{(because $D_rX_s=0$ for $s<r$)}
\\
&=h_t+\int_0^t\nabla v(s,X_s) \alpha_{\dot{h}}(s)ds.
\end{align*}
Hence
\[
\partial_t\alpha_{\dot{h}}(t)=\dot{h}_t+\nabla v(t,X_t) \alpha_{\dot{h}}(t) \quad\forall\, t\in [0,1). 
\]
Set $\lambda_t:=\lambda_{\max}(\nabla v(t,X_t))$. It follows from the Cauchy-Schwarz inequality that
\begin{align*}
\partial_t|\alpha_{\dot{h}}(t)|^2&=2\langle \partial_t\alpha_{\dot{h}}(t),\alpha_{\dot{h}}(t)\rangle=2\langle \dot{h}_t,\alpha_{\dot{h}}(t)\rangle+2\langle\alpha_{\dot{h}}(t), \nabla v(t,X_t)\alpha_{\dot{h}}(t)\rangle\\
&\le 2|\dot{h}_t|\sqrt{|\alpha_{\dot{h}}(t)|^2}+2\lambda_t|\alpha_{\dot{h}}(t)|^2
\end{align*}
so defining $y:[0,1)\to \R$ by $y(t):=|\alpha_{\dot{h}}(t)|^2$ we find
\[
\partial_ty(t)\le 2|\dot{h}_t|\sqrt{y(t)}+2\lambda_ty(t).
\]
In order to analyze $y(t)$ we note that the solution of the Bernoulli ordinary differential equation
\[
\partial_tz(t)=2|\dot{h}_t|\sqrt{z(t)}+2\lambda_tz(t), \quad z(0)=0
\]
can be verified to be
\[
z(t)=\left(e^{\int_0^t\lambda_s ds}\int_0^t e^{-\int_0^s\lambda_rdr}|\dot{h}_s|ds\right)^2.
\]
By the Cauchy-Schwarz inequality, 
\[
z(t)\le e^{2\int_0^t\lambda_s ds}\int_0^t e^{-2\int_0^s\lambda_rdr}ds\int_0^t|\dot{h}_s|^2ds= \int_0^t e^{2\int_s^t\lambda_rdr}ds\int_0^t|\dot{h}_s|^2ds
\]
so since $y(t)\le z(t)$ for all $t\in [0,1)$ we conclude that
\begin{align*}
|\D X_t|_{\mathcal L(H,\R^d)}^2=\sup_{\dot{h}\in H:|\dot{h}|_H=1}|\alpha_{\dot{h}}(t)|^2\le\int_0^t  e^{2\int_s^t\lambda_rdr}ds.
\end{align*}
\end{proof}

Combining Lemma \ref{lem:existsde} and Corollaries  \ref{cor:betapos}, \ref{cor:betaneg} we obtain:

\begin{corollary}
\label{cor:boundDX}
Let $dp=fd\gamma_d$ with $S:=\diam (\supp(p))$ and suppose that $p$ is $\cc$-log-concave for some $\cc\in \R$. Then, $\gamma$-a.e.,
\begin{enumerate}[(a)]
\item Suppose $\cc \ge 0$. 
\begin{itemize}
\item If $\cc S^2\ge 1$: 
\[
|\D X_t|_{\mathcal L(H,\R^d)}^2\le \frac{t((1-\cc)t+\cc)}{\cc},\quad t\in [0,1).
\]
\item If $\cc S^2<1$: For $t\in\left[\frac{1-\cc S^2}{(1-\cc)S^2+1},1\right)$, 
\[
|\D X_t|_{\mathcal L(H,\R^d)}^2\le\frac{1}{2S^2}((1-\cc)S^2t+\cc S^2)^2\left\{e^{2(1-\cc S^2)}-1\right\}+((1-\cc)t+\cc)(\cc S^2+t+(1-\cc)tS^2-1).
\]
\end{itemize}

\item Suppose $\cc\le 0$ and that $S<+\infty$. For $t\in\left[\frac{1-\cc S^2}{(1-\cc)S^2+1},1\right)$, 
\[
|\D X_t|_{\mathcal L(H,\R^d)}^2\le\frac{1}{2S^2}((1-\cc)S^2t+\cc S^2)^2\left\{e^{2(1-\cc S^2)}-1\right\}+((1-\cc)t+\cc)(\cc S^2+t+(1-\cc)tS^2-1).
\]
\end{enumerate}
\end{corollary}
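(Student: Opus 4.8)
The plan is to combine the Gr\"onwall-type estimate of Lemma \ref{lem:existsde} with the deterministic bounds on $\lambda_{\max}(\nabla v)$ collected in Corollaries \ref{cor:betapos} and \ref{cor:betaneg}. First I would note that Lemma \ref{lem:existsde} is applicable under either hypothesis of the corollary ($\cc\ge 0$, or $S<+\infty$): equation \eqref{eq:sde} then has a unique strong solution $(X_t)$ on $[0,1)$, it is Malliavin differentiable, and $\gamma$-a.e.
\[
|\D X_t|_{\mathcal L(H,\R^d)}^2\le \int_0^t e^{2\int_s^t\lambda_{\max}(\nabla v(r,X_r))\,dr}\,ds,\qquad t\in[0,1).
\]
The restriction to $t<1$ is essential here, since the available bounds on $\nabla v$ degenerate as $t\to 1$; this is why the corollary is phrased on $[0,1)$, and the extension to $t=1$ is carried out separately afterwards.

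Next I would dominate $\lambda_{\max}(\nabla v(r,X_r))$ by the deterministic envelope $\theta_r$. Because the inequalities of Lemma \ref{lem:nablav} bound $\nabla v(r,x)$ for \emph{every} $x\in\R^d$, we get $\lambda_r:=\lambda_{\max}(\nabla v(r,X_r(\omega)))\le\theta_r$ for every $\omega$, with $\theta_r$ the piecewise function of Corollary \ref{cor:betapos} when $\cc\ge 0$ and of Corollary \ref{cor:betaneg} when $\cc\le 0$ and $S<+\infty$. Monotonicity of $x\mapsto e^{2x}$ then gives $\int_s^t\lambda_r\,dr\le\int_s^t\theta_r\,dr$, hence $\int_0^t e^{2\int_s^t\lambda_r\,dr}\,ds\le\int_0^t e^{2\int_s^t\theta_r\,dr}\,ds$, and it only remains to insert the closed-form evaluations of the right-hand integral already computed in Corollaries \ref{cor:betapos} and \ref{cor:betaneg}: this is $\tfrac{t((1-\cc)t+\cc)}{\cc}$ for all $t\in[0,1)$ in case (a) when $\cc S^2\ge 1$, and $\tfrac{1}{2S^2}((1-\cc)S^2t+\cc S^2)^2\{e^{2(1-\cc S^2)}-1\}+((1-\cc)t+\cc)(\cc S^2+t+(1-\cc)tS^2-1)$ for $t\in[\tfrac{1-\cc S^2}{(1-\cc)S^2+1},1)$ in case (a) when $\cc S^2<1$ and in case (b). This produces the three displayed bounds verbatim.

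I do not expect a real obstacle: the corollary is essentially a bookkeeping step transporting the pointwise control of $\nabla v$ along the random trajectory $X$ into a bound on $\D X_t$. The only points that call for a moment's care are that the closed form for $\int_0^t e^{2\int_s^t\theta_r\,dr}\,ds$ in the $\cc S^2<1$ regime was established only for $t$ past the crossover time $\tfrac{1-\cc S^2}{(1-\cc)S^2+1}$ (hence the stated range), and the degenerate case $\cc=0$, in which $\cc S^2<1$ holds automatically and the right-hand side is to be read as $+\infty$ when $S=+\infty$; this case is innocuous and is anyway revisited in the proof of Theorem \ref{thm:KLS}.
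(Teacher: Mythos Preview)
Your proposal is correct and follows the same approach as the paper: combine the Gr\"onwall-type estimate from Lemma~\ref{lem:existsde} with the pointwise bounds on $\lambda_{\max}(\nabla v)$ and the closed-form integrals computed in Corollaries~\ref{cor:betapos} and~\ref{cor:betaneg}. The paper's proof is in fact a single sentence (``Combining Lemma~\ref{lem:existsde} and Corollaries~\ref{cor:betapos},~\ref{cor:betaneg} we obtain''), and you have simply unpacked that sentence in full.
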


We will now extend the solution $X$ and its Malliavin derivatives to $t=1$. 

\begin{proposition}
\label{prop:existsde}
Let $dp=fd\gamma_d$ with $S:=\diam (\supp(p))$  and suppose that either $p$ is $\cc$-log-concave for some $\cc> 0$, or that $p$ is $\cc$-log-concave for some $\cc\in \R$ and that $S<+\infty$. The equation \eqref{eq:sde} has a unique strong solution $(X_t)$ for $t\in [0,1]$ satisfying, $\gamma$-a.e.,
\[
\forall\, r\le t, ~~D_rX_t=\Id_d+\int_r^t\nabla v(s,X_s)D_rX_sds\quad\text{and}\quad D_rX_t=0~\forall\, r> t
\] 
and
\[
|\D X_t|_{\mathcal L(H,\R^d)}^2\le \int_0^t e^{2\int_s^t\lambda_{\max}(\nabla v(r,X_r))dr} ds\quad\forall\, t\in[0,1].
\]
In addition: 
\begin{enumerate}[(a)]
\item Suppose $\cc\ge 0$. 
\begin{itemize}
\item If $\cc S^2\ge 1$: 
\[
|\D X_t|_{\mathcal L(H,\R^d)}^2\le \frac{t((1-\cc)t+\cc)}{\cc},\quad t\in [0,1].
\]
\item If $\cc S^2<1$: For $t\in\left[\frac{1-\cc S^2}{(1-\cc)S^2+1},1\right]$, 
\[
|\D X_t|_{\mathcal L(H,\R^d)}^2\le\frac{1}{2S^2}((1-\cc)S^2t+\cc S^2)^2\left\{e^{2(1-\cc S^2)}-1\right\}+((1-\cc)t+\cc)(\cc S^2+t+(1-\cc)tS^2-1).
\]
\end{itemize}

\item Suppose $\cc\le 0$ and $S<+\infty$. For $t\in\left[\frac{1-\cc S^2}{(1-\cc)S^2+1},1\right]$, 
\[
|\D X_t|_{\mathcal L(H,\R^d)}^2\le\frac{1}{2S^2}((1-\cc)S^2t+\cc S^2)^2\left\{e^{2(1-\cc S^2)}-1\right\}+((1-\cc)t+\cc)(\cc S^2+t+(1-\cc)tS^2-1).
\]
\end{enumerate}
\end{proposition}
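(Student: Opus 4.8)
The plan is to bootstrap from Lemma \ref{lem:existsde} (which gives the strong solution, its Malliavin derivative, and the bound on $|\D X_t|_{\mathcal L(H,\R^d)}$ only on $[0,1)$) up to the closed interval $[0,1]$. First I would note that by Lemma \ref{lem:existsde} the process $(X_t)$ is well-defined on $[0,1)$; the key analytic input is the entropy identity \eqref{eq:entprop}, namely $\h(p|\gamma_d)=\tfrac12\int_0^1\EE_\gamma[|v(t,X_t)|^2]\,dt<\infty$, which bounds the drift in $L^2$ in time and space and is exactly what is needed to see that $X_t$ converges (in $L^2(\gamma)$, and along a subsequence almost surely) as $t\uparrow 1$, with the limit distributed according to $p$ by construction of the F\"ollmer drift. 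I would define $X_1$ to be this limit; a short argument via the explicit form $v(t,x)=\nabla\log P_{1-t}f(x)$ and the fact that $P_{1-t}f\to f$ shows $X_1\sim p$, so the strong solution extends to $t=1$.

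Next I would extend the Malliavin derivative. The derivative equation $D_rX_t=\Id_d+\int_r^t\nabla v(s,X_s)D_rX_s\,ds$ is a linear ODE in $t$ for each fixed $r$, and the pointwise bounds on $\lambda_{\max}(\nabla v(t,X_t))$ from Corollaries \ref{cor:betapos} and \ref{cor:betaneg} show that the integrating factor $\exp(\int_s^t\lambda_{\max}(\nabla v(r,X_r))\,dr)$ stays bounded as $t\uparrow 1$ — this is precisely why the finite-volume or $\cc>0$ hypotheses are imposed, since the generic lower bound $-\tfrac{1}{1-t}\Id_d$ from \eqref{eq:nablavlower} would not suffice. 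Concretely, the uniform bound $|\D X_t|_{\mathcal L(H,\R^d)}^2\le\int_0^t e^{2\int_s^t\lambda_r\,dr}\,ds$ from Lemma \ref{lem:existsde} together with the explicit antiderivatives computed in Corollaries \ref{cor:betapos}, \ref{cor:betaneg} gives $\sup_{t<1}|\D X_t|_{\mathcal L(H,\R^d)}<\infty$ $\gamma$-a.e., and each entry of $\D X_t$ has a limit as $t\uparrow1$ by the integral equation; one identifies this limit with $\D X_1$ by a closure/continuity argument for the Malliavin derivative operator (e.g.\ via $\mathbb{D}^{1,2}$ being closed, combined with $L^2$-convergence of $X_t$ and boundedness of $\D X_t$). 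The claimed explicit bounds on $|\D X_1|_{\mathcal L(H,\R^d)}^2$ in cases (a) and (b) then follow by taking $t\to 1$ in the formulas of Corollary \ref{cor:boundDX}, which are continuous in $t$ up to $t=1$.

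The main obstacle I anticipate is the passage to the limit at $t=1$ done rigorously: one must show not only that $X_t$ converges but that the convergence is strong enough (and the Malliavin derivatives are controlled uniformly enough) to conclude $X_1\in\mathbb{D}^{1,2}$ with $\D X_1$ equal to the limit of $\D X_t$ and satisfying the stated integral equation. The clean way is to combine (i) the uniform-in-$t$ bound $\sup_{t<1}\|\D X_t\|_{\mathcal L(H,\R^d)}\le M<\infty$ $\gamma$-a.e.\ from Corollary \ref{cor:boundDX}, which gives $\sup_{t<1}\EE_\gamma[|\D X_t|^{2}]<\infty$; (ii) $L^2(\gamma)$-convergence $X_t\to X_1$ as $t\uparrow1$, which follows from the drift being in $L^2(dt\,d\gamma)$ via \eqref{eq:entprop}; and (iii) the closedness of the operator $D$, so that a bounded-derivative $L^2$-convergent sequence has its limit in the domain with derivative the weak limit of the derivatives. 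Once $\D X_1$ is thus identified, reading off the bounds is just evaluating the already-computed expressions at $t=1$; the $\cc=0,\,S=\infty$ degenerate case is excluded by hypothesis so no blow-up occurs.
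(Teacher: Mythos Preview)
Your proposal is correct and follows essentially the same route as the paper: extend $X_t$ to $t=1$ via an $L^2$-Cauchy argument using the entropy bound \eqref{eq:entprop}, then extend $\D X_t$ by combining the uniform-in-$t$ bounds of Corollary~\ref{cor:boundDX} with the closedness of the Malliavin derivative (the paper invokes \cite[Lemma 1.2.3]{nualart2006malliavin} and a Banach--Steinhaus step where you invoke closedness of $D$ on $\mathbb{D}^{1,2}$), and finally read off the explicit bounds by letting $t\uparrow 1$ in the formulas.
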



\begin{proof}
We start by establishing the solution to \eqref{eq:sde} all the way to $t=1$. Let $(X_t)_{t\in[0,1)}$ be the process given by Lemma \ref{lem:existsde}. For $k\in\mathbb Z_+$ define $t_k:=1-\frac{1}{k}$ and compute, for $l\ge k$,
\begin{align*}
\EE[|X_{t_l}-X_{t_k}|^2]&\le 2\EE[|B_{t_l}-B_{t_k}|^2]+2\int_{t_k}^{t_l}\EE[|v(s,X_s)|^2]ds\le 2d(t_l-t_k)+2 (t_l-t_k)\h(p|\gamma_d)\\
&\le \frac{2d}{k}+\frac{2}{k}\h(p|\gamma_d)
\end{align*}
where we used \eqref{eq:entprop} and $t_l-t_k\le 1-\left(1-\frac{1}{k}\right)=\frac{1}{k}$. Given $\epsilon>0$ let $N$ be such that $\frac{2d}{N}+\frac{2}{N}\h(p|\gamma_d)<\epsilon$ (which is possible as $\h(p|\gamma_d)<\infty$) to conclude that $\EE_{\gamma}[|X_{t_l}-X_{t_k}|^2]\le \epsilon$ for any $k,l\ge N$. Hence, $\{X_{t_k}\}$ is a Cauchy sequence in $L^2(\Omega,\R^d)$ which is complete; we denote the limit by $X_1$. Repeating the above argument on the right-hand side of \eqref{eq:sde} shows that $(X_t)_{t\in [0,1]}$ solves \eqref{eq:sde} for all $t\in [0,1]$.\\

To extend the derivative to $t=1$ we start by showing that $DX_1$ exists. Fix $w\in \R^d$ and take $\dot{h}\equiv w$ so that
\[
|\D X_t[\dot{h}]|^2=\left|\int_0^tD_rX_twdr\right|^2.
\]
Taking $w=e_j$, the $j$th element of the standard basis of $\R^d$, and using that $D_r^jX_t^i=0$ if $r>t$, we have 
\[
|\D X_t[\dot{h}]|^2=\sum_{j=1}^d\left(\int_0^tD_r^jX_t^idr\right)^2=\sum_{j=1}^d\left(\int_0^1D_r^jX_t^idr\right)^2\le \sum_{j=1}^d|D^jX_t^i|_{H}^2.
\]
By Corollary \ref{cor:boundDX}, it follows that $\sup_k |D^jX_{t_k}^i|_{H}^2<\infty$ for any $i,j\in [d]$, $\gamma$-a.e. Hence, by \cite[Lemma 1.2.3]{nualart2006malliavin}, for any $i,j\in [d]$, $D^jX_1^i$ exists and $D^jX_{t_k}^i$ converges to $D^jX_1^i$ in the weak topology of $L^2(\Omega,H)$. Hence, for a fixed $\dot{h}\in H$, we have that $\EE_{\gamma}[|\D X_{t_k}[\dot{h}]-\D X_1[\dot{h}]|^2]\to 0$ as $k\to\infty$. 	In particular, for a fixed $\dot{h}\in H$, $\D X_{t_k}[\dot{h}]$ converges to $\D X_{1}[\dot{h}]$ in probability. 

On the other hand, fix $\dot{h}\in H$ and recall the definition of $\alpha_{\dot{h}}:[0,1)\to \R^d$ from the proof of Lemma \ref{lem:existsde}. The definition of $\alpha_{\dot{h}}$ as an integral, and the fact that the integrand is bounded (since $\sup_k |D^jX_{t_k}^i|_{H}^2<\infty$ and $\dot{h}_r$ is in $L^2([0,1],\R^d)$), show that, $\gamma$-a.e., $\{\alpha_{\dot{h}}(t_k)\}$ is a Cauchy sequence so it converges to some limit denoted as $\alpha_{\dot{h}}(1)$. Hence, $\gamma$-a.e,  $\D X_{t_k}[\dot{h}]$ converges to $\alpha_{\dot{h}}(1)$ for any $\dot{h}\in H$. In particular, $\D X_{t_k}[\dot{h}]$ converges to $\alpha_{\dot{h}}(1)$ in probability. 

It follows that, $\gamma$-a.e., $\alpha_{\dot{h}}(1)=\D X_{1}[\dot{h}]$. Since $\alpha_{\dot{h}}(t)=\D X_{t}[\dot{h}]$ for all $t\in [0,1)$ we conclude that, $\gamma$-a.e., for any $\dot{h}\in H$, $\D X_{t}[\dot{h}]$ converges to $\D X_{1}[\dot{h}]$. By the Banach-Steinhaus theorem, $\gamma$-a.e., the limiting operator $\D X_{1}:H\to \R^d$ is linear and continuous with $|\D X_{1}|_{\mathcal L(H,\R^d)}\le \lim\inf_{t\uparrow 1} |\D X_t|_{\mathcal L(H,\R^d)}$. The proof is complete. 
\end{proof}

\subsection{Proof of Theorem \ref{thm:causalmain}}
\label{subsec:proof_causalmain}
We start by noting that Lemma \ref{lem:rad} applies in our setting because the moment assumption holds by either convexity or the boundedness of the support (including in the Gaussian mixture case). 

Part (1): Combining the results in Proposition \ref{prop:existsde} and plugging in $t=1$ we get:
\begin{enumerate}[(a)]
\item If $\cc S^2\ge 1$: 
\[
|\D X_1|_{\mathcal L(H,\R^d)}^2\le \frac{1}{\cc}\quad \gamma\text{-a.e.}
\]
\item If $\cc S^2<1$: 
\[
|\D X_1|_{\mathcal L(H,\R^d)}^2\le \left(\frac{e^{1-\cc S^2}+1}{2}\right)S^2 \quad \gamma\text{-a.e.}
\]
\end{enumerate}
This completes the proof.

Part (2): By Lemma \ref{lem:nablav}(3),
\[
\nabla v(t,x)\preceq R^2\Id_d
\]
so the previous arguments of this section apply to show that \eqref{eq:sde} has a unique strong solution in the setting where $p$ is a mixture of Gaussians.  In addition, the bound $\nabla v(t,x)\preceq R^2\Id_d$ implies that
\[
\lambda_{\max}(\nabla v(t,x))\le \theta_t:=R^2\quad \forall t\in [0,1]. 
\]
Hence, repeating the computations earlier in this section yields
\[
|\D X_1|_{\mathcal L(H,\R^d)}^2\le\int_0^1e^{2\int_s^1\theta_r dr}ds=\frac{e^{2R^2}-1}{2R^2}.
\]

\section{Contraction properties of Brownian transport maps for log-concave measures}
\label{sec:contrctlc}
In this section we suppose that $p$ is an isotropic log-concave measure with compact support. Our main result, Theorem \ref{thm:KLS}, bounds the norms of the derivative of the Brownian transport map (Theorem \ref{thm:klsintro}). The proof of Theorem \ref{thm:KLS} relies on the result of \cite{Klartag} and the  technique of  \cite{chen2021almost}, which is based on the stochastic localization of Eldan; see also \cite{eldan2013thin, leeVempala, KlartagLehec}. 
\subsection*{Preliminaries} 
We start by explaining the connection between stochastic localization and the F\"ollmer process. Recall that the F\"ollmer process is the solution $(X_t)_{t\in [0,1]}$ to the stochastic differential equation \eqref{eq:sde}:
 \begin{align*}
dX_t=\nabla\log P_{1-t}f(X_t)dt+dB_t,\quad X_0=0
\end{align*}
and has the property that $X_1\sim p$ where $p=fd\gamma$.  We also recall the definition \eqref{eq:pxt}:
\[
dp^{x,t}(y)= \frac{f(y)\varphi^{x,t}(y)}{P_tf(x)}dy.
\]
Let us denote by $p_t$ the (random) law of $X_1|X_t$, that is, $\int_{\R^d} \eta dp_t=\EE_{\gamma}[\eta(X_1)|X_t]$ a.s. for all $\eta:\R^d\to \R$ continuous and bounded. The next lemma establishes the connection between the F\"ollmer process and stochastic localization. The proof is well-known and we provide it for completeness.
\begin{lemma}
\label{lem:Follmersotcloc}
For $t\in [0,1)$ the random law $p_t$ has a density with respect to the Lebesgue measure, denoted by $p_t(y)$, which satisfies $p_t(y)dy=dp^{X_t,1-t}(y)$. Further, given $y\in \R^d$ the random process $(p_t(y))_{t\in [0,1)}$ satisfies the stochastic differential equation 
\begin{align}
\label{eq:ptsde}
dp_t(y)=p_t(y)\left\langle \frac{y-\int zdp_t(z)}{1-t},dB_t\right\rangle.
\end{align}
\end{lemma}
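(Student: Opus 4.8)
The plan is to identify $p_t$ concretely and then differentiate. The first claim—that $p_t$, the conditional law of $X_1$ given $X_t$, equals $p^{X_t,1-t}$—should follow from the Markov property of the F\"ollmer process together with the explicit form of the drift. Indeed, conditionally on $X_t = x$, the process $(X_s)_{s\in[t,1]}$ is again a F\"ollmer-type bridge, so the endpoint law is Brownian motion started at $x$ over time $1-t$ and reweighted by $f$ at time $1$; Bayes' formula then gives exactly the density $\frac{f(y)\varphi^{x,1-t}(y)}{P_{1-t}f(x)}$, which is $dp^{x,1-t}(y)$ by the definition \eqref{eq:pxt}. Alternatively, and perhaps cleaner, one can observe that $P_{1-s}f(X_s)$ is a martingale (this is implicit in the Fokker--Planck/Girsanov discussion preceding the statement) and write $\EE_\gamma[\eta(X_1)\mid X_t]$ directly as a Gaussian-kernel integral against $f$, normalized.

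The second and main part is the stochastic differential equation \eqref{eq:ptsde}. I would fix $y$ and apply It\^o's formula to the expression $p_t(y) = \frac{f(y)\,\varphi^{X_t,1-t}(y)}{P_{1-t}f(X_t)}$, viewing it as a function of $(t, X_t)$. It is convenient to handle numerator and denominator separately. For the numerator, $\varphi^{x,1-t}(y) = (2\pi(1-t))^{-d/2}\exp\!\big(-\tfrac{|y-x|^2}{2(1-t)}\big)$; a direct computation of $\partial_t$ and of the gradient/Hessian in $x$, combined with $dX_t = \nabla\log P_{1-t}f(X_t)\,dt + dB_t$, gives an It\^o expansion whose $dB_t$ part is proportional to $\tfrac{y - X_t}{1-t}$. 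For the denominator, since $s\mapsto P_{1-s}f(X_s)$ is a martingale, It\^o gives $dP_{1-t}f(X_t) = \nabla P_{1-t}f(X_t)\cdot dB_t = P_{1-t}f(X_t)\,\langle \nabla\log P_{1-t}f(X_t), dB_t\rangle$, and one recognizes $\nabla\log P_{1-t}f(X_t) = \frac{\int (z-X_t)\,dp^{X_t,1-t}(z)}{1-t} = \frac{\int z\,dp_t(z) - X_t}{1-t}$ from the Claim/the computation of $\nabla P_{1-t}f$ in the excerpt. Then I would combine via the It\^o quotient rule $d(N/D) = \frac{dN}{D} - \frac{N\,dD}{D^2} - \frac{dN\,dD}{D^2} + \frac{N (dD)^2}{D^3}$, and verify that the drift terms cancel (they must, since $p_t(y)$ for fixed $y$ is a ratio built from martingale-like objects whose local law is stationary in the right sense—more precisely, $\EE[p_t(y)]$ should be constant in $t$, equal to $p(y)$, forcing zero drift) and that the surviving martingale part collapses to $p_t(y)\big\langle \frac{y - \int z\,dp_t(z)}{1-t}, dB_t\big\rangle$ once the two $dB_t$ contributions (one from the numerator with coefficient $\frac{y-X_t}{1-t}$, one from the denominator with coefficient $\frac{\int z\,dp_t(z) - X_t}{1-t}$) are subtracted.

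The step I expect to be the main obstacle is the bookkeeping in the It\^o quotient expansion: there are several $dt$ terms—the explicit $\partial_t$ of the Gaussian kernel, the $\nabla$ terms hit by the drift $\nabla\log P_{1-t}f$, the $\tfrac12\Delta$ terms from the quadratic variation of $X_t$, the cross-variation term $-dN\,dD/D^2$, and the $N(dD)^2/D^3$ correction—and showing they sum to zero requires care. A clean way to organize this is to note that $p_t(y) = \frac{f(y)\varphi^{0,1}(y)}{P_1 f(o)} \cdot \frac{\varphi^{X_t,1-t}(y)/\varphi^{0,1}(y)}{P_{1-t}f(X_t)/P_1f(o)}$ is, up to the $y$-dependent constant $p(y)$, the \emph{Radon--Nikodym derivative} $\frac{d}{dt}$-martingale associated with conditioning, so its drift must vanish by construction; this lets one avoid verifying the cancellation by brute force and instead read off the diffusion coefficient alone. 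An alternative route is to differentiate the defining identity $\int \eta(y)\,p_t(y)\,dy = \EE_\gamma[\eta(X_1)\mid X_t]$: the right side is a martingale in $t$, apply It\^o to the (known, from the first part) functional $x\mapsto \int \eta\,dp^{x,1-t}$, and match coefficients against an arbitrary test function $\eta$ to extract \eqref{eq:ptsde}. I would likely present whichever of these two gives the shortest cancellation argument; the measure-valued/test-function approach is probably the most robust, since it reduces everything to the already-computed derivatives of $P_{1-t}f$ from the Claim.
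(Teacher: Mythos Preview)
Your plan is essentially the paper's approach: identify the density via the Markov/change-of-measure argument, then apply It\^o's formula to the explicit expression $p_t(y)=\frac{f(y)\varphi^{X_t,1-t}(y)}{P_{1-t}f(X_t)}$. The paper writes this as a product $\alpha(t,X_t)\beta(t,X_t)$ with $\alpha=1/P_{1-t}f$ and $\beta=\varphi^{\cdot,1-t}(y)$, computes $\partial_t,\nabla,\Delta$ of each factor by hand, and verifies the drift cancellation explicitly before reading off the diffusion coefficient as $\frac{y-X_t}{1-t}-\nabla\log P_{1-t}f(X_t)$.

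There is one genuine error in your proposal: under the F\"ollmer dynamics $dX_t=v(t,X_t)\,dt+dB_t$, the process $s\mapsto P_{1-s}f(X_s)$ is \emph{not} a martingale. Applying It\^o and the heat equation gives
\[
dP_{1-t}f(X_t)=\frac{|\nabla P_{1-t}f(X_t)|^2}{P_{1-t}f(X_t)}\,dt+\nabla P_{1-t}f(X_t)\cdot dB_t,
\]
so there is a nonzero drift $P_{1-t}f(X_t)\,|v(t,X_t)|^2\,dt$. (The process \emph{is} a martingale along standard Brownian motion, which is perhaps the source of the confusion.) If you plug your incorrect $dD_t$ into the quotient rule, the drift terms will not cancel: you will be left with a spurious $p_t(y)\,|v(t,X_t)|^2\,dt$.

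That said, your argument is easily repaired, and your own fallback does the job. The claim that $(p_t(y))_t$ is a martingale is correct---for any bounded $\eta$, $\int\eta(y)p_t(y)\,dy=\EE_\gamma[\eta(X_1)\mid\mathcal F_t]$ is a martingale, and one can pass inside the integral---so the drift must vanish, and you may read off the diffusion coefficient directly. The diffusion part of the quotient rule only involves the \emph{martingale parts} of $N$ and $D$, which you have correctly identified as $N_t\,\frac{y-X_t}{1-t}\cdot dB_t$ and $D_t\,v(t,X_t)\cdot dB_t$; subtracting gives exactly $p_t(y)\,\frac{y-\int z\,dp_t(z)}{1-t}\cdot dB_t$. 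So either correct the formula for $dD_t$ and carry out the full cancellation (as the paper does), or invoke the martingale property of $p_t(y)$ and skip the drift bookkeeping entirely.
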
 
In stochastic localization (in its simplified setting), equation \eqref{eq:ptsde}, up to time-change ($t\mapsto \frac{1}{1-t}-1$), serves as the \emph{definition} of the process. We refer \cite{eldanICM}, \cite{lee2018kannan}, and \cite[section 4]{klartag2021spectral} for more information. 

\begin{proof}[Proof of Lemma \ref{lem:Follmersotcloc}]
Let $(X_t)_{t\in [0,1]}$ be the F\"ollmer process and let $\mu$ be its associated measure on the Wiener space $\Omega$: $\frac{d\mu}{d\gamma}(\omega)=f(\omega_1)$ for $\omega\in\Omega$. Then, for any $\eta:\R^d\to \R$ continuous and bounded, we have
\begin{align*}
\EE_{\gamma}[\eta(X_1)|X_t]&=\EE_{\mu}[\eta(\omega_1)|X_t]=\frac{\EE_{\gamma}\left[\frac{d\mu}{d\gamma}(\omega_1)\eta(\omega_1)\bigg|X_t\right]}{\EE_{\gamma}\left[\frac{d\mu}{d\gamma}(\omega_1)\bigg|X_t\right]}=\frac{\EE_{\gamma}[f(\omega_1)\eta(\omega_1)|X_t]}{\EE_{\gamma}[f(\omega_1)|X_t]}=\frac{P_{1-t}(f\eta)(X_t)}{P_{1-t}f(X_t)}\\
&=\frac{1}{P_{1-t}f(X_t)}\int_{\R^d}\eta(y)f(y)\frac{\exp\left(-\frac{|y-X_t|^2}{2(1-t)}\right)}{(2\pi(1-t))^{d/2}} dy=\int_{\R^d}\eta(y)p^{X_t,1-t}(y)dy.
\end{align*}
It follows that $p_t=p^{X_t,1-t}$ with density 
\[
p_t(y)=\frac{f(y)}{P_{1-t}f(X_t)}\frac{\exp\left(-\frac{|y-X_t|^2}{2(1-t)}\right)}{(2\pi(1-t))^{d/2}} 
\]
which is well-defined for all $t\in [0,1)$. Fix $y\in \R^d$ and let
\[\alpha(t,x):=\frac{1}{P_{1-t}f(x)}\quad\text{and} \quad \beta(t,x):=\frac{\exp\left(-\frac{|y-x|^2}{2(1-t)}\right)}{(2\pi(1-t))^{d/2}}
\]
so that $p_t(y)=\alpha(t,X_t)\beta(t,X_t)$. By the heat equation,
\begin{align*}
&\partial_t\alpha(t,x)=-\frac{\partial_t P_{1-t}f(x)}{P_{1-t}f(x)^2}=\frac{1}{2}\frac{\Delta P_{1-t}f(x)}{P_{1-t}f(x)^2},\quad \partial_t\beta(t,x)=\frac{\exp\left(-\frac{|y-x|^2}{2(1-t)}\right)}{(2\pi(1-t))^{d/2}}\left\{\frac{d}{2}\frac{1}{1-t}-\frac{|y-x|^2}{2(1-t)^2}\right\},\\
&\nabla \alpha(t,x)=-\frac{\nabla P_{1-t}f(x)}{P_{1-t}f(x)^2},\quad \nabla\beta(t,x)=\frac{\exp\left(-\frac{|y-x|^2}{2(1-t)}\right)}{(2\pi(1-t))^{d/2}}\frac{y-x}{1-t},\\
&\Delta \alpha(t,x)=-\frac{\Delta P_{1-t}f(x)}{P_{1-t}f(x)^2}+2\frac{|\nabla P_{1-t}f(x)|^2}{P_{1-t}f(x)^3}, \quad \Delta \beta(t,x)=\frac{\exp\left(-\frac{|y-x|^2}{2(1-t)}\right)}{(2\pi(1-t))^{d/2}}\left\{\frac{|y-x|^2}{(1-t)^2}-\frac{d}{1-t}\right\},
\end{align*}
and hence,
\begin{align*}
&\partial_t[\alpha(t,x)\beta(t,x)]=p_t(y)\left\{\frac{1}{2}\frac{\Delta P_{1-t}f(x)}{P_{1-t}f(x)}+\frac{d}{2}\frac{1}{1-t}-\frac{|y-x|^2}{2(1-t)^2}\right\},\\
&\nabla [\alpha(t,x)\beta(t,x)]=p_t(y)\left\{-\frac{\nabla P_{1-t}f(x)}{P_{1-t}f(x)}+\frac{y-x}{1-t}\right\},\\
&\frac{1}{2}\Delta [\alpha(t,x)\beta(t,x)]= p_t(y)\left\{-\frac{1}{2}\frac{\Delta P_{1-t}f(x)}{P_{1-t}f(x)}+\frac{|\nabla P_{1-t}f(x)|^2}{P_{1-t}f(x)^2}-\left\langle \frac{\nabla P_{1-t}f(x)}{P_{1-t}f(x)},\frac{y-x}{1-t}\right\rangle+\frac{|y-x|^2}{2(1-t)^2}-\frac{d}{2(1-t)}\right\}.
\end{align*}
It follows from It\^o's formula that
\begin{align*}
d[\alpha(t,X_t)\beta(t,X_t)]&= p_t(y)\left\{\frac{|\nabla P_{1-t}f(X_t)|^2}{P_{1-t}f(X_t)^2}-\left\langle \frac{\nabla P_{1-t}f(X_t)}{P_{1-t}f(X_t)},\frac{y-X_t}{1-t}\right\rangle\right\}dt\\
&+p_t(y)\left\langle -\frac{\nabla P_{1-t}f(X_t)}{P_{1-t}f(X_t)}+\frac{y-X_t}{1-t},dX_t\right\rangle\\
&=p_t(y)\left\langle -\frac{\nabla P_{1-t}f(X_t)}{P_{1-t}f(X_t)}+\frac{y-X_t}{1-t},dB_t\right\rangle.
\end{align*}
By integration by parts,
\[
\frac{\nabla P_{1-t}f(x)}{P_{1-t}f(x)}=\int \frac{z-x}{1-t}dp^{x,1-t}(z)\Longrightarrow -\frac{\nabla P_{1-t}f(X_t)}{P_{1-t}f(X_t)}=\frac{X_t-\int zdp_t(z)}{1-t},
\]
so
\[
dp_t(y)=p_t(y)\left\langle \frac{y-\int zdp_t(z)}{1-t},dB_t\right\rangle.
\]
\end{proof}

\subsection*{Moments of the derivative of the Brownian transport map}
Our next goal is to bound the moments of $\D X_t$. To this end, we will use the current best bounds in the Kannan--Lov\'asz--Simonovits conjecture. Let $k\ge 0$ be such that
\[
C_{\text{kls}}\le a d^k
\]
where $a>0$ is some dimension-free constant. If the Kannan--Lov\'asz--Simonovits conjecture is true,  we can take $k=\frac{1}{\log d}$ to get $C_{\text{kls}}\le a e$, which is a dimension-free constant. The result of  \cite{Klartag} is that we can take $k=\frac{\log\log d}{\log d}$, which then yields $C_{\text{kls}}\le a \log d$.

%
%
%

\begin{theorem}{\normalfont{(Isotropic log-concave measures)}}
\label{thm:KLS}
Let $p$ be an isotropic log-concave measure with compact support. Then \eqref{eq:sde} has a unique strong solution on $[0,1]$. Further, there exists a universal $\zeta$ such that, for any positive integer $m$,
\[
\EE_{\gamma}\left[|\D X_t|_{\mathcal L(H,\R^d)}^{2m}\right]\le \zeta^m (2m+1)! (\log d)^{12m}\quad\forall t\in [0,1].
\]
\end{theorem}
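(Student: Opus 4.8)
The plan is as follows. The existence of a unique strong solution of \eqref{eq:sde} on all of $[0,1]$ is already the case $\cc=0$, $S<\infty$ of Proposition \ref{prop:existsde}, which in addition supplies, $\gamma$-a.e.\ and for every $t\in[0,1]$ (with $t=1$ handled by the limiting construction there), the bound
\[
|\D X_t|_{\mathcal L(H,\R^d)}^2\le\int_0^t e^{2\int_\sigma^t\lambda_{\max}(\nabla v(r,X_r))\,dr}\,d\sigma .
\]
So the task reduces to estimating this expression, and the new input relative to Section \ref{sec:causal} is that for a target that is only log-concave the crude bounds of Lemma \ref{lem:nablav} do not suffice; instead I would exploit the stochastic-localization structure. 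Concretely, by the covariance representation \eqref{eq:vcov} together with Lemma \ref{lem:Follmersotcloc} and the discussion following it, $\nabla v(r,X_r)=\frac{1}{(1-r)^2}\Cov(\mu_{a})-\frac{1}{1-r}\Id_d$, where $a=\frac{r}{1-r}$ and $(\mu_a)_{a\ge 0}$ is the stochastic-localization process started at $p$, so that $\lambda_{\max}(\nabla v(r,X_r))=\frac{\lambda_{\max}(\Cov(\mu_a))}{(1-r)^2}-\frac{1}{1-r}$.

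Next I would perform the time change $r\leftrightarrow a=\frac{r}{1-r}$ (the reparametrization that identifies the F\"ollmer process with stochastic localization), under which $\frac{dr}{(1-r)^2}=da$ and $\frac{dr}{1-r}=\frac{da}{1+a}$. This transforms the bound above into
\[
|\D X_t|_{\mathcal L(H,\R^d)}^2\le\frac{1}{(1+a_t)^2}\int_0^{a_t}\exp\!\Big(2\int_b^{a_t}\lambda_{\max}(\Cov(\mu_a))\,da\Big)\,db,\qquad a_t:=\frac{t}{1-t}.
\]
I would then fix a parameter $\tau\in(0,1)$ and split the inner integral at $\tau$: for $a\ge\tau$ use the elementary bound $\lambda_{\max}(\Cov(\mu_a))\le\frac1a$ (which is Lemma \ref{lem:nablav}(2) with $\cc=0$: the Gaussian factor of weight $a$ renders $\mu_a$ $a$-log-concave, so Brascamp--Lieb applies), keeping only $\int_0^\tau$. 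An elementary computation then gives, uniformly in $t\in[0,1]$,
\[
|\D X_t|_{\mathcal L(H,\R^d)}^2\le\frac{2}{\tau}\,e^{2E_\tau},\qquad E_\tau:=\int_0^\tau\lambda_{\max}(\Cov(\mu_a))\,da ,
\]
where crucially the a priori divergent contribution of $a$ near $a_t$ — i.e.\ of $r$ near $1$, where $\nabla v$ blows up — is absorbed by the prefactor $(1+a_t)^{-2}=(1-t)^2$, so the bound survives at $t=1$; note also that $E_\tau$ depends only on $(\mu_a)_{a\le\tau}$.

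The final and decisive step is to bound the moments of $e^{2E_\tau}$, and this is where I would invoke Chen's work \cite{chen2021almost}. The plan is to take $\tau=\tau(d)$ with $\tau^{-1}=d^{o_d(1)}$, equal to a small absolute constant times the reciprocal of the thin-shell (equivalently KLS) parameter, which by \cite{chen2021almost} is $d^{o_d(1)}$; on this time scale the stochastic-localization estimates of \cite{chen2021almost} (refining \cite{eldan2013thin, leeVempala}) control the process $(\Cov(\mu_a))_{a\le\tau}$, giving $\lambda_{\max}(\Cov(\mu_a))=O(1)$ for all $a\le\tau$ off an event of probability $d^{-\omega_d(1)}$ and, quantitatively, an exponential-moment estimate of the form $\EE_\gamma[e^{2mE_\tau}]\le C^m(2m+1)!$ with $C$ universal, the factorial reflecting the Burkholder--Davis--Gundy-type bounds underlying that analysis; the rare event on which the covariance is large would be absorbed independently of $\diam(\supp(p))$ via the truncation reduction of \cite[\S2.6]{chen2021almost}. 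Combining this with the previous display yields, for all $t\in[0,1]$ and all positive integers $m$,
\[
\EE_\gamma\big[|\D X_t|_{\mathcal L(H,\R^d)}^{2m}\big]\le\Big(\frac{2}{\tau}\Big)^m\EE_\gamma\big[e^{2mE_\tau}\big]\le\Big(\frac{2C}{\tau}\Big)^m(2m+1)!=\zeta^m(2m+1)!\,d^{m\cdot o_d(1)} .
\]
The first two paragraphs are routine given the earlier sections; the hard part will be this last step — extracting from \cite{chen2021almost} precisely the control needed on $E_\tau$ (exponential moments with factorial growth, valid up to a horizon $\tau$ with $\tau^{-1}=d^{o_d(1)}$) and, within it, handling the large-covariance event without reintroducing a dependence on the size of the support of $p$, which amounts to re-running rather than merely quoting parts of Chen's stochastic-localization argument.
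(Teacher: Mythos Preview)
Your plan is essentially the paper's proof, reorganized. The pathwise bound from Proposition \ref{prop:existsde}, the use of the Brascamp--Lieb estimate $\lambda_{\max}(\Cov(\mu_a))\le 1/a$ at large times, and the appeal to Chen's stochastic-localization analysis are exactly the three ingredients the paper uses. The paper, however, does not work with a \emph{fixed} horizon and the random integral $E_\tau$; instead it introduces a \emph{random} stopping time $\tau=r_0\wedge\inf\{r:\lambda_{\max}(\nabla v(r,X_r))\ge\alpha\}$ in the original time, obtains the pathwise bound $|\D X_t|^2\le e^{2\alpha r_0}t^2\tau^{-2}$, and reduces everything to bounding $\EE_\gamma[\tau^{-2m}]$ via a sub-exponential tail $\PP_\gamma[\tau\le s]\le c_\alpha e^{-b_\alpha/s}$ (Lemmas \ref{lem:taumoment}--\ref{lem:subexp}); the latter is where Chen's martingale control of $\Tr[\Cov(\mu_a)^q]$ (Lemma \ref{lem:Chen}) enters, and the factorial you anticipate comes from the resulting incomplete-Gamma computation, not from BDG. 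Your route would converge to this anyway: the only $S$-free way to control $E_\tau$ is to split at the first time $T$ the eigenvalue exceeds a level $\alpha$ and use $\lambda_{\max}\le 1/a$ on $[T,\tau]$, which gives $e^{2E_\tau}\le e^{2\alpha\tau}\,\tau^2/\min(T,\tau)^2$ and hence $\EE_\gamma[e^{2mE_\tau}]\le e^{2m\alpha\tau}\tau^{2m}\,\EE_\gamma[\min(T,\tau)^{-2m}]$, i.e.\ the paper's reduction in $a$-time.

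One correction: the $S$-independence of the final bound is \emph{not} obtained via ``the truncation reduction of \cite[\S2.6]{chen2021almost}''; that section reduces KLS itself to compactly supported measures and plays no role inside the present proof. Here the independence from $\diam(\supp p)$ is simply a consequence of the fact that the only bounds ever invoked on $\lambda_{\max}(\Cov(\mu_a))$ are $\le\alpha$ before the stopping time and $\le 1/a$ after it, neither of which involves $S$. The compact-support hypothesis is used only to invoke Proposition \ref{prop:existsde} (existence of the strong solution and of the derivative bound up to $t=1$).
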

\begin{remark}
\label{rem:klsfinitesupp}
The assumption in Theorem \ref{thm:KLS} that $p$ has a compact support is not important for the application to the  Kannan--Lov\'asz--Simonovits  conjecture; see \cite[section 2.6]{chen2021almost}. In particular, the bounds in the theorem are independent of the size of the support of $p$.
\end{remark}

\begin{proof}
By Proposition \ref{prop:existsde}, there exists a unique strong solution $(X_t)$  to \eqref{eq:sde} for all $t\in [0,1]$ with $X_1\sim p$ and, for any $m>0$,
\begin{align}
\label{eq:mbound}
\EE_{\gamma}\left[|\D X_t|_{\mathcal L(H,\R^d)}^{2m}\right]\le \EE_{\gamma}\left[\left(\int_0^t e^{2\int_s^t\lambda_{\max}(\nabla v(r,X_r))dr} ds\right)^m\right]\quad\forall t\in[0,1].
\end{align}
Hence, our goal is  to upper bound the right-hand side of the inequality above. Given $\alpha>2$ define the stopping time 
\[
\tau:=r_0 \wedge \inf\{r\in [0,1]:\lambda_{\max}(\nabla v(r,X_r))\ge \alpha\}
\]
for some $r_0\in \left[0,\frac{t}{2}\right]$ to be chosen later. By Lemma \ref{lem:nablav}(2) (with $\cc=0$), we have $\lambda_{\max}(\nabla v(r,X_r))\le \frac{1}{r}$ for all $r\in [0,1]$ while, on the other hand, $\lambda_{\max}(\nabla v(r,X_r))\le \alpha$ for $r\in [0,\tau]$ by the definition of $\tau$. Hence,
\[
\int_s^t\lambda_{\max}(\nabla v(r,X_r))dr=\int_s^{\tau}\lambda_{\max}(\nabla v(r,X_r))dr+\int_{\tau}^t\lambda_{\max}(\nabla v(r,X_r))dr\le \alpha r_0+\int_{\tau}^t\frac{1}{r}dr=\alpha r_0+\log t-\log\tau
\]
so it follows that
\[
e^{2\int_s^t\lambda_{\max}(\nabla v(r,X_r))dr} \le e^{2\alpha r_0}\frac{t^2}{\tau^2}. 
\]
We conclude that 
\[
\EE_{\gamma}\left[\left(\int_0^t e^{2\int_s^t\lambda_{\max}(\nabla v(r,X_r))dr} ds\right)^m\right]\le e^{2m\alpha r_0}t^{2m}\EE_{\gamma}\left[\frac{1}{\tau^{2m}}\right],
\]
and hence, by \eqref{eq:mbound}, 
\begin{align}
\label{eq:malistop}
\EE_{\gamma}\left[|\D X_t|_{\mathcal L(H,\R^d)}^{2m}\right]\le  e^{2m\alpha r_0}\EE_{\gamma}\left[\frac{1}{\tau^{2m}}\right]t^{2m}. 
\end{align}
In light of \eqref{eq:malistop}, we need to choose $\alpha, r_0$ appropriately and show that $\EE_{\gamma}\left[\frac{1}{\tau^{2m}}\right]$ can be sufficiently bounded. The control of the moments of $\frac{1}{\tau}$ will rely on showing that this random variable has a sub-exponential tail.

\begin{lemma}
\label{lem:taumoment}
Suppose there exist nonnegative constants (possibly dimension dependent) $b_{\alpha},c_{\alpha}$ such that
\[
\PP_{\gamma}\left[\frac{1}{\tau}\ge r\right]\le c_{\alpha}e^{-b_{\alpha}r}\quad\forall r\in\left[\frac{1}{r_0},\infty\right].
\] 
Then,
\[
\EE_{\gamma}\left[\frac{1}{\tau^{m}}\right]\le \frac{1}{r_0^m}\left[1+\left(\frac{1}{b^m_{\alpha}}+1\right)m!c_{\alpha}me^{-\frac{b_{\alpha}}{r_0}}\right].
\]
\end{lemma}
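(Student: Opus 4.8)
The plan is to use the layer-cake representation $\EE_\gamma[Y]=\int_0^\infty \PP_\gamma[Y>s]\,ds$ for $Y=1/\tau^m$ together with the assumed sub-exponential tail, exploiting the deterministic inequality $\tau\le r_0$ to evaluate the ``bulk'' contribution exactly. (If $b_\alpha=0$ the right-hand side is $+\infty$, so we may assume $b_\alpha>0$.) Since $\tau=r_0\wedge\inf\{r\in[0,1]:\lambda_{\max}(\nabla v(r,X_r))\ge\alpha\}\le r_0$ $\gamma$-a.e., we have $1/\tau\ge 1/r_0$ $\gamma$-a.e., hence $\PP_\gamma[1/\tau\ge r]=1$ for all $r<1/r_0$. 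After the substitution $s=r^m$,
\[
\EE_\gamma\Big[\frac{1}{\tau^m}\Big]=m\int_0^\infty r^{m-1}\,\PP_\gamma\Big[\frac{1}{\tau}\ge r\Big]\,dr,
\]
and I would split this integral at $r=1/r_0$: on $[0,1/r_0]$ the integrand is exactly $m r^{m-1}$, contributing $r_0^{-m}$, while on $[1/r_0,\infty)$ the hypothesis bounds it by $m c_\alpha\,I$ with $I:=\int_{1/r_0}^\infty r^{m-1}e^{-b_\alpha r}\,dr$.

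The remaining step is to estimate $I$ in a way that keeps the decaying factor $e^{-b_\alpha/r_0}$; the crude bound $I\le\Gamma(m)/b_\alpha^m$ discards it, so instead I would substitute $r=u+1/r_0$ with $u\ge0$, which factors out $e^{-b_\alpha/r_0}$, then expand $(u+1/r_0)^{m-1}$ by the binomial theorem and integrate term by term using $\int_0^\infty u^j e^{-b_\alpha u}\,du=j!/b_\alpha^{j+1}$ and $\binom{m-1}{j}j!\le(m-1)!$. This gives
\[
I\le\frac{(m-1)!}{b_\alpha\,r_0^{m-1}}\,e^{-b_\alpha/r_0}\sum_{j=0}^{m-1}\Big(\frac{r_0}{b_\alpha}\Big)^{j}.
\]
The geometric sum is then handled by two cases: if $b_\alpha\le r_0$ it is at most $m(r_0/b_\alpha)^{m-1}$, yielding $I\le\frac{m!}{b_\alpha^m}e^{-b_\alpha/r_0}$; if $b_\alpha>r_0$ each term is at most $1$, yielding $I\le\frac{m!}{b_\alpha r_0^{m-1}}e^{-b_\alpha/r_0}\le\frac{m!}{r_0^m}e^{-b_\alpha/r_0}$. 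Using $r_0\le t/2\le 1$ to compare powers of $r_0$, in both cases $m c_\alpha\,I\le\frac{1}{r_0^m}\big(\tfrac{1}{b_\alpha^m}+1\big)m!\,m\,c_\alpha\,e^{-b_\alpha/r_0}$; adding the $r_0^{-m}$ from the bulk gives precisely the claimed inequality.

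I do not expect a genuine obstacle: the proof is elementary. The only point that requires care is the estimate on $I$ --- preserving the factor $e^{-b_\alpha/r_0}$ so that the bound is informative in the large-$b_\alpha/r_0$ regime relevant for the application --- and the shift $r=u+1/r_0$ \emph{before} expanding is the device that achieves this; the rest is routine bookkeeping.
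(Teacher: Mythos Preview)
Your proposal is correct and follows essentially the same approach as the paper: layer-cake representation for $\EE_\gamma[1/\tau^m]$, splitting the integral at $r=1/r_0$, and bounding the resulting incomplete-Gamma-type tail integral while retaining the factor $e^{-b_\alpha/r_0}$. The only difference is a minor computational variation in the tail estimate: the paper invokes the exact identity $\int_x^\infty r^{m-1}e^{-r}\,dr=(m-1)!\,e^{-x}\sum_{j=0}^{m-1}x^j/j!$ and then applies the crude bounds $1/j!\le 1$, $b_\alpha^j\le b_\alpha^m+1$, $r_0^{-j}\le r_0^{-m}$, whereas you shift $r\mapsto u+1/r_0$, expand binomially, and split into the two cases $b_\alpha\lessgtr r_0$ --- both routes land on the same final bound.
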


\begin{proof}
We will apply the identity $\EE[Y^m]=m\int_0^{\infty} y^{m-1}\PP[ Y\ge y]dy$, for a nonnegative random variable $Y$, with $Y=\frac{1}{\tau}$. By the definition of $\tau$,  $\PP_{\gamma}\left[\frac{1}{\tau}\ge s\right]=1$ for $s\in \left[0,\frac{1}{r_0}\right]$ so, for any positive integer $m$,
\begin{align*}
\EE_{\gamma}\left[\frac{1}{\tau^{m}}\right]&=m\int_0^{\frac{1}{r_0}}r^{m-1}dr+m\int_{\frac{1}{r_0}}^{\infty} r^{m-1}\PP_{\gamma}\left[\frac{1}{\tau}\ge r\right]dr\le \frac{1}{r_0^m}+c_{\alpha}m\int_{\frac{1}{r_0}}^{\infty} r^{m-1} e^{-b_{\alpha}r}dr\\
&=\frac{1}{r_0^m}+\frac{c_{\alpha}m}{b_{\alpha}^m}\int_{\frac{b_{\alpha}}{r_0}}^{\infty} r^{m-1} e^{-r}dr=\frac{1}{r_0^m}+\frac{c_{\alpha}m(m-1)!}{b_{\alpha}^m}e^{-\frac{b_{\alpha}}{r_0}}\sum_{j=0}^{m-1}\frac{(b_{\alpha})^j}{r_0^jj!}
\end{align*}
where we used the incomplete Gamma function identity $\int_x^{\infty}r^{m-1}e^{-r}dr=(m-1)!e^{-x}\sum_{j=0}^{m-1}\frac{x^j}{j!}$ when $m$ is a positive integer. Using
\[
\frac{1}{j!}\le 1,\quad \quad b_{\alpha}^j\le b_{\alpha}^m+1, \quad\text{and}\quad\frac{1}{r_0^j}\le  \frac{1}{r_0^m} ~\text{ (as $r_0\in [0,1]$)}
\]
we have $\sum_{j=0}^{m-1}\frac{(b_{\alpha})^j}{r_0^jj!}\le m\frac{b_{\alpha}^m+1}{r_0^m}$ and hence
\[
\EE_{\gamma}\left[\frac{1}{\tau^{m}}\right]\le \frac{1}{r_0^m}\left[1+\left(\frac{1}{b_{\alpha}^m}+1\right)m!c_{\alpha}me^{-\frac{b_{\alpha}}{r_0}}\right].
\]
\end{proof}

In light of Lemma \ref{lem:taumoment}, our goal is to prove that $\frac{1}{\tau}$ has a sub-exponential tail, which requires a better understanding of the stopping time $\tau$. To simplify notation let $K_t:=\Cov(p^{X_t,1-t})$ and recall the representation \eqref{eq:vcov},
\[
\nabla v(t,X_t)=\frac{1}{(1-t)^2}K_t-\frac{1}{1-t}\Id_d.
\]
Hence,
\[
\lambda_{\max}(\nabla v(t,X_t))=\frac{\lambda_{\max}(K_t)}{(1-t)^2}-\frac{1}{1-t}
\]
and
\[
\tau=r_0\wedge \inf\left\{r\in [0,1]:\frac{\lambda_{\max}(K_r)}{(1-r)^2}-\frac{1}{1-r}\ge \alpha\right\}.
\]
The quantity $\lambda_{\max}(K_r)$ is difficult to control so we use the moment method and instead control $\Gamma_r:=\Tr[K_r^q]$, while noting that $\lambda_{\max}(K_r)\le \Gamma_r^{\frac{1}{q}}$ for any $q\ge 0$. The process $(\Gamma_t)_{t\in [0,1]}$ satisfies a stochastic differential equation
\[
d\Gamma_t=u_tdB_t+\delta_t dt
\]
for some vector-valued process $(u_t)_{t\in [0,1]}$ and a real-valued process $(\delta_t)_{t\in [0,1]}$. These processes can be derived using It\^o's formula and the stochastic differential equation satisfied by $(K_t)_{t\in [0,1]}$ (which itself can be derived using It\^o's formula). Next, we use the argument in \cite{chen2021almost} to control the processes $(u_t)$ and $(\delta_t)$.

\begin{lemma}
\label{lem:Chen}
Suppose $C_{\textnormal{kls}}\le a d^k$ for $k\ge 0$ and let $q:=\lceil\frac{1}{k}	\rceil+1$. Then, there exists a universal constant $c>0$ such that, for any $r\in \left[0,\frac{1}{2}\right]$, we have, a.s.,
\begin{align*}
&|u_r|\le cq\Gamma_r^{1+\frac{1}{2q}},\\
&\delta_r\le ca^2q^2(\log d)d^{2k-\frac{1}{q}}\Gamma_r^{1+\frac{1}{q}}.
\end{align*}
\end{lemma}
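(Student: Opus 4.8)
The plan is to derive, by It\^o calculus, a closed stochastic differential equation for the random covariance $K_t:=\Cov(p^{X_t,1-t})$ of the conditional law $p_t$ along the F\"ollmer process, then apply It\^o's formula to the potential $\Gamma_t=\Tr[K_t^q]$, and finally estimate the resulting coefficients by transcribing the tensor inequalities of \cite{chen2021almost} to the measures $p_t$. Writing $a_t:=\int y\,dp_t(y)$ for the barycenter, Lemma~\ref{lem:Follmersotcloc} gives $dp_t(y)=p_t(y)\big\langle\frac{y-a_t}{1-t},dB_t\big\rangle$, so $p_t(y)$ is a local martingale for each fixed $y$. Integrating this identity against $y$ and against $y^{\otimes2}$, and applying It\^o's product rule to $K_t=\int y^{\otimes2}\,dp_t(y)-a_t^{\otimes2}$, I expect to obtain
\[
da_t=\frac{K_t}{1-t}\,dB_t,\qquad dK_t=\frac{1}{1-t}\,\langle T_t,dB_t\rangle-\frac{K_t^2}{(1-t)^2}\,dt,
\]
where $(T_t)_{ijk}:=\int(y-a_t)_i(y-a_t)_j(y-a_t)_k\,dp_t(y)$ is the third central moment tensor of $p_t$, $(T_t)_{\cdot\cdot k}$ denotes the $d\times d$ matrix with entries $(T_t)_{ijk}$, and $\langle T_t,dB_t\rangle$ means contraction in the last index; this is the time-reparametrised stochastic-localisation covariance equation, with instantaneous covariation rate $\frac{1}{(1-t)^2}\sum_k(T_t)_{ijk}(T_t)_{i'j'k}$ between $(K_t)_{ij}$ and $(K_t)_{i'j'}$.

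Next I would apply It\^o's formula to $\Gamma_t=\Tr[K_t^q]$, using $D\,\Tr[K^q][H]=q\,\Tr[K^{q-1}H]$ and $D^2\,\Tr[K^q][H,H]=q\sum_{l=0}^{q-2}\Tr[K^lHK^{q-2-l}H]$. Plugging in the SDE for $K_t$ and discarding the non-positive first-order drift $-\frac{q}{(1-t)^2}\Tr[K_t^{q+1}]$, this yields $d\Gamma_t=u_t\,dB_t+\delta_t\,dt$ with
\[
u_t^{(k)}=\frac{q}{1-t}\,\Tr\big[K_t^{q-1}(T_t)_{\cdot\cdot k}\big],\qquad
\delta_t\le\frac{q}{2(1-t)^2}\sum_{l=0}^{q-2}\sum_{k}\Tr\big[K_t^l(T_t)_{\cdot\cdot k}K_t^{q-2-l}(T_t)_{\cdot\cdot k}\big].
\]

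For the diffusion bound, set $Y:=y-a_t$, so that $u_t^{(k)}=\frac{q}{1-t}\EE_{p_t}[\langle Y,K_t^{q-1}Y\rangle Y_k]$ and hence, testing against an arbitrary unit vector and using Cauchy--Schwarz, $|u_t|\le\frac{q}{1-t}\sqrt{\EE_{p_t}[\langle Y,K_t^{q-1}Y\rangle^2]}\cdot\sqrt{\lambda_{\max}(K_t)}$. Since $p$ is log-concave and the Gaussian tilt $\varphi^{X_t,1-t}/\varphi^{0,1}$ has non-positive log-Hessian, $p_t$ is log-concave, so Borell's reverse H\"older inequality, applied to the seminorm $Y\mapsto|K_t^{(q-1)/2}Y|$, gives $\EE_{p_t}[\langle Y,K_t^{q-1}Y\rangle^2]\le C\big(\EE_{p_t}[\langle Y,K_t^{q-1}Y\rangle]\big)^2=C\,\Gamma_t^2$. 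Combined with $\lambda_{\max}(K_t)\le\Gamma_t^{1/q}$ and $\frac{1}{1-t}\le2$ on $[0,\frac12]$, this yields $|u_t|\le cq\,\Gamma_t^{1+\frac{1}{2q}}$; note that only log-concavity, not the KLS bound, enters here, consistent with the absence of $a$ and $d^k$ in that estimate.

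The main obstacle is the drift bound: one must show that each summand $\sum_k\Tr[K_t^l(T_t)_{\cdot\cdot k}K_t^{q-2-l}(T_t)_{\cdot\cdot k}]$ is at most $c\,C_{\text{kls}}^2(\log d)\,d^{-1/q}\,\Gamma_t^{1+1/q}$. This is the heart of Chen's argument \cite{chen2021almost}: for a log-concave measure, contractions of its third central moment tensor against powers of its own covariance are controlled by the square of its Poincar\'e constant, which for $p_t$ is at most $C_{\text{kls}}\,\lambda_{\max}(K_t)$ by whitening and the definition of $C_{\text{kls}}$ for isotropic log-concave measures; the extra $\log d$ and the gain $d^{-1/q}$ arise from interpolating operator norms with the Schatten $q$-norm $\Gamma_t^{1/q}$ and from the net (or moment) argument over directions used to pass from a single direction to the full contraction. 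I would transcribe these estimates with $p_t$ in place of Chen's localisation measure. Summing the $q-1$ terms over $l$, using $\frac{1}{(1-t)^2}\le4$ on $[0,\frac12]$, and inserting $C_{\text{kls}}\le a d^k$ together with $q=\lceil1/k\rceil+1$ then produces $\delta_r\le c\,a^2q^2(\log d)\,d^{2k-1/q}\,\Gamma_r^{1+1/q}$, completing the proof. This last paragraph is where the deep KLS input enters; everything preceding it is It\^o calculus and soft log-concavity estimates.
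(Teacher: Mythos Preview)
Your argument is correct in substance, but the paper takes a shorter, more modular route. Rather than deriving the SDE for $K_t$ and $\Gamma_t$ from scratch in the F\"ollmer parametrisation and then transcribing Chen's tensor inequalities to the log-concave measures $p_t$, the paper simply observes that the F\"ollmer conditional law and Chen's stochastic-localisation measure are related by a deterministic time change: with $s(r)=\frac{1}{1-r}-1$ one has $p_r=\tilde p_{s(r)}$ (same driving Brownian motion), hence $\Gamma_r=\tilde\Gamma_{s(r)}$, and the It\^o decomposition transforms as $u_r=\sqrt{s'(r)}\,\tilde u_{s(r)}$, $\delta_r=s'(r)\,\tilde\delta_{s(r)}$. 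One then cites \cite[Lemma~6]{chen2021almost} as a black box for the bounds on $|\tilde u_s|$ and $\tilde\delta_s$, and finishes by noting that $s'(r)=\frac{1}{(1-r)^2}\le 4$ on $[0,\tfrac12]$.

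Your direct approach and the paper's time-change approach are equivalent: the factors $\frac{1}{1-t}$ and $\frac{1}{(1-t)^2}$ you carry explicitly are exactly $\sqrt{s'(t)}$ and $s'(t)$, and your invocation of Chen's tensor estimates for $p_t$ is precisely what \cite[Lemma~6]{chen2021almost} packages for $\tilde p_s$. The paper's route is cleaner because it avoids rederiving the covariance SDE and the It\^o expansion of $\Tr[K^q]$; yours is more self-contained and makes the role of the time factor transparent.
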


\begin{proof}
The statement of the lemma is essentially \cite[Lemma 6]{chen2021almost}, up to time-change. To make the connection with  \cite{chen2021almost} we recall that, by Lemma \ref{lem:Follmersotcloc}, 
\[
dp_r(x)=p_r(x)\left\langle \frac{x-a_r}{1-r},dB_r\right\rangle,
\]
where $a_r:=\int zdp_r(z)$, and that $\Gamma_r=\Tr\left[(\Cov(p_r))^q\right]$. On the other hand, the arguments of \cite{chen2021almost} use the measure-valued process \cite[Equation (13)]{chen2021almost}:
\begin{align}
\label{eq:sdeChen}
d\tilde p_r(x)=\tilde p_r(x)\langle x-\tilde a_r,dB_r\rangle,\quad \tilde p_0=p
\end{align}
with $\tilde a_r:=\int_{\R^d}zd\tilde p_r(z)$. The connection between $(p_r)$ and $(\tilde p_r)$ is via a time change: set $s(r):=\frac{1}{1-r}-1$ and note that 
\[
d\tilde p_{s(r)}(x)=\sqrt{s'(r)}\tilde p_{s(r)}(x)\langle x-\tilde a_{s(r)},dB_r\rangle=\frac{\tilde p_{s(r)}}{1-r}\langle x-\tilde a_{s(r)},dB_r\rangle.
\]
Since the equation \eqref{eq:sdeChen} has a unique strong solution \cite[Lemma 3]{chen2021almost}, it follows that $p_r=\tilde p_{s(r)}$ a.s. if the same driving Brownian motion is used. In particular, with $\tilde \Gamma_r:=\Tr\left[(\Cov(\tilde p_r))^q\right]$, we have  $\Gamma_r=\tilde \Gamma_{s(r)}$. By \cite[Lemma 6]{chen2021almost}, 
\[
d\tilde \Gamma_r=\tilde u_rdB_r+\tilde\delta_r dr
\]
for some vector-valued process $(\tilde u_r)_{r\in [0,1]}$ and a real-valued process $(\tilde \delta_r)_{r\in [0,1]}$ which satisfy
\begin{align*}
&|\tilde u_r|\le 16q\tilde \Gamma_r^{1+\frac{1}{2q}},\\
&\tilde \delta_r\le 64a^2q^2(\log d)d^{2k-\frac{1}{q}}\tilde \Gamma_r^{1+\frac{1}{q}}.
\end{align*}
Hence, as 
\[
d\Gamma_r=d\tilde \Gamma_{s(r)}=\sqrt{s'(r)}\tilde u_{s(r)}dB_r+s'(r)\tilde\delta_{s(r)}dr,
\]
we get $u_r=\sqrt{s'(r)}\tilde u_{s(r)}$ and $\delta_r=s'(r)\tilde\delta_{s(r)}$ a.s. The proof is complete by noting that $s'(r)$ and $\sqrt{s'(r)}$ are uniformly bounded on $\left[0,\frac{1}{2}\right]$. 
\end{proof}

Extending the analysis of \cite{chen2021almost}, we can use Lemma \ref{lem:Chen} to show that $\frac{1}{\tau}$ has a sub-exponential tail.
\begin{lemma}
\label{lem:subexp}
Suppose $C_{\textnormal{kls}}\le a d^k$ for $k\ge 0$ and let $q:=\lceil\frac{1}{k}	\rceil+1$. There exists a universal constant $c$ such that, with $\alpha=2d^{\frac{1}{q}}$, we have
\[
\PP_{\gamma}\left[\frac{1}{\tau}\ge r\right]\le c_{\alpha}e^{-b_{\alpha}r}\quad\forall r\in\left[\frac{1}{r_0},\infty\right],
\] 
with
\[
c_{\alpha}=\exp\left(\frac{2a^2q(\log d)d^{2k-\frac{1}{q}}}{c}\right)\quad \textnormal{and}\quad b_{\alpha}=\frac{1}{2c^2d^{\frac{1}{q}}}.
\]
\end{lemma}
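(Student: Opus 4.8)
The plan is to reduce the tail event $\{1/\tau \ge r\} = \{\tau \le 1/r\}$ to a hitting-time event for the scalar process $\psi_{r'} := \Gamma_{r'}^{1/q} = \big(\Tr[K_{r'}^q]\big)^{1/q}$, where $K_{r'} := \Cov(p^{X_{r'},1-r'})$, and then to control that hitting time by an exponential martingale inequality. Write $s := 1/r$, so that $r \ge 1/r_0$ means $s \le r_0$, with $r_0$ to be taken small. Note that $\psi_{r'} \ge \lambda_{\max}(K_{r'})$, and that isotropy of $p$ gives $p^{X_0,1} = p$ and hence $K_0 = \Id_d$, so $\psi_0 = d^{1/q}$. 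By \eqref{eq:vcov}, $\lambda_{\max}(\nabla v(r',X_{r'})) = \frac{\lambda_{\max}(K_{r'})}{(1-r')^2} - \frac{1}{1-r'} \le \frac{\psi_{r'}}{(1-r')^2}$, so on $\{\tau \le s\}$ we must have $\psi_{r'} \ge \alpha(1-r')^2 \ge \alpha(1-r_0)^2$ for some $r' \le s$; if $r_0$ is small enough that $(1-r_0)^2 \ge \tfrac34$, then, since $\alpha = 2d^{1/q}$, this reads $\psi_{r'} \ge \Lambda := 2d^{1/q}(1-r_0)^2 \in [\tfrac32 d^{1/q},\,2d^{1/q}]$, whence $\Lambda > \psi_0$ and $\{\tau \le s\} \subseteq \{\sigma \le s\}$ with $\sigma := \inf\{r' : \psi_{r'} \ge \Lambda\}$ (for $s < r_0$ this is a genuine statement about the hitting time; the endpoint $s = r_0$ will be trivial).

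The choice $\alpha = 2d^{1/q}$ is exactly what makes this reduction nonvacuous, and I expect this to be the main point. The bound $\lambda_{\max}(K_{r'}) \le \psi_{r'}$ is lossy by a factor $\asymp d^{1/q}$ near time $0$ — at $r'=0$ it reads $1 \le d^{1/q}$ — so to land a level $\Lambda$ for $\psi$ that is a \emph{fixed multiple} of its starting value $\psi_0 = d^{1/q}$ (which is what a nontrivial large-deviation bound requires) one must take $\alpha$ of order $d^{1/q}$ and then use the smallness of $s \le r_0$ to make $(1-r')^2 \approx 1$; a cruder use of $(1-r')^{-2}\le 4$ would only produce a level below $\psi_0$. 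Equally, working with $\psi = \Gamma^{1/q}$ rather than with $\Gamma$ itself is what keeps the exponents clean.

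Now run the martingale argument. From $d\Gamma_{r'} = u_{r'}\,dB_{r'} + \delta_{r'}\,dr'$, Itô's formula gives $d\psi_{r'} = \tfrac1q\Gamma_{r'}^{\frac1q-1}u_{r'}\,dB_{r'} + \big(\tfrac1q\Gamma_{r'}^{\frac1q-1}\delta_{r'} + \tfrac{1-q}{2q^2}\Gamma_{r'}^{\frac1q-2}|u_{r'}|^2\big)dr'$, whose last (Itô) term is $\le 0$ since $q\ge 2$. On $[0,\sigma]$ we have $\psi_{r'} \le \Lambda$, i.e. $\Gamma_{r'}\le \Lambda^q$, so Lemma \ref{lem:Chen} (applicable on $[0,\tfrac12]\supseteq[0,r_0]$) turns $|u_{r'}| \le cq\Gamma_{r'}^{1+1/(2q)}$ and $\delta_{r'}\le ca^2q^2(\log d)d^{2k-1/q}\Gamma_{r'}^{1+1/q}$ into the \emph{deterministic} bounds: diffusion coefficient $\tfrac1q\Gamma_{r'}^{\frac1q-1}|u_{r'}| \le c\Lambda^{3/2}$ and drift $\tfrac1q\Gamma_{r'}^{\frac1q-1}\delta_{r'} \le ca^2q(\log d)d^{2k-1/q}\Lambda^2 =: D$. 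Hence $M_{r'} := \int_0^{r'\wedge\sigma}\tfrac1q\Gamma_u^{\frac1q-1}u_u\,dB_u$ is a martingale with quadratic variation deterministically $\le c^2\Lambda^3 s$ on $[0,s]$ — this deterministic control, bought by stopping at $\sigma$, is precisely what makes the exponential maximal inequality applicable. On $\{\sigma\le s\}$, continuity gives $\psi_\sigma = \Lambda$, so (discarding the nonpositive Itô term) $M_\sigma \ge \Lambda - d^{1/q} - Ds$; applying $\PP[\sup M \ge x] \le \exp\!\big(-x^2/(2\langle M\rangle_\infty)\big)$ for a martingale with deterministically bounded quadratic variation, and taking $r_0$ small enough that $Ds \le \tfrac12(\Lambda - d^{1/q})$ throughout $[0,r_0]$, yields
\[
\PP_\gamma\!\left[\tfrac1\tau \ge r\right] \le \PP_\gamma[\sigma\le s] \le \exp\!\left(-\frac{(\Lambda - d^{1/q} - Ds)^2}{2c^2\Lambda^3\,s}\right).
\]

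Finally I would read off the constants. Expanding $(\Lambda - d^{1/q} - Ds)^2 \ge (\Lambda - d^{1/q})^2 - 2(\Lambda - d^{1/q})Ds$ splits the bound into a prefactor $\exp\!\big(\tfrac{(\Lambda - d^{1/q})D}{c^2\Lambda^3}\big) \le \exp\!\big(\tfrac{a^2q(\log d)d^{2k-1/q}}{c}\big)$ (using $\Lambda - d^{1/q}\le\Lambda$ and the definition of $D$) times $\exp\!\big(-\tfrac{(\Lambda - d^{1/q})^2}{2c^2\Lambda^3}\cdot\tfrac1s\big) = \exp(-b_\alpha r)$. Since $\tfrac32 d^{1/q}\le\Lambda\le 2d^{1/q}$ gives $(\Lambda - d^{1/q})^2 \asymp d^{2/q}$ and $\Lambda^3 \asymp d^{3/q}$, we get $b_\alpha \asymp \tfrac{1}{c^2 d^{1/q}}$; choosing the universal constant $c$ appropriately (together with $r_0 \asymp (a^2q(\log d)d^{2k})^{-1}$, which also makes the claimed bound trivially true at the endpoint $r = 1/r_0$, where $\PP_\gamma[1/\tau\ge r] = 1$) yields exactly $b_\alpha = \tfrac{1}{2c^2 d^{1/q}}$ and $c_\alpha = \exp\!\big(\tfrac{2a^2q(\log d)d^{2k-1/q}}{c}\big)$. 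The one part needing genuine care beyond this bookkeeping is the reduction in the first two paragraphs — recognizing the right potential $\psi=\Gamma^{1/q}$ and the matching scaling $\alpha=2d^{1/q}$, and verifying that the various smallness requirements on $r_0$ are mutually compatible.
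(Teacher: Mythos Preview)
Your proof is correct and follows essentially the same strategy as the paper: reduce $\{\tau\le s\}$ to a level-crossing event for a power of $\Gamma_r=\Tr[K_r^q]$, apply It\^o's formula using Lemma~\ref{lem:Chen}, and conclude with a Gaussian-type tail bound on the resulting martingale. The only notable differences are cosmetic. You transform by $\psi=\Gamma^{1/q}$, whereas the paper uses $\eta(\Gamma)=-\Gamma^{-1/(2q)}$; the latter has the slight advantage that the martingale integrand $\frac{1}{2q}\Gamma^{-1-1/(2q)}|u|\le \tfrac{c}{2}$ is uniformly bounded \emph{without} invoking the stopping, while in your version the bound $c\psi^{3/2}\le c\Lambda^{3/2}$ needs the stopping at $\sigma$ --- but this costs nothing since you are stopping anyway. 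For the tail bound you invoke the exponential martingale inequality directly, while the paper time-changes via Dubins--Schwarz and applies Doob's maximal inequality for Brownian motion; these are equivalent routes to the same $\exp(-x^2/2V)$ estimate.
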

\begin{proof}
For $s\le r_0$ we have,
\[
\PP_{\gamma}[\tau\le s]=\PP_{\gamma}\left[\sup_{r\in [0,s]}\lambda_{\max}(\nabla v(r,X_r))\ge \alpha\right]\le \PP_{\gamma}\left[\sup_{r\in [0,s]}\lambda_{\max}(K_r)\ge 2\alpha\right]
\]
where the last inequality uses that $r_0\le \frac{1}{2}$ and that $\alpha>2$. Recalling that $\lambda_{\max}(K_r)\le \Gamma_r^{\frac{1}{q}}$ we get,
\[
\PP_{\gamma}[\tau\le s]\le\PP_{\gamma}\left[\sup_{r\in [0,s]}\Gamma_r\ge (2\alpha)^q\right]=\PP_{\gamma}\left[\sup_{r\in [0,s]} \Theta_r\ge (2\alpha)^q\right]
\]
where $(\Theta_r)$ is the stopped process given by 
\[
\Theta_r:=1_{r<\theta}\Gamma_r+1_{r\ge \theta}(2\alpha)^q\quad\text{with}\quad \theta:=\inf\{r: \Gamma_r\ge (2\alpha)^q\}. 
\]
Let $\eta(x)=-x^{-\frac{1}{2q}}$ and note that $\eta$ is monotonically increasing on $(0,\infty)$ so
\[
\PP_{\gamma}[\tau\le s]\le \PP_{\gamma}\left[\sup_{r\in [0,s]}\eta( \Theta_r)\ge \eta((2\alpha)^q)\right]= \PP_{\gamma}\left[\sup_{r\in [0,s]}\eta(\Theta_r)\ge- \frac{1}{\sqrt{2\alpha}}\right].
\]
Moreover, since $\eta(\Theta_r)=-\frac{1}{\sqrt{2\alpha}}$ for $r\ge \theta$, we have 
\begin{align}
\label{eq:tauh}
\PP_{\gamma}[\tau\le s]\le \PP_{\gamma}\left[\sup_{r\in [0,\min(\theta,s)]}\eta(\Theta_r)\ge- \frac{1}{\sqrt{2\alpha}}\right].
\end{align}
Applying It\^o's formula to $\eta(\Theta_r)$, and using Lemma \ref{lem:Chen} as well as $\Gamma_r=\Theta_r$ for $r\le \theta$,  we get, for $r\le \theta$,
\begin{align*}
d\eta(\Theta_r)&=\frac{1}{2q\Gamma_r^{1+\frac{1}{2q}}}d\Gamma_r-\frac{1}{2}\frac{1}{2q}\left(\frac{1}{2q}+1\right)\frac{1}{\Gamma_r^{\frac{1}{2q}+2}}d[\Gamma]_r\le \frac{1}{2q\Gamma_r^{1+\frac{1}{2q}}}d\Gamma_r\\
&=\frac{1}{2q\Gamma_r^{1+\frac{1}{2q}}}u_rdB_r+\frac{1}{2q\Gamma_r^{1+\frac{1}{2q}}}\delta_r dr\\
&\le \frac{1}{2q\Gamma_r^{1+\frac{1}{2q}}}u_rdB_r+ \frac{1}{2}ca^2q(\log d)d^{2k-\frac{1}{q}}\Gamma_r^{\frac{1}{2q}}dr\\
&= \frac{1}{2q\Theta_r^{1+\frac{1}{2q}}}u_rdB_r+ \frac{1}{2}ca^2q(\log d)d^{2k-\frac{1}{q}}\Theta_r^{\frac{1}{2q}}dr.
\end{align*}
Define the martingale $M_s:=\int_0^s\frac{1}{2q\Theta_r^{1+\frac{1}{2q}}}u_rdB_r$ and note that, since $p$ is isotropic, we have $\eta(\Theta_0)=\eta(\Gamma_0)=\eta(d)=-d^{-\frac{1}{2q}}$. Hence,
\[
\eta(\Theta_s)\le -d^{-\frac{1}{2q}}+M_s+\int_0^s\frac{1}{2}ca^2q(\log d)d^{2k-\frac{1}{q}}\Theta_r^{\frac{1}{2q}}dr\le -d^{-\frac{1}{2q}}+M_s+\frac{1}{2}sca^2q(\log d)d^{2k-\frac{1}{q}}\sqrt{2}\sqrt{\alpha},
\]
where the last inequality holds by the definition of $(\Theta_r)$. Plugging this estimate into \eqref{eq:tauh} yields
\[
\PP_{\gamma}[\tau\le s]\le \PP_{\gamma}\left[\sup_{r\in [0,\min(\theta,s)]}M_r\ge- \frac{1}{\sqrt{2\alpha}}+d^{-\frac{1}{2q}}-\frac{\sqrt{2}}{2}sca^2q(\log d)d^{2k-\frac{1}{q}}\sqrt{\alpha}\right].
\]
By the Dubins-Schwarz theorem we have $M_s=Z_{[M]_s}$ with $(Z_s)$ a standard Brownian motion in $\R$, and by Lemma \ref{lem:Chen},
\[
[M_s]=\int_0^s\frac{1}{4q^2\Theta_r^{2+\frac{2}{2q}}}|u_r|^2dr\le \frac{c^2}{4}s.
\]
Hence,
\begin{align*}
&\PP_{\gamma}\left[\sup_{r\in [0,\min(\theta,s)]}M_r\ge- \frac{1}{\sqrt{2\alpha}}+d^{-\frac{1}{2q}}-\frac{\sqrt{2}}{2}sca^2q(\log d)d^{2k-\frac{1}{q}}\sqrt{\alpha}\right]\\
&=\PP_{\gamma}\left[\sup_{r\in [0,\min(\theta,s)]}Z_{[M]_r}\ge- \frac{1}{\sqrt{2\alpha}}+d^{-\frac{1}{2q}}-\frac{\sqrt{2}}{2}sca^2q(\log d)d^{2k-\frac{1}{q}}\sqrt{\alpha}\right]\\
&\le \PP_{\gamma}\left[\sup_{r\in [0,\frac{c^2}{4}\min(\theta,s)]}Z_r\ge- \frac{1}{\sqrt{2\alpha}}+d^{-\frac{1}{2q}}-\frac{\sqrt{2}}{2}sa^2q(\log d)d^{2k-\frac{1}{q}}\sqrt{\alpha}\right]\\
&\le \PP_{\gamma}\left[\sup_{r\in [0,\frac{c^2}{4}s]}Z_r\ge- \frac{1}{\sqrt{2\alpha}}+d^{-\frac{1}{2q}}-\frac{\sqrt{2}}{2}sca^2q(\log d)d^{2k-\frac{1}{q}}\sqrt{\alpha}\right].
\end{align*}
Applying Doob's maximal inequality for Brownian motion we get 
\begin{align*}
& \PP_{\gamma}\left[\sup_{r\in [0,\frac{c^2}{4}s]}Z_r\ge- \frac{1}{\sqrt{2\alpha}}+d^{-\frac{1}{2q}}-\frac{\sqrt{2}}{2}sca^2q(\log d)d^{2k-\frac{1}{q}}\sqrt{\alpha}\right]\\
 &\le\exp\left(-2\frac{\left[- \frac{1}{\sqrt{2\alpha}}+d^{-\frac{1}{2q}}-\frac{\sqrt{2}}{2}sca^2q(\log d)d^{2k-\frac{1}{q}}\sqrt{\alpha}\right]^2}{c^2s}\right).
\end{align*}
Now let $\alpha:=2d^{\frac{1}{q}}$ so that 
\begin{align*}
&\left[- \frac{1}{\sqrt{2\alpha}}+d^{-\frac{1}{2q}}-\frac{\sqrt{2}}{2}sa^2q(\log d)d^{2k-\frac{1}{q}}\sqrt{\alpha}\right]^2=\left[\frac{1}{2d^{\frac{1}{2q}}}-sca^2q(\log d)d^{2k-\frac{1}{2q}}\right]^2\\
&=\frac{1}{4d^{\frac{1}{q}}}-sca^2q(\log d)d^{2k-\frac{1}{q}}+s^2c^2a^4q^2(\log d)^2d^{4k-\frac{1}{q}}.
\end{align*}
Omitting the (positive) last term above we get
\begin{align*}
& \PP_{\gamma}\left[\sup_{r\in [0,\frac{c^2}{4}s]}Z_r\ge- \frac{1}{\sqrt{2\alpha}}+d^{-\frac{1}{2q}}-\frac{\sqrt{2}}{2}sca^2q(\log d)d^{2k-\frac{1}{q}}\sqrt{\alpha}\right]\\
 &\le\exp\left(-\frac{1}{2c^2sd^{\frac{1}{q}}}\right)\exp\left(\frac{2a^2q(\log d)d^{2k-\frac{1}{q}}}{c}\right).
\end{align*}
\end{proof}
We now complete the proof of the theorem. By Lemma \ref{lem:taumoment} and Lemma \ref{lem:subexp}, 
\[
\EE_{\gamma}\left[\frac{1}{\tau^{m}}\right]\le \frac{1}{r_0^m}\left[1+\left(\frac{1}{b^m_{\alpha}}+1\right)m!m\exp\left(\frac{2a^2q(\log d)d^{2k-\frac{1}{q}}}{c}\right)\exp\left(-\frac{1}{2c^2d^{\frac{1}{q}}r_0}\right)\right].
\]
We will choose $r_0\in \left[0,\frac{t}{2}\right]$ such that the two exponentials cancel each other. Setting 
\[
r_0=\frac{t}{4qca^2(\log d)d^{2k}}
\]
we get
\[
\EE_{\gamma}\left[\frac{1}{\tau^{m}}\right]\le \left(\frac{4qca^2(\log d)d^{2k}}{t}\right)^m\left[1+\left(\frac{1}{b^m_{\alpha}}+1\right)m!m\right].
\]

By  \cite[Theorem 1.2]{Klartag}, we may take $k=\frac{\log\log d}{\log d}$, and hence, $q=\lceil\frac{1}{k}\rceil+1=c'\frac{\log d}{\log \log d}$ for some $c'$. By increasing $c'$, we may assume that $2k=\frac{2}{q-1}$. Hence, using $\frac{1}{b^m_{\alpha}}+1\le \frac{2}{b^m_{\alpha}}=2^{m+1}c^{2m}d^{\frac{m}{q}}$ and $\left[1+\left(\frac{1}{b^m_{\alpha}}+1\right)m!m\right]\le  2\left(\frac{1}{b^m_{\alpha}}+1\right)m!m\le 2^{m+2}c^{2m}m!md^{\frac{m}{q}}$, we get 
\begin{align*}
\EE_{\gamma}\left[\frac{1}{\tau^{m}}\right]&\le  \left(\frac{4qca^2(\log d)d^{2k}}{t}\right)^m2^{m+2}c^{2m}m!md^{\frac{m}{q}} \le \frac{1}{t^m}(32)^mc^{3m}a^{2m}m!m[q(\log d)d^{\frac{2}{q-1}+\frac{1}{q}}]^m\\
&\le \frac{1}{t^m}(32)^mc^{3m}a^{2m}m!m[(\log d)qd^{\frac{4}{q-1}}]^m.
\end{align*}
We have 
\begin{align*}
&q^m=(c')^m\left(\frac{\log d}{\log \log d}\right)^m\le (c')^m (\log d)^m,\\
&d^{\frac{4m}{q-1}}=d^{4mk}=(\log d)^{4m},
\end{align*}
so
\[
\EE_{\gamma}\left[\frac{1}{\tau^{m}}\right]\le m! m[16c^3a^2]^m\frac{(\log d)^{6m}}{t^m},
\]
for some constant $a>0$. By \eqref{eq:malistop},
\begin{align*}
\EE_{\gamma}\left[|\D X_t|_{\mathcal L(H,\R^d)}^{2m}\right]&\le  e^{2m\alpha r_0}(2m)! 2m[32c^3a^2]^{2m}(\log d)^{12m}\\
&=\exp\left(2m\frac{2d^{\frac{1}{q}}t}{4qca^2(\log d)d^{\frac{2}{q-1}}}
\right)(2m)! 2m[32c^3a^2]^{2m}(\log d)^{12m}\\
&\le \exp\left(m\frac{1}{cqa^2(\log d)}
\right)(2m)! 2m[32c^3a^2]^{2m}(\log d)^{12m}\le  e^{c''m}(2m)! 2m[32c^3a^2]^{2m}(\log d)^{12m},
\end{align*}
where we used that $d^{\frac{1}{q}-\frac{2}{q-1}}<1$, and that $\exp\left(m\frac{1}{cqa^2(\log d)}
\right)\le e^{c''m}$ for some constant $c''$, as $d\to\infty$. Taking
\[
\zeta:=[32c^3a^2e^{c''}]^2,
\] 
and using $2m(2m)!\le (2m+1)!$, completes the proof.
\end{proof}

\section{Functional inequalities}
\label{sec:funct}
The contraction properties provided by Theorem \ref{thm:causalmain} and Theorem \ref{thm:KLS} allow us to prove functional inequalities for measures in Euclidean spaces. The main  goal of this section is to demonstrate the power of the contraction machinery developed in this paper, rather than be exhaustive, so we focus only on a number of functional inequalities. As a consequence of the almost-sure contraction of Theorem \ref{thm:causalmain}, we will prove $\Psi$-Sobolev inequalities (Theorem \ref{thm:PHIsob}), $q$-Poincar\'e inequalities (Theorem \ref{thm:qpoinc}), and isoperimetric inequalities (Theorem \ref{thm:isoper}). As a consequence of the contraction in expectation of Theorem \ref{thm:KLS},  we will construct Stein kernels and prove central limit theorems (Theorem \ref{thm:SteinIntro} and Corollary \ref{cor:CLTIntro}). 

We start with almost-sure contractions; the next lemma describes the behavior of derivatives under such contractions.

\begin{lemma}
\label{lem:functcontract}
Let $\Upsilon:\Omega\to \R^d$ be an almost-sure contraction with constant $C$ and let $\eta:\R^d\to \R$ be a continuously differentiable Lipschitz function. Then,
\[
D(\eta\circ\Upsilon)=(\mathcal D\Upsilon)^*\nabla\eta(\Upsilon)\quad \gamma\textnormal{-a.e.}
\]
where $(\mathcal D\Upsilon)^*:\R^d\to H$ is the adjoint of $\mathcal D\Upsilon$. Further,
\[
|D(\eta\circ\Upsilon)|_H\le C |\nabla\eta(\Upsilon)|\quad\gamma\textnormal{-a.e.}
\]
\end{lemma}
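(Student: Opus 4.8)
The plan is to reduce both assertions to the Malliavin chain rule. \emph{Step 1 (membership in the Sobolev class).} Since $\Upsilon$ is an almost-sure contraction with constant $C$, Lemma \ref{lem:rad} gives that $\mathcal D\Upsilon$ exists $\gamma$-a.e.\ with $|\mathcal D\Upsilon|_{\mathcal L(H,\R^d)}\le C$ $\gamma$-a.e.; moreover, as recorded in the proof of that lemma, $\mathcal D\Upsilon$ is the G\^ateaux derivative of $\Upsilon$ along $\HH$ and each coordinate $\Upsilon^i$ lies in the corresponding Sobolev class. Reading off coordinates from $(\mathcal D\Upsilon[\dot h])_i=\langle D\Upsilon^i,\dot h\rangle_H$, we also get $|D\Upsilon^i|_H\le|\mathcal D\Upsilon|_{\mathcal L(H,\R^d)}\le C$ $\gamma$-a.e. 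Finally, each $\Upsilon^i$ is $C$-Lipschitz for $|\cdot|_{\HH}$, so by Gaussian concentration on Wiener space it has sub-Gaussian tails under $\gamma$; in particular $\Upsilon^i\in L^q(\gamma)$ for all $q\ge 1$, whence $\Upsilon^i\in\mathbb D^{1,q}$ for all $q\ge 1$ with Malliavin derivative $D\Upsilon^i$.

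\emph{Step 2 (chain rule and identification of the adjoint).} Because $\eta$ is continuously differentiable and Lipschitz, $\nabla\eta$ is bounded, so the Malliavin chain rule \cite[Proposition 1.2.3]{nualart2006malliavin}, applied to $\eta$ and $F=\Upsilon\in(\mathbb D^{1,2})^d$, gives $\eta\circ\Upsilon\in\mathbb D^{1,2}$ and, $\gamma$-a.e.,
\[
D(\eta\circ\Upsilon)=\sum_{i=1}^d\partial_i\eta(\Upsilon)\,D\Upsilon^i.
\]
To recognize the right-hand side, compute the adjoint of $\mathcal D\Upsilon$: for $v\in\R^d$ and $\dot h\in H$,
\[
\langle(\mathcal D\Upsilon)^*v,\dot h\rangle_H=\langle v,\mathcal D\Upsilon[\dot h]\rangle=\sum_{i=1}^d v_i\int_0^1\langle D_t\Upsilon^i,\dot h_t\rangle\,dt=\Big\langle\sum_{i=1}^d v_i\,D\Upsilon^i,\dot h\Big\rangle_H,
\]
so $(\mathcal D\Upsilon)^*v=\sum_{i=1}^d v_i\,D\Upsilon^i$. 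Taking $v=\nabla\eta(\Upsilon)$ yields the claimed identity $D(\eta\circ\Upsilon)=(\mathcal D\Upsilon)^*\nabla\eta(\Upsilon)$.

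\emph{Step 3 (the norm bound).} For a bounded operator between Hilbert spaces the operator norm of the adjoint equals that of the operator, hence, $\gamma$-a.e.,
\[
|D(\eta\circ\Upsilon)|_H=|(\mathcal D\Upsilon)^*\nabla\eta(\Upsilon)|_H\le|(\mathcal D\Upsilon)^*|_{\mathcal L(\R^d,H)}\,|\nabla\eta(\Upsilon)|=|\mathcal D\Upsilon|_{\mathcal L(H,\R^d)}\,|\nabla\eta(\Upsilon)|\le C\,|\nabla\eta(\Upsilon)|.
\]
The only genuinely non-bookkeeping point is Step 1: an almost-sure contraction is specified only by a pointwise Lipschitz inequality, so one must first certify that $\Upsilon$ belongs to a Sobolev class on Wiener space (and that $\mathcal D\Upsilon$ is its Malliavin derivative) before the chain rule of \cite{nualart2006malliavin} may be invoked; this is exactly what Lemma \ref{lem:rad} (via \cite[Theorem 5.7.2]{bogachev1998gaussian}) together with Gaussian concentration supply. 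Everything else is routine Hilbert-space manipulation.
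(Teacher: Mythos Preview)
Your proof is correct and follows essentially the same approach as the paper: both invoke the Malliavin chain rule (the paper cites \cite[Proposition 1.2.4]{nualart2006malliavin}, you cite Proposition 1.2.3, but either applies since $\eta$ is $C^1$ with bounded gradient), identify $D(\eta\circ\Upsilon)$ with $(\mathcal D\Upsilon)^*\nabla\eta(\Upsilon)$ via duality, and then use $|(\mathcal D\Upsilon)^*|_{\mathcal L(\R^d,H)}=|\mathcal D\Upsilon|_{\mathcal L(H,\R^d)}\le C$ from Lemma~\ref{lem:rad}. Your Step~1 is more explicit than the paper about verifying $\Upsilon^i\in\mathbb D^{1,q}$ before applying the chain rule, which is a fair point the paper leaves implicit.
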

\begin{proof}
To compute $D(\eta\circ\Upsilon)$ we note that, by duality, it can be viewed as the operator $\mathcal D(\eta\circ\Upsilon):H\to \R$ acting on $h\in H$ by $\mathcal D(\eta\circ\Upsilon)[h]=\langle  D(\eta\circ\Upsilon),h\rangle_H$. By the chain rule  \cite[Proposition 1.2.3]{nualart2006malliavin},
\[
\langle  D(\eta\circ\Upsilon),h\rangle_H=\int_0^1(\nabla\eta(\Upsilon))^*D_t\Upsilon\dot{h}_tdt=\langle \nabla\eta(\Upsilon),\mathcal D\Upsilon[h]\rangle=\langle  (\mathcal D\Upsilon)^*\nabla\eta(\Upsilon),h\rangle_H
\]
so $  D(\eta\circ\Upsilon)= (\mathcal D\Upsilon)^*\nabla\eta(\Upsilon)$. 
Next, using
\[
D(\eta\circ\Upsilon)=(\mathcal D\Upsilon)^*\nabla\eta(\Upsilon)\quad \gamma\textnormal{-a.e.},
\]
the bound $|(\mathcal D\Upsilon)^*|_{\mathcal L(\R^d,H)}=|\mathcal D\Upsilon|_{\mathcal L(H,\R^d)}\le C$ (Lemma \ref{lem:rad}) implies
\[
|D(\eta\circ\Upsilon)|_H\le C |\nabla\eta(\Upsilon)|\quad\gamma\textnormal{-a.e.}
\]
\end{proof}

With Lemma \ref{lem:functcontract} in hand we can now start the proofs of the functional inequalities which follow from Theorem \ref{thm:causalmain}. We begin with the $\Psi$-Sobolev inequalities \cite{chafai2002entropies}. 
\begin{definition}
\label{def:PHIsob}
Let $\mathcal I$ be a closed interval (possibly unbounded) and let $\Psi:\mathcal I\to \R$ be a twice-differentiable function. We say that $\Psi$ is a \emph{divergence} if each of the functions $\Psi, \Psi'', -\frac{1}{\Psi''}$ is convex. Given a probability measure $\nu$ on $\R^d$  and a function $\eta:\R^d\to\mathcal I$, such that $\int \eta\, d\nu\in\mathcal I$, we define
\[
\Ent_{\nu}^{\Psi}(\eta):=\int_{\R^d}\Psi(\eta)d\nu-\Psi\left(\int_{\R^d}\eta\,d\nu\right).
\]
\end{definition}
Some classical examples of divergences are $\Psi:\R\to \R$ with $\Psi(x)=x^2$ (Poincar\'e inequality) and $\Psi:\R_{\ge 0}\to \R$ with $\Psi(x)=x\log x$ (log-Sobolev inequality). 
\begin{theorem}{\textnormal{($\Psi$-Sobolev inequalities)}}
\label{thm:PHIsob}
 Let $\Psi:\mathcal I\to \R$ be a divergence. 
\vspace{0.1in}

\begin{enumerate}
\item Let $p$ be a $\cc$-log-concave measure with $S:=\textnormal{diam}(\supp (p))$ and let $\eta:\R^n\to \mathcal I$ be any continuously differentiable Lipschitz function such that $\int \eta^2\, dp\in\mathcal I$. 
\vspace{0.1in}

\begin{itemize}
\item If $\cc S^2\ge 1$ then 
\[
\Ent_p^{\Psi}(\eta)\le \frac{1}{2\cc}\int_{\R^d}\Psi''(\eta)|\nabla \eta|^2 dp. 
\]

\item If $\cc S^2< 1$ then
\[
\Ent_{p}^{\Psi}(\eta)\le \frac{e^{1-\cc S^2}+1}{4}S^2\int_{\R^d}\Psi''(\eta)|\nabla \eta|^2 dp.
\]
\end{itemize}

\item Fix a probability measure $\nu$ on $\R^d$ supported on a ball of radius $R$ and let $p:=\gamma_d^{a,\Sigma}\star \nu$ where $\gamma_d^{a,\Sigma}$ is a the Gaussian measure on $\R^d$ with mean $a$ and covariance $\Sigma$. Set $\lambda_{\min}:=\lambda_{\min}(\Sigma)$ and $\lambda_{\max}:=\lambda_{\max}(\Sigma)$. Then, for any $\eta:\R^n\to \mathcal I$, a continuously differentiable Lipschitz function such that $\int \eta^2\, dp\in\mathcal I$, we have
\begin{align*}
\Ent_{p}^{\Psi}(\eta)\le \frac{\lambda_{\min}\lambda_{\max}}{2R^2}\left(e^{2 \frac{R^2}{\lambda_{\min}}}-1\right)\int_{\R^d}\Psi''(\eta)|\nabla \eta|^2 d p.
\end{align*}
\end{enumerate} 
\end{theorem}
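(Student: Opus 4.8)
The plan is to derive the $\Psi$-Sobolev inequalities as a transfer of the corresponding inequalities on Wiener space via the Brownian transport map, using the almost-sure contraction bounds from Theorem~\ref{thm:causalmain} together with the chain-rule estimate of Lemma~\ref{lem:functcontract}. The starting point is the fact that the Wiener measure $\gamma$ satisfies the $\Psi$-Sobolev inequality with constant $\tfrac12$: for any divergence $\Psi$ and any suitably regular $F:\Omega\to\mathcal I$,
\[
\Ent_{\gamma}^{\Psi}(F)\le \frac{1}{2}\,\EE_{\gamma}\!\left[\Psi''(F)\,|DF|_H^2\right],
\]
which is the classical Bakry--\'Emery-type $\Psi$-entropy inequality for the Ornstein--Uhlenbeck/Wiener semigroup (e.g.\ via \cite{chafai2002entropies}), generalizing the Gaussian Poincar\'e and log-Sobolev inequalities quoted in the introduction. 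One should state this as the input (citing the $\Psi$-entropy literature), since it holds for the infinite-dimensional Gaussian measure exactly as it does for $\gamma_d$.

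First I would set $F=\eta\circ X_1$, where $X_1:\Omega\to\R^d$ is the Brownian transport map from $\gamma$ to $p$. Since $X_1{}_*\gamma=p$, we have $\Ent_{\gamma}^{\Psi}(\eta\circ X_1)=\Ent_{p}^{\Psi}(\eta)$, and likewise $\EE_\gamma[\Psi''(\eta\circ X_1)\,|D(\eta\circ X_1)|_H^2]=\EE_p[\Psi''(\eta)\,\cdot\,]$ once the Malliavin derivative is controlled. By Lemma~\ref{lem:functcontract}, if $X_1$ is an almost-sure contraction with constant $C$ then $|D(\eta\circ X_1)|_H\le C\,|\nabla\eta(X_1)|$ $\gamma$-a.e., hence
\[
\Ent_{p}^{\Psi}(\eta)=\Ent_{\gamma}^{\Psi}(\eta\circ X_1)\le \frac{1}{2}\,\EE_{\gamma}\!\left[\Psi''(\eta\circ X_1)\,|D(\eta\circ X_1)|_H^2\right]\le \frac{C^2}{2}\,\EE_{p}\!\left[\Psi''(\eta)\,|\nabla\eta|^2\right].
\]
For part~(1) I plug in the two cases of Theorem~\ref{thm:causalmain}(1): $C^2=1/\cc$ when $\cc S^2\ge1$ (giving the constant $\tfrac{1}{2\cc}$) and $C^2=\tfrac{e^{1-\cc S^2}+1}{2}S^2$ when $\cc S^2<1$ (giving $\tfrac{e^{1-\cc S^2}+1}{4}S^2$). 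For part~(2), I first reduce $p=\gamma_d^{a,\Sigma}\star\nu$ to the centered-isotropic-Gaussian-convolution case by the affine change of variables $y=a+\Sigma^{1/2}x$: this turns $\Sigma$-variance into identity, rescales the support radius of $\nu$ from $R$ to $R/\sqrt{\lambda_{\min}}$ in the worst direction, and changes gradients by a factor $|\Sigma^{1/2}|_{\mathrm{op}}^2=\lambda_{\max}$ (upper bound) — so the $\gamma_d\star\nu'$ case with radius $R/\sqrt{\lambda_{\min}}$ from Theorem~\ref{thm:causalmain}(2) yields $C^2=\tfrac{\lambda_{\min}}{R^2}\big(e^{2R^2/\lambda_{\min}}-1\big)$ and, after accounting for the $\lambda_{\max}$ gradient distortion, the stated constant $\tfrac{\lambda_{\min}\lambda_{\max}}{2R^2}\big(e^{2R^2/\lambda_{\min}}-1\big)$.

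The main obstacle is a technical regularity/domain issue rather than a conceptual one: one must justify that $\eta\circ X_1$ lies in the domain where the $\Psi$-Sobolev inequality on Wiener space applies and that the chain rule of Lemma~\ref{lem:functcontract} is legitimate here — i.e.\ $\eta$ continuously differentiable and Lipschitz, $X_1\in\mathbb D^{1,q}$ with enough integrability (which holds by Proposition~\ref{prop:existsde} and the moment bounds implied by compact support or convexity), and the composition hence in $\mathbb D^{1,2}$ with $\Psi''(\eta\circ X_1)$ integrable against $|D(\eta\circ X_1)|_H^2$. A secondary point in part~(2) is bookkeeping the affine rescaling: one should verify the scale-invariance logic (the quantity $R^2/\lambda_{\min}$ is the natural dimensionless parameter, exactly as $\cc S^2$ is in part~(1)) and that taking $\lambda_{\max}$ as the operator-norm bound and $\lambda_{\min}$ inside the exponential is the correct (worst-case) pairing. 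Both of these are routine once set up carefully, so I would state the Wiener-space $\Psi$-Sobolev inequality as a cited input, invoke Lemma~\ref{lem:functcontract} and Theorem~\ref{thm:causalmain}, and then spend the bulk of the written proof on the affine reduction in part~(2).
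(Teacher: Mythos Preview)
Your proposal is correct and follows essentially the same approach as the paper: cite the $\Psi$-Sobolev inequality for the Wiener measure $\gamma$ from \cite{chafai2002entropies}, apply it to $F=\eta\circ X_1$, use Lemma~\ref{lem:functcontract} to bound $|DF|_H\le C|\nabla\eta(X_1)|$, and plug in the contraction constants from Theorem~\ref{thm:causalmain}; for part~(2) the paper does exactly the affine reduction you describe (apply part~(1) to $\tilde p=\gamma_d\star\tilde\nu$ with $\tilde\nu$ the law of $\Sigma^{-1/2}Y$, then undo the change of variables, picking up the $\lambda_{\max}$ factor from $|\nabla\tilde\eta|^2\le\lambda_{\max}|\nabla\eta|^2$). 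One minor arithmetic slip: the contraction constant from Theorem~\ref{thm:causalmain}(2) with radius $R/\sqrt{\lambda_{\min}}$ is $C^2=\tfrac{\lambda_{\min}}{2R^2}(e^{2R^2/\lambda_{\min}}-1)$, not $\tfrac{\lambda_{\min}}{R^2}(\cdots)$; this does not affect your argument's structure.
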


\begin{proof}
$~$

\begin{enumerate}
\item We will use the fact \cite[Theorem 4.2]{chafai2002entropies} that $\Psi$-Sobolev inequalities hold for the Wiener measure $\gamma$:
\[
\Ent_{\gamma}^{\Psi}(F)\le \frac{1}{2}\EE_{\gamma}\left[\Psi''(F)|DF|_{H}^2\right]
\]
for any $F:\Omega\to \mathcal I$ which is $L^2$-integrable with respect to $\gamma$. Let $(X_t)_{t\in [0,1]}$ be the F\"ollmer process associated to $p$, so that $X_1\sim p$, and suppose that $X_1:\Omega\to \R^d$ is an almost-sure contraction with constant $C$. 
Given $\eta$ let $F(\omega):=(\eta\circ X_1)(\omega)$.
Then, by Lemma \ref{lem:functcontract} and \cite[Proposition 1.2.4]{nualart2006malliavin},
\begin{align*}
\Ent_{p}^{\Psi}(\eta)&=\Ent_{\gamma}^{\Psi}(F)\le \frac{1}{2}\EE_{\gamma}\left[\Psi''(F)|DF|_H^2\right]\le  \frac{C^2}{2}\EE_{\gamma}\left[\Psi''(\eta\circ X_1)|\nabla\eta(X_1)|^2\right]=\frac{C^2}{2}\int_{\R^d}\Psi''(\eta)|\nabla \eta|^2 dp.
\end{align*}
The proof is complete by Theorem \ref{thm:causalmain}.\\
%

\item Let $Y\sim \nu$, let $\tilde \nu$ be the law $\Sigma^{-1/2}Y$, and  define $\tilde p:=\gamma_d\star \tilde \nu$. Set $\lambda_{\min}:=\lambda_{\min}(\Sigma)$ and $\lambda_{\max}:=\lambda_{\max}(\Sigma)$. The argument of part (1) gives,
\[
\Ent_{\tilde p}^{\Psi}(\eta)\le\frac{e^{2\lambda_{\min}^{-1} R^2}-1}{2\lambda_{\min}^{-1} R^2}\int_{\R^d}\Psi''(\eta)|\nabla \eta|^2 d\tilde p.
\]
Let $p=\gamma_d^{a,\Sigma}\star \nu$ and let $\tilde X\sim \tilde p$ so that $\Sigma^{1/2}\tilde X+a\sim p$. Given $\eta$ let $\tilde \eta:=\eta(\Sigma^{1/2}x+a)$ so that
\begin{align*}
\Ent_{p}^{\Psi}(\eta)=\Ent_{\tilde p}^{\Psi}(\tilde \eta)\le \frac{e^{2\lambda_{\min}^{-1} R^2}-1}{2\lambda_{\min}^{-1} R^2}\int_{\R^d}\Psi''(\tilde \eta)|\nabla \tilde \eta|^2 d\tilde p.
\end{align*}
Since $\nabla \tilde \eta(x)=\Sigma^{1/2}\nabla \eta(\Sigma^{1/2}x+a)$ we have $|\nabla \tilde \eta(x)|^2\le \lambda_{\max} |\nabla \eta(\Sigma^{1/2}x+a)|^2$ so
\begin{align*}
\Ent_{p}^{\Psi}(\eta)\le \frac{\lambda_{\min}\lambda_{\max}}{2R^2}\left(e^{2 \frac{R^2}{\lambda_{\min}}}-1\right)\int_{\R^d}\Psi''(\eta)|\nabla \eta|^2 d p.
\end{align*}
\end{enumerate}
\end{proof}

\begin{theorem}{\textnormal{($q$-Poincar\'e inequalities)}}
\label{thm:qpoinc}
Let $q\in [1,\infty)$ and set $c_q:=1_{q\in [1,2)}\frac{\pi}{2}+1_{q\in [2,\infty)}\sqrt{q-1}$. Let $\eta:\R^n\to \mathcal \R$ be any continuously differentiable Lipschitz function such that $\int \eta\,dp=0$ and $\eta\in L^q(\gamma_d)$.

\begin{enumerate}
\item Let $p$ be a $\cc$-log-concave measure with $S:=\textnormal{diam}(\supp (p))$. 
\vspace{0.1in}

\begin{itemize}
\item If $\cc S^2\ge 1$ then 
\[
\EE_p[\eta^q]\le \frac{1}{\cc^{q/2}}c_q^q\EE_p[|\nabla \eta|^q].
\]

\item If $\cc S^2< 1$ then
\[
\EE_p[\eta^q]\le\left(\frac{e^{1-\cc S^2}+1}{2}\right)^{q/2}S^qc_q^q\EE_p[|\nabla \eta|^q].
\]
\end{itemize}

\item Fix a probability measure $\nu$ on $\R^d$ supported on a ball of radius $R$ and let $p:=\gamma_d^{a,\Sigma}\star \nu$ where $\gamma_d^{a,\Sigma}$ is a the Gaussian measure on $\R^d$ with mean $a$ and covariance $\Sigma$. Then, with $\lambda_{\min}:=\lambda_{\min}(\Sigma)$ and $\lambda_{\max}:=\lambda_{\max}(\Sigma)$, we have
\[
\EE_{p}[\eta^q]\le    c_q^q \frac{1}{2^{q/2}}\frac{(\lambda_{\min}\lambda_{\max})^{q/2}}{R^q}(e^{2\frac{R^2}{\lambda_{\min}}}-1)^{q/2}\EE_{p}[|\nabla \eta|^q].
\]
\end{enumerate} 
\end{theorem}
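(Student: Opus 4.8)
The plan is to transfer, via the Brownian transport map $X_1$, the $L^q$-Poincar\'e inequality for the Wiener measure, exactly as Theorem~\ref{thm:PHIsob} transfers the $\Psi$-Sobolev inequality. The only ingredient not already in the paper is the \emph{$L^q$-Poincar\'e inequality on Wiener space}: for every even integer $q$ and every $F\in L^q(\Omega,\gamma)$ with $\EE_\gamma F=0$,
\[
\EE_\gamma\!\left[F^q\right]\le (q-1)^{q/2}\,\EE_\gamma\!\left[|DF|_H^q\right].
\]
This is elementary: writing $F=-\int_0^\infty L_{\mathrm{OU}}Q_sF\,ds$ for the Ornstein--Uhlenbeck semigroup $(Q_s)_{s\ge 0}$ on $\Omega$ and integrating against $F^{q-1}$, one uses the identity $-\EE_\gamma[F^{q-1}L_{\mathrm{OU}}Q_sF]=(q-1)\EE_\gamma[F^{q-2}\langle DF,DQ_sF\rangle_H]$, the commutation $DQ_s=e^{-s}Q_sD$, the pointwise Jensen bound $|Q_sDF|_H\le Q_s|DF|_H$, the bound $\EE_\gamma[(Q_s|DF|_H)^q]\le\EE_\gamma[|DF|_H^q]$, H\"older's inequality with exponents $\tfrac{q}{q-2},q,q$, and $\int_0^\infty e^{-s}\,ds=1$, to arrive at $\EE_\gamma[F^q]\le (q-1)\,\EE_\gamma[F^q]^{\frac{q-2}{q}}\,\EE_\gamma[|DF|_H^q]^{\frac{2}{q}}$, which rearranges to the claim. (That $(q-1)^{q/2}$ is the right constant is forced: taking $p=\gamma_d$, i.e.\ $\cc=1$ and $S=\infty$, in the statement to be proved is exactly this inequality.)

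\emph{Part (1).} Let $(X_t)_{t\in[0,1]}$ be the F\"ollmer process of $p$, so $X_1\sim p$, and let $C$ be the almost-sure contraction constant of $X_1$ from Theorem~\ref{thm:causalmain}(1), namely $C=\cc^{-1/2}$ when $\cc S^2\ge 1$ and $C=\big(\tfrac{e^{1-\cc S^2}+1}{2}\big)^{1/2}S$ when $\cc S^2<1$. For $\eta$ as in the statement set $F:=\eta\circ X_1$; then $\EE_\gamma F=\int\eta\,dp=0$, and $F\in L^q(\gamma)$ since $\eta$ is Lipschitz and $p$ has finite $q$-th moment (either $\cc>0$ or $S<\infty$). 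Applying the $L^q$-Poincar\'e inequality to $F$, then the bound $|DF|_H\le C\,|\nabla\eta(X_1)|$ of Lemma~\ref{lem:functcontract}, and then $(X_1)_*\gamma=p$, gives
\[
\EE_p[\eta^q]=\EE_\gamma[F^q]\le (q-1)^{q/2}\EE_\gamma[|DF|_H^q]\le (q-1)^{q/2}C^q\EE_\gamma[|\nabla\eta(X_1)|^q]=(q-1)^{q/2}C^q\EE_p[|\nabla\eta|^q],
\]
and inserting the two values of $C$ yields the two displayed inequalities.

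\emph{Part (2).} Reduce to the case $\Sigma=\Id_d$, $a=0$ as in the proof of Theorem~\ref{thm:PHIsob}(2): with $Y\sim\nu$, let $\tilde\nu$ be the law of $\Sigma^{-1/2}Y$ (supported on a ball of radius $\lambda_{\min}^{-1/2}R$) and put $\tilde p:=\gamma_d\star\tilde\nu$. By Theorem~\ref{thm:causalmain}(2) the Brownian transport map to $\tilde p$ is an almost-sure contraction with some constant $\tilde C$ satisfying $\tilde C^2=\tfrac{\lambda_{\min}}{2R^2}\big(e^{2R^2/\lambda_{\min}}-1\big)$, so Part~(1) applied to $\tilde p$ gives $\EE_{\tilde p}[\tilde\eta^q]\le (q-1)^{q/2}\tilde C^q\,\EE_{\tilde p}[|\nabla\tilde\eta|^q]$ for every admissible $\tilde\eta$. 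Given $\eta$, set $\tilde\eta(x):=\eta(\Sigma^{1/2}x+a)$; since $\Sigma^{1/2}\tilde X+a\sim p$ for $\tilde X\sim\tilde p$, we have $\EE_p[\eta^q]=\EE_{\tilde p}[\tilde\eta^q]$ and $\int\tilde\eta\,d\tilde p=0$, while $\nabla\tilde\eta(x)=\Sigma^{1/2}\nabla\eta(\Sigma^{1/2}x+a)$ gives $|\nabla\tilde\eta(x)|^q\le\lambda_{\max}^{q/2}|\nabla\eta(\Sigma^{1/2}x+a)|^q$. Hence $\EE_p[\eta^q]\le (q-1)^{q/2}\lambda_{\max}^{q/2}\tilde C^q\,\EE_p[|\nabla\eta|^q]$, and $\lambda_{\max}^{q/2}\tilde C^q=\tfrac{1}{2^{q/2}}\tfrac{(\lambda_{\min}\lambda_{\max})^{q/2}}{R^q}\big(e^{2R^2/\lambda_{\min}}-1\big)^{q/2}$ gives the asserted bound.

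\emph{Where the difficulty lies.} Given Theorem~\ref{thm:causalmain}, the proof is a line-by-line copy of the $\Psi$-Sobolev argument and there is no serious obstacle; the only steps needing care are obtaining the \emph{sharp} constant $(q-1)^{q/2}$ in the Wiener-space $L^q$-Poincar\'e inequality (via the semigroup computation above, rather than the cruder Clark--Ocone / Burkholder--Davis--Gundy route, which gives a worse dimension-free constant), and the routine integrability verifications.
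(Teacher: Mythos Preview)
Your proof is correct and follows essentially the same approach as the paper: transfer the $q$-Poincar\'e inequality from Wiener space to $p$ via $F=\eta\circ X_1$, using Lemma~\ref{lem:functcontract} and the contraction constants from Theorem~\ref{thm:causalmain}, and then handle the general Gaussian mixture by the affine change of variables $\tilde\eta(x)=\eta(\Sigma^{1/2}x+a)$. The only difference is that the paper simply cites \cite[Proposition~3.1(3)]{nourdin2009second} for the Wiener-space inequality $\EE_\gamma[F^q]\le(q-1)^{q/2}\EE_\gamma[|DF|_H^q]$, whereas you sketch its standard semigroup proof; your sketch is correct and is in fact the argument underlying that reference.
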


\begin{proof}
$~$
\begin{enumerate}
\item We will use the fact \cite[Theorem 2.6]{addona2022equivalence} (see \cite[Proposition 3.1(3)]{nourdin2009second}  for an earlier result) that the $q$-Poincar\'e inequality holds for the Wiener measure $\gamma$: For  $q\in [1,\infty)$ and $F\in \mathbb D^{1,q}$ with $\EE_{\gamma}[F]=0$, we have 
\[
\EE_{\gamma}[F^q]\le c_q^q\EE_{\gamma}[|DF|_{H}^q]. 
\]
Let $(X_t)_{t\in [0,1]}$ be the F\"ollmer process associated to $p$, so that $X_1\sim p$, and suppose that $X_1:\Omega\to \R^d$ is an almost-sure contraction with constant $C$. Given $\eta$ let $F(\omega):=(\eta\circ X_1)(\omega)$. Then, by Lemma \ref{lem:functcontract} and \cite[Proposition 1.2.4]{nualart2006malliavin},
\begin{align*}
&\EE_p[\eta^q]=\EE_{\gamma}[F^q]\le  c_q^q\EE_{\gamma}[|DF|_{H}^q]\le C^qc_q^q\EE_{\gamma}[|\nabla\eta(X_1)|^q]=C^qc_q^q\EE_p[|\nabla\eta|^q]. 
\end{align*}
The proof is complete by Theorem \ref{thm:causalmain}. \\

\item Let $Y\sim \nu$, let $\tilde \nu$ be the law $\Sigma^{-1/2}Y$, and  define $\tilde p:=\gamma_d\star \tilde \nu$. Set $\lambda_{\min}:=\lambda_{\min}(\Sigma)$ and $\lambda_{\max}:=\lambda_{\max}(\Sigma)$. The argument of part (1) gives,
\[
\EE_{\tilde p}[\eta^q]\le \left(\frac{e^{2\lambda_{\min}^{-1}R^2}-1}{2\lambda_{\min}^{-1}R^2}\right)^{q/2}c_q^q\EE_{\tilde p}[|\nabla \eta|^q]. 
\]
Let $p=\gamma_d^{a,\Sigma}\star \nu$ and let $\tilde X\sim \tilde p$ so that $\Sigma^{1/2}\tilde X+a\sim p$. Given $\eta$ let $\tilde \eta:=\eta(\Sigma^{1/2}x+a)$ so that
\begin{align*}
\EE_{p}[\eta^q]=\EE_{\tilde p}[\tilde \eta^q]\le \left(\frac{e^{2\lambda_{\min}^{-1}R^2}-1}{2\lambda_{\min}^{-1}R^2}\right)^{q/2}c_q^q\EE_{\tilde p}[|\nabla \tilde \eta|^q].
\end{align*}
Since $\nabla \tilde \eta(x)=\Sigma^{1/2}\nabla \eta(\Sigma^{1/2}x+a)$ we have $|\nabla \tilde \eta(x)|^q\le \lambda_{\max}^{q/2} |\nabla \eta(\Sigma^{1/2}x+a)|^q$ so
\begin{align*}
\EE_{p}[\eta^q]\le c_q^q \frac{1}{2^{q/2}}\frac{(\lambda_{\min}\lambda_{\max})^{q/2}}{R^q}(e^{2\frac{R^2}{\lambda_{\min}}}-1)^{q/2}\EE_{p}[|\nabla \eta|^q].
\end{align*}

\end{enumerate}
\end{proof}

\begin{theorem}{\textnormal{(Isoperimetric inequalities)}}
\label{thm:isoper}
Let $\Phi$ be the cumulative distribution function of $\gamma_1$ and let $B_d\subset \R^d$ be the unit ball. 
\begin{enumerate}
\item Let $p$ be a $\cc$-log-concave measure with $S:=\textnormal{diam}(\supp (p))$ and let
\[
C:=
\begin{cases}
\frac{1}{\sqrt{\cc}}&\text{if }\cc S^2\ge 1\\
\left(\frac{e^{1-\cc S^2}+1}{2}\right)^{1/2}S&\text{if }\cc S^2< 1.
\end{cases}
\]
Then, for any Borel set $A\subset \R^d$ and $r\ge 0$,
\[
p[A+r B_d]\ge\Phi\left(\Phi^{-1}(p[A])+\frac{r}{C}\right).
\]
\item Fix a probability measure $\nu$ on $\R^d$ supported on a ball of radius $R$ and let $p:=\gamma_d^{a,\Sigma}\star \nu$ where $\gamma_d^{a,\Sigma}$ is a the Gaussian measure on $\R^d$ with mean $a$ and covariance $\Sigma$. Set $\lambda_{\min}:=\lambda_{\min}(\Sigma),~ \lambda_{\max}:=\lambda_{\max}(\Sigma)$, and 
\[
C:= (\lambda_{\min}\lambda_{\max})^{1/2}\frac{(e^{\frac{2R^2}{\lambda_{\min}}}-1)^{1/2}}{\sqrt{2}R}.
\]
Then,
\[
p[A+rB_d]\ge \Phi\left(\Phi^{-1}(p[A])+\frac{r}{C}\right).
\]
\end{enumerate} 
\end{theorem}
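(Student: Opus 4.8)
The plan is to transport the Gaussian isoperimetric inequality from the Wiener space $\Omega$ to $p$ by means of the Brownian transport map, in exactly the manner by which Theorems \ref{thm:PHIsob} and \ref{thm:qpoinc} transported $\Psi$-Sobolev and $q$-Poincar\'e inequalities. The input is the classical isoperimetric inequality on abstract Wiener space (see, e.g., \cite{bogachev1998gaussian, ledoux2001concentration}): writing $U_\HH:=\{h\in\HH:|h|_\HH\le 1\}$ for the unit ball of the Cameron--Martin space, for every $B\in\mathcal F$ and every $r\ge 0$ one has
\[
\gamma\big(B+rU_\HH\big)\ge\Phi\big(\Phi^{-1}(\gamma(B))+r\big).
\]
Here the enlargement $B+rU_\HH$ is $\gamma$-measurable since it is an analytic subset of $\Omega$ (a continuous image of the Borel set $B\times rU_\HH\subseteq\Omega\times\HH$), hence belongs to the $\gamma$-completed $\sigma$-algebra $\mathcal F$; alternatively one may phrase the inequality with inner measure, which suffices as only a lower bound is needed.

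First I would treat part (1). Let $C$ be as in the statement. Under the stated hypotheses Theorem \ref{thm:causalmain}(1) provides the Brownian transport map $X_1:\Omega\to\R^d$ with $(X_1)_*\gamma=p$, which is an almost-sure contraction with constant $C$; fix (Definition \ref{def:contract}) a conull set $\Omega_0\in\mathcal F$ on which $|X_1(\omega+h)-X_1(\omega)|\le C|h|_\HH$ for all $h\in\HH$. Given a Borel set $A\subseteq\R^d$ and $r\ge0$, set $B:=X_1^{-1}(A)\cap\Omega_0$, so that $\gamma(B)=p(A)$. If $\omega\in B$ and $h\in rU_\HH$, then $X_1(\omega)\in A$ and $|X_1(\omega+h)-X_1(\omega)|\le C|h|_\HH\le rC$, whence $X_1(\omega+h)\in A+rC\,B_d$; thus $B+rU_\HH\subseteq X_1^{-1}(A+rC\,B_d)$. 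Feeding this inclusion into the Wiener-space isoperimetric inequality,
\[
p\big(A+rC\,B_d\big)=\gamma\big(X_1^{-1}(A+rC\,B_d)\big)\ge\gamma\big(B+rU_\HH\big)\ge\Phi\big(\Phi^{-1}(p(A))+r\big),
\]
and the substitution $r\mapsto r/C$ yields the assertion, with $C=1/\sqrt{\cc}$ when $\cc S^2\ge1$ and $C=\big(\tfrac{e^{1-\cc S^2}+1}{2}\big)^{1/2}S$ when $\cc S^2<1$, as in Theorem \ref{thm:causalmain}(1).

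For part (2) I would reduce to part (1) by an affine change of variables, following the proofs of Theorems \ref{thm:PHIsob}(2) and \ref{thm:qpoinc}(2). Set $\Lambda(x):=\Sigma^{1/2}x+a$, let $\tilde\nu$ be the law of $\Sigma^{-1/2}Y$ with $Y\sim\nu$ (so $\tilde\nu$ is supported in a ball of radius $R/\sqrt{\lambda_{\min}}$), and put $\tilde p:=\gamma_d\star\tilde\nu$, so that $p=\Lambda_*\tilde p$. By Theorem \ref{thm:causalmain}(2) the Brownian transport map of $\tilde p$ is an almost-sure contraction with constant $\tilde C:=\big(\tfrac{\lambda_{\min}}{2R^2}(e^{2R^2/\lambda_{\min}}-1)\big)^{1/2}$, so by the argument of part (1), $\tilde p(E+\rho B_d)\ge\Phi\big(\Phi^{-1}(\tilde p(E))+\rho/\tilde C\big)$ for every Borel $E$ and $\rho\ge0$. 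Applying this to $E=\Lambda^{-1}(A)$ and using $\Lambda^{-1}(A+\rho B_d)=\Lambda^{-1}(A)+\rho\,\Sigma^{-1/2}B_d\supseteq\Lambda^{-1}(A)+\big(\rho/\sqrt{\lambda_{\max}}\big)B_d$ together with $p(\cdot)=\tilde p\big(\Lambda^{-1}(\cdot)\big)$,
\[
p(A+\rho B_d)=\tilde p\big(\Lambda^{-1}(A)+\rho\,\Sigma^{-1/2}B_d\big)\ge\Phi\Big(\Phi^{-1}(p(A))+\tfrac{\rho}{\sqrt{\lambda_{\max}}\,\tilde C}\Big),
\]
and since $\sqrt{\lambda_{\max}}\,\tilde C=(\lambda_{\min}\lambda_{\max})^{1/2}(e^{2R^2/\lambda_{\min}}-1)^{1/2}/(\sqrt2\,R)=C$, this is precisely the claim.

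I do not expect a conceptual obstacle here; the proof is a direct transfer of Gaussian isoperimetry, and the points that require genuine care are of a technical nature. The first is the measure-theoretic bookkeeping in the Wiener-space step: measurability of the $U_\HH$-enlargement and, more importantly, the right to fix a single conull $\Omega_0$ on which the almost-sure contraction bound holds simultaneously for all $h\in\HH$ (handled by working with the completed $\sigma$-algebra, using the separability of $\HH$, or by passing to inner measures). The second is the distortion of the Euclidean ball under $\Lambda$ in part (2): one must be careful that it is $\sqrt{\lambda_{\max}}$, not $\sqrt{\lambda_{\min}}$, that enters the final constant, because $\Sigma^{-1/2}B_d\supseteq\lambda_{\max}^{-1/2}B_d$. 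I would finally double-check the threshold $\cc S^2\ge1$ versus $\cc S^2<1$ when reading off $C$ from Theorem \ref{thm:causalmain}(1).
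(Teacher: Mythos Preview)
Your proposal is correct and follows essentially the same route as the paper: transport the Gaussian isoperimetric inequality on Wiener space through the Brownian transport map via the inclusion $X_1^{-1}(A)+\tfrac{r}{C}B_{\HH}\subseteq X_1^{-1}(A+rB_d)$, and for part (2) reduce to the standard Gaussian case by the affine map $\Lambda(x)=\Sigma^{1/2}x+a$. One small point: you (correctly) write the isoperimetric bound as $\Phi\bigl(\Phi^{-1}(p(A))+r/C\bigr)$, whereas the statement and the paper's proof record it as $\Phi\bigl(p(A)+r/C\bigr)$; this is a typo in the paper, and your version is the standard one.
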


\begin{proof}
$~$
\begin{enumerate}

\item Let $B_{\HH}$ be the unit ball in $\HH$. We will use the fact \cite[Theorem 4.3]{ledoux1996isoperimetry} that the Wiener measure $\gamma$ satisfies the isoperimetric inequality:
\[
\gamma[K+r B_{\HH}]\ge \Phi(\Phi^{-1}(\gamma(K))+r)
\]
for any Borel measurable set $K\subset \Omega$ and $r\ge 0$; see the discussion following \cite[Theorem 4.3]{ledoux1996isoperimetry} for measurability issues.

Let $(X_t)_{t\in [0,1]}$ be the F\"ollmer process associated to $p$ so that $X_1\sim p$. Suppose that $X_1:\Omega\to \R^d$ is an almost-sure  contraction with constant $C$, so in particular, 
\[
|X_1(\omega+h)-X_1(\omega)|\le C|h|_{\HH}\quad\forall h\in \HH, \quad \gamma\text{-a.e.}
\]
Let $M\subset \R^d$ be a Borel measurable set. We will show that
\begin{align}
\label{eq:incl}
X_1^{-1}(M)+\frac{r}{C}B_{\HH}\subset X_1^{-1}(M+rB_d).
\end{align}
Then, by the isoperimetric inequality for $\gamma$ and \eqref{eq:incl},
\begin{align*}
p[M+rB_d]=\gamma[X_1^{-1}(M+rB_d)]&\ge \gamma\left[X_1^{-1}(M)+\frac{r}{C}B_{\HH}\right]\\
&\ge \Phi\left(\Phi^{-1}\left(\gamma[X_1^{-1}(M)]\right)+\frac{r}{C}\right)=\Phi\left(\Phi^{-1}(p[M])+\frac{r}{C}\right). 
\end{align*}
The proof is then complete by  Theorem \ref{thm:causalmain}.

In order to prove \eqref{eq:incl} it suffices to show that 
\[
X_1\left(X_1^{-1}(M)+\frac{r}{C}B_{\HH}\right)\subset M+rB_d,
\]
or, in other words, that $\omega \in X_1^{-1}(M)+\frac{r}{C}B_{\HH}\Rightarrow X_1(\omega)\in  M+rB_d$. Fix $\omega \in X_1^{-1}(M)+\frac{r}{C}B_{\HH}$ so that $\omega=\theta+\frac{r}{C}h$ for some $\theta\in X_1^{-1}(M)$ and $h\in \HH$ with $|h|_{\HH}=1$. Then $X_1(\omega-\frac{r}{C}h)\in M$ and, as $X$ is an almost-sure contraction with a constant $C$, $\left|X_1\left(\omega-\frac{r}{C}h\right)-X_1(\omega)\right|\le r$ so $X_1(\omega)\in M+rB_d$ as desired.\\

\item  Let $Y\sim \nu$, let $\tilde \nu$ be the law $\Sigma^{-1/2}Y$, and  define $\tilde p:=\gamma_d\star \tilde \nu$. Set $\lambda_{\min}:=\lambda_{\min}(\Sigma)$ and $\lambda_{\max}:=\lambda_{\max}(\Sigma)$. The argument of part (1) gives, for any Borel set $M\subset \R^d$ and $r\ge 0$,
\[
\tilde p [M+rB_d]\ge \Phi\left(\Phi^{-1}(\tilde p[M])+\frac{r}{C}\right)
\]
with $C:=\left(\frac{e^{2\lambda_{\min}^{-1}R^2}-1}{2\lambda_{\min}^{-1}R^2}\right)^{1/2}$. Let $p=\gamma_d^{a,\Sigma}\star \nu$ and let $\tilde X\sim \tilde p$ so that $\Sigma^{1/2}\tilde X+a\sim p$. Then, for any Borel set $M\subset \R^d$ and $r\ge 0$,
\[
p[M+rB_d]=\tilde p[\Sigma^{-1/2}(M-a)+\Sigma^{-1/2}rB_d]\ge \tilde p[\Sigma^{-1/2}(M-a)+\lambda_{\max}^{-1/2}rB_d].
\]
Hence,
\[
p[M+rB_d]\ge \Phi\left(\Phi^{-1}\left(\tilde p\left[\Sigma^{-1/2}(M-a)\right]\right)+\frac{r\lambda_{\max}^{-1/2}}{C}\right).
\]
The proof is complete by noting that $\tilde p\left[\Sigma^{-1/2}(M-a)\right]=p[M]$. 
\end{enumerate}
\end{proof}

\subsection*{Stein kernels} We now turn to the applications of the contraction in expectation, as in Theorem \ref{thm:KLS}. Specifically, we shall prove Theorem \ref{thm:SteinIntro}, from which Corollary \ref{cor:CLTIntro} follows, as explained in the introduction. We first establish the connection between the Brownian transport map and Stein kernels. Given Malliavin differentiable functions $F,G:\Omega\to \R^k$ we denote
\[
\left( DF,DG\right)_H:=\int_0^1D_tF(D_tG)^*dt.
\]
Note that, as outlined in section \ref{sec:prem}, for every fixed $t \in [0,1]$, $D_tF$ is a $k \times d$ matrix, and so $\left(DF,DG\right)_H$ takes values in the space of $k\times k$ matrices.
The construction of the Stein kernel relies on the Ornstein-Uhlenbeck operator $\mathcal L$, as defined, as in \cite[section 2.8.2]{nourdin2012normal}. To define the operator, let $\delta$ stand for the adjoint of the Malliavin derivative $D$, also called the Skorokhod integral. For our purposes we shall only use $\delta$ on  matrix-valued paths $DF$ and $DG$, where $F$ and $G$ are as above. In this case, $\delta$ acts on the rows of $DG$, and $\delta DG$ takes values in $\R^k$. Formally, the adjoint property of $\delta$ is given by
$$\EE\left[\left(DF,DG\right)_H\right] = -\EE\left[\langle F, \delta D G \rangle\right],$$
where the inner product on the right hand side is the Euclidean one in $\R^k$.

We can now define the Ornstein-Uhlenbeck operator as $\mathcal L:= - \delta D$. By construction, if $G:\Omega\to \R^k$, then $\mathcal L G:\Omega \to \R^k$ as well. A useful property of $\mathcal L$ is that it is invertible on the subspace of functions $G$, satisfying $\EE_\gamma\left[G\right]=0$, and we denote the pseudo-inverse by $\mathcal L^{-1}$; see \cite[section 2.8.2]{nourdin2012normal} for more details, and in particular \cite[Proposition 2.8.11]{nourdin2012normal}. Now, given a Malliavin differentiable function $F:\Omega\to \R^k$ such that, $\EE_{\gamma}[F]=0$,  we define the $k\times k$ matrix-valued map
\[
\tau(x):=\EE_{\gamma}\left[(-D\mathcal L^{-1}F,DF)_H|F=x\right].
\]
Above, the expression $\EE_{\gamma}\left[\cdot|F=x\right]$ is the expectation of the regular conditional probability on the fibers of the map $F^{-1}(x)$, which is well-defined for almost every $x\in \R^k$, with respect to the law of $F$, \cite{hoffman1971existence}.

\begin{lemma}
\label{lem:steinkernel}
Let $F: \Omega \to \R^k$ be Malliavin differentiable and satisfy $\EE_\gamma\left[F\right]= 0$. Then, the map $\tau$ is a Stein kernel for $F_*\gamma$. 
\end{lemma}
\begin{proof} 
The proof follows the argument in \cite[Lemma 1]{Mikulincer}. Let $\eta:\R^k\to \R^k$ be a continuously differentiable and Lipschitz function and let $Y\sim F_*\gamma$. We need to show that
\[
\EE[\langle \nabla\eta(Y),\tau(Y)\rangle_{\text{HS}}]=\EE[\langle \eta(Y),Y\rangle]
\]
where $\langle \cdot,\cdot\rangle_{\text{HS}}$ is the Hilbert-Schmidt inner product. We recall that $\mathcal L=-\delta D$ where $\delta$ is the adjoint to the Malliavin derivative $D$. Compute,
\begin{align*}
&\EE[\langle \nabla\eta(Y),\tau(Y)\rangle_{\text{HS}}]=\EE[\langle \nabla\eta(Y),\EE_{\gamma}\left[(-D\mathcal L^{-1}F,DF)_H|F=Y\right]\rangle_{\text{HS}}]\\
&=\EE_{\gamma}[\langle \nabla\eta(F),(-D\mathcal L^{-1}F,DF)_H\rangle_{\text{HS}}]\\
&=\EE_{\gamma}[\Tr[(-D\mathcal L^{-1}F,D(\eta\circ F),)_H]]\quad (\text{chain rule})\\
&=\EE_{\gamma}[\langle \eta\circ F,-\delta D\mathcal L^{-1}F\rangle]\quad (\text{$\delta$ is adjoint to $D$})\\
&=\EE_{\gamma}[\langle \eta\circ F,\mathcal L\mathcal L^{-1}F\rangle]\quad (\mathcal L=-\delta D)\\
&=\EE_{\gamma}[\langle \eta\circ F, F\rangle]\quad (\text{when }\EE_{\gamma}[F]=0)\\
&=\EE[\langle \eta(Y),Y\rangle]. 
\end{align*}
\end{proof}
\stein*
\begin{remark}
As will become evident from the proof of 
Theorem \ref{thm:SteinIntro}, the result holds provided that $\chi\circ X_1$ is a Malliavin differentiable random vector where $(X_t)$ is the F\"ollmer process associated to $p$. By \cite[Proposition 1.2.3]{nualart2006malliavin}, this condition holds if $\chi$ is a continuously differentiable function with bounded partial derivatives. 
\end{remark}
\begin{proof}
Let $F=\chi \circ X_1$ and let $\tau$ be the Stein kernel constructed above so that $\tau_q:=\tau$ is a Stein kernel of $q$. Let $(\mathcal P_t)$ be the Ornstein-Uhlenbeck semigroup on the Wiener space\footnote{We will write $\mathcal P_s(DF)$ for the operator acting coordinate-wise on the matrix-valued function $DF$, and similarly, we write $\mathcal P_s(D_rF)$ for the operator acting coordinate-wise on the random matrix $D_rF$.} and recall that it is a contraction \cite[Proposition 2.8.6]{nourdin2012normal}. By \cite[Proposition 2.9.3]{nourdin2012normal},
\[
\EE_q[|\tau_q|_{\textnormal{HS}}^2]=\EE_{\gamma}[|(DF,D\mathcal L^{-1}F)_H|_{\textnormal{HS}}^2]=\EE_{\gamma}\left[\left|\int_0^{\infty}e^{-s}(DF,\mathcal P_s(DF))_Hds\right|_{\textnormal{HS}}^2\right]\le \sup_{s\in[0,\infty)}\EE_{\gamma}\left[\left|(DF,\mathcal P_s(DF))_H\right|_{\textnormal{HS}}^2\right].
\]
Using $(\mathcal P_s(DF))_r=\mathcal P_s(D_rF)$, we get 
\begin{align*}
&\sup_{s\in[0,\infty)}\EE_{\gamma}\left[\left|(DF,\mathcal P_s(DF))_H\right|_{\textnormal{HS}}^2\right]=\sup_{s\in[0,\infty)}\EE_{\gamma}\left[\left|\int_0^1(D_rF)^*(\mathcal P_s(D_rF))dr\right|_{\textnormal{HS}}^2\right]\\
&\le \sup_{s\in[0,\infty)}\EE_{\gamma}\left[\int_0^1|(D_rF)^*(\mathcal P_s(D_rF))|_{\textnormal{HS}}^2dr\right]\le \min\{k,d\}\sup_{s\in[0,\infty)}\EE_{\gamma}\left[\int_0^1|D_rF|_{\text{op}}^2|\mathcal P_s(D_rF)|_{\textnormal{op}}^2dr\right]\\
&\le  d\,\EE_{\gamma}\left[\int_0^1|D_rF|_{\text{op}}^2|D_rF|_{\textnormal{op}}^2dr\right]\le d\sup_{r\in [0,1]}\EE_{\gamma}[|D_rF|_{\textnormal{op}}^4]=d\sup_{r\in [0,1]}\EE_{\gamma}[|\nabla \chi(X_1)(D_rX_1)|_{\textnormal{op}}^4]\\
&\le d\sqrt{\EE_{\gamma}[|\nabla \chi(X_1)|_{\text{op}}^8]}\,\sup_{r\in [0,1]}\sqrt{\EE_{\gamma}[|D_rX_1|_{\textnormal{op}}^8]}.
\end{align*}
It remains to show that $\sup_{r\in [0,1]}\sqrt{\EE_{\gamma}[|D_rX_1|_{\textnormal{op}}^8]}\le a (\log d)^{24}$ for some universal constant $a>0$. The latter will follow from Theorem \ref{thm:KLS} as soon as we show that, $\gamma$-a.e., 
\[
|D_rX_1|_{\textnormal{op}}\le  |\D X_1|_{\mathcal L(H,\R^d)}\quad \forall r\in[0,1]. 
\]
Indeed, fix $\omega\in \Omega$ and $r\in [0,1]$. Choose a unit vector $w\in\R^d$ such that $|D_rX_1|_{\textnormal{op}}=|D_rX_1w|$.  Let $(\dot{h}^{\epsilon,r})_{\epsilon>0}\subset L^2([0,1])$ be an approximation to the identity: $\int_0^1|\dot{h}_s^{\epsilon,r}|^2ds=1$ and for every continuous $\eta\in  L^2([0,1])$, $\lim_{\epsilon\to 0}\int_0^1\eta(s)\dot{h}_s^{\epsilon,r}=\eta(r)$. Define $\dot{h}_s^{\epsilon,r,w}:=\dot{h}_s^{\epsilon,r}w$ for $s\in [0,1]$ and note that $|\dot{h}^{\epsilon,r,w}|_H=1$ so $|\D X_1[\dot{h}^{\epsilon,r,w}]|\le   |\D X_1|_{\mathcal L(H,\R^d)}$. By the definition of $\mathcal DX_1$, and since $s\mapsto D_sX_1$ is continuous (since it satisfies a differential equation) $\gamma$-a.e., we have
\[
\lim_{\epsilon\to 0}\D X_1[\dot{h}^{\epsilon,r,w}]=D_rX_1w.
\]
It follows that $|D_rX_1w|=\lim_{\epsilon\to 0}|\D X_1[\dot{h}^{\epsilon,r,w}]|\le   |\D X_1|_{\mathcal L(H,\R^d)}$. This completes the proof. 
\end{proof}

\section{Cameron-Martin contractions}
\label{sec:CMcontract}
The notion of contraction we considered up until now was the appropriate one when the target measures were measures on $\R^d$. If, however, we are interested in transportation between measures on the Wiener space itself, then we need a stronger notion of contraction. 

\begin{definition}
\label{def:transport}
A measurable map $T:\Omega\to \Omega$ is a  \emph{Cameron-Martin transport map} if $T(\omega)=\omega+\xi(\omega)$ for some measurable map $\xi:\Omega\to \HH$; we write $\xi(\omega)=\int_0^{\cdot}\dot{\xi}(\omega)$ for some measurable map $\dot{\xi}:\Omega\to H$. We set $(T_t)_{t\in [0,1]}:=( W_t\circ T)_{t\in [0,1]}$. A Cameron-Martin transport map $T:\Omega\to \Omega$ is a \emph{Cameron-Martin contraction} with constant $C$ if, $\gamma$-a.e., 
\[
|T(\omega+h)-T(\omega)|_{H^1}\le C|h|_{\HH}\quad \forall ~h\in \HH. 
\]
\end{definition}

\begin{claim*}
Let $T:\Omega\to \Omega$ be a Cameron-Martin contraction with constant $C$. Then, for any $t\in [0,1]$, $T_t:\Omega\to \R^d$ is an almost-sure contraction with constant $C$. 
\end{claim*}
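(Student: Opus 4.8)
The plan is to show that the inequality defining a Cameron--Martin contraction, which controls the $H^1$-norm of $T(\omega+h)-T(\omega)$, automatically controls the Euclidean norm of $T_t(\omega+h)-T_t(\omega)=W_t(T(\omega+h))-W_t(T(\omega))$ for each fixed $t$. The key observation is simply that the evaluation map $W_t:\Omega\to\R^d$, restricted to the Cameron--Martin space $\HH$, is a bounded linear functional of norm at most $\sqrt{t}\le 1$: for $g\in\HH$ with $g_t=\int_0^t\dot g_s\,ds$, Cauchy--Schwarz gives $|g_t|\le\int_0^t|\dot g_s|\,ds\le \sqrt{t}\,\big(\int_0^t|\dot g_s|^2ds\big)^{1/2}\le |g|_{\HH}$.

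First I would fix $t\in[0,1]$ and work with a full-measure set of $\omega$ on which the defining inequality of Definition \ref{def:transport} holds for all $h\in\HH$ (such a set exists since $T$ is a Cameron--Martin contraction). Fix such an $\omega$ and an arbitrary $h\in\HH$. Write $g:=T(\omega+h)-T(\omega)$. By hypothesis $g\in\HH$ (indeed $T(\omega+h)-(\omega+h)$ and $T(\omega)-\omega$ both lie in $\HH$, so their difference does up to the fixed element $h\in\HH$) and $|g|_{\HH}\le C|h|_{\HH}$. Then
\[
|T_t(\omega+h)-T_t(\omega)| = |W_t(T(\omega+h))-W_t(T(\omega))| = |g_t| \le |g|_{\HH} \le C|h|_{\HH},
\]
using the norm bound on the evaluation functional in the middle inequality. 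Since $h\in\HH$ was arbitrary and $\omega$ ranges over a set of full $\gamma$-measure (which can be taken independent of $h$, exactly as in Definition \ref{def:transport}), this is precisely the statement that $T_t$ is an almost-sure contraction with constant $C$ in the sense of Definition \ref{def:contract}.

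There is essentially no obstacle here: the only point requiring a moment's care is the measurability of $T_t=W_t\circ T$, which is immediate since $W_t:\Omega\to\R^d$ is continuous and $T$ is measurable, and the fact that the exceptional null set in Definition \ref{def:transport} can be chosen uniformly in $h$ (which is built into that definition). One could optionally remark that the argument in fact gives the slightly sharper constant $\sqrt{t}\,C$, but since $\sqrt t\le 1$ the stated constant $C$ suffices and matches the formulation.
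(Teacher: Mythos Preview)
Your proof is correct and follows essentially the same approach as the paper: both observe that $T(\omega+h)-T(\omega)\in\HH$ and then bound the time-$t$ evaluation by the $\HH$-norm via Cauchy--Schwarz (the paper phrases it as Jensen's inequality applied to $|\int_0^t\dot q_s\,ds|^2$, which is the same estimate). Your additional remark about the sharper constant $\sqrt{t}\,C$ is a nice touch not mentioned in the paper.
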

\begin{proof}
Let $T:\Omega\to \Omega$ be a Cameron-Martin contraction with constant $C$. Fix $h\in \HH,\omega\in \Omega,$ and define $q\in \HH$ by $q_t=T_t(\omega+h)-T_t(\omega)$ for $t\in [0,1]$; note that $q$ is indeed an element of $\HH$ since $T$ is a Cameron-Martin transport map. Since
\[
\sup_{t\in [0,1]}|q_t|=\sup_{t\in [0,1]}\left|\int_0^t\dot{q}_sds\right|,
\]
it follows from Jensen's inequality that
\[
\sup_{t\in[0,1]}|T_t(\omega+h)-T_t(\omega)|^2=\sup_{t\in [0,1]}\left|\int_0^t\dot{q}_sds\right|^2\le \sup_{t\in [0,1]}t\int_0^t|\dot{q}_s|^2ds\le \int_0^1|\dot{q}_s|^2ds=|T(\omega+h)-T(\omega)|_{H^1}^2.
\]
\end{proof}

We see that a Cameron-Martin contraction is a stronger notion than an almost-sure contraction. Given a measure $\mu$ on $\Omega$, a Cameron-Martin contraction between $\gamma$ and $\mu$ would transfer functional inequalities from $\gamma$ to $\mu$  where the functions are allowed to depend on the entire path $\{\omega_t\}_{t\in [0,1]}$. For the rest of the paper, we focus on the question of whether either the causal optimal transport map or the optimal transport map is a Cameron-Martin contraction when the target measure enjoys convexity properties. The notion of convexity we use is compatible with the Cameron-Martin space \cite{feyel2000notion}:
\begin{definition}
\label{def:convex}
A measurable map $V:\Omega\to \R\cup\{\infty\}$ is \emph{Cameron-Martin convex} if, for any $ h,g\in \HH$ and $\alpha\in [0,1]$, it holds that
\[
V(\omega+\alpha h+(1-\alpha)g)\ge \alpha V(\omega+ h)+(1-\alpha) V(\omega+ g) \quad \gamma\textnormal{-a.e.}
\]
\end{definition}

\begin{remark}
\label{rem:convex}
An important example of a Cameron-Martin convex function $V$ on $\Omega$ is $V(\omega)=\eta(\omega_1)$ with $\eta:\R^d\to \R$ a convex function. 
\end{remark}

The precise question we consider is the following: Suppose $\mu$ is a probability measure on $\Omega$ of the form $d\mu(\omega)=e^{-V(\omega)}d\gamma(\omega)$, where $V:\Omega\to \R$ is a Cameron-Martin convex function. Let $A,O:\Omega\to\Omega$ be the causal optimal transport map and optimal transport map from $\gamma$ to $\mu$, respectively. Is it true that either $A$ or $O$ is a Cameron-Martin contraction with any constant $C$?

In order to answer this question our first task is to construct a suitable notion of derivative for Cameron-Martin transport maps $T:\Omega\to \Omega$ so that, in analogy with Lemma \ref{lem:rad}, we can establish a correspondence between being a  Cameron-Martin contraction and having a bounded derivative. The Malliavin derivative was defined for real-valued functions $F:\Omega\to \R$ but it can be defined for $H$-valued functions $\dot{\xi}:\Omega\to H$ as well \cite[p. 31]{nualart2006malliavin}. We start with the class $\mathcal S_{H}$ of $H$-valued smooth random variables: $\dot{\xi}\in\mathcal S_{H}$ if $\dot{\xi}=\sum_{i=1}^mF_i\dot{h}_i$ where $F_i\in \mathcal S$ (the class of smooth random variables, cf. section \ref{sec:prem}), $\dot{h}_i\in H$ for $i\in [m]$ for some $m\in \mathbb Z_+$. For $\dot{\xi}\in \mathcal S_{H}$ we define
\[
H\otimes H\ni D\dot{\xi}:=\sum_{i=1}^m DF_i\otimes \dot{h}_i
\]
and let $\mathbb D^{1,p}(H)$ be the completion of $\mathcal S_{H}$ under the norm
\[
\|\dot{\xi}\|_{1,p,H}=\left(\EE_{\gamma}\left[|\dot{\xi}|_{H}^p\right]+\EE_{\gamma}\left[|D\dot{\xi}|_{H\otimes H}^p\right]\right)^{\frac{1}{p}}.
\]
Since $D\dot{\xi}\in H\otimes H$ we may also view it as a linear operator $D\dot{\xi}:H\to H$ and we denote its operator norm by $|D\dot{\xi}|_{\mathcal L(H)}$.

\begin{definition}
\label{def:Girder}
Let  $T:\Omega\to\Omega$ be a measurable map of the form $T(\omega)=\omega+\xi(\omega)$ where $\dot{\xi} \in \mathbb D^{1,p}(H)$ for some $p\ge 1$. For any $\omega\in \Omega$ we define $DT(\omega):H\to H$ by
\[
DT(\omega)[\dot{h}]=\dot{h}+D\dot{\xi}(\omega)[\dot{h}], \quad\forall ~\dot{h}\in H.
\]
The operator norm of $DT(\omega): H\to H$, with $\omega\in \Omega$ fixed, is denoted by $|DT(\omega)|_{\mathcal L(H)}$.
\end{definition}

For the purpose of this work, we focus on measures $\mu$ on $\Omega$ of the form  $d\mu(\omega):=f(\omega_1)d\gamma(\omega)$. In addition, comparing the following lemma to Lemma \ref{lem:rad}, we see that it provides only one direction of the correspondence between Cameron-Martin contractions and bounded derivatives. A more general theory could be developed, at least in principle, but our goal in this work is to highlight key differences between causal optimal transport and optimal transport on the Wiener space, to which end the following suffices. 

\begin{lemma}
\label{lem:strongcontractder}
Let  $\mu$ be a measure on $\Omega$ of the form $d\mu(\omega):=f(\omega_1)d\gamma(\omega)$ and let $T:\Omega\to \Omega$ be a transport map from $\gamma$ to $\mu$ of the form $T(\omega)=\omega+\xi(\omega)$ where $\xi:=\int_0^{\cdot}\dot{\xi}$ for some $\dot{\xi}\in\mathbb D^{1,p}(H)$.  If $T$ is a Cameron-Martin contraction with constant $C$ then $|DT|_{\mathcal L(H)}\le C$ $\gamma$-a.e.
\end{lemma}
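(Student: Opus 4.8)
The plan is to show that, $\gamma$-a.e., the operator $DT(\omega)=\Id_H+D\dot{\xi}(\omega)$ of Definition \ref{def:Girder} has operator norm at most $C$, by testing it against a countable dense set of directions and then invoking continuity. Since $\dot{\xi}\in\mathbb D^{1,p}(H)$, the Hilbert--Schmidt norm $|D\dot{\xi}(\omega)|_{H\otimes H}$ is finite for $\gamma$-a.e. $\omega$, so $|D\dot{\xi}(\omega)|_{\mathcal L(H)}\le|D\dot{\xi}(\omega)|_{H\otimes H}<\infty$ and $DT(\omega)$ is a bounded operator on $H$ for $\gamma$-a.e. $\omega$. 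Fix a countable dense set $\{\dot k^{(n)}\}_{n\ge1}\subset H$ and put $k^{(n)}=\int_0^{\cdot}\dot k^{(n)}_s\,ds\in\HH$. It then suffices to prove that, for $\gamma$-a.e. $\omega$, $|DT(\omega)[\dot k^{(n)}]|_H\le C|\dot k^{(n)}|_H$ for every $n$; continuity of $DT(\omega)$ upgrades this to $|DT(\omega)|_{\mathcal L(H)}\le C$.

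Next I would unwind the contraction hypothesis. Writing $T(\omega)=\omega+\int_0^{\cdot}\dot{\xi}_s(\omega)\,ds$, for $h=\int_0^{\cdot}\dot h_s\,ds\in\HH$ we have $T(\omega+h)-T(\omega)=\int_0^{\cdot}(\dot h_s+\dot{\xi}_s(\omega+h)-\dot{\xi}_s(\omega))\,ds\in\HH$, hence
\[
|T(\omega+h)-T(\omega)|_{\HH}^2=\int_0^1\bigl|\dot h_s+\dot{\xi}_s(\omega+h)-\dot{\xi}_s(\omega)\bigr|^2\,ds=\bigl|\dot h+\dot{\xi}(\omega+h)-\dot{\xi}(\omega)\bigr|_H^2 .
\]
So being a Cameron--Martin contraction with constant $C$ is exactly the statement that, $\gamma$-a.e., $|\dot h+\dot{\xi}(\omega+h)-\dot{\xi}(\omega)|_H\le C|\dot h|_H$ for all $h\in\HH$; in particular $|\dot{\xi}(\omega+h)-\dot{\xi}(\omega)|_H\le(C+1)|h|_{\HH}$, i.e. $\dot{\xi}$ is Cameron--Martin Lipschitz.

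Now I would identify $D\dot{\xi}$ with a genuine directional derivative. Viewing $H=L^2([0,1],\R^d)$ as a separable Hilbert space (hence a separable Banach space with the Radon--Nikodym property), the same Bogachev results invoked in the proof of Lemma \ref{lem:rad}, namely \cite[Theorem 5.11.2(ii)]{bogachev1998gaussian} together with \cite[Theorem 5.7.2]{bogachev1998gaussian}, apply to the $H$-valued map $\dot{\xi}$: since $\dot{\xi}$ is Cameron--Martin Lipschitz its $\HH$-G\^ateaux derivative exists $\gamma$-a.e., and since $\dot{\xi}\in\mathbb D^{1,p}(H)$ this G\^ateaux derivative coincides with the Malliavin derivative $D\dot{\xi}$. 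Thus, for $\gamma$-a.e. $\omega$ and every $n$, $D\dot{\xi}(\omega)[\dot k^{(n)}]=\lim_{\epsilon\to0}\epsilon^{-1}(\dot{\xi}(\omega+\epsilon k^{(n)})-\dot{\xi}(\omega))$ in $H$, and combining with the previous step,
\begin{align*}
|DT(\omega)[\dot k^{(n)}]|_H
&=\bigl|\dot k^{(n)}+D\dot{\xi}(\omega)[\dot k^{(n)}]\bigr|_H
=\lim_{\epsilon\to0}\frac{1}{|\epsilon|}\bigl|\epsilon\dot k^{(n)}+\dot{\xi}(\omega+\epsilon k^{(n)})-\dot{\xi}(\omega)\bigr|_H\\
&=\lim_{\epsilon\to0}\frac{1}{|\epsilon|}\,|T(\omega+\epsilon k^{(n)})-T(\omega)|_{\HH}
\le\lim_{\epsilon\to0}\frac{1}{|\epsilon|}\,C\,|\epsilon k^{(n)}|_{\HH}=C|\dot k^{(n)}|_H ,
\end{align*}
which is the desired estimate; a supremum over the dense set plus continuity of $DT(\omega)$ finishes the proof.

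The main obstacle is bookkeeping among the notions of derivative: ensuring that the ``formal'' operator $DT$ of Definition \ref{def:Girder} (built from the Malliavin derivative $D\dot{\xi}$), the $\HH$-G\^ateaux derivative, and the difference quotients all coincide on a single $\gamma$-full-measure set, and that the relevant directional derivatives exist there --- this is precisely what the Cameron--Martin Lipschitzness of $\dot{\xi}$ buys, via the Bogachev machinery. It is worth noting why one cannot shortcut the argument through the Claim preceding this lemma: that Claim gives only $|\D T_t|_{\mathcal L(H,\R^d)}\le C$ for each $t$, i.e. it controls the \emph{value} at each time $t$ of the perturbed path, which bounds neither the $L^2([0,1])$-norm of its velocity nor, therefore, $|DT(\omega)[\dot h]|_H$; the full $H^1$-norm (Cameron--Martin) contraction hypothesis is genuinely needed.
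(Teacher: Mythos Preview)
Your proof is correct and follows essentially the same route as the paper: both identify the Malliavin derivative $D\dot{\xi}$ with genuine $\HH$-directional derivatives via the Bogachev results (in particular \cite[Theorem 5.7.2]{bogachev1998gaussian}), then feed the Cameron--Martin contraction hypothesis into the difference quotient to bound $|DT(\omega)[\dot h]|_H$ by $C|\dot h|_H$. The only cosmetic difference is that the paper argues weakly, pairing $T(\omega+\epsilon h)-T(\omega)$ against a second test direction $g\in\HH$ before passing to the limit, whereas you work with the strong (norm) limit directly and handle the null-set bookkeeping explicitly via a countable dense set of directions together with the a.e.\ boundedness of $DT(\omega)$ coming from $|D\dot{\xi}|_{H\otimes H}<\infty$; neither step changes the substance of the argument.
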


\begin{proof}
We first note that the Malliavin differentiability of $\xi$, as well as \cite[Theorem 5.7.2]{bogachev1998gaussian}, imply that, $\gamma$-a.e., for any $h,g\in \HH$,
\begin{align}
\label{eq:tder}
\lim_{\epsilon\downarrow 0}\frac{1}{\epsilon}\langle T(\omega+\epsilon h)-T(\omega),g\rangle_{\HH}&=\lim_{\epsilon\downarrow 0}\frac{1}{\epsilon}\langle \epsilon h+\xi(\omega+\epsilon h)-\xi(\omega),g\rangle_{\HH}=\langle [\Id_{\Omega}+D\dot{\xi}(\omega)][\dot{h}],\dot{g}\rangle_{H}\nonumber\\
&=\langle DT(\omega)[\dot{h}],\dot{g}\rangle_{H}.
\end{align}
Suppose now that $T$ is a Cameron-Martin contraction with constant $C$ so that, for a fixed $h\in \HH$, and any $\epsilon>0$,
\[
\sup_{g\in \HH, |g|_{\HH}=1}\frac{1}{\epsilon}\langle T(\omega+\epsilon h)-T(\omega),g\rangle_{\HH}=\frac{1}{\epsilon}|T(\omega+\epsilon h)-T(\omega)|_{\HH}\le C|h|_{\HH}.
\]
Taking $\epsilon\downarrow 0$ and using \eqref{eq:tder} shows that
\[
\sup_{\dot{g}\in H, |\dot{g}|_H=1}\langle DT(\omega)[\dot{h}],\dot{g}\rangle_{H}=|DT(\omega)[\dot{h}]|_{H}\le C
\]
so it follows that
\[
\sup_{\dot{h}\in H, |\dot{h}|_{H}=1}|DT(\omega)[\dot{h}]|_{H}=|DT(\omega)|_{\mathcal L(H)}\le C.  
\]
\end{proof}

\section{Causal optimal transport}
\label{sec:causal_strong}
In this section we answer in the negative, for causal optimal transport maps, the question raised in section \ref{sec:CMcontract}, thus proving the second part of Theorem \ref{thm:strongInformal}. In particular, we will construct a strictly log-concave function $f:\R^d\to \R$ such that the causal optimal transport map from $\gamma$ to  $d\mu(\omega):=f(\omega_1)d\gamma(\omega)$  is not a Cameron-Martin contraction, with any constant $C$. This indeed provides a negative answer in light of Remark \ref{rem:convex}. Our concrete example is the case where $fd\gamma_d$ is the measure of a one-dimensional Gaussian random variable conditioned on being positive. More precisely, fix a constant $\sigma> 0$ and let $f:\R\to \R$ be given by 
\[
f(x)=\frac{1_{[0,+\infty)}(x)e^{-\frac{x^2}{2\sigma}}}{\int_{\R}1_{[0,+\infty)}(y)e^{-\frac{y^2}{2\sigma}}d\gamma_1(y)}.
\]
The measure $f(x)d\gamma_1(x)$ is the measure on $\R$ of a centered Gaussian, whose variance is smaller than one, conditioned on being positive. We define a measure $\mu$ on $\Omega$ by setting
\[
d\mu(\omega):=f(\omega_1)d\gamma(\omega)
\]
and note that $f$ is strictly log-concave for all $\sigma>0$, and that $f$ is log-concave as $\sigma\to \infty$. To simplify computations we will take $\sigma\ge 1$. Finally, note that the assumptions of Proposition \ref{prop:existsde} hold in this case ($\cc\ge 0$).

\begin{remark}
\label{rem:nontrivcexam}
Given $d\mu(\omega)=f(\omega_1)d\gamma(\omega)$ let $p$ be the measure on $\R$ given by $p:=fd\gamma_1$ and let $\gamma_1^{a,\sigma}$ be the Gaussian measure on $\R$ with mean $a$ and variance $\sigma$. The natural examples for testing whether the causal optimal transport map $A$, between $\gamma$ to $\mu$,  is a Cameron-Martin contraction are $p=\gamma_1^{a,1}$, for some $a\in \R$, and $p=\gamma_1^{0,\sigma}$, for some $\sigma> 0$. We can expect these examples to show that $A$ is not a Cameron-Martin contraction since they saturate the bounds in Lemma \ref{lem:nablav}: When $p=\gamma_1^{a,1}$ we have $\nabla v(t,x)=0$ (saturation of Lemma \ref{lem:nablav}(2) under the assumption $\cc \ge 1$) and when $p=\gamma_1^{0,\sigma}$ we have that, in the limit $\sigma\downarrow 0$, $\nabla v(t,x)=-\frac{1}{1-t}$ (saturation of \eqref{eq:nablavlower}). Since in these cases $|\nabla v|$ is the largest, we can expect that $A$ will not be a Cameron-Martin contraction since its derivative will blow up. However, explicit calculation shows that $A$ is in fact a Cameron-Martin contraction for $p=\gamma_1^{a,1}$ and $p=\gamma_1^{0,\sigma}$. Hence, we require the construction of a more sophisticated example which we obtain by considering Gaussians conditioned on being positive. 
\end{remark}

In order to prove that $A$ is not a Cameron-Martin contraction we will use Lemma \ref{lem:strongcontractder}. We will show that with the example above, with positive probability, the derivative can be arbitrary large so that $A$ cannot be a Cameron-Martin contraction with any constant $C$. 

As mentioned already, the map $A$ is nothing but the F\"ollmer process $X$ \cite{lassalle2013causal}. This allows for the following convenient representation of the derivative of $A$. 
\begin{lemma}
\label{lem:derrep}
Let $A$ be the causal optimal transport map from $\gamma$ to $\mu$ and let $X$ be the solution of \eqref{eq:sde}. Fix $0<\epsilon<1$. For any $\dot{h}\in H$,
\[
(DA[\dot{h}])_t=\partial_t\langle DX_t,\dot{h}\rangle_H\quad\forall ~t\in [0,1-\epsilon]. 
\]
In addition,
\[
|DA[\dot{h}]|_{H}^2\ge \int_0^{1-\epsilon}\left(\dot{h}_t+\nabla v(t,X_t)\int_0^te^{\int_s^t\nabla v(r,X_r)dr}\dot{h}_sds\right)^2dt.
\]
\end{lemma}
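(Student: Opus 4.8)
The plan is to combine two ingredients: the identification $A = X$ (the F\"ollmer process, by \cite{lassalle2013causal}), and the formula for the Malliavin derivative of $X_t$ established in Lemma \ref{lem:existsde} and Proposition \ref{prop:existsde}. First I would recall that $A(\omega) = X(\omega)$ as paths in $\Omega$, so that in the notation of Definition \ref{def:Girder} we have $A(\omega) = \omega + \xi(\omega)$ with $\xi_t(\omega) = \int_0^t v(s,X_s(\omega))\,ds$, i.e. $\dot\xi_t = v(t,X_t)$. To apply Lemma \ref{lem:strongcontractder} and the derivative machinery, I need $\dot\xi \in \mathbb D^{1,p}(H)$ on the time interval $[0,1-\epsilon]$; this follows from the Malliavin differentiability of $(X_t)_{t\in[0,1)}$ (Lemma \ref{lem:existsde}) together with the chain rule, using that $v(t,\cdot) = \nabla\log P_{1-t}f$ is smooth with controlled derivatives on $[0,1-\epsilon]\times\R^d$ (by \eqref{eq:nablavlower} and Lemma \ref{lem:nablav}). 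One must restrict to $[0,1-\epsilon]$ precisely because the bounds on $\nabla v$, and hence the Sobolev control on $\dot\xi$, degenerate as $t\uparrow 1$.

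Next, for the first displayed identity: by definition $(DA[\dot h])_t = (D\xi[\dot h])'_t$ is the time derivative of the $H^1$-path $D\xi[\dot h]$, and since $\xi_t = \int_0^t v(s,X_s)\,ds = X_t - B_t$ and $B_t$ is deterministic in the Cameron-Martin direction (its Malliavin derivative contributes $\dot h_t$, the "identity" part already displayed separately in Definition \ref{def:Girder}), the stochastic-flow part of $DA[\dot h]$ is exactly $\partial_t \langle DX_t, \dot h\rangle_H$. More precisely, I would write $\langle DX_t,\dot h\rangle_H = \int_0^t D_r X_t \dot h_r\,dr = \alpha_{\dot h}(t)$ in the notation of the proof of Lemma \ref{lem:existsde}, recall that $\alpha_{\dot h}$ is $C^1$ on $[0,1)$ with $\partial_t\alpha_{\dot h}(t) = \dot h_t + \nabla v(t,X_t)\alpha_{\dot h}(t)$, and observe that this matches $DT[\dot h] = \dot h + D\dot\xi[\dot h]$ from Definition \ref{def:Girder} once one checks $D\dot\xi[\dot h]_t = \nabla v(t,X_t)\alpha_{\dot h}(t)$ via the chain rule applied to $\dot\xi_t = v(t,X_t)$. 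This is the bookkeeping step linking the abstract operator $DT$ to the concrete flow equation; it is routine but is where one must be careful about what "derivative of a path" means versus the Malliavin derivative of the endpoint.

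Finally, for the lower bound on $|DA[\dot h]|_H^2$: in the one-dimensional setting of the example, $\nabla v(t,X_t)$ is a scalar, write it $v'(t,X_t)$, and the linear ODE $\partial_t\alpha_{\dot h}(t) = \dot h_t + v'(t,X_t)\alpha_{\dot h}(t)$ with $\alpha_{\dot h}(0)=0$ has the explicit solution $\alpha_{\dot h}(t) = \int_0^t e^{\int_s^t v'(r,X_r)\,dr}\dot h_s\,ds$ (variation of constants). Hence $(DA[\dot h])_t = \partial_t\alpha_{\dot h}(t) = \dot h_t + v'(t,X_t)\int_0^t e^{\int_s^t v'(r,X_r)\,dr}\dot h_s\,ds$, and
\[
|DA[\dot h]|_H^2 = \int_0^1 \big((DA[\dot h])_t\big)^2\,dt \ge \int_0^{1-\epsilon}\Big(\dot h_t + v'(t,X_t)\int_0^t e^{\int_s^t v'(r,X_r)\,dr}\dot h_s\,ds\Big)^2\,dt,
\]
simply by discarding the (nonnegative) contribution of the interval $(1-\epsilon,1]$, which is legitimate since $DA[\dot h]\in H$ is square-integrable. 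The main obstacle I anticipate is not any single computation but rather the careful justification that the first identity holds on all of $[0,1-\epsilon]$ simultaneously (i.e. $\gamma$-a.e. as an identity of $H$-elements, not just pointwise in $t$), which requires knowing $\dot\xi$ genuinely lies in $\mathbb D^{1,p}(H)$ restricted to that interval and that the chain rule for $D$ composes with the flow as expected — all of which is available from \cite{nualart2006malliavin} and the results already proved in Section \ref{sec:causal}, but needs to be invoked cleanly.
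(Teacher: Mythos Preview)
Your proposal is correct and follows essentially the same approach as the paper: identify $A$ with the F\"ollmer process so that $\dot\xi_t = v(t,X_t)$, use the flow equation for $\alpha_{\dot h}(t) = \langle DX_t,\dot h\rangle_H$ from Lemma~\ref{lem:existsde}/Proposition~\ref{prop:existsde}, apply the chain rule (valid because $v$ is Lipschitz on $[0,1-\epsilon]$) to compute $D\dot\xi[\dot h]$, and then solve the resulting linear ODE by variation of constants for the lower bound. The paper carries out the chain-rule step via a weak formulation---pairing against test functions $\dot g$ supported in $[0,1-\epsilon]$ and passing the limit through the integral by dominated convergence---which is exactly the careful justification you flag in your final paragraph.
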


\begin{proof}
We have $A=\Id_{\Omega}+\xi$ where $\dot{\xi}_t(\omega)=v(t,X_t(\omega))$ with the drift $v$ as in \eqref{eq:sde}. To show that
\[
(DA[\dot{h}])_t=\partial_t\langle DX_t,\dot{h}\rangle_H\quad\forall ~t\in [0,1-\epsilon]
\]
we start by noting that Proposition \ref{prop:existsde} gives
\[
DX_t=1_{[0,t]}+\int_0^t\nabla v(s,X_s)DX_sds,
\]
so
\begin{align}
\label{eq:ode}
\partial_t\langle DX_t,\dot{h}\rangle_H=\dot{h}_t+\nabla v(t,X_t)\langle DX_t,\dot{h}\rangle_H \quad \forall t\in [0,1]. 
\end{align}
Hence, our goal is to show that
\[
(DA[\dot{h}])_t=\dot{h}_t+\nabla v(t,X_t)\langle DX_t,\dot{h}\rangle_H\quad\forall ~t\in [0,1-\epsilon].
\]
To establish the above identity it suffices to show that 
\begin{align*}
\langle Dv[\dot{h}],\dot{g}\rangle_{H}=\int_0^1\nabla v(t,X_t)\langle DX_t,\dot{h}\rangle_H\dot{g}_tdt
\end{align*}
for every $h,g\in \HH$ with $\dot{g}_t=0$ for all $t\in[1-\epsilon,1]$. This indeed holds since, for such $g$ and $h$,
\begin{align*}
\langle Dv[\dot{h}],\dot{g}\rangle_{H}&=\lim_{\delta\downarrow 0}\frac{1}{\delta}\langle \xi(\omega+\delta h)-\xi(\omega),g\rangle_{\HH}=\int_0^1 \lim_{\delta\downarrow 0}\frac{v(t,X_t(\omega+\delta h))-v(t,X_t(\omega))}{\delta}\dot{g}_tdt\\
&=\int_0^1\nabla v(t,X_t)\langle DX_t,\dot{h}\rangle_H\dot{g}_tdt
\end{align*}
where in the second equality the integral and the limit were exchanged by the use of the dominated convergence theorem and as $v$ is Lipschitz on $[0,1-\epsilon]$ (Equation \eqref{eq:nablavlower} and Lemma \ref{lem:nablav}), while the third equality holds by the chain rule which can be applied as $v$ is Lipschitz \cite[Proposition 1.2.4]{nualart2006malliavin}.

The proof of the second part of the lemma follows by noting that the solution to the ordinary differential equation \eqref{eq:ode}, with initial condition $0$ at $t=0$, is
\[
\langle DX_t,\dot{h}\rangle_H=\int_0^te^{\int_s^t\nabla v(r,X_r)dr}\dot{h}_sds.
\]
\end{proof}
The next theorem is the main result of this section, showing that, with positive probability, $|DA|_{\mathcal L(H)}$ can be arbitrarily large, thus proving the second part of Theorem \ref{thm:strongInformal}.
\begin{theorem}
\label{thm:strong} 
Let $\ell\in \HH$ be given by $\ell(t)=t$ for $t\in [0,1]$. There exists a constant $c>0$ such that, for any $0<\epsilon <c$, there exists a measurable set $\mathcal E_{\epsilon}\subset \Omega$ satisfying
\[
\gamma[\mathcal E_{\epsilon}]>0
\]
and
\[
\gamma[|DA[\dot{\ell}]|_{H}> c\log (1/\epsilon)~|~\mathcal E_{\epsilon}]=1.
\]
\end{theorem}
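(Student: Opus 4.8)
The plan is to lower-bound $|DA[\dot{\ell}]|_H$ using the second inequality of Lemma \ref{lem:derrep}, which for the choice $\dot{\ell}\equiv 1$ gives
\[
|DA[\dot\ell]|_H^2\ge \int_0^{1-\epsilon}\left(1+v'(t,X_t)\int_0^t e^{\int_s^t v'(r,X_r)dr}\,ds\right)^2 dt.
\]
Here $v'=\partial_x v$ is, by the Claim preceding Lemma \ref{lem:nablav}, $v'(t,x)=\frac{1}{(1-t)^2}\Var(p^{x,1-t})-\frac{1}{1-t}$. The key heuristic (Remark \ref{rem:nontrivcexam}) is that when $X_t$ is large and positive, the conditioned-Gaussian structure of $p=fd\gamma_1$ makes $p^{X_t,1-t}$ essentially an \emph{unconstrained} Gaussian of small variance, so $v'(t,X_t)\approx -\tfrac{1}{1-t}\cdot\tfrac{1}{1+\sigma^{-1}(1-t)^{-1}}$-type quantities that are strongly negative near $t=1$, forcing the integrand to be large. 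Concretely, I would first pick the event $\mathcal E_\epsilon$ to be a set on which the Brownian path stays in a regime guaranteeing $X_t\ge M$ (for a suitable large $M$ depending on $\epsilon$) for all $t$ in a window $[1-\epsilon, 1-\epsilon/2]$ or similar; since $X$ has a drift pushing it toward the support of $p$ and $p$ is supported on $[0,\infty)$, such an event has positive $\gamma$-probability, and this is where the quantitative estimate on $\gamma[\mathcal E_\epsilon]>0$ comes from (a Girsanov / explicit density argument, or a crude comparison with Brownian motion).

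The main computational step is to show that on $\mathcal E_\epsilon$ the quantity $g(t):=v'(t,X_t)\int_0^t e^{\int_s^t v'(r,X_r)dr}\,ds$ is close to $-1$ with an error that does \emph{not} kill the $\log(1/\epsilon)$ growth — or, more robustly, to extract the $\log(1/\epsilon)$ directly. I would argue as follows: using $v'(r,X_r)\le \frac{1}{r}$ wait, more usefully $-\frac{1}{1-r}\le v'(r,X_r)$ always, and on the good event $v'(r,X_r)\le -\frac{\delta}{1-r}$ for some $\delta\in(0,1)$ (because the conditional variance $\Var(p^{X_r,1-r})$ is at most $\frac{(1-r)}{1+\sigma^{-1}(1-r)}\cdot(1+o(1))\le (1-\delta')(1-r)$ once $X_r$ is large enough, by a Gaussian tail estimate showing the truncation is negligible). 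Then $\int_s^t v'(r,X_r)\,dr\le \delta\log\frac{1-t}{1-s}$, so $e^{\int_s^t v'}\le\left(\frac{1-t}{1-s}\right)^\delta$ wait that's an upper bound and I want the integrand large — so I instead use the exact solution $\langle DX_t,\dot\ell\rangle_H=\int_0^t e^{\int_s^t v'(r,X_r)dr}ds$ and the ODE $\partial_t\langle DX_t,\dot\ell\rangle_H = 1 + v'(t,X_t)\langle DX_t,\dot\ell\rangle_H$: write $\beta_t:=\langle DX_t,\dot\ell\rangle_H$, so the integrand in Lemma \ref{lem:derrep} is $(\partial_t\beta_t)^2 = (1+v'(t,X_t)\beta_t)^2$. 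On the good event, $v'(t,X_t)\le -\frac{\delta}{1-t}$ with $\delta$ bounded away from $0$, and $\beta_t$ stays comparable to $\frac{1-t}{\delta}\cdot$(const) by a Grönwall-type two-sided bound from the ODE; hence $\partial_t\beta_t$ is bounded away from $0$ on the window, and in fact $\int_{1-\epsilon}^{1-\epsilon^2}(\partial_t\beta_t)^2dt$ can be made $\gtrsim \log(1/\epsilon)$ by choosing the event so that $v'(t,X_t)$ is close to $-\frac{1}{1-t}$ while $\beta_t$ stays of order $(1-t)$, making $v'(t,X_t)\beta_t$ close to a negative constant of order $1$, so $(1+v'(t,X_t)\beta_t)^2\gtrsim 1$ over a $t$-interval of length $\sim 1$, OR more carefully by exploiting that near $t=1$ the truncation vanishes so $\beta_t\sim$ the unconstrained-Gaussian value which makes $v'\beta$ blow down and the square integrate to a logarithm. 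I would verify the precise window and constants by direct computation with the explicit one-dimensional Gaussian $f$.

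The hard part, and where I'd spend most effort, is constructing $\mathcal E_\epsilon$ so that simultaneously (a) $\gamma[\mathcal E_\epsilon]>0$, and (b) conditionally on $\mathcal E_\epsilon$ the path $(X_t)$ is deterministically controlled enough on the relevant window $[1-\epsilon, 1-O(\epsilon^2)]$ to push the bound $|DA[\dot\ell]|_H> c\log(1/\epsilon)$ through with \emph{probability one} given $\mathcal E_\epsilon$. The natural candidate is $\mathcal E_\epsilon = \{\omega : X_{1-\epsilon}(\omega)\ge M_\epsilon\}\cap\{\text{path increments controlled on }[1-\epsilon,1]\}$ for $M_\epsilon\to\infty$ slowly (e.g. $M_\epsilon = \sqrt{\log(1/\epsilon)}$); since $X_1\sim p$ has full support on $[0,\infty)$ and $X$ is a nondegenerate diffusion, this event has positive probability, and one controls the conditional behavior of $(X_t)_{t\in[1-\epsilon,1]}$ via the Markov property and the fact that on this short window the drift is bounded below by $-\frac{1}{1-t}$ and the diffusion part is small. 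Making the "probability one given $\mathcal E_\epsilon$" statement rigorous likely requires either intersecting with a further path-regularity event of conditional probability $1$, or reformulating: pick $\mathcal E_\epsilon$ measurable with $\gamma[\mathcal E_\epsilon]>0$ on which the integrand estimate holds surely. I would organize the argument so that the event is defined directly in terms of the values of $X$ on a finite set of times plus an a priori modulus-of-continuity set (which has full measure), so that conditioning on it is unproblematic and (b) becomes a deterministic consequence of the defining inequalities together with the ODE for $\beta_t$ and the covariance representation of $v'$.
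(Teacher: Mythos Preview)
Your starting point---invoking Lemma \ref{lem:derrep} with $\dot h\equiv 1$---is the same as the paper's, but the choice of event $\mathcal E_\epsilon$ is the wrong one, and this is a genuine gap rather than a detail to be patched.

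You want to force $X_t$ to be \emph{large and positive}, so that the truncation at $0$ becomes negligible and $p^{X_t,1-t}$ behaves like the unconstrained Gaussian. But in that regime your own formula gives $\Var(p^{X_t,1-t})\approx \frac{1-t}{1+\sigma^{-1}(1-t)}$, and as $t\uparrow 1$ this is $(1-t)-\sigma^{-1}(1-t)^2+O((1-t)^3)$, whence
\[
v'(t,X_t)=\frac{\Var(p^{X_t,1-t})}{(1-t)^2}-\frac{1}{1-t}\longrightarrow -\frac{1}{\sigma},
\]
a \emph{bounded} constant. Your claimed inequality $\Var\le (1-\delta')(1-r)$ with fixed $\delta'>0$ fails precisely near $r=1$, so the conclusion $v'(r,X_r)\le -\delta/(1-r)$ is false on your event, and no $\log(1/\epsilon)$ emerges. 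This is exactly the phenomenon flagged in Remark \ref{rem:nontrivcexam}: the pure-Gaussian regimes saturate the pointwise bounds on $\nabla v$ yet still yield a Cameron--Martin contraction, so an event that pushes $X$ into the pure-Gaussian regime cannot work.

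The paper instead constructs a deterministic path $\eta_\epsilon(t)=|c|\,\sigma_t^{-1}\sim |c|\sqrt{1-t}$ that approaches the boundary $0$ of $\supp(p)$ at the precise rate making the Mills-ratio contribution to $\partial_{xx}^2\log P_{1-t}f$ of order $\sigma_t^2\sim (1-t)^{-1}$ with a fixed coefficient in $(1/4,1/2)$; this yields $v'(t,\eta_\epsilon(t))\in[-\tfrac{1}{2}(1-t)^{-1}-O(1),\,-\tfrac{1}{8}(1-t)^{-1}]$. A direct computation then shows the integrand in Lemma \ref{lem:derrep} is bounded below by a positive constant times $(1-t)^{-1}$ on $[t_0,1-\epsilon]$ for fixed $t_0$, producing the $\log(1/\epsilon)$. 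The event $\mathcal E_\epsilon$ is then $\{\sup_{t\in[\epsilon,1-\epsilon]}|X_t-\eta_\epsilon(t)|<\delta(\epsilon)\}$, and its positive probability follows from the Brownian-bridge representation of the F\"ollmer process together with the support theorem for Brownian motion. The key qualitative point you missed is that the blow-up comes from $X_t$ being \emph{near the boundary} at a carefully tuned rate, not far from it.
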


The upshot of Theorem \ref{thm:strong} is that there exists a unit norm $\dot{h}\in H$, specifically $h=\ell$, such that, for any $b
>0$,  the event $\{|DA[\dot{h}]|_{H}>b\}$ has positive probability (possibly depending on $b$). Since 
\[
|DA|_{\mathcal L(H)}=\sup_{\dot{h}\in H:|\dot{h}|=1}|DA[\dot{h}]|_{H}
\]
we conclude that $A$ cannot be a Cameron-Martin contraction, with any constant $C$. 

Next we describe the idea behind the proof of Theorem \ref{thm:strong}. Fix $0<\epsilon<1$. By Lemma \ref{lem:derrep}, and as $\nabla v(t,X_t)=\partial_{xx}^2\log P_{1-t}f(X_t)$, we have
\begin{align}
\label{eq:DTh}
|DA[\dot{h}]|_{H}^2\ge \int_0^{1-\epsilon}\left(\dot{h}_t+\partial_{xx}^2\log P_{1-t}f(X_t)\int_0^te^{\int_s^t\partial_{xx}^2\log P_{1-r}f(X_r)dr}\dot{h}_sds\right)^2dt.
\end{align}
The idea of the proof is to construct a function $\eta_{\epsilon}:[0,1-\epsilon]\to \R$ and a constant $b>0$, such that
\[
\partial_{xx}^2\log P_{1-t}f(\eta_{\epsilon}(t))\approx -\frac{b}{1-t} \quad\forall ~t\in [\epsilon,1-\epsilon].
\]
If we choose $h=\ell$, and substitute $\eta_{\epsilon}(t)$ for $X_t$ in \eqref{eq:DTh}, then a computation shows that $|DA[\dot{h}]|_{H}$ is large. The final step is to show that, with positive probability, 
\[
\partial_{xx}^2\log P_{1-t}f(X_t)\approx \partial_{xx}^2\log P_{1-t}f(\eta_{\epsilon}(t)) \quad\forall ~t\in [\epsilon,1-\epsilon]. 
\]
This implies that, with positive probability, we can make $|DA[\dot{h}]|_{H}$ arbitrary large. We now proceed to make this idea precise. We start with the construction of the function $\eta_{\epsilon}$.

\begin{lemma}
\label{lem:eta}
For every $0<\epsilon<1$ there exists an absolutely continuous function $\eta_{\epsilon}:[0,1-\epsilon]\to \R$, with $\eta_{\epsilon}(0)=0$, such that
\[
 -\frac{1}{2}\frac{1}{1-t} -\frac{1}{1-t+\sigma}\le\partial_{xx}^2\log P_{1-t}f(\eta_{\epsilon}(t))\le -\frac{1}{8}\frac{1}{1-t}\quad \forall t\in [\epsilon,1-\epsilon].
 \]
Furthermore, for any $\epsilon>0$ and $\eta_{\epsilon}$ as above, there exists $\delta(\epsilon)>0$ such that, if $\tilde \eta:[0,1]\to \R$ satisfies
\[
\sup_{t\in [\epsilon,1-\epsilon]}|\tilde\eta(t)-\eta_{\epsilon}(t)|<\delta(\epsilon),
\] 
then 
\[
 -\frac{1}{2}\frac{1}{1-t} -\frac{1}{1-t+\sigma}\le\partial_{xx}^2\log P_{1-t}f(\tilde\eta(t))\le -\frac{1}{8}\frac{1}{1-t} \quad \forall t\in [\epsilon,1-\epsilon]
 \]
as well. 
\end{lemma}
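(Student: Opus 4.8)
The plan is to work with the explicit formula for $P_s f$ in the one-dimensional conditioned-Gaussian setting, compute $\partial_{xx}^2 \log P_s f$ explicitly, and then choose $\eta_\epsilon(t)$ to force this quantity into the desired window. Since $fd\gamma_1$ is the law of a centered Gaussian with variance $\frac{\sigma}{\sigma+1}$ conditioned on being positive, the measure $p^{x,s}$ from \eqref{eq:pxt} is (a multiple of) the Gaussian with density proportional to $1_{[0,\infty)}(y)\exp\!\big(-\tfrac{y^2}{2\sigma} - \tfrac{(y-x)^2}{2s}\big)$, i.e.\ a Gaussian with some mean $m(x,s)$ and variance $w(s) = \big(\tfrac{1}{\sigma}+\tfrac{1}{s}\big)^{-1} = \tfrac{\sigma s}{\sigma+s}$, truncated to $[0,\infty)$. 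By the claim \eqref{eq:vcov}, $\partial_{xx}^2 \log P_s f(x) = \tfrac{1}{s^2}\operatorname{Var}(p^{x,s}) - \tfrac{1}{s}$, so everything reduces to estimating the variance of a truncated Gaussian as a function of how many standard deviations the truncation point sits from the mean. Writing $a(x,s) := m(x,s)/\sqrt{w(s)}$ for the (signed) number of standard deviations, there is a standard one-variable function $G(a) := \operatorname{Var}(N(0,1)\mid N(0,1) \ge -a)$, decreasing in $a$, with $G(a)\to 1$ as $a\to+\infty$ and $G(a)\to 0$ as $a\to-\infty$, such that $\operatorname{Var}(p^{x,s}) = w(s)\, G(a(x,s))$. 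Thus
\[
\partial_{xx}^2 \log P_s f(x) = \frac{w(s)}{s^2} G(a(x,s)) - \frac{1}{s} = \frac{1}{s}\Big(\frac{\sigma}{\sigma+s} G(a(x,s)) - 1\Big).
\]
With $s = 1-t$, the prefactor is $\tfrac{1}{1-t}$, and the bracket ranges continuously over $\big(\tfrac{\sigma}{\sigma+s}\cdot 0 - 1,\ \tfrac{\sigma}{\sigma+s}\cdot 1 - 1\big) = \big(-1,\ -\tfrac{s}{\sigma+s}\big)$ as $a$ ranges over $\R$.

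The first assertion then follows by an intermediate-value argument: for each $t\in[\epsilon,1-\epsilon]$ I want the bracket to take a value in the interval $\big[-\tfrac12 - \tfrac{1-t}{1-t+\sigma},\ -\tfrac18\big]\cdot$ (after multiplying the target bounds by $1-t$); since $\sigma\ge 1$, one checks that $-\tfrac18$ lies strictly inside $\big(-1,-\tfrac{s}{\sigma+s}\big)$ — indeed $-\tfrac{s}{\sigma+s}\ge -\tfrac12$ for $s\le\sigma$, and $s=1-t\le 1\le\sigma$ — and the lower target $-\tfrac12-\tfrac{s}{s+\sigma}$ is $> -\tfrac32 > -1$ only once we also incorporate the $\tfrac{1}{1-t}$... here I should be slightly careful and just pick a single convenient value such as bracket $= -\tfrac18$ uniformly, i.e.\ solve $\tfrac{\sigma}{\sigma+s}G(a) - 1 = -\tfrac18$ for $a = a^\star(s)$; this has a solution because $\tfrac{\sigma}{\sigma+s}G(a)$ runs over $(0,\tfrac{\sigma}{\sigma+s})$ and $\tfrac78 < \tfrac{\sigma}{\sigma+s}$ whenever $s < \tfrac{\sigma}{7}$, which holds for all $t$ once $\epsilon$ is... no: $s=1-t$ can be close to $1$. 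So instead I will let the target value depend on $s$: choose bracket value $\beta(s)\in\big[-\tfrac12,\ -\tfrac18\big]$ lying in the open interval $\big(-1,-\tfrac{s}{s+\sigma}\big)$ — which is nonempty and contains such a $\beta(s)$ since for $\sigma\ge1$, $s\le1$ one has $-\tfrac{s}{s+\sigma}\le -\tfrac18$ fails only when $s$ small, in which case I use $\beta(s)=-\tfrac18$, and for $s$ near $1$, $-\tfrac{s}{s+\sigma}$ can be as large as $-\tfrac12$, so I take $\beta(s) = -\tfrac12$, always landing in the claimed two-sided window. Given $\beta(s)$, set $a^\star(s) = G^{-1}\!\big(\tfrac{\sigma+s}{\sigma}(1+\beta(s))\big)$ (well-defined since $G$ is a continuous strictly decreasing bijection $\R\to(0,1)$ by elementary properties of truncated Gaussians, and the argument lies in $(0,1)$), and then \emph{define} $\eta_\epsilon(t)$ to be the unique $x$ with $a(x,1-t) = a^\star(1-t)$; since $a(x,s) = m(x,s)/\sqrt{w(s)}$ and $m(x,s)$ is an affine increasing function of $x$ (from the Gaussian completion of the square, $m(x,s) = w(s) x/s$), this gives $\eta_\epsilon(t) = \tfrac{s}{w(s)}\sqrt{w(s)}\,a^\star(s) = \sqrt{w(s)}\,\tfrac{s}{w(s)} a^\star(s)$, an explicit absolutely continuous function of $t$ on $[\epsilon,1-\epsilon]$, extended affinely (or by any absolutely continuous interpolation) to $[0,1-\epsilon]$ with $\eta_\epsilon(0)=0$.

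For the second assertion (stability), I note that $(t,x)\mapsto \partial_{xx}^2\log P_{1-t}f(x)$ is jointly continuous on the compact set $[\epsilon,1-\epsilon]\times[\min\eta_\epsilon - 1,\ \max\eta_\epsilon + 1]$ — continuity in $x$ for fixed $s$ is clear from the truncated-Gaussian formula, and the joint continuity / the uniform modulus over the compact $t$-range follows since $s=1-t$ is bounded away from $0$ there, so no blow-up occurs. Since the function $\eta_\epsilon$ already puts $\partial_{xx}^2\log P_{1-t}f(\eta_\epsilon(t))$ strictly inside the open window $\big(-\tfrac12\tfrac{1}{1-t} - \tfrac{1}{1-t+\sigma},\ -\tfrac18\tfrac{1}{1-t}\big)$ for every $t\in[\epsilon,1-\epsilon]$, and both the function and the window endpoints are continuous in $t$ on this compact interval, there is a uniform gap $\rho(\epsilon)>0$ between $\partial_{xx}^2\log P_{1-t}f(\eta_\epsilon(t))$ and the boundary. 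By uniform continuity of $\partial_{xx}^2\log P_{1-t}f(x)$ in $x$ on the compact region, there is $\delta(\epsilon)>0$ such that $|x - \eta_\epsilon(t)|<\delta(\epsilon)$ implies $|\partial_{xx}^2\log P_{1-t}f(x) - \partial_{xx}^2\log P_{1-t}f(\eta_\epsilon(t))| < \rho(\epsilon)$ for all $t\in[\epsilon,1-\epsilon]$, which gives the claim for any $\tilde\eta$ with $\sup_{t\in[\epsilon,1-\epsilon]}|\tilde\eta(t)-\eta_\epsilon(t)|<\delta(\epsilon)$.

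The main obstacle I anticipate is purely bookkeeping rather than conceptual: one must verify that the target value $\beta(s)$ can genuinely be chosen inside $\big(-1,-\tfrac{s}{s+\sigma}\big)$ \emph{and} simultaneously make the two displayed inequalities (with their differing $-\tfrac18\tfrac{1}{1-t}$ and $-\tfrac12\tfrac{1}{1-t} - \tfrac{1}{1-t+\sigma}$ bounds) hold — this amounts to checking the elementary chain of inequalities $-\tfrac12\tfrac{1}{1-t}-\tfrac{1}{1-t+\sigma} < -1 < \tfrac{1}{1-t}\big(\tfrac{\sigma}{\sigma+1-t}G(a)-1\big) < -\tfrac18\tfrac{1}{1-t}$ is achievable, using $\sigma\ge 1$, $0<1-t\le 1$, and the range of $G$. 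The only genuinely analytic inputs are: (i) the closed-form for $P_{1-t}f$ via completing the square, which is routine; (ii) the fact that $G$, the variance of a standard normal truncated to a half-line, is a continuous strictly decreasing bijection onto $(0,1)$, which is classical (e.g.\ via the formula $G(a) = 1 - \lambda(a)(\lambda(a)-a)$ with $\lambda = \varphi/\Phi$ the inverse Mills ratio, and monotonicity of the Mills ratio); and (iii) joint continuity of $\partial_{xx}^2\log P_{1-t}f$ away from $t=1$, which is immediate from the explicit formula. None of these should present real difficulty.
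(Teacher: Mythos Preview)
Your approach is essentially the same as the paper's: both exploit that $p^{x,1-t}$ is a half-line--truncated Gaussian, express $\partial_{xx}^2\log P_{1-t}f(x)$ via its variance (equivalently, via the inverse Mills ratio), and then choose $\eta_\epsilon$ so that the ``standardized truncation location'' lands in a region where the bounds hold; stability is then a uniform-continuity argument. The difference is a simplification you miss. Rather than choosing a $t$-dependent target bracket value $\beta(s)$ and inverting $G$ to get $a^\star(s)$, the paper fixes the standardized location $\sigma_t\,\eta_\epsilon(t)$ to be a \emph{constant} $-c$, with $c$ chosen so that $m^2(c)+cm(c)=\tfrac13$ (equivalently $G(-c)=\tfrac23$). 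This gives the closed form $\eta_\epsilon(t)=-c\,\sigma_t^{-1}$ on $[\epsilon,1-\epsilon]$ (extended to $[0,\epsilon]$ arbitrarily with $\eta_\epsilon(0)=0$), and then the two-sided bound follows from $\tfrac14<\tfrac13<\tfrac12$ together with $\tfrac{1}{2(1-t)}\le\sigma_t^2\le\tfrac{1}{1-t}$ for $\sigma\ge1$. The stability step likewise collapses to one-variable continuity of $y\mapsto m^2(y)+ym(y)$ near $y=c$, with $\delta(\epsilon)=\delta'/\sigma_{1-\epsilon}$.

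Your version can be made rigorous, but as written the selection of $\beta(s)$ is sketchy and case-by-case, and you never quite pin down a single continuous choice that simultaneously sits strictly inside both the achievable open interval $\big(-1,-\tfrac{s}{s+\sigma}\big)$ and the target window for all $s\in[\epsilon,1-\epsilon]$ (you need strict interior for the stability step). Also, a small slip: $G(a)=\Var(N(0,1)\mid N(0,1)\ge -a)$ is strictly \emph{increasing} in $a$, not decreasing; this does not affect invertibility, but you should fix the sign. The displayed ``chain of inequalities'' near the end does not parse as stated (the lower target bound is not always below $-1$), though the underlying nonempty-intersection claim is correct once reformulated. None of this is a real obstacle; the paper's constant-$a$ trick simply bypasses it.
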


\begin{proof}
Fix $0<\epsilon<1$ and let $Z:=\int_{\R}1_{[0,+\infty)}(y)e^{-\frac{y^2}{2\sigma}}d\gamma_1(y)$ so that $f(x)=Z^{-1}1_{[0,+\infty)}(x)e^{-\frac{x^2}{2\sigma}}$. Let $\varphi$ be the density of the standard Gaussian measure on $\R$ and  let $\Phi(x):=\int_{-\infty}^x\varphi(y)dy$ be its cumulative distribution function. Making the change of variables $y\mapsto \frac{y-x}{\sqrt{t}}$ we get 
\begin{align*}
P_tf(x)&=\frac{1}{Z}\int_{\R}1_{[0,+\infty)}(x+\sqrt{t}y)e^{-\frac{(x+\sqrt{t}y)^2}{2\sigma}}\frac{e^{-\frac{y^2}{2}}}{\sqrt{2\pi}}dy=\frac{1}{Z\sqrt{t}\sqrt{2\pi}}\int_0^{\infty}e^{-\frac{y^2}{2\sigma}}e^{-\frac{(y-x)^2}{2t}}dy.
\end{align*}
Since
\begin{align*}
&\frac{y^2}{\sigma}+\frac{(y-x)^2}{t}=\left(\frac{1}{\sigma}+\frac{1}{t}\right)y^2-2\frac{xy}{t}+\frac{x^2}{t}=\frac{t+\sigma}{t\sigma}\left[y-\frac{\sigma}{t+\sigma}x\right]^2+\left[\frac{1}{t}-\frac{\sigma}{t(t+\sigma)}\right]x^2\\
&=\frac{t+\sigma}{\sigma t}\left[y-\frac{\sigma}{t+\sigma}x\right]^2+\frac{x^2}{t+\sigma},
\end{align*}	
we have
\begin{align*}
\frac{1}{\sqrt{t}Z\sqrt{2\pi}}\int_0^{\infty}e^{-\frac{y^2}{2\sigma}}e^{-\frac{(y-x)^2}{2t}}dy=\frac{\sqrt{\frac{\sigma }{t+\sigma}}}{Z}e^{-\frac{1}{2}\frac{x^2}{t+\sigma}}\int_0^{\infty}\frac{\exp\left[-\frac{1}{2\frac{\sigma t}{t+\sigma}}\left[y-\frac{\sigma}{t+\sigma}x\right]^2\right]}{\sqrt{2\pi \frac{\sigma t}{t+\sigma}}}dy.
\end{align*}
The cumulative distribution function of a Gaussian with mean $\frac{\sigma}{t+\sigma}x$ and variance $\frac{\sigma t}{t+\sigma}$ is $y\mapsto \Phi\left(\frac{y-\frac{\sigma}{t+\sigma}x}{\sqrt{\frac{\sigma t}{t+\sigma}}}\right)$ so,
\begin{align*}
\int_0^{\infty}\frac{\exp\left[-\frac{1}{2\frac{\sigma t}{t+\sigma}}\left[y-\frac{\sigma}{t+\sigma}x\right]^2\right]}{\sqrt{2\pi \frac{\sigma t}{t+\sigma}}}dy=1- \Phi\left(\frac{-\frac{\sigma}{t+\sigma}x}{\sqrt{\frac{\sigma t}{t+\sigma}}}\right)=1- \Phi\left(-\sqrt{\frac{\sigma}{t(t+\sigma)}}x\right).
\end{align*}
Since $\Phi(-y)=1-\Phi(y)$ we conclude that
\[
 P_tf(x)=\frac{\sqrt{\frac{\sigma }{t+\sigma}}}{Z}e^{-\frac{1}{2}\frac{x^2}{t+\sigma}}\Phi\left(\sqrt{\frac{\sigma}{t(t+\sigma)}}x\right)\quad \forall t\in [0,1]. 
\]
Let $\sigma_t:=\sqrt{\frac{\sigma}{(1-t)(1-t+\sigma)}}$ and let $m(x):=\frac{\varphi(-x)}{\Phi(-x)}$ be the inverse Mills ration, where $\varphi$ is the density of the standard Gaussian on $\R$. Then, for $t\in [0,1]$,
\begin{align*}
\partial_x\log P_{1-t}f(x)=\sigma_t\frac{\varphi(\sigma_t x)}{\Phi(\sigma_t x)}-\frac{x}{1-t+\sigma}=\sigma_tm(-\sigma_t x)-\frac{x}{1-t+\sigma},
\end{align*}
and using the readily verified relation
\[
m'(x)=m^2(x)+xm(x),
\]
we get
\[
\partial_{xx}^2\log P_{1-t}f(x)=-\sigma_t^2 m'(-\sigma_t x)-\frac{1}{1-t+\sigma}=-\sigma_t^2[m^2(-\sigma_t x)-\sigma_t xm(-\sigma_t x)]-\frac{1}{1-t+\sigma}. 
\]

Set $\eta_{\epsilon}(t):=-\sigma_t^{-1}c$ for $t\in [\epsilon,1-\epsilon]$ with $c$ a constant to be determined shortly, and continue $\eta_{\epsilon}$ to $[0,\epsilon]$ in such a way that $\eta_{\epsilon}$ is absolutely continuous with derivative in $L^2([0,1-\epsilon])$, and $\eta_{\epsilon}(0)=0$. Then,
\[
\partial_{xx}^2\log P_{1-t}f(\eta_{\epsilon}(t))=-\sigma_t^2[m^2(c)+cm(c)]-\frac{1}{1-t+\sigma}\quad \forall~ t\in [\epsilon,1-\epsilon].
\]
Since  $m^2(0)+0m(0)=\frac{2}{\pi}>\frac{1}{2}$, and as $\lim_{x\to-\infty}[m^2(x)+xm(x)]=0$,  the continuity of $m$ implies that there exists $c<0$ such that $m^2(c)+cm(c)=\frac{1}{3}$. With this choice of $c$ we have $\frac{1}{4}<m^2(c)+cm(c)<\frac{1}{2}$ so 
\[
-\frac{\sigma_t^2}{2}-\frac{1}{1-t+\sigma}\le \partial_{xx}^2\log P_{1-t}f(\eta_{\epsilon}(t))\le -\frac{\sigma_t^2}{4}-\frac{1}{1-t+\sigma}\quad \forall~ t\in [\epsilon,1-\epsilon].
\]
Whenever $\sigma\ge 1$, 
\[
\frac{1}{2}\frac{1}{1-t}\le\sigma_t^2=\frac{\sigma}{(1-t)(1-t+\sigma)}\le \frac{1}{1-t}
\]
so 
\[
 -\frac{1}{2}\frac{1}{1-t}-\frac{1}{1-t+\sigma}\le\partial_{xx}^2\log P_{1-t}f(\eta_{\epsilon}(t))\le -\frac{1}{8}\frac{1}{1-t}-\frac{1}{1-t+\sigma}.
\]
Since $ -\frac{1}{1-t+\sigma}\le 0$ we get
\[
 -\frac{1}{2}\frac{1}{1-t} -\frac{1}{1-t+\sigma}\le\partial_{xx}^2\log P_{1-t}f(\eta_{\epsilon}(t))\le -\frac{1}{8}\frac{1}{1-t}.
\]
This completes the proof of the first part of the lemma. 

For the second part of the lemma, given $\epsilon$ and $\eta_{\epsilon}$ as above, use the continuity of $m$, and that $m^2(c)+cm(c)=\frac{1}{3}$, to choose $\delta'>0$ such that $|c'-(-c)|<\delta'\Rightarrow \frac{1}{4}<m^2(-c')-c'm(-c')< \frac{1}{2}$. Now let $\delta(\epsilon):=\frac{\delta'}{\sigma_{1-\epsilon}}$  and let $\tilde \eta:[0,1]\to \R$ be any function such that $\sup_{t\in [\epsilon,1-\epsilon]}|\eta_{\epsilon}(t)-\tilde\eta(t)|<\delta(\epsilon)$. Then, 
\begin{align*}
|\sigma_t\tilde \eta(t)-(-c)|=|\sigma_t\tilde \eta(t)-\sigma_t\eta_{\epsilon}(t)|\le |\sigma_{1-\epsilon}\tilde \eta(t)-\sigma_{1-\epsilon}\eta_{\epsilon}(t)|<\delta' \quad\forall~t\in [\epsilon,1-\epsilon]
\end{align*}
so $\frac{1}{4}<m^2(-\sigma_t\tilde \eta(t))-(-\sigma_t\tilde \eta(t))m(-\sigma_t\tilde \eta(t))< \frac{1}{2}$. It follows, as above, that
\[
-\frac{\sigma_t^2}{2}-\frac{1}{1-t+\sigma}\le \partial_{xx}^2\log P_{1-t}f(\tilde\eta(t))\le -\frac{\sigma_t^2}{4}-\frac{1}{1-t+\sigma}\quad \forall~ t\in [\epsilon,1-\epsilon],
\]
and we continue as above to complete the proof.
\end{proof}
Next we show that if we take $h=\ell$, and substitute for $X$ in \eqref{eq:DTh} a function $\tilde\eta$ which is close to the function $\eta_{\epsilon}$ constructed in Lemma \ref{lem:eta}, then $|DA[\dot{h}]|_{H}$ is large. 

\begin{lemma}
\label{lem:DTh_large}
There exists a constant $c>0$ with the following properties. Fix $0<\epsilon <c$ and let $\eta_{\epsilon}$ and $\delta:=\delta(\epsilon)$ be as in Lemma \ref{lem:eta}. Let $\tilde \eta:[0,1]\to\R$ be any function such that $\sup_{t\in [\epsilon,1-\epsilon]}|\tilde \eta(t)-\eta_{\epsilon}(t)|<\delta$. Then,
\[
\int_0^{1-\epsilon}\left(1+\partial_{xx}^2\log P_{1-t}f(\tilde \eta(t))\int_0^te^{\int_s^t\partial_{xx}^2\log P_{1-r}f(\tilde \eta(r))dr}ds\right)^2dt\ge c\log(1/\epsilon).
\]
\end{lemma}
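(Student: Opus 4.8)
The plan is to recast the integrand as a squared derivative. Abbreviate $g(t):=\partial_{xx}^2\log P_{1-t}f(\tilde\eta(t))$ and set
\[
\psi(t):=\int_0^t e^{\int_s^t g(r)\,dr}\,ds .
\]
By \eqref{eq:nablavlower} and Lemma~\ref{lem:nablav} the function $g$ is bounded on $[0,1-\epsilon]$, so $\psi$ is well defined there, and a Fubini computation shows $\psi(t)=t+\int_0^t g(s)\psi(s)\,ds$, i.e.\ $\psi'=1+g\psi$ a.e.\ with $\psi(0)=0$. Hence the integrand in the statement equals $(\psi'(t))^2$, and it suffices to prove $\int_0^{1-\epsilon}(\psi'(t))^2\,dt\ge c\log(1/\epsilon)$.

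The first real step is a lower bound on $\psi$. For $\epsilon\le s\le r\le t\le 1-\epsilon$, the second part of Lemma~\ref{lem:eta} gives $g(r)\ge -\tfrac12\tfrac1{1-r}-\tfrac1{1-r+\sigma}$; using $\sigma\ge 1$ to bound $\int_s^t g$ from below then yields $e^{\int_s^t g(r)\,dr}\ge\tfrac12\bigl(\tfrac{1-t}{1-s}\bigr)^{1/2}$, and integrating this over $s\in[\epsilon,t]$ (discarding the rest of the $s$-integral, which is nonnegative) gives
\[
\psi(t)\ \ge\ \sqrt{1-t}\,\bigl(\sqrt{1-\epsilon}-\sqrt{1-t}\bigr),\qquad t\in[\epsilon,1-\epsilon].
\]
I expect this to be the crux. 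The point is that the coefficient $\tfrac12$ in Lemma~\ref{lem:eta} is strictly smaller than $1$: with only the universal bound $g(r)\ge -\tfrac1{1-r}$ from \eqref{eq:nablavlower} one would get the weaker $\psi(t)\gtrsim (1-t)\log\tfrac1{1-t}$, which decays too fast as $t\uparrow1$, so $g\psi$ would grow only logarithmically, $(\psi')^2$ would be $O\bigl((\log\tfrac1{1-t})^2\bigr)$ near $t=1$, and $\int_0^{1-\epsilon}(\psi')^2$ would stay bounded in $\epsilon$ rather than growing like $\log(1/\epsilon)$.

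Then I would localize near the right endpoint. Fix a small absolute constant $\beta$ (say $\beta=\tfrac1{324}$) and set $J_\epsilon:=[\,1-\beta(1-\epsilon),\,1-\epsilon\,]$, which is contained in $[\epsilon,1-\epsilon]$ once $\epsilon$ is below an absolute threshold. On $J_\epsilon$ one has $\sqrt{(1-\epsilon)/(1-t)}\ge\beta^{-1/2}$ and $\sqrt{1-\epsilon}-\sqrt{1-t}\ge(1-\sqrt\beta)\sqrt{1-\epsilon}$, so combining the lower bound on $\psi$ with the upper bound $g(t)\le -\tfrac18\tfrac1{1-t}$ of Lemma~\ref{lem:eta} gives $g(t)\psi(t)\le -\tfrac18(1-\sqrt\beta)\sqrt{1-\epsilon}/\sqrt{1-t}$ there. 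Since $\beta^{-1/2}$ is large, the right-hand side dominates $1$, hence $|\psi'(t)|=|1+g(t)\psi(t)|\ge c_1\sqrt{1-\epsilon}/\sqrt{1-t}$ on $J_\epsilon$ for an absolute $c_1>0$, and therefore
\[
\int_0^{1-\epsilon}(\psi'(t))^2\,dt\ \ge\ c_1^2(1-\epsilon)\int_{1-\beta(1-\epsilon)}^{1-\epsilon}\frac{dt}{1-t}\ =\ c_1^2(1-\epsilon)\,\log\frac{\beta(1-\epsilon)}{\epsilon}.
\]
For $\epsilon$ below a further absolute threshold the right-hand side is at least $\tfrac14 c_1^2\log(1/\epsilon)$; taking $c$ to be the minimum of the two thresholds and $\tfrac14 c_1^2$ completes the proof. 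Everything after the lower bound on $\psi$ is routine one-dimensional calculus, and the constant produced, while absolute, is tiny.
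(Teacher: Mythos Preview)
Your proof is correct and follows essentially the same route as the paper: lower-bound the inner integral $\psi(t)$ via the lower bound on $g$ from Lemma~\ref{lem:eta}, combine with the upper bound $g(t)\le -\tfrac{1}{8(1-t)}$ to force $1+g(t)\psi(t)$ very negative for $t$ close to $1$, and then integrate the square over a short interval near $1-\epsilon$ to extract $\log(1/\epsilon)$. Your recasting of the integrand as $(\psi')^2$ and your choice to simply discard the range $s\in[0,\epsilon]$ (rather than bounding it via \eqref{eq:nablavlower}, as the paper does) make the argument slightly tidier, but the mechanism is the same.
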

\begin{proof}
By equation \eqref{eq:nablavlower} and Lemma \ref{lem:nablav}(2) (with $\cc=1$),
\[
-\frac{1}{1-t}\le\partial_{xx}^2\log P_{1-t}f(x)\le 0
\]
so, for $s\le \epsilon$,
\[
\int_s^{\epsilon}\partial_{xx}^2\log P_{1-r}f(\tilde \eta(r))dr\ge \int_0^{\epsilon}\partial_{xx}^2\log P_{1-r}f(\tilde \eta(r))dr\ge \int_0^{\epsilon}-\frac{1}{1-r}dr=\log (1-\epsilon).
\]
It follows that, for $\epsilon<t$,
\begin{align*}
\int_0^te^{\int_s^t\partial_{xx}^2\log P_{1-r}f(\tilde \eta(r))dr}ds&\ge \int_{\epsilon}^te^{\int_s^t\partial_{xx}^2\log P_{1-r}f(\tilde \eta(r))dr}ds+(1-\epsilon)\int_0^{\epsilon}e^{\int_{\epsilon}^t\partial_{xx}^2\log P_{1-r}f(\tilde \eta(r))dr}ds\\
&\ge \frac{1}{2}\int_{\epsilon}^te^{\int_s^t\partial_{xx}^2\log P_{1-r}f(\tilde \eta(r))dr}ds+\frac{1}{2}\int_0^{\epsilon}e^{\int_{\epsilon}^t\partial_{xx}^2\log P_{1-r}f(\tilde \eta(r))dr}ds\\
&=\frac{1}{2} \int_0^te^{\int_{\epsilon\vee s}^t\partial_{xx}^2\log P_{1-r}f(\tilde \eta(r))dr}ds.
\end{align*}
Using the lower bound of Lemma \ref{lem:eta} we get,
\[
\int_0^te^{\int_s^t\partial_{xx}^2\log P_{1-r}f(\tilde \eta(r))dr}ds\ge \frac{1}{2}\int_0^te^{\int_{\epsilon\vee s}^t\left[ -\frac{1}{2}\frac{1}{1-r} -\frac{1}{1-r+\sigma}\right]dr}ds \quad \forall t\in [\epsilon,1-\epsilon],
\]
and using the upper bound of Lemma \ref{lem:eta} we conclude that, for $t\in [\epsilon,1-\epsilon]$,
\begin{align*}
&\partial_{xx}^2\log P_{1-t}f(\tilde \eta(t))\int_0^te^{\int_s^t\partial_{xx}^2\log P_{1-r}f(\tilde \eta(r))dr}ds\le  -\frac{1}{16}\frac{1}{1-t} \int_0^te^{\int_{\epsilon\vee s}^t\left[ -\frac{1}{2}\frac{1}{1-r} -\frac{1}{1-r+\sigma}\right]dr}ds\\
&\le  -\frac{1}{16}\frac{1}{1-t} \int_0^te^{\int_{s}^t\left[ -\frac{1}{2}\frac{1}{1-r} -\frac{1}{1-r+\sigma}\right]dr}ds=-\frac{1}{16}\frac{1}{1-t} \left\{\int_0^t \left[\frac{\sqrt{1-t}}{\sqrt{1-s}}+\frac{1-t+\sigma}{1-s+\sigma}\right]ds\right\}\\
&=\frac{1}{8}\frac{\sqrt{1-t}-1}{\sqrt{1-t}}+\frac{1}{16}\frac{1-t+\sigma}{1-t} \log\left(\frac{1+\sigma-t}{1+\sigma}\right)\le \frac{1}{8}\frac{\sqrt{1-t}-1}{\sqrt{1-t}},
\end{align*}
since the second term is nonpositive. 
%
In particular, letting $t_0:=\frac{80}{81}$, we get
\[
1+\partial_{xx}^2\log P_{1-t}f(\tilde \eta(t))\int_0^te^{\int_s^t\partial_{xx}^2\log P_{1-r}f(\tilde \eta(r))dr}ds\le  0\quad \forall t\in[t_0,1-\epsilon].
\]
It follows that 
\begin{align*}
&\int_0^{1-\epsilon}\left(1+\partial_{xx}^2\log P_{1-t}f(\tilde \eta(t))\int_0^te^{\int_s^t\partial_{xx}^2\log P_{1-r}f(\tilde \eta(r))dr}ds\right)^2dt\\
&\ge \int_{t_0}^{1-\epsilon}\left(1+\partial_{xx}^2\log P_{1-t}f(\tilde \eta(t))\int_0^te^{\int_s^t\partial_{xx}^2\log P_{1-r}f(\tilde \eta(r))dr}ds\right)^2dt\\
&\ge \int_{t_0}^{1-\epsilon}\left(1-\frac{1}{8}\frac{1-\sqrt{1-t}}{\sqrt{1-t}}\right)^2dt\ge \frac{1}{2}\frac{1}{64}\int_{t_0}^{1-\epsilon}\frac{1}{1-t}dt-\frac{1}{4}\int_{t_0}^{1-\epsilon}\frac{1}{\sqrt{1-t}}dt\\
\ge &\frac{1}{128}\log(1/\epsilon)+\frac{1}{128}\log(1/81)-\frac{1}{18},
\end{align*}
which completes the proof. 
\end{proof}

It remains to show that, with positive probability, $X$ is close to $\eta_{\epsilon}$.
\begin{lemma}
\label{lem:bridgesup}
Fix $0<\epsilon <c$, with $c$ as in Lemma \ref{lem:DTh_large}, and let $\delta:=\delta(\epsilon)$ be as in Lemma \ref{lem:eta}. Then, the set 
\[
\mathcal E_{\epsilon,\delta}:=\left\{\tilde \eta\in \Omega:\tilde \eta(0)=0~\text{ and }\sup_{t\in [\epsilon,1-\epsilon]}|\tilde \eta(t)-\eta_{\epsilon}(t)|<\delta\right\}\subset \Omega
\]
is measurable and has positive probability.
\end{lemma}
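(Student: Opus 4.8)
The plan is to show that the event $\mathcal E_{\epsilon,\delta}$ is (a) measurable and (b) of positive $\gamma$-measure by relating it to a Brownian-motion tube event. First, measurability: the map $\tilde\eta\mapsto \sup_{t\in[\epsilon,1-\epsilon]}|\tilde\eta(t)-\eta_\epsilon(t)|$ is continuous on $\Omega$ with respect to the uniform norm (since $\eta_\epsilon$ is continuous on the compact interval $[\epsilon,1-\epsilon]$ and uniform convergence of paths implies uniform convergence of the differences), and $\{\tilde\eta(0)=0\}$ is automatic for every $\tilde\eta\in\Omega=C_0([0,1];\R^d)$ (in the one-dimensional case here, $\R$). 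Hence $\mathcal E_{\epsilon,\delta}$ is the preimage of an open set under a continuous map, so it is Borel, hence $\mathcal F$-measurable.

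For positivity, I would use the well-known support theorem for Brownian motion: under $\gamma$, the canonical process $W=(W_t)_{t\in[0,1]}$ has full topological support in $C_0([0,1];\R)$, i.e.\ for every continuous path $\psi$ with $\psi(0)=0$ and every $\varrho>0$ one has $\gamma\big[\sup_{t\in[0,1]}|W_t-\psi(t)|<\varrho\big]>0$. Concretely, one can take a $C^1$ extension $\psi$ of $\eta_\epsilon$ to all of $[0,1]$ with $\psi(0)=0$ (such an extension exists because $\eta_\epsilon$ is absolutely continuous with $L^2$ derivative on $[0,1-\epsilon]$, and we are free to redefine it smoothly on $[1-\epsilon,1]$), and then invoke the Cameron--Martin theorem together with the fact that small-ball probabilities are strictly positive: writing the law of $W$ as absolutely continuous with respect to the law of $W-\psi'$-shifted process when $\psi\in\HH$, the event $\{\sup_{t\in[0,1]}|W_t-\psi(t)|<\delta\}$ has positive probability because it contains, after the Cameron--Martin shift, a genuine small-ball event $\{\sup_{t\in[0,1]}|W_t|<\delta\}$ whose probability is strictly positive. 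Since
\[
\Big\{\sup_{t\in[0,1]}|\tilde\eta(t)-\psi(t)|<\delta\Big\}\subset \Big\{\sup_{t\in[\epsilon,1-\epsilon]}|\tilde\eta(t)-\eta_\epsilon(t)|<\delta\Big\}=\mathcal E_{\epsilon,\delta},
\]
monotonicity of measure gives $\gamma[\mathcal E_{\epsilon,\delta}]>0$.

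The main obstacle — really the only subtlety — is making sure the chosen reference path $\psi$ lies in the Cameron--Martin space $\HH$ so that the Cameron--Martin shift argument applies cleanly; this is exactly why Lemma~\ref{lem:eta} was stated with $\eta_\epsilon$ absolutely continuous and $\dot\eta_\epsilon\in L^2([0,1-\epsilon])$. One extends $\eta_\epsilon$ to $[0,1]$ keeping absolute continuity and square-integrability of the derivative (e.g.\ by linear interpolation or a smooth cutoff on $[1-\epsilon,1]$), so that $\psi\in\HH$ and $|\psi|_{\HH}<\infty$; then Girsanov/Cameron--Martin reduces the tube event around $\psi$ to the standard Brownian small-ball event, which has strictly positive probability. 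Alternatively, one can bypass $\HH$ entirely and cite the Stroock--Varadhan support theorem directly, which asserts positivity of $\gamma\big[\sup_{t}|W_t-\psi(t)|<\delta\big]$ for \emph{any} continuous $\psi$ with $\psi(0)=0$; either route completes the proof.
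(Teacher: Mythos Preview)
Your argument is correct for the lemma as literally stated: you show $\gamma[\mathcal E_{\epsilon,\delta}]>0$ via the support theorem for Brownian motion (or equivalently a Cameron--Martin shift plus a small-ball estimate), and the measurability part is handled cleanly.

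The paper, however, takes a genuinely different route, and for a reason worth noting. It interprets ``positive probability'' as positive probability \emph{under the law of the F\"ollmer process} $X$ (equivalently $\mu[\mathcal E_{\epsilon,\delta}]>0$), because in the application to Theorem~\ref{thm:strong} what matters is that $X$, not the raw Brownian path $\omega$, stays in the tube around $\eta_\epsilon$---the bound on $|DA[\dot\ell]|_H$ involves $\partial_{xx}^2\log P_{1-t}f(X_t)$. To get this, the paper represents $X$ as a mixture of Brownian bridges $Z^Y_t=\tilde B_t-t(\tilde B_1-Y)$ with $Y\sim p$ independent, and then shows each conditioned bridge $Z^b$ enters the tube with positive probability by extending $\eta_\epsilon$ to an $\HH$-path terminating at $b$ and using density of $\HH$ in $C_0$. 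Your approach is shorter because it works directly with $\gamma$; but since $\mu$ is only absolutely continuous (not equivalent) with respect to $\gamma$---the density $f(\omega_1)$ vanishes on $\{\omega_1<0\}$---the inequality $\gamma[\mathcal E_{\epsilon,\delta}]>0$ does not by itself give $\mu[\mathcal E_{\epsilon,\delta}]>0$. Your argument can be upgraded with one extra line: since $\mathcal E_{\epsilon,\delta}$ constrains the path only on $[\epsilon,1-\epsilon]$, the increment $\omega_1-\omega_{1-\epsilon}$ is free, so $\gamma[\mathcal E_{\epsilon,\delta}\cap\{\omega_1>0\}]>0$ and hence $\mu[\mathcal E_{\epsilon,\delta}]=\EE_\gamma[f(\omega_1)1_{\mathcal E_{\epsilon,\delta}}]>0$.
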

\begin{proof}
The measurability of $\mathcal E_{\epsilon,\delta}$ follows as the sigma-algebra $\mathcal F$ is generated by the Borel sets of $\Omega$ with respect to the uniform norm. To show that $\mathcal E_{\epsilon,\delta}$ has positive probability it will be useful to note that the F\"ollmer process $X$ is a mixture of Brownian bridges in the following sense. Let $(\tilde \Omega, \tilde{\mathcal F},\tilde \PP)$ be any probability space which supports a Brownian motion $\tilde B=(\tilde B_t)_{t\in [0,1]}$ and a random vector $ Y\sim p$, independent of $\tilde B$. Define the process $Z$ by $Z_t:=\tilde B_t-t(\tilde B_1- Y)$ for $t\in [0,1]$ so, conditioned on $Y$, $Z$ is a Brownian bridge starting at $0$ and terminating at $Y$. Given a set $\mathcal B\in \mathcal F$ we have \cite{follmer1988random},
\[
\gamma[X\in\mathcal B]=\tilde \PP[\{\tilde \omega\in\tilde \Omega: Z(\tilde \omega)\in\mathcal B\}].
\]
Since 
\[
\tilde \PP[\{\tilde \omega\in\tilde \Omega: Z(\tilde \omega)\in\mathcal B\}]=\EE[\tilde \PP[\{\tilde \omega\in\tilde \Omega: Z(\tilde \omega)\in\mathcal B\}|Y]],
\]
it will suffice to show that, for any $b\in \R$ and $Z^b_t:=\tilde B_t-t(\tilde B_1-b)$, we have 
\[
\tilde \PP[\{\tilde \omega\in\tilde \Omega: Z^b(\tilde \omega)\in\mathcal E_{\epsilon,\delta}\}]>0.
\]
This is equivalent to the following statement: Fix $b\in \R$ and let $\eta_{\epsilon}$ be as in Lemma \ref{lem:eta}. Then, for any $\epsilon\in (0,1)$ and $\delta>0$, 
\begin{align}
\label{eq:bridge}
\tilde \PP\left[\sup_{t\in [\epsilon,1-\epsilon]}|Z_t^b-\eta_{\epsilon}(t)|< \delta \right]>0. 
\end{align}
To prove \eqref{eq:bridge} define the function $h:[0,1]\to \R$ by
\[
h_t=
\begin{cases}
\eta_{\epsilon}(t),&t\in [0,1-\epsilon]\\
\frac{b-\eta_{\epsilon}(1-\epsilon)}{\epsilon}t+\frac{\eta_{\epsilon}(1-\epsilon)-(1-\epsilon)b}{\epsilon},&t\in(1-\epsilon, 1],
\end{cases}
\]
and note that the construction of $\eta_{\epsilon}$ ensures that $h\in H^1$. Then, for any $\delta>0$,
\[
\tilde \PP\left[\sup_{t\in [0,1]}|\tilde B_t-h_t|<\frac{\delta}{2}\right]>0,
\]
because $H^1$ is dense in $C_0([0,1])$. It follows that 
\begin{align*}
&\tilde \PP\left[\sup_{t\in [0,1-\epsilon]}|Z^b_t-\eta_{\epsilon}(t)|<\delta\right]\ge \tilde \PP\left[\sup_{t\in [0,1-\epsilon]}|\tilde B_t-\eta_{\epsilon}(t)|<\frac{\delta}{2}, ~|\tilde B_1-b|<\frac{\delta}{2}\right]\\
&=\tilde \PP\left[\sup_{t\in [0,1-\epsilon]}|\tilde B_t-h_t|\le\frac{\delta}{2},~ |\tilde B_1-h_1|<\frac{\delta}{2}\right]\ge \tilde \PP\left[\sup_{t\in [0,1]}|\tilde B_t-h_t|<\frac{\delta}{2}\right]>0.
\end{align*}
\end{proof}

We can now complete the proof of Theorem \ref{thm:strong} by combining the above lemmas. 

\begin{proof}{\textnormal{(of Theorem \ref{thm:strong})}}
Fix $0<\epsilon<c$ and let $\eta_{\epsilon}$ and $\delta:=\delta(\epsilon)$ be as in Lemma \ref{lem:eta}. Let $\mathcal E_{\epsilon}:=\mathcal E_{\epsilon,\delta}$ be as in Lemma \ref{lem:bridgesup} so $\gamma[\mathcal E_{\epsilon}]>0$. Conditioned on $\mathcal E_{\epsilon}$, Lemma \ref{lem:DTh_large} implies that there exists $c>0$ such that
\[
|DA|[\dot{\ell}]|_H\ge\int_0^{1-\epsilon}\left(1+\partial_{xx}^2\log P_{1-t}f(X_t)\int_0^te^{\int_s^t\partial_{xx}^2\log P_{1-r}f(X_r)dr}ds\right)^2dt\ge c\log(1/\epsilon),
\]
where the first inequality holds by \eqref{eq:DTh}. It follows that
\[
\gamma[|DA[\dot{\ell}]|_{H}> c\log (1/\epsilon)~|~\mathcal E_{\epsilon}]=1.
\]
\end{proof}

\section{Optimal transport}
\label{sec:opt}

The fundamental results of optimal transport on the Wiener space are due to Feyel and {\"U}st{\"u}nel \cite{feyel2004monge}. One of their results, pertaining to our setting, is the following analogue of a theorem of Brenier in Euclidean spaces \cite[Theorem 2.12]{villani2003topics}.

\begin{theorem}{\textnormal{(\cite[Theorem 4.1]{feyel2004monge})}}
\label{thm:OTfu}
Let $\mu$ be a measure on $\Omega$ defined by $\frac{d\mu}{d\gamma}(\omega)=F(\omega)$, where $F:\Omega \to \R$ is a positive function $\gamma$-a.e., such that $W_2(\gamma,\mu)<\infty$. Then, there exists a unique (up to a constant) convex map $\phi:\Omega\to \R$, such that $\mu$ is the pushforward of $\gamma$ under $\nabla\phi\in \HH$, where $(\nabla\phi(\omega))_t=\int_0^tD_s\phi(\omega)ds$ with $D\phi$ being the Malliavin derivative of $\phi$, and $\EE_{\gamma}\left[| \omega-\nabla\phi(\omega)|_{\HH}^2\right]=W_2^2(\gamma,\mu)$. 
\end{theorem}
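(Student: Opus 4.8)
The plan is to transcribe Brenier's theorem to the Wiener space, the essential new difficulty being that the quadratic reference potential $\tfrac12|\omega|_{\HH}^2$ is $\gamma$-a.s.\ infinite, so one cannot simply take a Legendre transform on $\Omega$; instead I would construct the convex potential $\phi$ by finite-dimensional approximation together with a stability argument. \textbf{Step 1: existence of an optimal coupling.} Because $W_2^2(\gamma,\mu)<\infty$ for the cost $|x-y|_{\HH}^2$ (equal to $+\infty$ off $\HH$), every coupling of finite cost is concentrated on $\{(x,y):y-x\in\HH\}$, so every admissible transport map has the form $T=\Id_\Omega+\xi$ with $\xi:\Omega\to\HH$. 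The set $\Pi(\gamma,\mu)$ is tight, hence relatively compact in the weak topology, and $(x,y)\mapsto|x-y|_{\HH}^2$ is lower semicontinuous (a supremum of squares of continuous linear functionals), so $\pi\mapsto\int|x-y|_{\HH}^2\,d\pi$ attains its infimum at some $\pi^\ast\in\Pi(\gamma,\mu)$; by Kantorovich duality for lower semicontinuous costs $\supp\pi^\ast$ is $c$-cyclically monotone.

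\textbf{Step 2: finite-dimensional Brenier problems.} Fix an orthonormal basis $(e_k)_{k\ge1}$ of $\HH$ such that the partial-sum projections $\pi_n(\omega):=\sum_{k=1}^n W(\dot e_k)(\omega)\,e_k$ converge to $\omega$ in $\Omega$ for $\gamma$-a.e.\ $\omega$ (e.g.\ the L\'evy--Ciesielski system), put $V_n:=\sspan(e_1,\dots,e_n)$, $\gamma_n:=(\pi_n)_\ast\gamma$ (the standard Gaussian on $V_n\cong\R^n$) and $\mu_n:=(\pi_n)_\ast\mu$. Pushing a finite-cost coupling of $(\gamma,\mu)$ through $(\pi_n,\pi_n)$ and using that $\pi_n$ is a contraction on $(\HH,|\cdot|_{\HH})$ gives $W_2^2(\gamma_n,\mu_n)\le W_2^2(\gamma_{n+1},\mu_{n+1})\le W_2^2(\gamma,\mu)$; combined with lower semicontinuity of $W_2$ along $\gamma_n\to\gamma$, $\mu_n\to\mu$, this yields $W_2^2(\gamma_n,\mu_n)\nearrow W_2^2(\gamma,\mu)$. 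By the Brenier--McCann theorem on $\R^n$ there is a convex $\phi_n:V_n\to\R$, unique up to a constant, with $(\nabla\phi_n)_\ast\gamma_n=\mu_n$ and $\nabla\phi_n$ optimal.

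\textbf{Step 3 (the main obstacle): passage to the limit.} Lift the finite-dimensional maps by $T_n:=\Id_\Omega+\xi_n$, $\xi_n(\omega):=\nabla\phi_n(\pi_n\omega)-\pi_n\omega\in V_n\subset\HH$, so that $(T_n)_\ast\gamma=\EE_\gamma[F\mid\mathcal F_n]\,\gamma\to F\gamma=\mu$ by martingale convergence (here $\mathcal F_n$ is generated by $W(\dot e_1),\dots,W(\dot e_n)$), and $\EE_\gamma[|\xi_n|_{\HH}^2]=W_2^2(\gamma_n,\mu_n)$. The crucial point is a \emph{tightening estimate} of the form $\EE_\gamma[|\xi_{n+1}-\xi_n|_{\HH}^2]\le C\,(W_2^2(\gamma_{n+1},\mu_{n+1})-W_2^2(\gamma_n,\mu_n))$, which uses that each $T_n$ is the gradient of a convex function (a Pythagoras-type comparison of the nested couplings); granting it, together with the monotone convergence of $W_2^2(\gamma_n,\mu_n)$, the sequence $(\xi_n)$ is Cauchy in $L^2(\gamma;\HH)$ with some limit $\xi$. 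Then along a subsequence $T_n\to\Id_\Omega+\xi$ $\gamma$-a.e., whence $(\Id_\Omega+\xi)_\ast\gamma=\lim_n(T_n)_\ast\gamma=\mu$ and $\EE_\gamma[|\xi|_{\HH}^2]=\lim_n W_2^2(\gamma_n,\mu_n)=W_2^2(\gamma,\mu)$, so $\Id_\Omega+\xi$ is the optimal map. Since cyclical monotonicity is closed under a.e.\ limits, $\xi$ is the Cameron--Martin gradient of a $1$-convex function; writing $\phi$ for this potential (formally $\phi=\tfrac12|\cdot|_{\HH}^2+\rho$ with $\rho$ having an $\HH$-valued gradient, so that $\nabla\phi=\Id_\Omega+\xi$ in the sense of the statement) and checking its Sobolev regularity completes existence. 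The hard part is precisely the infinite-dimensional bookkeeping hidden here --- the tightening estimate and, more broadly, the $1$-convexity calculus of Feyel--{\"U}st{\"u}nel ($\HH$-Sobolev regularity of $1$-convex functions, closedness of the gradient operator under $L^2$-limits) --- none of which follows from soft compactness alone.

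\textbf{Step 4: uniqueness.} If $\psi$ is another convex potential with $(\nabla\psi)_\ast\gamma=\mu$ and $\nabla\psi$ optimal, then $(\Id_\Omega,\nabla\phi)_\ast\gamma$ and $(\Id_\Omega,\nabla\psi)_\ast\gamma$ are both optimal couplings carried by $c$-cyclically monotone graphs; since $\mu=F\gamma$ with $F>0$ $\gamma$-a.e., the optimal coupling of $\gamma$ with an absolutely continuous target is unique and is carried by a graph, forcing $\nabla\phi=\nabla\psi$ $\gamma$-a.e. Then $\phi-\psi$ has $\gamma$-a.e.\ vanishing Cameron--Martin gradient, hence is $\gamma$-a.e.\ constant, which is the asserted uniqueness up to a constant.
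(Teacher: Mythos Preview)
The paper does not give a proof of this statement: Theorem~\ref{thm:OTfu} is quoted verbatim from Feyel--{\"U}st{\"u}nel \cite[Theorem 4.1]{feyel2004monge} and used as a black box (specifically, its uniqueness clause is invoked in the proof of Theorem~\ref{thm:OT} to identify the explicit candidate map $O'$ as the optimal one). There is therefore nothing in the paper to compare your proposal against.

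For what it is worth, your outline follows the strategy of the original Feyel--{\"U}st{\"u}nel paper: one proves Brenier's theorem on finite-dimensional cylindrical approximations and passes to the limit. You are right that the substantive difficulty sits in Step~3, and you are candid that the ``tightening estimate'' and the $1$-convexity/$\HH$-Sobolev machinery are being assumed rather than proved. In the original reference this step is handled not by a Cauchy estimate of the form you wrote but via Kantorovich duality: one shows that the dual potentials $\phi_n,\psi_n$ (suitably normalized) converge in appropriate $L^p(\gamma)$ spaces using the uniform bound on the transport cost and the $1$-convexity structure, and then identifies the limit of $\nabla\phi_n$ via the closability of the $\HH$-gradient on $1$-convex functions. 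Your Pythagoras-type inequality $\EE_\gamma[|\xi_{n+1}-\xi_n|_{\HH}^2]\le C(W_2^2(\gamma_{n+1},\mu_{n+1})-W_2^2(\gamma_n,\mu_n))$ is plausible in spirit but is not standard and would itself require justification; absent that, Step~3 as written is a genuine gap rather than a complete argument.
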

The main result in this section, Theorem \ref{thm:OT}, establishes a Cameron-Martin contraction for $\phi$ in the case where $F(\omega)=f(\omega_1)$ is $(\cc-1)$-log-concave for $\cc\ge 0$; informally, $\mu$ is $\cc$-log-concave for $\cc\ge 0$. This proves the first part of Theorem \ref{thm:strongInformal}. Our motivation is primarily to show that there are settings where the optimal transport map is a Cameron-Martin contraction, contrary to the causal optimal transport map (second part of Theorem \ref{thm:strongInformal} via Theorem \ref{thm:strong}).

\begin{theorem}
\label{thm:OT}
Let $\mu$ be a probability measure on $\Omega$ given by $d\mu(\omega)=f(\omega_1)d\gamma(\omega)$ where $f:\R^d\to \R$ is $(\cc-1)$-log-concave for some $\cc\ge 0$. Suppose, in addition, that the optimal transport map in $\R^d$ from $\gamma_d$ to $p:=f\gamma_d$ is twice-differentiable\footnote{See \cite[Theorem 1]{cordero2019regularity} for conditions under which such regularity holds.}. Then, the optimal transport map in $\Omega$ from $\gamma$ to $\mu$ is a Cameron-Martin contraction with constant $\max\left\{\frac{1}{\sqrt{\cc}},1\right\}$. 
\end{theorem}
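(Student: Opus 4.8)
\medskip

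The plan is to reduce the infinite-dimensional statement to the finite-dimensional Caffarelli-type contraction for the optimal transport map on $\R^d$ (the regularity of which is assumed in the hypothesis), by exploiting the fact that the target measure $\mu$ only depends on $\omega$ through $\omega_1$. First I would describe the structure of the optimal transport map $\nabla\phi:\Omega\to\Omega$ given by Theorem \ref{thm:OTfu}. Since $d\mu(\omega)=f(\omega_1)d\gamma(\omega)$ depends on $\omega$ only via the time-$1$ marginal, I would argue that the convex potential $\phi$ can be taken to depend on $\omega$ only through $\omega_1$: write $\phi(\omega)=\psi(\omega_1)+\frac12|\omega|_{\HH}^2$-type decomposition, or more precisely observe that the map $\omega\mapsto \omega+i\big((\nabla\psi(\omega_1)-\omega_1)\mathbf 1_{[0,1]}\big)$ — i.e.\ the map that adds the constant-in-time increment $\nabla\psi(\omega_1)-\omega_1$ to the whole path — pushes $\gamma$ forward to $\mu$ when $\nabla\psi$ is the Brenier map from $\gamma_d$ to $p$ on $\R^d$. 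One checks this is the gradient (in the Cameron-Martin sense) of a Cameron-Martin convex function on $\Omega$, and then by the uniqueness part of Theorem \ref{thm:OTfu} it must coincide $\gamma$-a.e.\ with $\nabla\phi$. The key computation here is that if $\psi$ is the Brenier potential on $\R^d$ then the functional $\Phi(\omega):=\psi(\omega_1)$ — or rather an appropriate modification so that $\nabla\Phi\in\HH$ — has Malliavin derivative $D_s\Phi(\omega)=\nabla\psi(\omega_1)$ for all $s$, so that $(\nabla\Phi(\omega))_t=t\nabla\psi(\omega_1)$; adding the ``identity'' part gives the claimed map.

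\medskip

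Next I would compute the Cameron-Martin derivative of this explicit map and bound its operator norm. With $T(\omega)=\omega+i\big((\nabla\psi(\omega_1)-\omega_1)\mathbf 1_{[0,1]}\big)$, a perturbation $\omega\mapsto\omega+h$ with $h\in\HH$ changes $\omega_1$ to $\omega_1+h_1$, so
\[
T(\omega+h)_t-T(\omega)_t = h_t + t\big(\nabla\psi(\omega_1+h_1)-\nabla\psi(\omega_1)-h_1\big).
\]
Differentiating, $DT(\omega)[\dot h]_t=\dot h_t$ is modified by a rank-structure term involving $\nabla^2\psi(\omega_1)$ acting on $h_1=\int_0^1\dot h_s\,ds$. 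I would then estimate $|T(\omega+h)-T(\omega)|_{\HH}^2=\int_0^1|\dot h_t + (\nabla\psi(\omega_1+h_1)-\nabla\psi(\omega_1)-h_1)|^2\,dt$; expanding and using Caffarelli's theorem, which (since $p$ is $(\cc-1)$-log-concave shifted appropriately — more precisely since $f$ is $(\cc-1)$-log-concave, $p=f\gamma_d$ is $\cc$-log-concave) gives $0\preceq\nabla^2\psi(x)\preceq\frac{1}{\sqrt\cc}\Id_d$ when $\cc\ge1$, and a matching bound $\preceq\Id_d$ control when $\cc<1$ via the trivial direction, I would show the whole expression is bounded by $\max\{1/\cc,1\}\,|h|_{\HH}^2$. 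The cross terms are handled using $|h_1|\le|h|_{\HH}$ (Cauchy–Schwarz) and the monotonicity $\langle \nabla\psi(x)-\nabla\psi(y),x-y\rangle\ge 0$ together with the upper eigenvalue bound on $\nabla^2\psi$; this is exactly the finite-dimensional Caffarelli contraction estimate repackaged, the only extra input being the elementary inequality relating $\sup_t|\int_0^t\dot h|$ and the $\HH$-norm that already appeared in the Claim following Definition \ref{def:transport}.

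\medskip

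The main obstacle, I expect, is the rigorous justification that the optimal potential $\phi$ produced by Theorem \ref{thm:OTfu} genuinely has this product/marginal structure — i.e.\ that one may descend from the abstract Wiener-space Brenier map to the Euclidean one. The clean route is via \emph{uniqueness}: one exhibits the explicit map $T$ above, verifies it is of the form $\nabla\phi$ for a Cameron-Martin convex $\phi$ (this requires checking that $\omega\mapsto\psi(\omega_1)$ has the right Malliavin-differentiability so that its ``gradient'' lands in $\HH$, using the assumed twice-differentiability of $\psi$ and that $\nabla\psi$ has at most linear growth since $p$ is $\cc$-log-concave with $\cc\ge0$, hence log-concave after a Gaussian factor), verifies $T_\ast\gamma=\mu$ (a change-of-variables / Jacobian computation on $\R^d$, already classical), and checks the cost-optimality $\EE_\gamma|\omega-T(\omega)|_{\HH}^2=W_2^2(\gamma,\mu)$ by the lower bound $W_2^2(\gamma,\mu)\ge W_2^2(\gamma_d,p)$ (projecting onto the time-$1$ marginal) combined with $\EE_\gamma|\omega-T(\omega)|_{\HH}^2=\EE_{\gamma_d}|x-\nabla\psi(x)|^2=W_2^2(\gamma_d,p)$, the middle equality because the added increment is constant in time so its $\HH$-norm squared equals $|\nabla\psi(\omega_1)-\omega_1|^2$. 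Then Theorem \ref{thm:OTfu}'s uniqueness forces $\nabla\phi=T$ $\gamma$-a.e., and the contraction bound established in the previous paragraph, combined with Lemma \ref{lem:strongcontractder} read in reverse (or rather, directly verifying Definition \ref{def:transport}'s inequality for $T$), completes the proof. A secondary technical point is handling the borderline $\cc<1$ case, where Caffarelli gives no nontrivial upper bound beyond $\nabla^2\psi\preceq\Id_d$ being false in general — here one uses instead that $f$ being $(\cc-1)$-log-concave with $\cc\ge0$ still yields $p$ more log-concave than a Gaussian of variance $1/\cc\ge1$, so the contraction constant $1/\sqrt\cc\ge1$; when $\cc\ge1$ we get $1/\sqrt\cc\le 1$ but the increment-plus-identity structure still only improves things, and the stated constant $\max\{1/\sqrt\cc,1\}$ absorbs both regimes.
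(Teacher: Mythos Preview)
Your proposal is correct and follows essentially the same route as the paper: exhibit the explicit map $T(\omega)_t=\omega_t+t(\nabla\psi(\omega_1)-\omega_1)$, identify it with the Feyel--\"Ust\"unel optimal map via uniqueness (using $W_2^2(\gamma,\mu)\ge W_2^2(\gamma_d,p)$ from the time-$1$ marginal projection together with $\EE_\gamma|\omega-T(\omega)|_{\HH}^2=W_2^2(\gamma_d,p)$), and then bound $|T(\omega+h)-T(\omega)|_{\HH}^2$ directly using Caffarelli's estimate $0\preceq\nabla^2\psi\preceq\frac{1}{\sqrt\cc}\Id_d$. The only place where the paper is slightly cleaner is the algebra for the two regimes: writing $M:=\int_0^1\nabla^2\psi(\omega_1+rh_1)\,dr-\Id_d$, the paper observes that $|Mh_1|^2+2\langle Mh_1,h_1\rangle=|(M+\Id_d)h_1|^2-|h_1|^2$, so the Caffarelli bound $0\preceq M+\Id_d\preceq\frac{1}{\sqrt\cc}\Id_d$ immediately gives $|(M+\Id_d)h_1|^2\le\frac{1}{\cc}|h_1|^2$, after which the split $\cc\ge1$ versus $\cc<1$ (combined with $|h_1|^2\le|h|_{\HH}^2$) is transparent; your discussion of the $\cc<1$ case is a bit circuitous but reaches the same conclusion.
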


Recall that in the Euclidean setting, if $p=fd\gamma_d$ with  $f:\R^d\to \R$ $(\cc-1)$-log-concave for some $\cc\ge 0$, then the optimal transport map is a contraction with constant $\frac{1}{\sqrt{\cc}}$  \cite[Theorem 2.2]{kolesnikov2011mass}. However, the fact that the analogous result of Theorem \ref{thm:OT} gives the constant  $\max\left\{\frac{1}{\sqrt{\cc}},1\right\}$ is no coincidence. If the constant of the Cameron-Martin contraction of the optimal transport map on the Wiener space was in fact $\frac{1}{\sqrt{\cc}}$, then, arguing as in section \ref{sec:funct}, we would conclude that the measure $\mu$ on $\Omega$ given by $d\mu(\omega)=f(\omega_1)d\gamma(\omega)$ satisfies a Poincar\'e inequality with constant $\frac{1}{\cc}$. But as argued in \cite[Remark 2.7]{KK}, we expect that if $\mu$ is equivalent to $\gamma$ then its Poincar\'e constant must be greater or equal to 1. Hence, we may expect Theorem \ref{thm:OT} to be the optimal result.\footnote{If we allow $\mu$ to be singular with respect to $\gamma$ then we can in fact get a Poincar\'e constant smaller than 1 \cite[Theorem 6.1]{feyel2000notion}. The prototypical example is when $\mu$ is the measure of the process $\sigma W$ where $W$ is a Brownian motion and $\sigma\neq 1$. Then $\mu$ is singular with respect to $\gamma$ and $\mu$ satisfies the conditions of \cite[Theorem 6.1]{feyel2000notion}. In this sense, the Wiener space is fundamentally different than the Euclidean space where the measure of any (non-zero) multiple of the  standard Gaussian is equivalent to $\gamma_d$.} (We note that the question of the contraction properties of the optimal transport map on Wiener space was addressed in \cite[\S 6]{MR2200166}.)

\begin{proof} 
We start by explicitly computing the optimal transport map $O$. Define the measure $p$ on $\R^d$ by $\frac{dp}{d\gamma_d}(x):=f(x)$ and let $\phi_d:\R^d\to \R$ be the convex function such that $\nabla\phi_d$ is the optimal transport map from $\gamma_d$ to $p$. Define $\dot{\xi}\in H$ by $\dot{\xi}_t:=\nabla \phi_d(\omega_1)-\omega_1$, for all $t\in [0,1]$, and let $O':\Omega\to \Omega$ be given by $O'(\omega)_t:=\omega_t+\xi_t$ (where $\xi_t:=\int_0^t\dot{\xi}_sds$) for $t\in [0,1]$. We claim that $O=O'$. Indeed, by the uniqueness part of Theorem \ref{thm:OTfu}, and as $O'$ is convex (according to Definition \ref{def:convex} and Remark \ref{rem:convex}), it suffices to show that the pushforward of $\gamma$ by $O'$ is $\mu$, and that $W_2^2(\gamma,\mu)=\EE_{\gamma}\left[|\omega-O'(\omega) |_{\HH}^2\right]$. The fact that $\mu$ is the pushforward of $\gamma$ by $O'$ follows by construction: Let $Z^b$ be a Brownian bridge on $[0,1]$ starting at $0$ and terminating at $b\in\R^d$. Then, for any $\eta:\Omega\to \R$ measurable continuous and bounded, we have
\begin{align*}
\EE_{\gamma}[\eta(O'(\omega))]=\EE_{\gamma}[\EE_{\gamma}[\eta(O'(\omega))]|\omega_1]=\int_{\R^d} \EE[\eta(Z^{\nabla\phi_d(x)})]d\gamma_d(x)=\EE_{Y\sim p}[\EE[\eta(Z^Y)]]=\EE_{\mu}[\eta(\omega)]. 
\end{align*}

To see that $O'$ is in fact the actual optimal transport map we compute
\begin{align*}
\EE_{\gamma}\left[|\omega-O'(\omega) |_{\HH}^2\right]=\EE_{\gamma}\left[|\xi|_{\HH}^2\right]=\EE_{\gamma}\left[\int_0^1 |\omega_1-\nabla\phi_d(\omega_1)|^2dt\right]=W_2^2(\gamma_d,p)\le W_2^2(\gamma,\mu),
\end{align*}
which shows that $O'=O$. Next we show that $O$ is a Cameron-Martin contraction. Fix $h\in \HH$ and compute
\begin{align*}
|O(\omega+h)-O(\omega)|_{\HH}^2=\int_0^1 |\dot{h}_t+\dot{\xi}_t(\omega+h)-\dot{\xi}_t(\omega)|^2dt=\int_0^1 |\nabla\phi_d(\omega_1+h_1)-\nabla\phi_d(\omega_1)+\dot{h}_t-h_1|^2 dt.
\end{align*}
Since 
\[
\nabla\phi_d(\omega_1+h_1)-\nabla\phi_d(\omega_1)=\left[\int_0^1\nabla^2\phi_d(\omega_1+rh_1)dr\right]h_1,
\]
we may write
\begin{align*}
|O(\omega+h)-O(\omega)|_{\HH}^2&=\int_0^1 \left|\left[\int_0^1\nabla^2\phi_d(\omega_1+rh_1)dr-\Id_d\right]h_1+\dot{h}_t\right|^2 dt=:\int_0^1 \left|Mh_1+\dot{h}_t\right|^2 dt\\
&=\int_0^1\left\{|Mh_1|^2+2\langle Mh_1,\dot{h}_t\rangle+|\dot{h}_t|^2\right\}dt=|Mh_1|^2+2\langle Mh_1,h_1\rangle+|h|_{\HH}^2. 
\end{align*}
Since $p$ is $\cc$-log-concave, we have $0\preceq	\nabla^2\phi_d\preceq	 \frac{1}{\sqrt{\cc}}\Id_d$ \cite[Theorem 2.2]{kolesnikov2011mass}, and hence, $-\Id_d \preceq  M\preceq \left(\frac{1}{\sqrt{\cc}}-1\right)\Id_d$. 
There are now two cases: $\cc\ge 1$ and $\cc<1$. Suppose $\cc\ge 1$. We claim that $|Mh_1|^2+2\langle Mh_1,h_1\rangle\le 0$. Indeed, the latter is equivalent to $|[M+\Id_d]h_1|^2\le |h_1|^2$, which is true since $0\preceq M+\Id_d \preceq\frac{1}{\sqrt{\cc}}\Id_d\preceq \Id_d$. This shows that
\[
\cc \ge 1\quad \Longrightarrow \quad |O(\omega+h)-O(\omega)|_{\HH}\le  |h|_{\HH}\quad\forall h\in \HH.
\]
Suppose now that $\cc<1$. We claim that $|Mh_1|^2+2\langle Mh_1,h_1\rangle\le \left(\frac{1}{\cc}-1\right)|h|_{\HH}^2$. Indeed, since $|[M+\Id_d]h_1|^2\le \frac{1}{\cc}|h_1|^2$, we get $|Mh_1|^2+2\langle Mh_1,h_1\rangle\le  \left(\frac{1}{\cc}-1\right)|h_1|^2\le \left(\frac{1}{\cc}-1\right)|h|_{\HH}^2$ where we used $|h_1|^2\le |h|_{\HH}^2$ by the Cauchy-Schwarz inequality. This shows that 
\[
\cc < 1\quad \Longrightarrow \quad |O(\omega+h)-O(\omega)|_{\HH}\le  \frac{1}{\sqrt{\cc}} |h|_{\HH}\quad\forall h\in \HH.
\]
\end{proof}

\begin{remark}
The example of a one-dimensional Gaussian conditioned on being positive, constructed in section \ref{sec:causal_strong}, does not exactly satisfy the assumptions of Theorem \ref{thm:OT} since the second-derivative of the transport map between $\gamma_1$ and the conditioned Gaussian $p=f\gamma_1$ does not exist at every point in $\R$. Nonetheless, the statement of Theorem \ref{thm:OT} still holds true in this case. In the example of section \ref{sec:causal_strong}, the optimal transport map is explicit,  $\nabla\phi_1=F_p^{-1}\circ F_{\gamma_1}$, where $F_p$ and $F_{\gamma_1}$ are the cumulative distribution functions of $p$ and $\gamma_1$, respectively.  Computing the derivatives of this map we see that $\phi_1$ is twice-differentiable everywhere. Hence, the proof of Theorem \ref{thm:OT} still goes through since $\nabla^2\phi_1$ must exists everywhere on the line $\omega_1+rh_1$.
\end{remark}

\begin{remark}
The proof of Theorem \ref{thm:OT} shows that the optimal transport map $O$ between $\gamma$ and $\mu(d\omega)=f(\omega_1)\gamma(d\omega)$ is essentially the optimal transport map in $\R^d$ between $\gamma_d$ and $f\gamma_d$. This explains why we cannot use the optimal transport map on Wiener space instead of the Brownian transport map, since the desired contraction properties for the optimal transport maps in $\R^d$ are still unknown. 
\end{remark}

\begin{remark}
\label{rem:thetaToTheta}
Inspection of the proof of Theorem \ref{thm:OT} reveals that if $\theta:\R^d\to \R^d$ is a contraction of $\gamma_d$ onto a target measure $p$, then one can construct a Cameron-Martin contraction $\Theta:\Omega\to \Omega$ of $\gamma$ onto $d\mu(\omega):=p(\omega_1)d\gamma(\omega)$. Indeed, define $\Theta_t(\omega):=\omega_t+t\theta(\omega_1)-t\omega_1$, for $t\in [0,1]$, and repeat the computation in the proof of Theorem \ref{thm:OT}. Note that, in general, $\Theta$ will not be the optimal transport map on the Wiener space, unless $\theta$ is the optimal transport map on the Euclidean space.
\end{remark}

\bibliographystyle{abbrv}
\bibliography{refBMTarxiv}

\end{document}